\documentclass[11pt]{amsart}

\usepackage[margin=.83in]{geometry}
\usepackage{tikz,tkz-euclide}

\makeatletter
\renewenvironment{abstract}
 {\small
  \begin{center}
  {\bfseries Abstract}
  \end{center}
  \list{}{\leftmargin=0pt \rightmargin=0pt}
  \item\relax}
 {\endlist}
\makeatother

\usepackage{amsmath,amssymb,amsthm,amsfonts,tikz}

\usepackage{xcolor}
\usepackage[
    colorlinks=true,
    linkcolor=blue,
    citecolor=blue,
    urlcolor=blue
]{hyperref}
\usepackage[capitalize]{cleveref}

\usepackage{caption}

\usepackage[normalem]{ulem} 

\numberwithin{equation}{section}

\usepackage{aliascnt}
\newtheorem{theorem}{Theorem}[section]

\newaliascnt{proposition}{theorem}
\newtheorem{proposition}[proposition]{Proposition}
\aliascntresetthe{proposition}

\newaliascnt{prop}{theorem}
\newtheorem{prop}[prop]{Proposition}
\aliascntresetthe{prop}

\newaliascnt{lemma}{theorem}
\newtheorem{lemma}[lemma]{Lemma}
\aliascntresetthe{lemma}

\newaliascnt{corollary}{theorem}
\newtheorem{corollary}[corollary]{Corollary}
\aliascntresetthe{corollary}

\theoremstyle{definition}

\newaliascnt{definition}{theorem}
\newtheorem{definition}[definition]{Definition}
\aliascntresetthe{definition}

\newaliascnt{remark}{theorem}
\newtheorem{remark}[remark]{Remark}
\aliascntresetthe{remark}

\newaliascnt{example}{theorem}

\aliascntresetthe{example}

\newcommand{\Eff}{\operatorname{Eff}}

\newcommand{\Nef}{\operatorname{Nef}}
\newcommand{\Mov}{\operatorname{Mov}}
\newcommand{\Pic}{\operatorname{Pic}}

\newcommand{\Fan}{\operatorname{Fan}}

\newcommand{\ex}{\operatorname{ex}}

\newcommand{\Cl}{\operatorname{Cl}}

\newcommand{\Proj}{\operatorname{Proj}}
\newcommand{\relint}{\operatorname{relint}}
\DeclareMathOperator{\Cone}{Cone}

\newcommand{\Bdiv}{\mathbf{B}^{\mathrm{div}}}                   
\newcommand{\Bigc}{\operatorname{Big}}                          
\newcommand{\rank}{\operatorname{rank}}

\crefname{theorem}{Th.}{Ths.}
\Crefname{theorem}{Theorem}{Theorems}
\crefname{lemma}{Lem.}{Lems.}
\Crefname{lemma}{Lemma}{Lemmas}
\crefname{corollary}{Cor.}{Cors.}
\Crefname{corollary}{Corollary}{Corollaries}
\crefname{proposition}{Prop.}{Props.}
\Crefname{proposition}{Proposition}{Propositions}
\crefname{prop}{Prop.}{Props.}
\Crefname{prop}{Proposition}{Propositions}
\crefname{definition}{Def.}{Defs.}
\Crefname{definition}{Definition}{Definitions}
\crefname{remark}{Rem.}{Rems.}
\Crefname{remark}{Remark}{Remarks}
\crefname{example}{Ex.}{Exs.}
\Crefname{example}{Example}{Examples}

\usepackage{cite}
\usepackage[utf8]{inputenc}
\usepackage{algorithm}
\usepackage{algpseudocode}

\usepackage{array}
\newcolumntype{L}[1]{>{\raggedright\arraybackslash}p{#1}}

\title[Global Sections and Birational Geometry of Calabi--Yau Type Varieties]{Global Sections and Birational Geometry\\of Calabi--Yau Type Varieties}

\author{Andrei Constantin}
\address{School of Mathematics, University of Birmingham, Watson Building, Edgbaston, Birmingham B15 2TT,~UK}
\email{a.constantin@bham.ac.uk}

\author{Andre Lukas}
\address{Rudolf Peierls Centre for Theoretical Physics, University of Oxford, Parks Road, Oxford OX1 3PU,~UK}
\email{lukas@physics.ox.ac.uk}

\author{Elijah Sheridan}
\address{Department of Physics, Cornell University, Ithaca, NY 14853, USA}
\email{es888@cornell.edu}

\begin{document}
\maketitle

\begin{abstract}
We investigate aspects of the relationship between global sections of divisors and birational geometry. We show that for varieties $X$ of klt Calabi--Yau type, global sections are governed by the same birational data that control the $D$-minimal model program ($D$-MMP). As a consequence, global sections of arbitrary big divisors can be systematically reduced to Euler characteristics $\chi(Y,\mathcal O_Y(D))$ on suitable birational models $Y$ using vanishing theorems, extending classical positivity-based methods beyond the movable cone. The reduction proceeds by removing fixed divisorial components, which do not contribute to the space of global sections, either directly by subtraction or birationally by contraction to a suitable $D$-minimal model. We obtain piecewise quasipolynomial formulae for global sections on the big cone, whose domains are the Mori chambers of the $D$-MMP. Restricting to the subclass of Fano type varieties extends such formulae to the entire effective cone. These formulae can be constructed from just the small birational class of~$X$, yet encode properties of all birational contractions of~$X$. Alongside the computation of global sections from birational geometry, we study the inverse process, wherein finite computations of $h^0(X,\mathcal O_X(D))$ determine aspects of the chamber decomposition and the associated birational models.

\end{abstract}

\section{Introduction}

Global sections of divisors lie at the heart of birational geometry. Indeed, the rational maps determined by linear systems of divisors are one of the fundamental tools employed in birational geometry. The purpose of this paper is to develop a more systematic understanding of how this relationship can be inverted: how global sections are themselves governed --- and can be counted --- by birational geometry.

We illustrate how for certain normal $\mathbb{Q}$-factorial projective varieties $X$ on which the $D$-minimal model program ($D$-MMP) can be run, the structure of global sections is governed by the geography of those $D$-minimal models \cite{shokurov19963, kaloghiros2016finite}, that is, the decomposition of the effective cone into Mori chambers of divisor classes with a common $D$-minimal model. In particular, the Mori chambers are the domains of a natural piecewise quasipolynomial formula for the function $D \mapsto h^0(X,\mathcal O_X(D))$, achieved by reducing to Euler characteristics $\chi(Y,\mathcal O_Y(D_Y))$ of nef divisors on suitable birational models $Y$ using vanishing theorems. This reduction has long been understood for movable divisors: we illustrate how divisors with fixed divisorial components can themselves be reduced to the movable case through systematic operations that remove these fixed components --- birational contraction or direct subtraction. In this work we focus on varieties of Calabi--Yau type (paying special attention to the subclass of varieties of Fano type), using the results of \cite{gachet2024effective} regarding $D$-minimal models and their geography in this setting, though our methods are relevant in any settings where $D$-minimal models exist.

Our work is motivated in part by recent progress on explicit formulae for line bundle cohomology more broadly. In applications to particle spectra in string compactifications, large-scale computations revealed striking piecewise polynomial and quasipolynomial behaviour of the functions $D\mapsto h^i(X,\mathcal O_X(D))$~\cite{Constantin:2018hvl, Klaewer:2018sfl, Larfors:2019sie, Brodie:2019dfx, Brodie:2019pnz, Brodie:2021zqq, Brodie:2020fiq, Brodie:2021nit}. Subsequent mathematical work established and explained such formulae for several classes of smooth projective surfaces~\cite{Brodie:2019ozt, Brodie:2020wkd} and complete intersection Calabi--Yau threefolds~\cite{Wang:2026nfz}. More recently, generating-function approaches have suggested that, in a range of examples, a single rational function can encode both zeroth and higher cohomologies through different expansions~\cite{Constantin:2024ulu}. 

Closely related is the theory of asymptotic invariants of divisors, 
which studies numerical functions obtained from the asymptotic behaviour of line bundles.
Examples of these invariants include the volume of a divisor, i.e.~the normalized leading-order growth of global sections along rays in $N^1(X)_{\mathbb R}$, as well as asymptotic higher cohomology functions; for a review, see \cite{ein2005asymptotic}. On projective $\mathbb Q$-factorial toric varieties these functions admit piecewise polynomial descriptions on finite decompositions of the effective cone, namely the Gelfand--Kapranov--Zelevinsky (equivalently, Mori chamber) decomposition~\cite{hering2006asymptotic}, or refinements thereof. Similar results hold more broadly for Mori dream spaces \cite{ein2006asymptotic}.

In the present paper we focus on exact global sections, rather than their asymptotic growth and show that the $D$-MMP and geography of models provide a general framework for explaining and extending the piecewise polynomial and quasipolynomial formulae observed in earlier examples.

\subsection{Methodology}

\begin{figure}
\begin{center}
\resizebox{\textwidth}{!}{%
\begin{tikzpicture}[line width=0.6pt]


\definecolor{bigcol}{RGB}{31,78,121}
\definecolor{effcol}{RGB}{176,74,10}
\tikzset{
  setbox/.style   = {draw, line width=0.6pt},          
  cybox/.style    = {draw=bigcol, line width=1.1pt},                    
  mdsbox/.style   = {draw=effcol, line width=1.1pt, densely dashed},    
  fanobox/.style  = {draw=effcol, line width=1.1pt},                    
  keytext/.style  = {font=\footnotesize, anchor=west, inner sep=1pt},
  keyhead/.style  = {font=\footnotesize\itshape, inner sep=1pt},
  keyrow/.style   = {font=\footnotesize\itshape, inner sep=1pt},
  keyrule/.style  = {draw=black!35, line width=0.4pt},
  keynone/.style  = {font=\footnotesize, text=black!40, inner sep=1pt},
  universe/.style = {draw, line width=0.4pt},           
  setname/.style     = {font=\footnotesize, inner sep=2pt},
  wit/.style      = {font=\scriptsize, align=center,    
                     inner sep=1pt},
}

\coordinate (Ull) at (-.8, 0.0);   \coordinate (Uur) at (15.8, 6.75);
\coordinate (All) at ( 0.0, 1.25);   \coordinate (Aur) at (11.0, 6.40);
\coordinate (Bll) at ( 4.3, 0.80);   \coordinate (Bur) at (15.0, 5.90);
\coordinate (Cll) at ( 1.2, 4.70);   \coordinate (Cur) at ( 7.4, 5.60);
\coordinate (Dll) at ( 5.3, 1.55);   \coordinate (Dur) at (10.7, 4.40);
\coordinate (Ell) at ( 5.7, 1.55+0.40);   \coordinate (Eur) at ( 8.9, 3.30);
\coordinate (Fll) at ( 7.1, 2.45);   \coordinate (Fur) at (10.3, 4.40-0.40);
\coordinate (Gc)  at ( 8.0, 3.4+0.7);

\draw[universe] (Ull) rectangle (Uur);
\draw[cybox]    (All) rectangle (Aur);   
\draw[mdsbox]   (Bll) rectangle (Bur);   
\draw[setbox]   (Cll) rectangle (Cur);   
\draw[fanobox]  (Dll) rectangle (Dur);   
\draw[setbox]   (Ell) rectangle (Eur);   
\draw[setbox]   (Fll) rectangle (Fur);   

\node[setname, anchor=south west] at ( 0.15, 0.15) {normal $\mathbb{Q}$-factorial projective varieties};
\node[setname, anchor=north west, text=bigcol] at ( 0.15, 6.25) {Calabi--Yau type};
\node[setname, anchor=north east, text=effcol] at (14.85-.5, 5.75) {Mori dream spaces};
\node[setname, anchor=north west] at ( 1.55, 5.40) {Calabi--Yau};
\node[setname, anchor=north west, text=effcol] at ( 5.40, 4.00) {Fano type};
\node[setname, anchor=south west, align=left]
                               at ( 5.80, 2.05) {toric\\varieties};
\node[setname, anchor=north east] at (10.20, 3.90) {Fano};
\node[keyhead] at ( 6.20,-0.65) {$D$-MMP};
\node[keyhead] at (11.40,-0.65) {$D$-MMP and suitable KV vanishing};
\draw[keyrule] ( 4.45,-0.90) -- (14.20,-0.90);
\draw[keyrule] ( 4.45,-0.40) -- ( 4.45,-2.20);
\node[keyrow, anchor=east] at ( 4.25,-1.35) {on $\Bigc(X)$};
\node[keyrow, anchor=east] at ( 4.25,-1.95) {on $\Eff(X)$};
\node[keynone] at ( 6.20,-1.35) {---};
\draw[cybox]   ( 9.70,-1.35) -- (10.50,-1.35);
\node[keytext] at (10.65,-1.35) {Calabi--Yau type};
\draw[mdsbox]  ( 4.70,-1.95) -- ( 5.50,-1.95);
\node[keytext] at ( 5.65,-1.95) {Mori dream spaces};
\draw[fanobox] ( 9.70,-1.95) -- (10.50,-1.95);
\node[keytext] at (10.65,-1.95) {Fano type};

\end{tikzpicture}%
}
\end{center}
\caption{Relations among the main classes of varieties considered in this paper, emphasizing the overlap between Calabi--Yau type, Fano type, and Mori dream spaces that underlies our birational framework.}
\label{fig:variety_containment}
\end{figure}

In this work we reduce the study of global sections to an Euler characteristic, which is a quasipolynomial function on $\mathbb{Q}$-factorial varieties by Snapper's theorem (\cref{th:snapper}). This method hinges on specific applications of the $D$-MMP and Kawamata--Viehweg (KV) vanishing (\cref{th:KV}). For varieties of Calabi--Yau type, these applications are available for all big divisors. Mori dream spaces provide a distinct setting in which the $D$-MMP can be run for every effective divisor, but the required vanishing need not hold in general; restricting to varieties of Fano type, which lie in the intersection of these two classes as illustrated in \cref{fig:variety_containment}, ensures that both ingredients are available throughout the effective cone. The main result of this paper is then that for varieties of Calabi--Yau type (Fano type) the Mori chamber decomposition of the big cone (effective cone) furnishes the domains of a piecewise quasipolynomial formula for global sections.

\begin{definition}
\label{def:CY_fano_type}
A normal projective variety $X$ is of \emph{klt Calabi--Yau type} if there exists an effective $\mathbb Q$-divisor $\Delta$ such that $(X,\Delta)$ is klt and $K_X+\Delta \equiv 0$. It is of \emph{Fano type} if there exists an effective $\mathbb Q$-divisor $\Delta$ such that $(X,\Delta)$ is klt and $-(K_X+\Delta)$ is ample.
\end{definition}

Throughout this paper, varieties of klt Calabi--Yau type and Fano type are additionally assumed to be $\mathbb Q$-factorial, to have finitely generated class group, and to satisfy $\Pic^0(X)=0$. Unless stated otherwise, the terms ``Calabi--Yau type'' and ``Fano type'' will refer to varieties satisfying these additional hypotheses. 

For simplicity, we sketch the approach for a Fano type variety $X$, thereby avoiding the more delicate, potentially infinite birational geometry of Calabi--Yau type varieties. As has long been understood, positive divisors readily admit formulae for global sections: for Fano type varieties, we may proceed as follows. The moving cone $\Mov(X)$ decomposes as a union of nef cones $\varphi_i^* \Nef(X_i)$ over the small $\mathbb{Q}$-factorial modifications (compositions of flips) $\varphi_i : X \dashrightarrow X_i$ which $X$ admits. Consequently, any movable divisor $D$ belongs to some $\varphi_i^* \Nef(X_i)$, hence $\varphi_{i*} D$ is nef on $X_i$ and a basic application of Kawamata--Viehweg vanishing (\cref{cor:MAIN-VANISHING}) identifies
\begin{equation}
    \label{eq:intro}
    h^0(X, \mathcal{O}_X(D)) = h^0(X_i, \mathcal O_{X_i}(\varphi_{i*}D)) =\chi(X_i, \mathcal{O}_{X_i}(\varphi_{i*} D))~.
\end{equation}
As the Euler characteristic on $X_i$ is quasipolynomial in the divisor class, and the birational model $X_i$ is constant on the chamber $\varphi_i^*\Nef(X_i)$, this yields a quasipolynomial formula for $h^0(X,\mathcal O_X(D))$ on that chamber. As the movable cone is covered by these chambers, $h^0$ is piecewise quasipolynomial on $\Mov(X)$.

The challenge is achieving formulae for the divisors in $\Eff(X) \setminus \Mov(X)$, where vanishing theorems are inapplicable. As illustrated schematically in \cref{fig:Eff_schematic}, these divisors lie in Mori chambers outside the movable cone and carry fixed divisorial components. Our strategy is to exploit that such fixed components do not affect the number of global sections. We present two strategies for removing these fixed components and reducing the global sections of any effective divisor to the Euler characteristic of a movable divisor. Both depend on the existence of $D$-minimal models and the associated Mori chamber decomposition.

\begin{figure}
\centering
\scalebox{0.8}{\begin{tikzpicture}[scale=2.6, line join=round, line cap=round]
\coordinate (O)  at (0,0);
\coordinate (r0) at (0.04,2.05);
\coordinate (r1) at (1.60,1.72);
\coordinate (r2) at (2.42,0.38);
\coordinate (r3) at (2.06,-1.14);
\fill[gray!30] (O) -- (r0) -- (r1) -- cycle;
\fill[gray!30] (O) -- (r1) -- (r2) -- cycle;
\fill[gray!10] (O) -- (r2) -- (r3) -- cycle;
\draw[thick] (O) -- (r0);
\draw[thick] (O) -- (r1);
\draw[thick] (O) -- (r2);
\draw[thick] (O) -- (r3);
\path (O) -- (r2)
  node[pos=1.15, sloped, inner sep=1.5pt]
  {$f_2^*\Nef(X_2)$};
\path (O) -- (r3)
  node[pos=1.1, sloped, inner sep=1.5pt]
  {$\mathbb{R}_{\geq0}E$};
\path (O) -- (r3) node[pos=0.3333, circle, fill, inner sep=1.2pt] (Ept) {};
\node[below=6pt, align=center] at (Ept) {$E$\\[0pt] ($f_2$-exceptional)};
\node[align=center] at (0.64,1.44)
{ $\mathcal{C}_1 = f_1^*\Nef(X_1)$\\[0pt]
 ($f_1$ an SQM)};
\node[align=center] at (1.41,0.70)
{$\mathcal{C}_0 = \Nef(X)$\\[-2pt]};
\node[align=center] at (1.4,-0.10)
{$\mathcal{C}_2 = $ classes with stable \\[0pt]
 fixed component $E$};
\end{tikzpicture}}
\caption{Schematic picture of the Mori chamber decomposition of the effective cone underlying the two methods of the paper. The movable cone, indicated by shading, is covered by pullbacks of nef cones of SQMs, on which vanishing theorems identify global sections with Euler characteristics. Outside the movable cone, fixed divisorial components can be removed either directly by subtraction or birationally by contraction to a $D$-minimal model. For varieties of Calabi--Yau type this yields piecewise quasipolynomial formulae on the big cone, while for varieties of Fano type the same mechanism extends to the entire effective cone.}
\label{fig:Eff_schematic}
\end{figure}

First, we consider the \textit{contraction} of fixed components. 
In our setting, one can always run a $D$-MMP and render an effective divisor nef. We employed this above in the special case of a movable divisor, for which only SQMs were necessary, but the $D$-MMP also renders non-movable divisors nef: it is just that divisorial contractions are required in addition to SQMs. The decomposition of the movable cone into nef cones above is a special case of the decomposition of the effective cone into Mori chambers. 
In particular, for any effective divisor, one can pass to the $D$-minimal model $f : X \dashrightarrow Y$ associated to its Mori chamber, at which point we can again apply \cref{eq:intro} (for $f$ rather than $\varphi_i$).

Second, we consider the \textit{subtraction} of fixed components of the complete linear system of a divisor.
Naively, determining the fixed components essentially requires knowing the divisor's global sections to begin with. However, a divisor's stable base locus is actually determined by its Mori chamber, as we will formalize through the Zariski decomposition (\cref{prop:zariski}). This means that the geography of models is also a geography of (stable) fixed components, enabling the efficient iterative subtraction of such components. Subtraction results in a divisor on $X$ with the same number of global sections but which resides in $\Mov(X)$, making the logic of \cref{eq:intro} again applicable.

\subsection{Summary of Results}

Using the contraction approach introduced above, we can reduce global sections to a quasipolynomial Euler characteristic on a $D$-minimal model.
\begin{theorem}
    \label{MAIN:contract}
    Let $X$ be a variety of Calabi--Yau type (Fano type). Then $D \mapsto h^0(X, \mathcal{O}_X(D))$ is a piecewise quasipolynomial on $\Cl(X)$ for big (respectively, effective) $D$, given by composing 
    \begin{enumerate}
        \item the pushforward to a $D$-minimal model $f : X \dashrightarrow Y$, on which $D$ is nef, with
        \item the Euler characteristic on $Y$. 
    \end{enumerate}
    The domains of quasipolynomiality are the Mori chambers, pulled back to $\Cl(X)$.
\end{theorem}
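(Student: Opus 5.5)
The plan is to combine three ingredients in order: existence of $D$-minimal models together with finiteness of the chamber decomposition (from \cite{gachet2024effective} in the Calabi--Yau type setting), invariance of $h^0$ under $D$-negative birational contractions, and Kawamata--Viehweg vanishing on the model followed by Snapper's theorem (\cref{th:snapper}).

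First, fix a big (respectively effective) Weil divisor class $D$ on $X$. Since $X$ is of Calabi--Yau type, $D\equiv K_X+\Delta+\epsilon D$ up to scaling. More precisely, $K_X+\Delta+\epsilon D\equiv \epsilon D$ with $(X,\Delta+\epsilon D')$ klt for an effective $D'\sim_{\mathbb Q}D$ and small $\epsilon$. Hence the $D$-MMP is a $(K_X+\Delta')$-MMP with big boundary. By BCHM (as packaged in \cite{gachet2024effective}) it terminates with a $D$-minimal model $f:X\dashrightarrow Y$ on which $f_*D$ is nef. Moreover, $Y$ is again $\mathbb Q$-factorial of Calabi--Yau type, since $f_*\Delta$ keeps $(Y,f_*\Delta)$ klt with $K_Y+f_*\Delta\equiv 0$. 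In the Fano type case $X$ is a Mori dream space, so the same holds for every effective $D$, and $Y$ is again of Fano type. The Mori chambers are by definition the sets of classes sharing a common $D$-minimal model $f$ (with $f_*$ linear on $\Cl$), and they are finite in number on $\Eff$ for Fano type and locally finite on $\Bigc$ for Calabi--Yau type. So on each chamber the map $D\mapsto f_*D\in\Cl(Y)$ is a fixed group homomorphism.

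Second, show $h^0(X,\mathcal O_X(D))=h^0(Y,\mathcal O_Y(f_*D))$. The map $f$ is $D$-nonpositive and contracts exactly the divisors in the stable fixed part. Take a common resolution $p:W\to X$, $q:W\to Y$. The negativity lemma gives $p^*D=q^*f_*D+E$ with $E\ge0$ and $q$-exceptional. Then $H^0(X,D)=H^0(W,\lfloor p^*D\rfloor)$, and adding a $q$-exceptional effective divisor does not change $q_*$ sections. For Weil divisors I would work with $\mathcal O_X(D)$ reflexive and compare via $f$ being an isomorphism in codimension one on $X$ minus the contracted divisors. Sections of $\mathcal O_Y(f_*D)$ pull back to rational functions whose divisor is effective off the exceptional locus, and effectivity along the exceptional divisors is exactly the $E\ge0$ statement.

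Third, apply \cref{cor:MAIN-VANISHING} on $Y$. Because $f_*D$ is nef and big (big in the Calabi--Yau case; in the Fano case $-(K_Y+\Delta_Y)$ is ample, so $f_*D-(K_Y+\Delta_Y)$ is ample even for merely nef $f_*D$), KV vanishing for Weil divisors on klt $\mathbb Q$-factorial pairs gives $h^i(Y,\mathcal O_Y(f_*D))=0$ for $i>0$. Hence $h^0=\chi(Y,\mathcal O_Y(f_*D))$. By \cref{th:snapper}, $\chi(Y,\cdot)$ is a quasipolynomial on $\Cl(Y)$, since the finitely generated class group together with $\Pic^0=0$ makes the torsion and periodicity finite. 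Composing with the linear map $f_*$ gives a quasipolynomial on the chamber's preimage in $\Cl(X)$.

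I expect the main obstacle to be the chamber bookkeeping at the boundaries. Classes on a wall may have several $D$-minimal models, and in the Calabi--Yau case infinitely many chambers can accumulate toward $\partial\Bigc$. I would handle this by working with closed chambers and noting that the formula is well defined on overlaps because both sides equal $h^0$. The Weil-divisor version of step two, via negativity on a resolution, is the other delicate point.
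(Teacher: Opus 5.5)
Your proposal is correct and follows the paper's route. You use existence of a $D$-minimal model (via the klt pair $(X,\Delta+\epsilon D')$ and BCHM, or the Mori dream space structure), invariance of $h^0$ under the $D$-nonpositive contraction, Kawamata--Viehweg vanishing on the Calabi--Yau/Fano type model $Y$, and Snapper's theorem composed with the homomorphism $f_*$ on each chamber. The only differences are that you sketch the $h^0$-invariance through a common resolution where the paper simply cites \cref{prop:mmp_preserves_sections}, and your side remark that $f$ contracts exactly the stable fixed divisors holds only for $D$ in the interior of the chamber, though your argument never uses it.
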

On the other hand, the subtraction method, formalized in \cref{alg}, reduces global sections to an Euler characteristic on an SQM after recursively subtracting components of the stable fixed locus, which involves only finitely many integer linear computations with the birational data of $X$ (i.e., the data determining the Mori chamber decomposition of the effective cone, formalized in \cref{def:birational_data}) and evaluations of Euler characteristics on SQMs of $X$.
\begin{theorem}
    \label{MAIN:subtract}
    Let $X$ be a variety of Calabi--Yau type. Then \cref{alg} computes $h^0(X, \mathcal{O}_X(D))$ for every $D \in \Bigc(X)$ with big Zariski movable subtraction (see \cref{def:zariski_movable_subtraction}). If $X$ is in particular Fano type, \cref{alg} computes $h^0(X, \mathcal{O}_X(D))$ for every $D \in \Eff(X)$.
\end{theorem}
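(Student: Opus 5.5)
The plan is to verify that each step of \cref{alg} preserves $h^0$, that the algorithm terminates after finitely many steps, and that its final output is an Euler characteristic on an SQM which, by vanishing, equals $h^0$.

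\textbf{Step 1: one subtraction preserves $h^0$.} Let $D\in\Bigc(X)$ lie in a Mori chamber with $D$-minimal model $f:X\dashrightarrow Y$. By the Zariski decomposition (\cref{prop:zariski}) we can write $D = P + N$, where $P = f^*f_*D$ is the movable part and $N$ is effective and supported on the $f$-exceptional divisors. These exceptional divisors are the stable fixed components. Both the exceptional divisors and their coefficients in $N$ are read off from the chamber, i.e. from the birational data of \cref{def:birational_data}. For an integral divisor, \cref{alg} subtracts an integral part of $N$ lying in the fixed part of $|D|$; if $N$ is not integral, it subtracts its round-down, and since $\lfloor N\rfloor \le N$ this part is contained in the fixed part of $|mD|$ for every $m$. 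Call the result $D'=D-\lfloor N\rfloor$. Every section of $\mathcal O_X(D)$ vanishes along these components with at least these multiplicities, so multiplication by the canonical section gives an isomorphism $H^0(X,\mathcal O_X(D'))\cong H^0(X,\mathcal O_X(D))$.

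\textbf{Step 2: termination.} If $D'$ is still not movable, we repeat Step 1 with the chamber of $D'$. To see that this stops, note that each step strictly decreases the effective divisor $D-P$ supported on finitely many prime divisors, namely the exceptional divisors of the finitely many models relevant near $D$. For Calabi--Yau type this finiteness comes from \cite{gachet2024effective} on the big cone. The integral coefficients cannot decrease forever, so after finitely many steps we reach a movable integral divisor $D_\infty$ with $h^0(D_\infty)=h^0(D)$. This is exactly the Zariski movable subtraction of \cref{def:zariski_movable_subtraction}.

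\textbf{Step 3: vanishing on an SQM.} The hypothesis that the Zariski movable subtraction is big is what we need here. A big movable $D_\infty$ lies in some $\varphi^*\Nef(X_i)$, with $\varphi:X\dashrightarrow X_i$ an SQM. Because $\varphi$ is an isomorphism in codimension one, $h^0$ is unchanged under pushforward. On $X_i$, which is again of Calabi--Yau type, we apply \cref{cor:MAIN-VANISHING}: writing $\varphi_*D_\infty - (K_{X_i}+\Delta_i) \equiv \varphi_*D_\infty$, which is nef and big, gives $h^i=0$ for $i>0$. Hence $h^0=\chi$, which is the output of \cref{alg}.

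\textbf{Fano type case.} Here every effective divisor admits a $D$-MMP with finitely many chambers (Mori dream space). Moreover $\varphi_*D_\infty - (K_{X_i}+\Delta_i)$ is big even when $D_\infty$ is not, because $-(K+\Delta)$ is big and remains so on SQMs. So the bigness hypothesis can be dropped, and the argument runs on all of $\Eff(X)$.

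\textbf{Main obstacle.} I expect the delicate point to be Step 2 for Calabi--Yau type. The rounding of non-integral $N$ needs care, and we must ensure the iteration stays inside the big cone and that chambers are locally finite there, rather than accumulating toward the boundary. I would first try to show that the subtraction lands in a face containing $P$, so that at most $\rank\Cl(X)$ iterations are needed.
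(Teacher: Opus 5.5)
\textbf{The gap is the rounding direction in your Step~1, and it breaks termination.} \cref{alg} subtracts $N_{\mathbb{Z},\mathcal{C}}(D)=\sum_i\lceil\lambda^{\mathcal C}_i(D)\rceil E^{\mathcal C}_i$, the round-\emph{up} of the negative part, not $\lfloor N\rfloor$.

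With the round-down, $D-\lfloor N\rfloor=P_{\mathcal C}(D)+(N-\lfloor N\rfloor)$ still lies in $\mathcal C$ with nonzero negative part whenever $N$ is non-integral. By \cref{prop:zariski}(2) it is therefore still not movable. At the next step the round-down of its negative part is $0$, so the iteration stalls. For example, $7[D_1]+4[D_6]$ in \cref{sec:ex1} has $\lambda^{Z_1}_1=1/2$, so $\lfloor N\rfloor=0$ and nothing is ever subtracted. By contrast, \cref{alg} passes in one step to $7[D_1]+3[D_6]=[D_5]$, which is big and movable. So your claim in Step~2 that each step strictly decreases something is false, and what you produce is not $D^\downarrow_{\mathrm{zms}}$.

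The missing observation is the integrality in \cref{prop:zariski}(3). The negativity lemma gives $\sigma_{E_i}(D)\ge\lambda^{\mathcal C}_i(D)$, and since $\sigma_{E_i}(D)$ is an integer, in fact $\sigma_{E_i}(D)\ge\lceil\lambda^{\mathcal C}_i(D)\rceil$. Hence $P_{\mathbb Z,\mathcal C}(D)$ is a subtraction in the sense of \cref{def:subtraction}, so it preserves $h^0$ by \cref{lem:exceptional_preserve}. It also removes a nonzero divisor whenever $D\notin\Mov(X)$, because then $N_{\mathcal C}(D)\neq 0$ (\cref{cor:chamber_movable_part}), so some $\lceil\lambda^{\mathcal C}_i(D)\rceil\ge 1$.

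\textbf{With the correct rounding, the obstacle you anticipate in Step~2 disappears.} You need no local finiteness of chambers near $D$ and no bound such as $\rank\Cl(X)$ on the number of iterations. Fix an effective representative of $D$. Each Zariski subtraction removes an integral divisor with multiplicities at most $\sigma_{E_i}\le\nu_{E_i}$, so the result is again effective with strictly smaller sum of coefficients, and the loop terminates (\cref{cor:subtraction_algo_preserves}).

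Staying in the big cone is built into the hypothesis. Each intermediate $D^{(i)}$ is the big divisor $D^\downarrow_{\mathrm{zms}}$ plus an effective divisor, hence big, so the chamber decomposition of \cref{prop:cy_preq} is available at every step.

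Your Step~3 for Calabi--Yau type is fine; it is \cref{prop:subtraction_algo_chi}. In the Fano type case, however, bigness of the pushforward of $-(K_X+\Delta)$ does not suffice for \cref{th:KV}. That theorem needs $\varphi_*D_\infty-(K_{X_i}+\Delta_i)$ to be nef \emph{and} big, and the pushforward of an ample class to a non-trivial SQM is in general not nef. Instead, use that $X_i$ is itself of Fano type (\cref{prop:dmmp_of_fano_type_is_fano_type}). Then you may choose $\Delta_i$ with $-(K_{X_i}+\Delta_i)$ ample, and nef plus ample is ample.
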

We see that subtraction is often easier to apply than contraction --- one need only work with the SQMs of a variety, rather than the larger and more complicated set given by all of its birational contractions --- but naively doesn't yield a formula (only a recursive algorithm), and is less comprehensive for Calabi--Yau type varieties. The strongest result arises from combining subtraction and contraction.
\begin{theorem}
    \label{MAIN:together}
    Let $X$ be a variety of Calabi--Yau type (Fano type). Then \cref{alg} is asymptotically (in the sense of \cref{def:asymp}) a piecewise quasipolynomial function on $\Cl(X)$ for divisors in $\Bigc(X)$ (respectively, $\Eff(X)$) with domains given the Mori chambers, pulled back to $\Cl(X)$.
    Moreover, that asymptotic piecewise quasipolynomial function agrees with $D \mapsto h^0(X, \mathcal{O}_X(D))$ for all $D \in \Bigc(X)$ (respectively, $D \in \Eff(X)$).
\end{theorem}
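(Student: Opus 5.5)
Theorem~\ref{MAIN:together} should follow from combining Theorems~\ref{MAIN:contract} and~\ref{MAIN:subtract}: contraction supplies the piecewise quasipolynomial structure on Mori chambers, and subtraction supplies the algorithmic output that must coincide with it.

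\emph{Reduction to a single chamber.} Fix a Mori chamber $\mathcal C$ of $\Bigc(X)$ (respectively $\Eff(X)$), with associated $D$-minimal model $f\colon X\dashrightarrow Y$. The Zariski decomposition (\cref{prop:zariski}) shows that the stable fixed part is linear on $\mathcal C$. Concretely, every $D\in\mathcal C$ decomposes as $D = P_\sigma(D)+N_\sigma(D)$ with $P_\sigma(D)=f^*f_*D$. Here $N_\sigma(D)=\sum_j a_j(D)E_j$ is supported on the $f$-exceptional divisors $E_j$, and the $a_j$ are linear forms on $\mathcal C$ determined by the birational data.

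\emph{Analysis of one step of \cref{alg}.} On $\mathcal C$, the algorithm subtracts the integer part of the fixed components, giving $D' = D-\sum_j \lfloor a_j(D)\rfloor E_j$ or its recursive analogue. The floors are periodic in $D$ modulo a sublattice of finite index, since the $a_j$ have rational coefficients. On each coset of this sublattice, $D'$ is an affine-linear function of $D$. Asymptotically, i.e.\ for $D$ deep in $\mathcal C$ as in \cref{def:asymp}, the divisor $D'$ lands in a fixed movable chamber $\varphi_i^*\Nef(X_i)$, namely the chamber containing $P_\sigma(D)$ perturbed by the bounded fractional part. The algorithm then returns $\chi(X_i,\mathcal O_{X_i}(\varphi_{i*}D'))$. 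By Snapper's theorem (\cref{th:snapper}) this is a quasipolynomial composed with a piecewise affine map, which is periodic with the same period, hence a quasipolynomial on $\mathcal C\cap\Cl(X)$. Since there are finitely many chambers in the Fano type case, and locally finitely many inside $\Bigc(X)$ in the Calabi--Yau type case by \cite{gachet2024effective}, we obtain a piecewise quasipolynomial with domains the pulled-back Mori chambers.

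\emph{Agreement with $h^0$.} There are two inputs. First, where the algorithm applies (Theorem~\ref{MAIN:subtract}), its output equals $h^0(X,\mathcal O_X(D))$. Second, by Theorem~\ref{MAIN:contract}, $h^0(X,\mathcal O_X(D))=\chi(Y,\mathcal O_Y(f_*D))$ is already a quasipolynomial on each chamber for all big (respectively effective) $D$. The two quasipolynomials agree on the asymptotic region of $\mathcal C$, which is Zariski dense within each coset of the period lattice. A quasipolynomial vanishing on such a set vanishes identically, so they agree on all of $\mathcal C$. Hence the asymptotic function equals $h^0$ for all $D$ in $\Bigc(X)$ (respectively $\Eff(X)$), including those $D$ where the Zariski movable subtraction fails to be big or lands on walls.

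\emph{Main obstacle.} The hardest step is showing that, asymptotically in each chamber, the subtraction output stays in a single SQM chamber and in the region where Theorem~\ref{MAIN:subtract} applies, i.e.\ that the subtraction is big. The residual fractional part $\{a_j(D)\}E_j$ is bounded but could a priori push $D'$ across walls of $\Mov(X)$. I would handle this by choosing \cref{def:asymp}-asymptotic to mean that $P_\sigma(D)$ has distance from every wall exceeding a fixed bound, which is determined by the finitely many $E_j$. I would then check that the recursive subtractions terminate after boundedly many steps uniformly on the chamber.
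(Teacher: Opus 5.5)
\textbf{Verdict: genuine gap.} Your overall architecture matches the paper's. The algorithm yields an asymptotic quasipolynomial $Q_{\mathrm{sub}}$, \cref{MAIN:contract} yields a quasipolynomial $Q_{\mathrm{con}}$ on the whole chamber, and the two agree on each coset of $\Lambda_{\mathrm{sub}}\cap\Lambda_{\mathrm{con}}$ because the asymptotic region is a translated full-dimensional cone. That final step is fine.

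The gap is in the middle step, which is the real content of the theorem, and your description of it is wrong in two ways.

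\emph{The rounding direction.} The algorithm rounds \emph{up}: it subtracts $N_{\mathbb Z,\mathcal C}(D)=\sum_j\lceil\lambda^{\mathcal C}_j(D)\rceil E_j$. This gives $D'=P_{\mathcal C}(D)-R_H$, where $R_H=\sum_j(\lceil\lambda^{\mathcal C}_j\rceil-\lambda^{\mathcal C}_j)E_j$ is a bounded effective exceptional $\mathbb Q$-divisor depending only on the coset $H$ of $\Lambda_{\mathcal C}$. With your floor, $D'=P_{\mathcal C}(D)+\sum_j\{\lambda^{\mathcal C}_j(D)\}E_j$. Whenever some $\lambda^{\mathcal C}_j(D)$ is non-integral, this stays in $\mathcal C$, keeps that $E_j$ in its stable base locus, and is never movable.

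\emph{One step need not reach $\Mov(X)$.} Even with the correct rounding, a single step does not asymptotically land in a movable chamber. In \cref{sec:ex_4}, the coset $2$ of $\Lambda_{\mathcal C_1}$ is asymptotically sent into the non-movable chamber $\mathcal C_2$, and a second subtraction is needed. There the SQM chamber finally reached also depends on the coset, not only on $\mathcal C$.

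\emph{Why your proposed fix fails.} When $\mathcal C$ is not an SQM chamber, $P_{\mathcal C}(D)\in f^*\Nef(Y)$ always lies on $\partial\Mov(X)$. Indeed, $P_{\mathcal C}(D)+\epsilon E_j$ has $E_j$ in its divisorial stable base locus for every $\epsilon>0$, by \cref{prop:zariski}. So no notion of ``asymptotic'' can keep $P_{\mathcal C}(D)$ a fixed distance from all walls. Where $P_{\mathcal C}(D)-R_H$ lands is decided by which chamber the direction $-R_H$ enters, not by how deep $D$ sits.

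\emph{The missing mechanism.} What you need is uniformity, which the paper supplies in \cref{lem:corrected_shifted_face}--\cref{lem:movable_zariski_sub_SQM}:
\begin{enumerate}
\item Take $P\in\relint(\mathcal C\cap\Mov(X))$ and a fixed effective exceptional $T$. Some single Mori chamber $\mathcal C_T$ contains a segment $[P,P-\alpha T]$. In the Calabi--Yau type case this needs finiteness of the chambers meeting a segment in $\Bigc(X)$, obtained from \cite[Cor.~1.1.5]{BCHM10}.
\item By \cite[Lem.~1.7]{HuKeel}, $f_{\mathcal C}$ factors through $f_{\mathcal C_T}$. Hence $\mathcal C\cap\Mov(X)\subset\mathcal C_T\cap\Mov(X)$ and $\ex(f_{\mathcal C_T})\subseteq\ex(f_{\mathcal C})$.
\item By convexity, $P_{\mathcal C}(D)-T\in\mathcal C_T\cap\Bigc(X)$ for all $D$ in a translated cone.
\item Since $N_{\mathcal C_T}(P_{\mathcal C}(D))=0$, the next Zariski subtraction leaves $P_{\mathcal C}(D)$ untouched. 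It only replaces $T$ by a new remainder $T'\geq T$ that is independent of $D$.
\item Iterating gives $D^\downarrow_{\mathrm{zms}}=P_{\mathcal C}(D)-T_H$ on $S+\relint\mathcal C$, lying in a single SQM chamber and big.
\item Termination follows from the strict decrease of $\sum_i\nu_{E^{\mathcal C}_i}$. This is valid precisely because every exceptional divisor encountered lies among the $E^{\mathcal C}_i$.
\end{enumerate}
Without this argument you have no single quasipolynomial $Q_{\mathrm{sub}}$ per coset to compare with $Q_{\mathrm{con}}$. Your phrase ``check that the recursive subtractions terminate after boundedly many steps uniformly on the chamber'' names exactly the statement that requires proof.
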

In this result,  subtraction furnishes an explicit global section quasipolynomial on each Mori chamber which holds asymptotically; then contraction requires that on each entire Mori chamber, global sections must be quasipolynomial. In conjunction, the asymptotic quasipolynomial must hold everywhere (even where the algorithm naively failed or required extra iterations).

These results exhibit how birational geometric data determine formulae for global sections: the geography of models organizes the effective cone into Mori chambers, and \cref{MAIN:contract} entails that on any chamber global sections are determined by the Euler characteristic of a nef divisor on the associated $D$-minimal model. In this way, birational geometry imprints in a distinctive way on the formulae. Beyond the agreement between the domains of the formulae and of the $D$-MMP, we have that, for example, invariant directions in the formulae are generated by exceptional divisors. More generally, we see that collectively, the data of global sections of all effective divisors is equivalent to that of the global sections/Euler characteristics of all nef (and big, in the CY type case) divisors on all birational contractions.

Moreover, \cref{MAIN:subtract} and \cref{MAIN:together} illustrate how one doesn't need to construct every birational contraction --- it suffices to compute the nef cones and Euler characteristics of each SQM, and merely the prime exceptional divisors of their remaining birational contractions (see \cref{def:birational_data} and the surrounding discussion). In light of the previous paragraph, this means that this limited data actually determines a considerable amount about the complete collection of $D$-minimal models. For example, Euler characteristics $\chi(X,\mathcal{O}_X(D))$ on a variety $X$ encode important aspects of its intersection theory, characteristic classes, asymptotic divisorial invariants, lattices of divisor classes, and singularities. Our main theorems entail that these aspects can be determined for all birational contractions merely from the aforementioned data of the SQMs. In particular, a crucial role is played by how the Mori chambers intersect the lattice of divisor classes.

This can be taken one step further through a process we denote \emph{birational tomography}, which inverts our primary objective of ``cohomology from birational geometry.'' Any individual quasipolynomial is determined, in principle, by its evaluation at finitely many values --- consequently, one can make inferences about the birational geometry of a variety by performing a finite number of global section computations and numerically fitting the resulting data to a piecewise quasipolynomial. Such a process determines all of the rich information discussed in the previous paragraph --- the birational and Euler characteristic data of the SQMs we were previously treating as an input as well as the information encoded by the Euler characteristics of all birational contractions, hence ``birational geometry from cohomology.''
This is analogous in spirit to~\cite{Gendler:2022ztv}, where birational geometric information is extracted from finite enumerative geometry computations in the case of smooth Calabi--Yau threefolds.

From a computational perspective, the correspondence can therefore be exploited in whichever direction is more accessible. When the birational geometry is tractable, it may be computed directly: for Mori dream subvarieties of Mori dream spaces, Cox ring methods such as those of~\cite{herrera2024cox} can determine the relevant birational data, with Mori dream subvarieties of toric varieties providing a particularly concrete setting. Conversely, in geometries where individual cohomology computations are cheaper, one may instead use them as input for birational tomography; for example, line-bundle cohomology on toric hypersurfaces can be computed using packages such as \texttt{StringTorics}~\cite{stringtorics}, while complete intersections in products of projective spaces can be treated by Koszul-based methods implemented in \texttt{CIPro}~\cite{Anderson:2026eyl}. At present, these two computational approaches are most effective on substantially overlapping classes of explicitly presented varieties, but their relative cost can differ considerably in individual examples. Importantly, the tomography perspective is not restricted to Mori dream spaces: it can also probe Calabi--Yau type varieties with potentially infinitely many birational chambers, recovering individual regions without requiring a finite Cox ring computation or a complete global chamber decomposition.

Although our results concern Calabi--Yau type and Fano type varieties specifically, the underlying mechanisms of subtraction and contraction depend more generally on the existence of $D$-minimal models, sufficient control of stable base loci, and vanishing theorems. We therefore expect many aspects of the present framework to extend beyond the settings considered here. In particular, even when the $D$-MMP is only available --- or only practical --- on restricted subsets of the effective cone, the correspondence may still be exploited locally or for individual divisors, furnishing methods for computing global sections or extracting birational information even when a complete global description is unavailable.

\subsection{Organisation of the paper} 

In \cref{sec:prelim} we establish notation and review the ingredients required for our results, including aspects of birational geometry, the minimal model program, Mori dream spaces (especially Fano type varieties), Calabi--Yau type varieties, Zariski decomposition, and cohomological theorems. In \cref{sec:contraction} we develop our first approach, contraction, yielding the proof of \cref{MAIN:contract}. In \cref{sec:subtraction} we develop the complementary approach of subtraction and prove \cref{MAIN:subtract}, as well as the more powerful \cref{MAIN:together} which follows from contraction and subtraction together. In \cref{sec:non-big} we consider the marginal case of divisors that are nef but not big, explaining how passage to the base of an Iitaka fibration can furnish global section formulae. In \cref{sec:cy_hyp} we briefly comment on a result for Calabi--Yau hypersurfaces in toric varieties that is of practical utility, and in \cref{sec:birational_tomography} we further discuss the birational tomography approach. We conclude in \cref{sec:examples} with four illustrative examples.

\subsection{Acknowledgements}

We are grateful to Mike Stillman for many helpful conversations as well as for providing early access to the Macaulay2 \cite{M2} package StringTorics \cite{stringtorics}, which enabled line bundle cohomology computations for Calabi--Yau hypersurfaces in toric varieties. We thank the Erwin Schr\"odinger Institute's Unreasonable Effectiveness of Toric Geometry workshop (June-July 2026) for providing a productive research environment for the advancement of this project. We are also grateful to Bal\'azs Szendr\"oi for helpful comments on a draft of this paper. ES additionally thanks Sebastian Vander Ploeg Fallon, Chen Jiang, Daniel Halpern-Leistner, Liam McAllister, Jakob Moritz, and Rachel Webb for insightful conversations. 
The authors acknowledge the use of Claude Opus and Claude Fable for verification and correction of statements and computations, as well as for literature search, ideation and figure generation. All AI-assisted content was checked by the authors, who take full responsibility for the accuracy of the work.
ES is supported in part by NSF grant PHY-2309456, and this material is based upon work supported by the National Science Foundation (NSF) Graduate Research Fellowship under Grant No. 2139899; any opinions, findings, and conclusions or recommendations expressed in this material are those of the author(s) and do not necessarily reflect the views of the NSF. AC~is supported by the Royal Society grant DHF/R1/231142. AL is supported by the STFC consolidated grant ST/X000761/1.

\section{Preliminaries}\label{sec:prelim}

We collect here the basic notation and standard facts used in the paper. Here and throughout the remainder of the paper, all varieties $X$ are normal, $\mathbb{Q}$-factorial, and projective over~$\mathbb C$. Let $\mathrm{Cl}(X)$ denote the class group of linear equivalence classes of Weil divisors and $\Pic(X)$ the Picard group of linear equivalence classes of Cartier divisors. We write $N^1(X)$ for the
group of Cartier divisors modulo numerical equivalence.
For any abelian group $A$, we define $A_\mathbb{F} := A \otimes_{\mathbb Z} \mathbb F$ for $\mathbb{F} \in \{\mathbb{Q}, \mathbb{R}\}$. The cones of nef, movable, and effective divisor classes are denoted by
\begin{equation*}
    \Nef(X), \qquad \Mov(X), \qquad \Eff(X)\subset N^1(X)_{\mathbb R}.
\end{equation*}
We note that the latter two of these cones are neither open nor closed in general. In many contexts it is useful to consider their closures and/or interiors, but this largely will not be necessary for our purposes, with the exception of the big cone $\Bigc(X)$ (or cone of big divisors), which is the interior of $\Eff(X)$. 

For $D$ a divisor, we let $\mathcal O_X(D)$ denote the associated rank-one reflexive sheaf.
We also write $[D] \in \mathrm{Cl}(X)$ for its linear equivalence class, $[D]_\text{num} \in N^1(X)_\mathbb{R}$ for its numerical equivalence class, $|D|$ for its complete linear system, and $\Bdiv(D)$ for the closed set given by the union of the prime divisorial components of the stable base locus of $D$. If $D'$ is another divisor, we write linear equivalence and numerical equivalence as $D \sim D'$ and $D \equiv D'$, respectively. We have natural maps $\Cl(X) \xrightarrow{} \Pic(X)_\mathbb{Q}$ and $\Pic(X)_\mathbb{Q} \to N^1(X)_\mathbb{R}$, which we will implicitly employ throughout this work when we write, e.g., $D \in \Nef(X)$ (instead of $[D]_\text{num} \in \Nef(X)$). In particular, many functions we consider --- such as $h^0(X,-)$ and $\chi(X,-)$ --- will act on elements of $\Cl(X)$, but we will talk about properties of such functions on regions in $N^1(X)_\mathbb{R}$, by which we always mean the preimage of such regions in $\Cl(X)$.

We let $\nu_A(D)$ denote the degree of vanishing of $D$ along a prime divisor $A$ (e.g., if $D = \sum_i a_i D_i$ for $D_i$ prime divisors, $\nu_{D_i}(D) = a_i$). Furthermore, following \cite{Nakayama04}, for an effective divisor~$D$, we define
\begin{equation}
    \sigma_A(D) = \inf \{\nu_A(D') \; | \; D' \in |D|\}~.
\end{equation}
In \cite{Nakayama04} this is denoted $\sigma_A(D)_\mathbb{Z}$ --- we truncate the notation for brevity. We will also use the following terminology, which is convenient for stating several of our results.
\begin{definition}
    \label{def:asymp}
    We say that divisors in a cone $\mathcal{C} \subset N^1(X)_\mathbb{R}$ asymptotically have a property $P$ if for every Weil divisor $D \in \mathcal{C}$, there exists $m_0 \in \mathbb{Z}_{\geq 0}$ such that $mD$ satisfies $P$ for every $m \geq m_0$. The integer $m_0$ will in general be $D$-dependent.
\end{definition}
The following elementary observation underlies much of the present work: removing fixed components does not change the space of global sections.
\begin{lemma}
\label{lem:exceptional_preserve}
Let $X$ be an irreducible normal variety with Weil divisor $D$. Assume that a prime divisor $E$ is a fixed component of the linear system $|D|$. Then
\begin{equation*}
H^0(X,\mathcal O_X(D)) \;=\; H^0(X,\mathcal O_X(D-E)).
\end{equation*}
\end{lemma}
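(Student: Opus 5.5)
The plan is to realize both spaces concretely as subspaces of the function field $K(X)$ and compare them directly. On a normal variety we have the standard description
\begin{equation*}
H^0(X,\mathcal O_X(D)) = \{ f \in K(X)^\times : \operatorname{div}(f) + D \geq 0 \} \cup \{0\},
\end{equation*}
and similarly for $D-E$. This description uses normality: $\mathcal O_X(D)$ is reflexive, so sections are determined by their behaviour at the codimension-one points, where the local rings are DVRs. With this identification, the inclusion $H^0(X,\mathcal O_X(D-E)) \subseteq H^0(X,\mathcal O_X(D))$ is immediate. Indeed, if $\operatorname{div}(f)+D-E \geq 0$, then adding the effective divisor $E$ preserves effectivity.

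For the reverse inclusion, I will use the correspondence between nonzero sections and members of the linear system. Take $0 \neq f \in H^0(X,\mathcal O_X(D))$, and set $D' = \operatorname{div}(f) + D \in |D|$. By hypothesis $E$ is a fixed component of $|D|$, which I interpret as saying that every $D' \in |D|$ satisfies $\nu_E(D') \geq 1$; equivalently, $\sigma_E(D) \geq 1$ in the notation introduced above. It follows that $D' - E$ is effective, because $E$ is prime and occurs in $D'$ with coefficient at least one. Since $D'-E = \operatorname{div}(f) + (D-E)$, this shows $f \in H^0(X,\mathcal O_X(D-E))$. The two subspaces of $K(X)$ therefore coincide, and that is the claimed equality.

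I expect essentially no real obstacle here. The only points needing care are the following. First, the identification of sections of the reflexive rank-one sheaf with rational functions must be justified for Weil, not necessarily Cartier, divisors; normality together with the codimension-one description handles this. Second, the edge case $|D| = \emptyset$ makes the statement vacuous, or trivially true with both sides zero. Third, one should state clearly that ``fixed component'' means $E \leq D'$ for all $D' \in |D|$. I would also remark that the equality is canonical, meaning it holds via the natural inclusion $\mathcal O_X(D-E) \hookrightarrow \mathcal O_X(D)$, and that this is the form used later when fixed components are subtracted iteratively.
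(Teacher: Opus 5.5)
Your proposal is correct and follows essentially the same argument as the paper: the inclusion $H^0(X,\mathcal O_X(D-E))\subset H^0(X,\mathcal O_X(D))$ is immediate, and for the reverse, every nonzero section $f$ gives a member $\operatorname{div}(f)+D\in|D|$ containing $E$, so $\operatorname{div}(f)+D-E\ge 0$. Your extra care about identifying sections of the reflexive sheaf with rational functions on a normal variety, and about the empty-linear-system case, is fine but goes beyond what the paper spells out.
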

\begin{proof}
Clearly $H^0(X,\mathcal O_X(D-E))\subset H^0(X,\mathcal O_X(D))$. Conversely, every section $s \in H^0(X,\mathcal O_X(D))$ vanishes along $E$, since $E$ is fixed in $|D|$, so $\mathrm{div}(s)+D-E\ge 0$, hence $s\in H^0(X,\mathcal O_X(D-E))$.
\end{proof}
The only notion from the theory of singularities that we will need is that of a klt pair, which is required to state the definition of the varieties of interest (Calabi--Yau type and Fano type) as well as the Kawamata--Viehweg vanishing theorem. 
\begin{definition}
    Let $X$ be a normal variety with \(\Delta = \sum_i d_i \Delta_i \ge 0\) an effective
    $\mathbb{Q}$--divisor such that \(K_X + \Delta\) is $\mathbb{Q}$--Cartier, and let
    \(f \colon Y \to X\) be a log resolution of the pair \((X,\Delta)\); that is, a proper
    birational morphism with \(Y\) smooth such that the union of the exceptional locus
    and the support of the strict transform \(f^{-1}_*\Delta\) is a
    divisor with simple normal crossings. Then there are uniquely determined rational
    numbers \(a(E;X,\Delta)\), the \emph{discrepancies}, such that
    \begin{equation}
      K_Y + f^{-1}_*\Delta \;\sim_{\mathbb{Q}}\; f^*(K_X+\Delta)
          + \sum_E a(E;X,\Delta)\, E,
    \end{equation}
    where the sum runs over all \(f\)--exceptional prime divisors \(E\) on \(Y\).
    Each \(a(E;X,\Delta)\) depends only on \(E\) as a divisor over \(X\), not on the
    choice of \(f\). The pair $(X,\Delta)$ is \emph{Kawamata log terminal (klt)} if
    \(a(E;X,\Delta) > -1\) for every prime divisor \(E\) over \(X\) and every
    coefficient satisfies \(0 \le d_i < 1\).
\end{definition}

The methods developed in this paper also involve passing between birational models. If $f \colon X\dashrightarrow Y$ is a birational map, $D$ is a divisor on $X$, and $D'$ is a divisor on $Y$, we write $f_*D$ for the strict transform of $D$ on $Y$ (which we will also refer to as the pushforward) and $f^*D'$ for the pullback of $D'$ in $X$, $\mathbb{Q}$-linearly extended from Cartier divisors to Weil divisors. We also recall the following standard terminology.
\begin{definition}
    A birational map $f \colon X\dashrightarrow Y$ is a \emph{contraction} if its inverse contracts no divisors.
\end{definition}
\begin{definition}
    A birational map $\varphi \colon X\dashrightarrow Y$ with $\mathbb{Q}$-factorial image is a \emph{small $\mathbb Q$-factorial modification} (SQM) if it is an isomorphism in codimension one. Equivalently, it is an SQM if it is a contraction and its inverse is a contraction.
\end{definition}
We say a birational map is a divisorial contraction if it is a contraction but not an SQM.
\begin{definition}
    \label{def:D-nonpositive}
    A birational map $f \colon X \dashrightarrow Y$ is \emph{$D$-nonpositive} if on a common resolution $(\alpha, \beta) : W \to X \times Y$, $\alpha^* D - \beta^* f_* D$ is effective and $\beta$-exceptional.
\end{definition}
We will occasionally let contraction or SQM denote both the birational map itself as well as the target. Finally, for some fixed variety $X$, distinct birational contractions can yield the same variety. To distinguish between such maps when some reference $X$ is fixed --- e.g., to distinguish distinct Mori chambers --- we use the notion of \emph{marked} contractions (or marked varieties), which store the data of both the image $Y$ as well as a particular choice of birational map $f : X \dashrightarrow Y$. This marked contraction is said to equal $g : X \dashrightarrow Z$ if $g \circ f^{-1}$ is an isomorphism.

Finally, we will use the negativity lemma a few times.
\begin{lemma}[Negativity lemma, {\cite[Lem.~3.39]{KM}}]
    \label{lem:negativity}
    Let $f \colon X \to Y$ be a proper birational morphism between normal varieties and let $B$ be a $\mathbb{Q}$-Cartier $\mathbb{Q}$-divisor on $X$ such that $-B$ is nef over $Y$, or $f$-nef (i.e., considering only curves contracted by $f$). Then $B$ is effective if and only if $h_* B$ is effective. In particular, an $h$-exceptional divisor which is numerically trivial over $Y$ vanishes.
\end{lemma}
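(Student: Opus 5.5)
The plan is to prove only the nontrivial direction. If $B$ is effective then $f_*B$ is clearly effective. For the converse, write $B = B_{\mathrm{ne}} + \sum_i b_i E_i$, where the $E_i$ are the $f$-exceptional prime divisors. The non-exceptional part $B_{\mathrm{ne}}$ is the strict transform of $f_*B$, so it is effective. It therefore remains to show $b_i \ge 0$ for all $i$. (I read the $h$ in the statement as $f$.) The statement is local over $Y$, so I would first assume $Y$ is affine (quasi-projective suffices). By Chow's lemma and resolution of singularities, there is a proper birational $g\colon X' \to X$ with $X'$ smooth and $f\circ g$ projective. Replacing $B$ by $g^*B$ and $f$ by $f\circ g$ is harmless, for three reasons. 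Relative nefness of $-B$ is preserved by pullback. The pushforward to $Y$ is unchanged. The coefficients of $g^*B$ along the strict transforms of the $E_i$ are the $b_i$. So we may assume $X$ is smooth and $f$ is projective.

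Next I would reduce to the case $\dim X = 2$ by induction on dimension, cutting with general hyperplane sections. Suppose some $b_i<0$, and take such an $E_i$. If $\dim f(E_i) > 0$, I intersect with a general hyperplane $H \subset Y$ and its preimage in $X$. By Bertini this preserves normality/smoothness and restriction of nefness. It also keeps $E_i \cap f^{-1}H$ exceptional with the same coefficient. This lowers the dimension and leads to a contradiction by induction. If instead $f(E_i)$ is a point, I cut with general hyperplane sections of $X$ (ample over $Y$) down to a surface $S$. Then $E_i\cap S$ is a union of curves contracted by $f|_S\colon S \to f(S)$, and the restricted divisor $B|_S$ still has $-B|_S$ nef over $f(S)$. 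Its pushforward remains effective, since the non-exceptional part restricts to an effective divisor. The exceptional part keeps the negative coefficient $b_i$ on the components of $E_i \cap S$. I expect this bookkeeping to be the main obstacle: after cutting, exceptional divisors must stay exceptional and non-exceptional ones must not acquire new exceptional components. To control this, I would choose the hyperplanes general relative to the finitely many centers $f(E_j)$.

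For the surface case, let $f\colon S\to T$ be a proper birational morphism from a smooth surface, with exceptional curves $C_1,\dots,C_r$. By Mumford/Grauert (equivalently, the Hodge index theorem), the intersection matrix $(C_j\cdot C_k)$ is negative definite. Write $B = B^+ - B^-$ with $B^+, B^- \ge 0$ having no common components. Then $B^-$ is supported on the $C_j$, because the non-exceptional part is effective. Suppose $B^-\neq 0$. Negative definiteness gives $B^-\cdot B^-<0$. On the other hand,
\[
B^-\cdot B^- = B^+\cdot B^- - B\cdot B^-.
\]
Here $B^+\cdot B^-\ge 0$, since the two divisors have no common components. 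Also $B\cdot B^-\le 0$, because $-B$ is $f$-nef and $B^-$ is an effective combination of contracted curves. Hence $B^-\cdot B^-\ge 0$, a contradiction, so $B^- = 0$ and $B$ is effective.

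Finally, let $B$ be $f$-exceptional and numerically trivial over $Y$. Then $f_*B = 0$, and both $B$ and $-B$ satisfy the hypotheses, since both $B$ and $-B$ are $f$-nef. Applying the result twice gives $B\ge 0$ and $-B \ge 0$, so $B=0$.
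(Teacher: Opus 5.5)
The paper gives no proof of this lemma; it only quotes \cite[Lem.~3.39]{KM}. Your outline is the standard strategy from there: reduce to $X$ smooth and $f$ projective over an affine base, cut by general hyperplane sections down to a surface, and conclude from negative definiteness of the exceptional curves. Several of your steps are fine as written: the Chow/resolution reduction, the case $\dim f(E_i)>0$ (a general hyperplane of $Y$ keeps every exceptional $E_j$ exceptional or removes it), the surface computation $B^-\cdot B^- = B^+\cdot B^- - B\cdot B^-$, and the final ``in particular''.

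There is a genuine gap in the case where $f(E_i)$ is a point. You assert that $(f|_S)_*(B|_S)$ is effective ``since the non-exceptional part restricts to an effective divisor''. You then propose to keep all exceptional divisors exceptional on $S$ by choosing the hyperplanes general with respect to the centers $f(E_j)$. That cannot be arranged. Suppose $E_j$ is exceptional with $\dim f(E_j)\ge 1$. Then $E_j\cap S$ is a general complete-intersection curve in $E_j$, and it meets the positive-dimensional fibres of $E_j\to f(E_j)$ in finitely many points. Hence $f(E_j\cap S)$ is a curve in $f(S)$, and $E_j\cap S$ is \emph{not} $f|_S$-exceptional, whatever hyperplanes you choose. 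For example, the exceptional divisor of the blow-up of a smooth threefold along a curve meets a general hyperplane section in a multisection of the $\mathbb{P}^1$-bundle. If such an $E_j$ has $b_j<0$, then $(f|_S)_*(B|_S)$ has a negative coefficient, and the surface case cannot be invoked.

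The repair is an ordering, not genericity.
\begin{enumerate}
\item Apply your first case to \emph{every} $E_j$ with $\dim f(E_j)>0$. By induction, $b_j\ge 0$ for all of them; note that this case does not depend on the other coefficients.
\item Only then, assuming some $b_i<0$ with $f(E_i)$ a point, cut down to $S$.
\end{enumerate}
After step 1, every component of $B|_S$ with negative coefficient lies over a point, so it is $f|_S$-exceptional. The curves $E_j\cap S$ that become non-exceptional carry $b_j\ge 0$. So the pushforward is effective, and your surface argument gives the contradiction. A minor point: apply the surface step to $S\to{}$the normalization of $f(S)$, since $f(S)$ need not be normal; negative definiteness of the contracted curves holds there. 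With this reordering your proof is complete and agrees with the standard one.
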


\subsection{Minimal Model Program}

Given an effective divisor \(D\) on a variety \(X\), one may attempt to construct a birational model on which the pushforward of $D$ is nef and preserves as much of the structure of $D$ as possible.

\begin{definition}[$D$-minimal model]
    Given a variety $X$ with divisor $D$, a $D$-minimal model is a variety $Y$ together with a $D$-nonpositive birational contraction $f : X \dashrightarrow Y$ such that $f_* D$ is a nef divisor on $Y$.
\end{definition}
The special case \(D=K_X\) corresponds to the standard minimal model program.
An $(X,D)$-log minimal model is a $(K_X + D)$-minimal model (e.g., for a discussion in the Fano-type case, see \cite[Cor.~2.7]{prokhorov2009towards}).
In favourable situations, $D$-minimal models can be constructed by a sequence of extremal contractions 
(SQMs and divisorial contractions) \[X = X_0 \dashrightarrow X_1 \dashrightarrow \dots \dashrightarrow X_n = Y~.\] This procedure is often referred to as the $D$-minimal model program ($D$-MMP). Each step is determined by a \(D\)-negative extremal ray of the Mori cone $R \subset \overline{NE}(X)$, that is a ray satisfying $D \cdot R < 0$. For a thorough review, see \cite{KM}.

The existence of $D$-minimal models
is not assured in general. By \cite[Th. 1.2]{BCHM10}, a projective klt pair $(X,\Delta)$ admits a log terminal model whenever $\Delta$ is big and $K_X+\Delta$ is pseudoeffective. We restrict our attention to settings in which substantially stronger results are available: Calabi--Yau type varieties, where every big divisor admits a $D$-minimal model (\cref{prop:cy_mmp}) and Mori dream spaces (of which Fano type varieties are a special case), where every effective divisor admits a $D$-minimal model (\cref{prop:MDS_chambers}).

In the settings considered below, birational contractions determine regions of the effective cone on which the resulting $D$-minimal model is constant. These are the Mori chambers.
\begin{definition}
    \label{def:mori_chamber}
    The Mori chamber $\mathcal{C} \subset \Eff(X)$ associated to a birational contraction $f : X \dashrightarrow Y$ with $\mathbb{Q}$-factorial image and prime exceptional divisors $E_1, \dots, E_r$ is the cone
    \begin{equation}
        \mathcal{C} = f^* (\Nef(Y) \cap \Eff(Y)) + \mathrm{ex}(f)~,
    \end{equation}
    where $\mathrm{ex}(f)$ is the simplicial cone generated by the exceptional divisors $E_1, \ldots, E_r$.
\end{definition}
In particular, in the notation of the definition $\mathrm{rank} \, N^1(X) = \mathrm{rank} \, N^1(Y) + r$ holds in our setting. If $D$ belongs to the Mori chamber of a birational map $f$, then the codomain of $f$ is a $D$-minimal model, so divisors in a common Mori chamber share a $D$-minimal model. 

The through-line uniting Mori dream spaces and Calabi--Yau varieties will be that the existence of $D$-minimal models induces a decomposition of the effective cone (or big cone, in the Calabi--Yau case) into Mori chambers, as we will discuss in \cref{sec:mori} and \cref{sec:cy}. The Mori chamber decomposition will organize both of our approaches for producing algorithms and formulae for global sections. Certainly for the contraction method, the Mori chambers prescribe the birational model on which a vanishing theorem should be applied and an Euler characteristic should be evaluated. Moreover, as we will see in \cref{sec:zariski}, the decomposition also encodes the stable base loci of effective divisors in a manner that enables efficient subtraction of fixed components. Following \cite{shokurov19963, kaloghiros2016finite}, we refer to the Mori chamber decomposition of the big cone/effective cone as the geography of models on \(X\).

The contraction method is enabled by the fact that passing to a $D$-minimal model preserves the global sections of $D$.
\begin{proposition}[{\negthinspace\negthinspace\negthinspace\cite[Rem. 2.4]{kaloghiros2016finite}}]
    \label{prop:mmp_preserves_sections}
    Let $X$ be a normal $\mathbb{Q}$-factorial projective variety with divisor $D$. If a $D$-minimal model $f : X \dashrightarrow Y$ exists, then
    \begin{equation*}
        h^0(X, \mathcal{O}_X(D)) = h^0(Y, \mathcal{O}_Y(f_* D)).
    \end{equation*}
\end{proposition}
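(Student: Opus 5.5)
We will pass to a common resolution and compare sections there. Choose a resolution $(\alpha,\beta)\colon W\to X\times Y$ of $f$, with $W$ normal and projective. By \cref{def:D-nonpositive},
\begin{equation*}
\alpha^*D = \beta^*f_*D + F, \qquad F\ge 0 \text{ and } \beta\text{-exceptional}.
\end{equation*}
Since $X$ and $Y$ are $\mathbb{Q}$-factorial, these pullbacks are $\mathbb{Q}$-divisors. We therefore work with $h^0$ of integral divisors throughout, interpreting $H^0(W,\mathcal O_W(\alpha^*D))$ as the rational functions $s$ with $\mathrm{div}(s)+\alpha^*D\ge 0$. Equivalently, one may take round-downs; the arguments below are insensitive to this choice, because all the comparisons are inequalities of $\mathbb{Q}$-divisors.

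The proof reduces to two identifications, each inside the function field $\mathbb C(X)=\mathbb C(W)=\mathbb C(Y)$.

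\emph{First identification:} $H^0(X,\mathcal O_X(D)) = H^0(W,\alpha^*D)$. Suppose $\mathrm{div}(s)+\alpha^*D\ge 0$ on $W$. Pushing forward by $\alpha$ gives $\mathrm{div}(s)+D\ge 0$ on $X$, since $\alpha_*\alpha^*D=D$. Conversely, suppose $\mathrm{div}_X(s)+D\ge 0$. This is a $\mathbb{Q}$-Cartier effective divisor, because $X$ is $\mathbb{Q}$-factorial, so its pullback is effective. That pullback equals $\mathrm{div}_W(s)+\alpha^*D$.

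\emph{Second identification:} $H^0(Y,\mathcal O_Y(f_*D))=H^0(W,\beta^*f_*D)$. This holds by the same argument applied to $\beta$, using that $Y$ is $\mathbb{Q}$-factorial. This is implicit in the definition of a $D$-minimal model: $f_*D$ must be $\mathbb{Q}$-Cartier for nefness to make sense.

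It remains to compare $H^0(W,\beta^*f_*D+F)$ with $H^0(W,\beta^*f_*D)$. One inclusion is clear because $F\ge 0$.

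For the reverse inclusion, take $s$ with $\mathrm{div}(s)+\beta^*f_*D+F\ge 0$. Since $F$ is $\beta$-exceptional, applying $\beta_*$ gives $\mathrm{div}_Y(s)+f_*D\ge 0$. By the second identification, $\mathrm{div}_W(s)+\beta^*f_*D\ge0$. Equivalently, the $\beta$-exceptional part of any effective divisor in $|\beta^*f_*D+F|$ dominates $F$; in other words, $F$ lies in the fixed part, in the spirit of \cref{lem:exceptional_preserve}.

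Combining the two identifications with this comparison gives $h^0(X,\mathcal O_X(D))=h^0(Y,\mathcal O_Y(f_*D))$. Nefness of $f_*D$ is not needed; only $D$-nonpositivity and $\mathbb{Q}$-factoriality are used.

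The main obstacle is bookkeeping with $\mathbb{Q}$-divisors on $W$: the pullbacks need not be integral. We handle this by never forming $\mathcal O_W$ of a non-integral divisor and instead phrasing everything as inequalities $\mathrm{div}(s)+(\cdot)\ge 0$ among $\mathbb{Q}$-divisors. The key fact used repeatedly is that pullback of $\mathbb{Q}$-Cartier divisors along a birational morphism preserves effectivity and commutes with $\mathrm{div}(s)$.
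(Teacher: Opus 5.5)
Your proof is correct. The paper gives no argument of its own here and simply cites \cite[Rem.~2.4]{kaloghiros2016finite}; your common-resolution argument is the standard reasoning behind that remark. It identifies both $H^0(X,\mathcal O_X(D))$ and $H^0(Y,\mathcal O_Y(f_*D))$ with subspaces of $\mathbb C(X)$ cut out on $W$ by $\alpha^*D=\beta^*f_*D+F$ and by $\beta^*f_*D$ respectively, and then uses $\beta_*F=0$ to show that the effective $\beta$-exceptional $F$ contributes no sections. Your remark that nefness of $f_*D$ plays no role, with only $D$-nonpositivity and $\mathbb{Q}$-Cartierness of $D$ and $f_*D$ being used, is also accurate.
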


\subsection{Quasipolynomials and Hilbert Series}\label{sec:mori1}

Quasipolynomials will play a foundational role in this work. There are two primary reasons for this: first, $\mathbb{Q}$-factoriality entails that Euler characteristics are quasipolynomial, as we recall in \cref{th:snapper} and \cref{cor:euler_char_is_quasipoly}, and reduction to Euler characteristics will be a primary tool of our methods. Second, as we will review in this section, the Hilbert function of a module over a multigraded ring is also (piecewise) quasipolynomial. If one considers in particular Cox rings of Mori dream spaces, this straightforwardly allows one to conclude that global sections on Mori dream spaces admit piecewise quasipolynomial formulae, as we state in \cref{prop:cox_ring_quasipoly}. However, in this paper we will pursue more computable and informative formulae.
\begin{definition}
    \label{def:quasipoly}
    A \emph{quasipolynomial} is a function $q:G\to\mathbb Z$, where $G\cong\mathbb{Z}^n\oplus H$ is a finitely generated Abelian group (with $H$ finite), for which there exists a finite-index sublattice $\Lambda \subset \mathbb{Z}^n$ and polynomials $q_{i,h}$ indexed by cosets $t_i + \Lambda$ of $\mathbb{Z}^n/\Lambda$ and $h \in H$ such that $q(x) = q_{i,h}(t)$ whenever $x = (t, h)$ for $t \in t_i + \Lambda$. If each $q_{i,h}$ is linear, we say that $q$~is \emph{quasilinear}.
    Similarly, for $A$ a finitely generated Abelian group, we call $f : G \to A$ \emph{quasilinear} if, for $\Lambda$ and the indexing as above, there are homomorphisms $\phi_{i,h} : \Lambda \to A$ and constants $c_{i,h} \in A$ such that $f(x) = \phi_{i,h}(t - t_i) + c_{i,h}$ whenever $x = (t,h)$ for $t \in t_i + \Lambda$.
\end{definition}
Now we turn our attention to modules over finitely generated multigraded rings. For details, we refer the reader to \S8 of \cite{miller2005combinatorial} --- we will just summarize the basic result here. Recall that a polynomial ring $S=\mathbb C[x_1,\ldots,x_n]$ is multigraded by a finitely generated Abelian group $G$ if each variable $x_i$ is assigned a degree $\deg(x_i)\in G$. Equivalently, this determines a group homomorphism $\deg:\mathbb Z^n\to G$, where $\mathbb Z^n$ is identified with the lattice of monomial exponents. For a monomial $x^a=\prod_i x_i^{a_i}$ with
$a=(a_1,\ldots,a_n)\in\mathbb N^n$, its degree is $\deg(x^a)=\sum_i a_i\deg(x_i)$.
The multigrading is positive if the only monomials of degree $0$ are the constants.
\begin{proposition}
\label{prop:gen_func_piecewise_quasi}
    Let $S$ be a positively multigraded polynomial ring with free grading group $G$ and $M$ a finitely generated multigraded $S$-module. Then the Hilbert series of $M$ is rational. Consequently, the Hilbert function of $M$ is piecewise quasipolynomial.
\end{proposition}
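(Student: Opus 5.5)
The plan is to follow the standard commutative-algebra route: prove rationality of the Hilbert series via a finite free resolution, then read off piecewise quasipolynomiality from the rational generating function.

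\emph{Step 1: rationality.} By Hilbert's syzygy theorem in the multigraded setting, $M$ admits a finite graded free resolution
\begin{equation*}
0\to F_k\to\cdots\to F_0\to M\to 0,\qquad F_j=\bigoplus_\ell S(-a_{j\ell}),\quad a_{j\ell}\in G,
\end{equation*}
with degree-preserving maps. Positivity of the grading is what makes this work. It guarantees that each graded piece $S_g$ is finite dimensional, because the monomials of degree $g$ are the lattice points of a bounded polytope. It also makes graded Nakayama available, which is needed to build minimal resolutions. Exactness in each degree then gives
\begin{equation*}
H_M(t)=\sum_j(-1)^j\sum_\ell t^{a_{j\ell}}H_S(t),\qquad H_S(t)=\prod_{i=1}^n\frac{1}{1-t^{\deg x_i}}.
\end{equation*}
Hence $H_M(t)=K(t)/\prod_i(1-t^{\deg x_i})$ with $K$ a Laurent polynomial. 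This is the K-polynomial description in \cite[\S8]{miller2005combinatorial}.

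\emph{Step 2: from rational series to piecewise quasipolynomial.} Since $G$ is free, identify $G\cong\mathbb Z^r$. By linearity it suffices to treat a single term $t^{a}/\prod_i(1-t^{b_i})$ with $b_i=\deg x_i$. Its coefficient at $g$ is the vector partition function
\begin{equation*}
\#\{\lambda\in\mathbb N^n:\textstyle\sum\lambda_i b_i=g-a\},
\end{equation*}
which is finite by positivity. The key input is the classical theorem of Sturmfels and Blakley on vector partition functions. The cone generated by the $b_i$ admits a finite chamber complex, obtained as the common refinement of the cones spanned by the linearly independent subsets of the $b_i$. On each chamber, the partition function agrees with a quasipolynomial whose period lattice is determined by the sublattices generated by those independent subsets. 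Translating by $a$ and summing finitely many such terms, then taking the common refinement of all translated chamber decompositions, gives a finite polyhedral decomposition of $G_{\mathbb R}$. On each piece the Hilbert function $\dim_{\mathbb C}M_g$ is quasipolynomial in the sense of \cref{def:quasipoly}, with $H=0$.

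The main obstacle is Step 2. The translated chambers are not cones centered at the origin, so one must say precisely what ``piecewise'' means: the domains are finitely many polyhedra, rational translates of cones. The proof of the vector partition theorem itself is nontrivial. I would cite it rather than reprove it, or else sketch it by partial fractions and induction on $n$, using the fact that each $1/(1-t^{b})$ contributes a quasipolynomial supported on a ray and that convolution preserves piecewise quasipolynomiality on the refined fan.
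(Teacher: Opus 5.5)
Your proposal is correct and follows essentially the same route as the paper. The paper obtains $HS(M;t)=K(M;t)/\prod_i(1-t^{g_i})$ by citing \cite[Th.~8.20]{miller2005combinatorial}, whose proof is your finite multigraded free resolution argument; it then writes the Hilbert function as a finite $\mathbb{Z}$-linear combination of translates of the vector partition function of the $\deg x_i$, invokes \cite[Th.~1]{sturmfels1995vector}, and passes to a common refinement of the shifted domains, exactly as in your Step~2.
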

\begin{proof}
    Let $Q \subset G$ denote the image of $\mathbb{N}^n \subset \mathbb{Z}^n$ under $\deg$ and write $g_i := \deg(x_i)$ for $i = 1, \dots, n$. The multigraded Hilbert series of $M$ is the formal sum
    \begin{equation}
        HS(M;t) = \sum_{g \in G} \dim M_g \, t^g,
    \end{equation}
    and by \cite[Th. 8.20]{miller2005combinatorial} it lies in the ring $\mathbb{Z}[[Q]][G]$ of series supported on finitely many translates of $Q$, such that there is a unique Laurent polynomial $K(M;t) \in \mathbb{Z}[G]$ satisfying
    \begin{equation}
        \label{eq:hilbert_series_rational}
        HS(M;t) = K(M;t) \cdot \frac{1}{\prod_{i=1}^n (1 - t^{g_i})}~.
    \end{equation}
    This proves the first assertion. For the second, write $q : G \to \mathbb{Z}$, $q(g) = \dim M_g$, for the Hilbert function of $M$. By \cite[Lem. 8.16]{miller2005combinatorial} the second factor of \cref{eq:hilbert_series_rational} is a power series in $\mathbb{Z}[[Q]]$, namely the Hilbert series of $S$ itself, so its Hilbert function
    \begin{equation}
        q_S(g) = |\{a \in \mathbb{N}^n \; | \; \textstyle\sum_i a_i g_i = g\}|
    \end{equation}
    is a vector partition function, and is therefore piecewise quasipolynomial by \cite[Th. 1]{sturmfels1995vector}. Writing $K(M;t) = \sum_j c_j t^{h_j}$ with $c_j \in \mathbb{Z}$ and $h_j \in G$, we obtain
    \begin{equation}
        q(g) = \sum_j c_j \, q_S(g - h_j)~.
    \end{equation}
    A finite $\mathbb{Z}$-linear combination of piecewise quasipolynomial functions is piecewise quasipolynomial: each term contributes a shifted copy of the domains of $q'$, and a common refinement of these finitely many collections serves as the collection of domains for $q$.
\end{proof}
\begin{corollary}
    \label{cor:gen_func_torsion}
    The Hilbert function of $M$ is piecewise quasipolynomial for an arbitrary finitely generated grading group $G$.
\end{corollary}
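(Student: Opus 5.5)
We reduce to the free case of \cref{prop:gen_func_piecewise_quasi} by splitting off the torsion of the grading group. By the structure theorem, write $G \cong \mathbb{Z}^n \oplus H$ with $H$ finite. The Hilbert function $g \mapsto \dim M_g$ of a $G$-graded module over a $G$-graded ring $S$ will then be studied one torsion coset at a time. The free part $\mathbb{Z}^n$ is a grading group to which \cref{prop:gen_func_piecewise_quasi} applies.

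\textbf{Step 1 (the coarsened module is finitely generated).} Let $\pi : G \to \mathbb{Z}^n$ be the projection. Coarsening the grading along $\pi$ gives $M$ the structure of a $\mathbb{Z}^n$-graded $S$-module, with graded pieces
\begin{equation*}
M_{t} = \bigoplus_{h \in H} M_{(t,h)}.
\end{equation*}
This module is still finitely generated over $S$, now regarded as $\mathbb{Z}^n$-graded via $\pi\circ\deg$. We need this grading to be positive. That holds if no nonconstant monomial has degree in $H$, i.e.\ if the $G$-grading was positive in the sense that only constants have torsion degree. I will take this as the standing hypothesis in the Corollary, inherited from the positivity assumption of the Proposition.

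\textbf{Step 2 (refinement by a finite-index sublattice).} The coarsened Hilbert function only records the sum over $h$, so we refine rather than just coarsen. Choose the sublattice
\begin{equation*}
\Lambda' = \{ a \in \mathbb{Z}^n : \deg(x^a) \in \mathbb{Z}^n \oplus 0\},
\end{equation*}
the kernel of the composite $\mathbb{Z}^n \to G \to H$ on exponents. It has finite index because $H$ is finite. Let $S' = \bigoplus_{a \in \Lambda' \cap \mathbb{N}^n} \mathbb{C} x^a$ be the corresponding Veronese-type subring. It is finitely generated by Gordan's lemma, and $S$ is a finite $S'$-module. The $S'$-grading takes values in $\mathbb{Z}^n\oplus 0 \cong \mathbb{Z}^n$.

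\textbf{Step 3 (splitting into torsion components).} For each $h \in H$, the subspace $M^{(h)} := \bigoplus_{t} M_{(t,h)}$ is an $S'$-submodule. It is a direct summand of $M$ as an $S'$-module, so it is finitely generated over $S'$ since $M$ is finite over $S'$. Present $S'$ as a quotient of a positively $\mathbb{Z}^n$-graded polynomial ring, with one variable for each generator. Then $M^{(h)}$ becomes a finitely generated module over a free, positively graded polynomial ring. \cref{prop:gen_func_piecewise_quasi} then shows that $t \mapsto \dim M_{(t,h)}$ is piecewise quasipolynomial on $\mathbb{Z}^n$.

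\textbf{Step 4 (assembly).} Take a common refinement of the finitely many domain collections, one for each $h\in H$, and a common finite-index lattice. This gives a single decomposition on which $(t,h)\mapsto \dim M_{(t,h)}$ is quasipolynomial in the sense of \cref{def:quasipoly}, with polynomials indexed by cosets and by $h$.

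The main obstacle is Step 3. We must check that restricting to the subring $S'$ preserves finite generation and positivity. Finiteness of $S$ over $S'$ follows because $x_i^{|H|}\in S'$ for each $i$. Positivity of $S'$ is inherited from positivity of $S$ under the assumption in Step 1.
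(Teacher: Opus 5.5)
Your argument is correct and is essentially the paper's: you split $M$ into the summands $M^{(h)}=\bigoplus_t M_{(t,h)}$, view each as a finitely generated module over a subring of $S$ whose degrees lie in the free part, apply \cref{prop:gen_func_piecewise_quasi}, and take a common refinement. The only real difference is the choice of subring: the paper takes $R=\mathbb{C}[x_1^{m},\dots,x_n^{m}]$ with $m=|H|$, which is already a positively graded polynomial ring and is exactly what underlies your finiteness claim, so you can drop the Gordan subring $S'$, its presentation, and the unused coarsening in Step~1; likewise your extra positivity hypothesis is automatic, since $\deg(x^a)\in 0\oplus H$ forces $\deg(x^{|H|a})=0$.
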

\begin{proof}
    Write $G \cong \mathbb{Z}^d \oplus H$ with $H$ finite of order $m$ and set $R := \mathbb{C}[x_1^m, \dots, x_n^m] \subset S$. Applying \cref{prop:gen_func_piecewise_quasi} to each of the $m$ $R$-modules $M^{(\eta)} := \bigoplus_{\deg(g) \in \mathbb{Z}^d \oplus \{\eta\}} M_g$ gives polynomials indexed by the cosets of a finite-index sublattice of $\mathbb{Z}^d$ and by $\eta \in H$ --- precisely the data of \cref{def:quasipoly}.
\end{proof}
\begin{remark}
    Sturmfels' result from~\cite{sturmfels1995vector} that underpinned \cref{prop:gen_func_piecewise_quasi} can also be viewed geometrically as the statement that, on a complete toric variety, the function $D \mapsto h^0(X,\mathcal O_X(D))$ (equivalently, the Hilbert function of the Cox ring of that toric variety) is piecewise quasipolynomial.
\end{remark}

\subsection{Mori Dream Spaces \& Fano type varieties} \label{sec:mori}

Instead of proceeding directly to the most general class of varieties we will consider --- those of Calabi--Yau type --- we will begin with the simpler case of Fano type varieties. But the relevant aspects of the $D$-MMP and Mori chamber holds more generally for all Mori dream spaces, hence we lose nothing by focusing on such varieties in this section. Mori dream spaces were originally introduced by Hu and Keel~\cite{HuKeel}, who we follow.
\begin{definition}
    A normal projective $\mathbb Q$-factorial variety $X$ is a \emph{Mori dream space} if $\Pic(X)_{\mathbb Q}\cong N^1(X)_{\mathbb Q}$, the nef cone $\Nef(X)$ is rational polyhedral and generated by finitely many semiample divisors, and there exist finitely many SQMs $\varphi_i\colon X\dashrightarrow X_i$ such that each $X_i$ has the same properties and
    \begin{equation*}
        \Mov(X)=\bigcup_i \varphi_i^*\Nef(X_i).
    \end{equation*}
\end{definition}
An equivalent characterization of Mori dream spaces is that their Cox rings are finitely generated~\cite{HuKeel}. Roughly speaking, the Cox ring of a variety $X$ is the $\mathrm{Cl}(X)$-graded ring obtained by assembling the spaces $H^0(X,\mathcal O_X(D))$ for all divisor classes $[D]\in\mathrm{Cl}(X)$ into a single algebra. Assuming, for simplicity, that ${\rm Cl}(X)$ is torsion free, this is 
\begin{equation*}
    {\rm Cox}(X)~=\bigoplus_{[D]\in {\rm Cl}(X)} 
    H^0(X,\mathcal O_X(D)),
\end{equation*}
with multiplication induced by multiplication of rational functions. Finite generation of the Cox ring immediately imposes strong algebraic structure on the spaces of global sections and, in particular, implies piecewise quasipolynomiality.

As mentioned in the previous section, applying \cref{cor:gen_func_torsion} to the Cox ring immediately yields the following.
\begin{proposition}
    \label{prop:cox_ring_quasipoly}
    Let $X$ be a Mori dream space. Then the function $D \mapsto h^0(X,\mathcal O_X(D))$ on the class group ${\rm Cl}(X)$ is piecewise quasipolynomial.
\end{proposition}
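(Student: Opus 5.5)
The plan is to realize $D \mapsto h^0(X,\mathcal O_X(D))$ as the Hilbert function of a finitely generated multigraded module and then apply \cref{cor:gen_func_torsion}. Since $X$ is a Mori dream space, the Cox ring $\mathrm{Cox}(X)$ is a finitely generated $\mathbb C$-algebra graded by $\Cl(X)$ \cite{HuKeel}. Its graded pieces are, by construction, $\mathrm{Cox}(X)_{[D]} = H^0(X,\mathcal O_X(D))$. When $\Cl(X)$ has torsion, the ring has to be defined via the usual choice of representatives and a quotient by an ideal identifying them. Even so, the degree-$[D]$ piece is still canonically isomorphic to $H^0(X,\mathcal O_X(D))$, and this identification is all we need.

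First I choose homogeneous generators $x_1,\dots,x_n$ of $\mathrm{Cox}(X)$ with degrees $g_i=\deg(x_i)\in \Cl(X)$. I then form the polynomial ring $S=\mathbb C[y_1,\dots,y_n]$, graded by $G=\Cl(X)$ with $\deg y_i = g_i$, and take the surjection $S\to\mathrm{Cox}(X)$ sending $y_i\mapsto x_i$. This makes $M=\mathrm{Cox}(X)$ a cyclic, hence finitely generated, $G$-graded $S$-module. Its Hilbert function is exactly $[D]\mapsto h^0(X,\mathcal O_X(D))$.

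The remaining point, and the only real obstacle, is the positivity hypothesis in \cref{prop:gen_func_piecewise_quasi}: the only monomials of degree $0$ should be constants. To check this, suppose $y^a$ has degree $0$ with $a\neq 0$. Its image $x^a$ lies in $H^0(X,\mathcal O_X)=\mathbb C$, up to torsion issues handled below. Each $x_i$ is a nonzero section of an effective class, and $x^a$ is a nonzero product in a domain. Therefore $\mathrm{div}(x^a)=\sum a_i\,\mathrm{div}(x_i)$ is an effective divisor linearly equivalent to $0$, so it is $0$. Hence every $x_i$ with $a_i>0$ has trivial divisor, i.e. it is a nonzero constant, which lives in degree $0$.

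After discarding constant generators (they are redundant) and zero-degree generators of this kind, the grading is positive. The same argument applies to the torsion-degree pieces used in the proof of \cref{cor:gen_func_torsion}: a monomial of torsion degree $\eta$ has an effective divisor $\Gamma$ with $m\Gamma\sim 0$, which forces $\Gamma=0$. With positivity established, \cref{cor:gen_func_torsion} gives that the Hilbert function of $M$ is piecewise quasipolynomial on $\Cl(X)$, which is the claim.
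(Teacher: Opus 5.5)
Your proof is correct and takes the same route as the paper: identify $D \mapsto h^0(X,\mathcal O_X(D))$ with the Hilbert function of the finitely generated $\Cl(X)$-graded Cox ring, and apply \cref{cor:gen_func_torsion}. What you add is an explicit check of the positivity hypothesis, which the paper's one-line proof leaves implicit. This includes the torsion-degree case that the subring $\mathbb{C}[x_1^m,\dots,x_n^m]$ in the proof of that corollary silently requires.
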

\begin{proof}
    The map $D \mapsto h^0(X, \mathcal{O}_X(D))$ is the Hilbert function of the Cox ring of $X$, so it suffices to apply \cref{cor:gen_func_torsion}.
\end{proof}
Thus the existence of piecewise quasipolynomial formulae on Mori dream spaces is already a consequence of finite generation of the Cox ring. However, this perspective provides little information about the geometry of the corresponding domains and is often impractical computationally, since Cox rings and their Hilbert functions can be difficult to determine explicitly. 
Our aim is not to prove the existence of piecewise quasipolynomial
formulae, which is already implicit in finite generation of the Cox
ring, but rather to identify their domains with birational chambers and
to compute them directly from birational data.

Mori dream spaces are distinct in that the $D$-MMP can be run for every effective divisor, and there are finitely many $D$-minimal models. This means the Mori chamber decomposition is a finite chamber decomposition of the entire effective cone.
\begin{proposition}[{\negthinspace\negthinspace\negthinspace\cite[Prop. 1.11]{HuKeel}, \cite[Def.-Prop. 2.9]{Okawa}}]\label{prop:MDS_chambers}
    Let $X$ be a Mori dream space. Then $\Eff(X)$ admits a finite rational polyhedral decomposition into a fan $\Fan(X)$, whose maximal cones are Mori chambers $\mathcal{C}$ associated to birational contractions $f_{\mathcal C} : X \dashrightarrow X_{\mathcal{C}}$ for $X_{\mathcal{C}}$ also a Mori dream space.
    \begin{equation}
        \label{eq:mori_chamber}
        \mathcal C = f_{\mathcal C}^* \Nef(X_{\mathcal{C}}) + \mathrm{ex}(f_{\mathcal C})
    \end{equation}
    In particular, $D$-minimal models exist for every $D \in \Eff(X)$.
\end{proposition}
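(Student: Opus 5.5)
Since \cref{prop:MDS_chambers} is quoted from \cite[Prop.~1.11]{HuKeel} and \cite[Def.-Prop.~2.9]{Okawa}, I would not rebuild the theory. Instead I would recall how the statement comes out of finite generation of the Cox ring. The plan rests on the Hu--Keel characterisation: $X$ is a Mori dream space if and only if $\mathrm{Cox}(X)$ is finitely generated. In that case $X$ arises as a GIT quotient $\mathrm{Spec}\,\mathrm{Cox}(X)/\!\!/_{\chi} G$ with $G=\mathrm{Hom}(\Cl(X),\mathbb{C}^*)$, and $\Eff(X)$ is the cone generated by the degrees of a finite set of homogeneous generators $s_1,\dots,s_N$ of $\mathrm{Cox}(X)$. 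In particular $\Eff(X)$ is rational polyhedral and closed.

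\emph{Step 1 (variation of GIT).} For each $D\in\Eff(X)$ I would consider the section ring $R(X,D)=\bigoplus_{m\ge 0}H^0(X,\mathcal{O}_X(mD))$. It is a Veronese-type subring of the Cox ring, hence finitely generated, and I would set $X_D:=\Proj R(X,D)$. The GIT chamber structure on characters of $G$ partitions $\Eff(X)$ into finitely many rational polyhedral cones. On the relative interior of each cone, the semistable locus in $\mathrm{Spec}\,\mathrm{Cox}(X)$ is constant, so $X_D$ is constant there. Finiteness holds because the chambers are cut out by the finitely many cones spanned by subsets of $\{\deg s_i\}$. The resulting cones form a fan, which I would take as $\Fan(X)$.

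\emph{Step 2 (identify chambers with contractions).} Fix a maximal cone $\mathcal C$ with model $f_{\mathcal C}\colon X\dashrightarrow X_{\mathcal C}$. Maximality means the quotient is geometric on a full-dimensional chamber, which makes $X_{\mathcal C}$ $\mathbb{Q}$-factorial. I would then show three things about $f_{\mathcal C}$. First, it is a contraction, since its inverse extracts no divisor: each prime divisor on $X_{\mathcal C}$ is the image of some generator $s_i$ whose zero set is already a divisor on $X$. Second, its exceptional divisors are exactly the primes $E$ lying in the stable base locus of every $D\in\relint\mathcal C$; equivalently, $\sigma_E(mD)>0$ for $m\gg0$. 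Third, writing $D=f_{\mathcal C}^*f_{\mathcal C*}D+\sum a_iE_i$ with $a_i\ge 0$, the pushforward $f_{\mathcal C*}D$ is ample on $X_{\mathcal C}$ for $D\in\relint\mathcal C$, since $X_{\mathcal C}=\Proj R(X,D)$. Closing up then yields \eqref{eq:mori_chamber}. Effectivity of the $E_i$-part and the negativity lemma (\cref{lem:negativity}) on a common resolution show that $f_{\mathcal C}$ is $D$-nonpositive. So $X_{\mathcal C}$ is a $D$-minimal model for every $D\in\mathcal C$, and the final claim follows because the maximal cones cover $\Eff(X)$. Finally, $X_{\mathcal C}$ is again a Mori dream space: its Cox ring is a finitely generated quotient/localisation of $\mathrm{Cox}(X)$, in fact $\mathrm{Cox}(X)$ with the variables of the $E_i$ inverted and the degrees reduced.

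\emph{Main obstacle.} I expect the hardest step to be showing that each maximal chamber has exactly the stated shape, $f^*\Nef+\mathrm{ex}(f)$, with $\mathrm{ex}(f)$ simplicial and of rank $\rank N^1(X)-\rank N^1(X_{\mathcal C})$. This requires that the non-movable part of $D$ is supported precisely on the contracted divisors, which are linearly independent in $N^1(X)$. My first approach would be the negativity lemma: exceptional divisors of a birational morphism from a resolution have negative-definite intersection behaviour over $X_{\mathcal C}$, which gives independence, and the Zariski-type decomposition above gives the cone description. For the remaining technical points I would defer to \cite{HuKeel,Okawa}.
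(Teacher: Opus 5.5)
Your sketch is correct and is essentially the Hu--Keel variation-of-GIT argument. The paper gives no independent proof of this proposition and simply cites \cite[Prop.~1.11]{HuKeel} and \cite[Def.-Prop.~2.9]{Okawa}, so deferring the technical points to those sources, as you do, matches it. One small slip: prime divisors on $X_{\mathcal C}$ correspond to homogeneous prime elements of $\mathrm{Cox}(X)$, not necessarily to one of the fixed generators $s_i$ (a conic in $\mathbb{P}^2$ is not cut out by a coordinate), but your contraction argument goes through unchanged with that replacement.
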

\begin{remark}
    In comparing with \cref{def:mori_chamber}, we have that $f_{\mathcal C}^* \Nef(X_{\mathcal{C}}) = f_{\mathcal C}^* (\Nef(X_{\mathcal{C}}) \cap \Eff(X_{\mathcal{C}}))$ because $X_\mathcal{C}$ is also a Mori dream space and thus its nef divisors are semiample and in particular have some effective multiple, so $\Nef(X_{\mathcal{C}}) \subset \Eff(X_\mathcal{C})$.
\end{remark}
\begin{proposition}[{\negthinspace\negthinspace\negthinspace\cite[Lem. 1.7 and Prop. 1.11]{HuKeel}}]
    \label{prop:MDS_bir_contract_factor}
    Let $X$ be a Mori dream space and let $f : X \dashrightarrow Z$ be a birational contraction. Then $f$ factors as $f = h \circ g$ with $g : X \dashrightarrow Y$ an SQM and $h : Y \to Z$ a morphism.
\end{proposition}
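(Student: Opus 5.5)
The plan is to prove this by combining the Mori chamber decomposition of \cref{prop:MDS_chambers} with the Hu--Keel description of Mori dream spaces via SQMs. Fix a birational contraction $f : X \dashrightarrow Z$.

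\textbf{Step 1: place $f$ in a Mori chamber.} Choose an ample divisor $A$ on $Z$ (after replacing $Z$ by its $\mathbb{Q}$-factorial model if needed). Take $D = f^*A + \epsilon\sum_i E_i$, where the $E_i$ are the $f$-exceptional prime divisors and $\epsilon > 0$ is small. By the negativity lemma (\cref{lem:negativity}), $f$ is $D$-nonpositive. Hence $Z$ is a $D$-minimal model, and $D$ lies in the relative interior of the Mori chamber associated to $f$.

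\textbf{Step 2: perturb into the movable cone.} Perturb $D$ to a movable class $D'$ near $f^*A$, for example by dropping the exceptional part and adding a small general class. Since $\Mov(X)=\bigcup_i \varphi_i^*\Nef(X_i)$, the class $f^*A$ lies in some $\varphi_i^*\Nef(X_i)$. Because $f^*A$ is movable (it is a pullback of an ample class under a contraction), there is an index $i$ with $f^*A \in \varphi_i^*\Nef(X_i)$.

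\textbf{Step 3: construct the morphism.} Set $g=\varphi_i : X \dashrightarrow Y:=X_i$. Then $g_*f^*A$ is nef on $Y$. Since $Y$ is a Mori dream space, this class is semiample, so some multiple defines a morphism $h : Y \to Z' = \Proj \bigoplus_m H^0(Y, m\,g_*f^*A)$.

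\textbf{Step 4: identify $Z'$ with $Z$.} Because $g$ is an SQM, the section rings of $g_*f^*A$ on $Y$ and of $f^*A$ on $X$ agree. By \cref{prop:mmp_preserves_sections}, applied to multiples of $f^*A$ with the $D$-minimal model $Z$, this ring in turn equals $\bigoplus_m H^0(Z, mA)$, whose $\Proj$ is $Z$ since $A$ is ample. So $Z'\cong Z$. Moreover $h\circ g$ and $f$ agree as rational maps, since both are induced by the same linear systems $|mf^*A|$.

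\textbf{Main obstacle.} The delicate point is Step 4, where one must use that $f^*A$ is $f$-nonpositive: specifically, $H^0(X, mf^*A)=H^0(Z,mA)$ with no extra sections coming from the exceptional divisors. The negativity argument above supplies this, but some care is needed with $\mathbb{Q}$-Cartier pullbacks and with possibly non-$\mathbb{Q}$-factorial $Z$. To handle these issues I would work throughout with sufficiently divisible multiples of $A$.
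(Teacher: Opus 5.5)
Your argument is correct and is essentially the Hu--Keel argument that the paper cites. The class $f^*A$ is movable, hence lies in $\varphi_i^*\Nef(X_i)$ for some SQM and is semiample there; comparing the section rings of $f^*A$ on $X$, on $X_i$ and on $Z$ (the content of \cite[Lem.~1.7]{HuKeel}) then identifies $X_i \to \Proj R$ with a morphism to $Z$ through which $f$ factors. Step~1, the parenthetical about replacing $Z$ (which the statement does not allow, and which is unnecessary since $Z$ is projective) and the perturbation in Step~2 are never used and can be dropped: all Step~4 needs is $H^0(X,\mathcal O_X(mf^*A)) = H^0(Z,\mathcal O_Z(mA))$, which follows directly by pushing forward and pulling back effective divisors, using that $f^{-1}$ contracts no divisors.
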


It now suffices to comment that Fano type varieties are a special case of Mori dream spaces, meaning they enjoy a finite Mori chamber decomposition of the effective cone. Moreover, they are amenable to particular applications of Kawamata--Viehweg vanishing that we require for global section formulae. 
\begin{proposition}[{\negthinspace\negthinspace\cite[Cor. 1.3.1]{BCHM10}}]
    \label{prop:fano_type_is_mds}
    Varieties of Fano type are Mori dream spaces.
\end{proposition}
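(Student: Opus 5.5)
The plan is to verify the Hu--Keel definition of a Mori dream space directly. The tools are the cone and basepoint-free theorems for klt pairs together with the finiteness of models of \cite{BCHM10}. The standing hypotheses ($\mathbb Q$-factoriality, $\Cl(X)$ finitely generated, $\Pic^0(X)=0$) supply the condition $\Pic(X)_{\mathbb Q}\cong N^1(X)_{\mathbb Q}$: on a variety of Fano type, numerical and $\mathbb Q$-linear equivalence of $\mathbb Q$-Cartier divisors agree. This is because $h^1(X,\mathcal O_X)=0$ by Kawamata--Viehweg vanishing applied to $0=K_X+\Delta+(-(K_X+\Delta))$, so $\Pic^0$ is trivial and $\Pic$ is discrete.

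First fix $\Delta$ with $(X,\Delta)$ klt and $H:=-(K_X+\Delta)$ ample. Since $-K_X\sim_{\mathbb Q}\Delta+H$ is big, the cone theorem for the klt pair $(X,\Delta)$ shows that $\overline{NE}(X)$ is rational polyhedral. Indeed, every ray is $(K_X+\Delta)$-negative, so only finitely many extremal rays survive, since the ample $H$ bounds them away. Hence $\Nef(X)$ is rational polyhedral. For semiampleness, take a nef Cartier $L$. Then $L-(K_X+\Delta)=L+H$ is ample, and the basepoint-free theorem gives that $L$ is semiample. The same argument applies to every SQM $X_i$ of $X$, because Fano type is preserved under SQMs. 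If $\varphi:X\dashrightarrow X'$ is an SQM, write $-(K_X+\Delta)\sim_{\mathbb Q} A+E$ with $A$ ample and $E\ge0$ small, and set $\Delta''=\Delta+\epsilon E+$ a general member of $|\epsilon A|_{\mathbb Q}$, so that $K_X+\Delta''\sim_{\mathbb Q} 0$ with $(X,\Delta'')$ klt and $\Delta''$ big. Pushing forward gives a klt pair $(X',\varphi_*\Delta'')$ with $K_{X'}+\varphi_*\Delta''\sim_{\mathbb Q}0$ and big boundary, and a big boundary of a log Calabi--Yau klt pair can be traded for an ample anticanonical part. So $X'$ is of Fano type.

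The remaining condition is the finite decomposition $\Mov(X)=\bigcup_i\varphi_i^*\Nef(X_i)$. For this, use the big log Calabi--Yau boundary $\Delta''$ above. Any $\mathbb Q$-divisor $D$ in a bounded region of $N^1(X)$ can be written as $\epsilon D\sim_{\mathbb Q}K_X+\Delta_D$, where $\Delta_D$ is a general member of the $\mathbb Q$-linear system of $\Delta''+\epsilon D$. For $\epsilon$ small and uniform on a compact set, $\Delta_D$ is big and $(X,\Delta_D)$ is klt, since $\Delta''$ contains an ample summand that absorbs $\epsilon D$. Thus the boundaries range over a rational polytope $\mathcal P$ of big klt boundaries, and \cite[Th.~E / Cor.~1.1.5]{BCHM10} (finiteness of models) gives finitely many $\mathbb Q$-factorial weak log canonical models covering $\mathcal P$. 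For movable $D$, the run of the $(K_X+\Delta_D)$-MMP contracts no divisor, so the model is an SQM $X_i$ on which $\varphi_{i*}D$ is nef. Hence the movable cone is covered by finitely many $\varphi_i^*\Nef(X_i)$.

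I expect the main obstacle to be this last step, namely making the perturbation uniform so that a single compact polytope of klt big boundaries captures all of $\Mov(X)$. This works by homogeneity, since it suffices to cover a compact slice of the cone. A secondary point is checking that the models produced by BCHM for movable classes are genuinely isomorphic in codimension one to $X$. Alternatively, one can bypass the definition and invoke \cite[Cor.~1.1.9]{BCHM10} (finite generation of multi-adjoint rings $R(X;K_X+\Delta_1,\dots,K_X+\Delta_r)$) together with the Hu--Keel equivalence with finite generation of the Cox ring. In that route, the Veronese/torsion bookkeeping for $\Cl(X)$ becomes the technical crux instead.
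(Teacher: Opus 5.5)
Your overall strategy is legitimate: cone and basepoint-free theorems for $\Nef$, stability of Fano type under SQMs, and finiteness of models for $\Mov$. The paper itself gives no proof here, only the citation to \cite{BCHM10}, and your outline is a reasonable way to unpack it. Two steps, however, do not work as written.

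First, the boundary $\Delta''$ is mis-normalized. With $\Delta''=\Delta+\epsilon E+G$ and $G$ general in $|\epsilon A|_{\mathbb Q}$, you get $K_X+\Delta''\sim_{\mathbb Q}(1-\epsilon)(K_X+\Delta)$. For $\epsilon<1$ this is anti-ample, not trivial. Since $-(K_X+\Delta)$ is already ample, take $\Delta''=\Delta+G$ with $G$ a general member of $|-(K_X+\Delta)|_{\mathbb Q}$. This is klt, big and log Calabi--Yau, and your SQM argument then goes through. Relatedly, bigness of $-K_X$ plays no role in the cone-theorem step; what you actually use is ampleness of $-(K_X+\Delta)$.

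Second, and more seriously, your boundaries $\Delta_D$ do not form a rational polytope. You take them as general members of $|\Delta''+\epsilon D|_{\mathbb Q}$, so they depend on $D$ and do not lie in a fixed finite-dimensional space $V$ of Weil divisors. Without that, there is no rational polytope $\mathcal P\subset\mathcal L_A(V)$ to feed into \cite[Cor.~1.1.5]{BCHM10]}. Homogeneity only reduces you to a compact slice; it does not supply this linearity.

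The standard fix is to let a fixed set of ample divisors carry the variation linearly:
\begin{itemize}
\item Choose general ample $H_1,\dots,H_\rho$ whose classes form a basis of $N^1(X)_{\mathbb R}$.
\item Choose a rational $\eta>0$ so small that $-(K_X+\Delta)-\eta\sum_j H_j$ is ample, and let $A$ be a general member of its $\mathbb Q$-linear system.
\item Let $V$ be spanned by the components of $\Delta$ and the $H_j$.
\end{itemize}
Then the boundaries $\Delta_c=A+\Delta+\eta\sum_j(1+c_j)H_j$, for $c\in[-1,1]^\rho$, form a rational polytope of big klt boundaries in $\mathcal L_A(V)$. They satisfy $K_X+\Delta_c\sim_{\mathbb R}\eta\sum_j c_jH_j$, so their adjoint classes sweep out a neighbourhood of $0$ in $N^1(X)_{\mathbb R}$. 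Now homogeneity genuinely finishes.

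Finally, the definition requires the equality $\Mov(X)=\bigcup_i\varphi_i^*\Nef(X_i)$, not just a covering. Add the reverse inclusion: nef divisors on the Fano type $X_i$ are semiample, and pull back under an SQM to movable divisors. Also make explicit why the model of a movable class is small. Each divisor contracted by a log terminal model $\phi\colon X\dashrightarrow Y$ of $K_X+\Delta_c$ appears with positive coefficient in $p^*(K_X+\Delta_c)-q^*(K_Y+\phi_*\Delta_c)$ on a common resolution. By the negativity lemma it therefore lies in the divisorial part of the stable base locus of the corresponding class. That part is empty for classes in $\Mov(X)$.
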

\begin{proposition}[{\negthinspace\negthinspace\cite[Lem.~2.8]{prokhorov2009towards}}]
    \label{prop:dmmp_of_fano_type_is_fano_type}
    $D$-minimal models of Fano type varieties are Fano type.
\end{proposition}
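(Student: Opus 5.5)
We need to prove: if $X$ is of Fano type and $f: X \dashrightarrow Y$ is a $D$-minimal model (a $D$-nonpositive birational contraction with $f_*D$ nef), then $Y$ is of Fano type.

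Fix a klt pair $(X,\Delta)$ with $-(K_X+\Delta)$ ample. The plan is to build a boundary $\Gamma$ on $X$ such that $K_X+\Gamma\sim_{\mathbb Q} 0$, the pair $(X,\Gamma)$ is klt, and $\Gamma$ is big. Then we transport it to $Y$ via $f_*$, using that $f$ is a contraction and so $Y$ is dominated by $X$ in codimension one. The point is that Fano type is equivalent to the existence of a big boundary $\Gamma_Y$ with $(Y,\Gamma_Y)$ klt and $K_Y+\Gamma_Y\sim_{\mathbb Q}0$. Indeed, writing $\Gamma_Y\sim_{\mathbb Q} A+E$ with $A$ ample and $E\geq 0$, the pair $(Y,\Gamma_Y-\epsilon\Gamma_Y+\epsilon(A+E))$ is klt for small $\epsilon$, and $-(K_Y+\cdot)\sim_{\mathbb Q}\epsilon A$ is ample. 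Since $Y$ is $\mathbb Q$-factorial, all pushforwards stay $\mathbb Q$-Cartier.

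\textbf{Step 1: the boundary on $X$.} Choose a general member $H$ of $|-m(K_X+\Delta)|$ for $m$ large and set
\begin{equation*}
\Gamma=\Delta+\tfrac1m H.
\end{equation*}
This gives $K_X+\Gamma\sim_{\mathbb Q}0$, the pair $(X,\Gamma)$ is klt by Bertini, and $\Gamma$ is big.

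\textbf{Step 2: push forward to $Y$.} Set $\Gamma_Y=f_*\Gamma$. Then $K_Y+\Gamma_Y=f_*(K_X+\Gamma)\sim_{\mathbb Q}0$, and $\Gamma_Y$ is big because pushforward under a contraction preserves bigness. Take a common resolution $(\alpha,\beta):W\to X\times Y$. Since $K_X+\Gamma\sim_{\mathbb Q}0$ and $K_Y+\Gamma_Y\sim_{\mathbb Q}0$, the difference
\begin{equation*}
\alpha^*(K_X+\Gamma)-\beta^*(K_Y+\Gamma_Y)
\end{equation*}
is $\mathbb Q$-linearly trivial and $\beta$-exceptional. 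It is therefore $0$, by the negativity lemma (\cref{lem:negativity}) applied in both directions. Consequently the two pairs are crepant birational: every divisor over $Y$ has the same discrepancy with respect to $(Y,\Gamma_Y)$ as with respect to $(X,\Gamma)$. The coefficients of $\Gamma_Y$ are coefficients of $\Gamma$, hence less than $1$. So $(Y,\Gamma_Y)$ is klt.

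Note that this argument does not use $D$-nonpositivity at all. It only needs $f$ to be a birational contraction with $\mathbb Q$-factorial target, and this is exactly where the Calabi--Yau-type boundary trick pays off.

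\textbf{Main obstacle.} The delicate step is the crepant comparison and the reduction from a big klt Calabi--Yau boundary back to an ample anti-log-canonical class. One must make sure the effective part $E$ in the decomposition $\Gamma_Y\sim_{\mathbb Q}A+E$ can be perturbed into the boundary without destroying klt. This works because klt is an open condition under small perturbations by effective $\mathbb Q$-Cartier divisors, which is where $\mathbb Q$-factoriality of $Y$ is used.
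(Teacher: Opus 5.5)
Your argument is correct in substance, but the last reduction has a slip you must fix. Since $A+E\sim_{\mathbb Q}\Gamma_Y$, the boundary $\Gamma_Y-\epsilon\Gamma_Y+\epsilon(A+E)$ is $\mathbb Q$-linearly equivalent to $\Gamma_Y$. Hence $-(K_Y+\Gamma_Y-\epsilon\Gamma_Y+\epsilon(A+E))\sim_{\mathbb Q}0$, not $\epsilon A$.

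Drop $A$ and use $B_\epsilon=(1-\epsilon)\Gamma_Y+\epsilon E$ instead. Then $-(K_Y+B_\epsilon)\sim_{\mathbb Q}\epsilon(\Gamma_Y-E)\sim_{\mathbb Q}\epsilon A$ is ample. The pair $(Y,B_\epsilon)$ is klt for $0<\epsilon\ll 1$, because $B_\epsilon\le\Gamma_Y+\epsilon E$ and $(Y,\Gamma_Y+\epsilon E)$ is klt for small $\epsilon$; here $\mathbb Q$-factoriality makes $E$ $\mathbb Q$-Cartier. The paper's convention for Fano type also includes finitely generated class group and $\Pic^0=0$. To cover this, note that $\Cl(Y)$ is a quotient of $\Cl(X)$, as in the proof of \cref{prop:dmmp_preserves_cy}.

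With this repair your proof is correct, and it takes a different route from the paper. The paper uses that Fano type varieties are Mori dream spaces (\cref{prop:fano_type_is_mds}) to factor $f=h\circ g$ into an SQM followed by a birational morphism (\cref{prop:MDS_bir_contract_factor}). It then cites Prokhorov--Shokurov's Lemma~2.8 for the two pieces.

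You handle $f$ in one step. On the common resolution, every $\alpha$-exceptional divisor is $\beta$-exceptional; this is exactly where the contraction property enters. The negativity lemma then forces $\alpha^*(K_X+\Gamma)=\beta^*(K_Y+f_*\Gamma)$, so the big klt log Calabi--Yau structure transfers crepantly to $Y$. In effect, you apply Prokhorov--Shokurov's characterization of Fano type by big klt log Calabi--Yau boundaries (their Lemma-Definition~2.6) directly to $f$.

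What each approach buys:
\begin{itemize}
\item Your version is self-contained and needs neither the Mori dream space property nor the factorization.
\item It makes explicit that $D$-nonpositivity plays no role: any birational contraction of a Fano type variety onto a $\mathbb Q$-factorial projective variety has Fano type target.
\item The same template with the non-big boundary $\Delta$ would also prove \cref{prop:dmmp_preserves_cy} without splitting into SQM and morphism cases.
\item The paper's route is shorter on the page, but it rests on heavier cited results.
\end{itemize}
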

\begin{proof}
    It suffices to apply \cite[Lem.~2.8]{prokhorov2009towards} to the factorization ensured by \cref{prop:MDS_bir_contract_factor}.
\end{proof}
As mentioned in the introduction, the Fano type case is a special case of the Calabi--Yau type case (see also \cite[\S1.4]{moraga2023coregularity}).
\begin{proposition}[{\negthinspace\negthinspace\cite[Lem.-Def.~2.6(iii)]{prokhorov2009towards}}]
    \label{prop:fano_type_is_cy_type}
    Fano type varieties are Calabi--Yau type
\end{proposition}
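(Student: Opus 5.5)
The plan is to take the Fano type boundary and perturb it into a Calabi--Yau type boundary by adding a small multiple of a general ample divisor. Let $X$ be of Fano type, with $\Delta \geq 0$ an effective $\mathbb{Q}$-divisor such that $(X,\Delta)$ is klt and $-(K_X+\Delta)$ is ample. Since $X$ is $\mathbb{Q}$-factorial, $K_X+\Delta$ is $\mathbb{Q}$-Cartier. Then $A := -(K_X+\Delta)$ is an ample $\mathbb{Q}$-Cartier $\mathbb{Q}$-divisor. Choose $m \in \mathbb{Z}_{>0}$ such that $mA$ is Cartier and very ample, and pick a general member $H \in |mA|$. Set
\begin{equation*}
\Delta' := \Delta + \tfrac{1}{m} H .
\end{equation*}
Then $\Delta' \geq 0$ and $K_X + \Delta' \sim_{\mathbb{Q}} K_X + \Delta + A = 0$, so in particular $K_X+\Delta' \equiv 0$. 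It therefore remains to show that $(X,\Delta')$ is klt. Once this is done, $X$ is of klt Calabi--Yau type in the sense of \cref{def:CY_fano_type}. The standing hypotheses ($\mathbb{Q}$-factoriality, finitely generated class group, $\Pic^0(X)=0$) concern only $X$, not the boundary, so they carry over unchanged.

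The klt property is the main step. I would use the standard Bertini-type argument (cf.\ \cite[Lem.~5.17]{KM}). Fix a log resolution $f\colon Y \to X$ of $(X,\Delta)$. Since $|mA|$ is base point free, Bertini's theorem shows that for general $H$ the pullback $f^*H$ equals the strict transform $f^{-1}_*H$. This divisor is smooth, reduced, has no exceptional components, and meets the simple normal crossings divisor $\mathrm{Exc}(f) \cup \mathrm{Supp}\, f^{-1}_*\Delta$ transversally. Consequently $f$ is also a log resolution of $(X,\Delta')$. On $Y$ we have $f^*(\tfrac1m H) = \tfrac1m f^{-1}_*H$, so the discrepancies of all $f$-exceptional divisors are unchanged: $a(E;X,\Delta') = a(E;X,\Delta) > -1$.

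The coefficients of $\Delta'$ must also be checked. Components of $\Delta$ keep their coefficients, which are less than $1$. The new components, namely those of $H$, receive coefficient $\tfrac1m < 1$ if $m \geq 2$; we may arrange this by replacing $m$ with $2m$. General $H$ shares no component with $\Delta$, so no coefficients add up. Since $f$ is a log resolution of $(X,\Delta')$ on which all coefficients of the boundary $f^{-1}_*\Delta' - \sum_E a(E;X,\Delta')E$ are less than $1$, the discrepancies of all divisors over $X$ are greater than $-1$, which gives the klt property.

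Alternatively, the paper cites \cite[Lem.-Def.~2.6(iii)]{prokhorov2009towards} for exactly this statement, and the argument above is its content. The only delicate point is the Bertini step: we need the genericity of $H$ to ensure that $f$ remains a log resolution and that $f^*H$ has no exceptional part. Base point freeness of $|mA|$ handles both requirements.
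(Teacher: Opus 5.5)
Your proposal is correct and follows essentially the same approach as the paper: the paper gives no argument beyond citing Prokhorov--Shokurov's Lemma-Definition 2.6(iii), and the implication from ample $-(K_X+\Delta)$ to a klt boundary $\Delta'$ with $K_X+\Delta'\sim_{\mathbb Q}0$ in that source is exactly your perturbation $\Delta' = \Delta + \tfrac1m H$ with $H\in|mA|$ general, checked on a log resolution via Bertini. Since $K_X+\Delta'\sim_{\mathbb Q}0$ implies $K_X+\Delta'\equiv 0$, and the standing hypotheses concern only $X$ and not the boundary, your argument establishes the statement in the form the paper uses it.
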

We also note that toric geometry furnishes many examples of Fano type varieties.
\begin{proposition}[{\negthinspace\negthinspace\cite[Ex. 11.4.26]{cls}}]
    \label{prop:toric_is_fano_type}
    Simplicial projective toric varieties are Fano type
\end{proposition}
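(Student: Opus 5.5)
The statement is the classical fact that a simplicial projective toric variety $X=X_\Sigma$ is of Fano type. The plan is to exhibit the boundary directly. I take $\Delta$ to be a suitable torus-invariant $\mathbb Q$-divisor supported on the boundary, chosen so that $-(K_X+\Delta)$ is ample and all coefficients lie in $[0,1)$.

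The starting point is that $-K_X=\sum_\rho D_\rho$, summing over the rays $\rho\in\Sigma(1)$. Since $X$ is projective, I fix an ample torus-invariant Cartier divisor $A=\sum_\rho a_\rho D_\rho$. Translating by a character, i.e. adding $\mathrm{div}(\chi^m)$ for suitable $m$, does not change the linear equivalence class, so I may assume every $a_\rho>0$. Concretely, $A$ corresponds to a full-dimensional lattice polytope $P=\{u: \langle u,v_\rho\rangle\ge -a_\rho\}$, and translating $P$ so that $0$ lies in its interior gives all $a_\rho>0$.

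Next I choose $\varepsilon>0$ rational with $\varepsilon a_\rho<1$ for all $\rho$, and set
\[
\Delta=\sum_\rho (1-\varepsilon a_\rho)D_\rho .
\]
Then $\Delta$ is effective with coefficients in $(0,1)$, and
\[
-(K_X+\Delta)=\sum_\rho D_\rho-\sum_\rho(1-\varepsilon a_\rho)D_\rho=\varepsilon A,
\]
which is ample. Since $X$ is simplicial, it is $\mathbb Q$-factorial, so $K_X+\Delta$ is $\mathbb Q$-Cartier.

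The remaining, and main, step is to show that $(X,\Delta)$ is klt. I would first verify that $(X,\sum_\rho D_\rho)$ is log canonical with all toric discrepancies equal to $-1$. For this I take a toric log resolution given by a smooth refinement $\Sigma'$. On each simplicial cone $\sigma$ there is a linear function $m_\sigma\in N_{\mathbb Q}^\vee$ with $\langle m_\sigma,v_\rho\rangle=1-\varepsilon a_\rho$ for each ray $\rho$ of $\sigma$; this exists precisely because $\sigma$ is simplicial. A new ray $v=\sum c_\rho v_\rho$ in $\sigma$, with $c_\rho\ge 0$, then has log discrepancy
\[
a(E_v;X,\Delta)+1=\sum_\rho c_\rho\,\varepsilon a_\rho>0 ,
\]
because the $c_\rho$ are nonnegative and not all zero, and every $a_\rho>0$. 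Equivalently, the log discrepancy is $\varepsilon$ times the support-function value of $A$ at $v$, which is positive. It remains to cover non-toric divisors over $X$. Since a toric log resolution already exists and is a log resolution of $(X,\Delta)$, discrepancies on it suffice to determine klt-ness, so these need no separate treatment. This settles klt-ness; alternatively, one can simply cite \cite[Ex.~11.4.26]{cls}, and the computation above is the content of that reference.
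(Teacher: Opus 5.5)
Your proof is correct and is the standard argument behind \cite[Ex.~11.4.26]{cls}, which the paper cites in place of a proof: perturb the toric boundary by a positive ample class, $\Delta=\sum_\rho(1-\varepsilon a_\rho)D_\rho$ with $-(K_X+\Delta)=\varepsilon A$, and check klt on a toric log resolution via the log discrepancies $\varepsilon\sum_\rho c_\rho a_\rho>0$. Only small touch-ups are needed: \begin{enumerate}
\item Making all $a_\rho>0$ by a character translation needs a lattice point in the interior of $P$, which a lattice polytope may lack. Replace $A$ by a multiple such as $(\dim X+1)A$, or translate by some $m\in M_{\mathbb Q}$ and work with $\mathbb Q$-divisors.
\item The local linear function should satisfy $\langle m_\sigma,v_\rho\rangle=1-d_\rho=\varepsilon a_\rho$, not $1-\varepsilon a_\rho$; your displayed discrepancy formula already uses the correct value.
\item The paper's standing conventions also require finitely generated $\Cl(X)$ and $\Pic^0(X)=0$. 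Both are immediate for a complete toric variety, since $\Cl(X)$ is a quotient of $\mathbb Z^{\Sigma(1)}$ and $\Pic(X)$ is free.
\end{enumerate}
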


\subsection{Calabi--Yau Type Varieties}\label{sec:cy}

Calabi--Yau varieties --- Calabi--Yau type varieties $X$ with $K_X \sim 0$ --- play a distinguished role in compactifications of superstring theory. Consequently, advancing understanding of their topology is of the utmost importance. Fortunately for our purposes, not just Calabi--Yau varieties but actually the more general class of varieties of Calabi--Yau type have excellent birational geometry. This is because the Calabi--Yau type condition is preserved by the $D$-MMP and moreover relates the $D$-MMP to a log MMP, for which there are quite general results (namely those of \cite{BCHM10}) .

First, though, we must state an analog of \cref{prop:MDS_bir_contract_factor} for Calabi--Yau type varieties.
\begin{prop}[{\negthinspace\negthinspace\negthinspace\cite[Lem. 4.10]{gachet2024effective}}]
    \label{prop:CY_type_bir_contract_factor}
    Let $X$ be a Calabi--Yau type variety and let $f : X \dashrightarrow Z$ be a birational contraction. Then $f$ factors as $f = h \circ g$ with $g : X \dashrightarrow Y$ an SQM and $h : Y \to Z$ a morphism.
\end{prop}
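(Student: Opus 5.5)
The plan is to factor $f$ through the birational model attached to an ample class on $Z$. Choose an ample Cartier divisor $A$ on $Z$, and let $D := f^*A$ on $X$. Since $f$ is a contraction, $D$ is a well-defined $\mathbb{Q}$-Cartier divisor on $X$ because $X$ is $\mathbb{Q}$-factorial.

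The first step is to check that $D$ is movable. Its pushforward $f_*D = A$ is semiample, so by the standard comparison (the pullback of a base point free linear system under a contraction has no divisorial base locus away from $\mathrm{Ex}(f)$) the only possible fixed divisorial components of $|mD|$ lie among the $f$-exceptional divisors. I would rule these out directly: $f^*A$ is defined via a common resolution $(\alpha,\beta)\colon W\to X\times Z$ as $\alpha_*\beta^*A$, and $\beta^*A$ is base point free. Hence $|mD|$ contains $\alpha_*$ of a general member of $|m\beta^*A|$, which does not contain any given prime divisor. So $D\in\overline{\Mov}(X)$, and in fact $D$ has no fixed divisorial part.

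Next, run a $D$-MMP on $X$ to reach a $D$-minimal model, using the Calabi--Yau type structure. Pick $\Delta$ with $(X,\Delta)$ klt and $K_X+\Delta\equiv 0$. Then for small $\epsilon>0$ the pair $(X,\Delta+\epsilon D')$, with $D'\in|mD|$ general, remains klt, and $K_X+\Delta+\epsilon D'\equiv \epsilon D'$. A $D$-MMP is therefore a log MMP for a klt pair. This is the main obstacle: $D$ need not be big (when $\dim Z<\dim X$ this cannot occur here since $f$ is birational, so $D$ is big, being a pullback of a big divisor under a birational map). With $D$ big, \cite[Th.~1.2]{BCHM10} gives a log terminal model $g\colon X\dashrightarrow Y$ on which $g_*D$ is nef, and by the basepoint-free theorem semiample. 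Because $D$ is movable, every step is a flip: a divisorial contraction would contract a divisor in $\Bdiv(D)$, which is empty. Thus $g$ is an SQM.

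Finally, I would show that $g_*D$ is semiample and that its ample model is $Z$. By \cref{prop:mmp_preserves_sections}, $H^0(X,mD)=H^0(Y,mg_*D)$, and also $H^0(X,mD)=H^0(Z,mA)$, since $f$ is a contraction and $f^*A$ has these sections via the resolution. Hence the section rings agree: $R(Y,g_*D)\cong R(Z,A)$. Taking Proj gives a morphism $h\colon Y\to \Proj R(Z,A)=Z$ induced by $|mg_*D|$, and $h\circ g=f$ as rational maps because both are defined by the same linear system. The delicate points are the equality of section rings, which needs $f^*A-\alpha_*\beta^*A$ to vanish, i.e.\ no exceptional correction, using the negativity lemma \cref{lem:negativity}, and the identification of $h\circ g$ with $f$ as marked maps.
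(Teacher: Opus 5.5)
The paper gives no argument of its own here; it only cites Lem.~4.10 of \cite{gachet2024effective}. Your proof is correct and follows the standard route for such factorization statements (as in Hu--Keel's Mori dream space version, with \cref{prop:cy_mmp} replacing finite generation): $f^*A$ is big with empty divisorial base locus, so the $D$-MMP only flips and ends at an SQM $Y$ where $g_*f^*A$ is nef and big, hence semiample by the basepoint-free theorem, and $Z\cong\Proj R(Z,A)\cong\Proj R(Y,g_*f^*A)$ receives the morphism $h$.

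One point is mislabelled: the section-ring equality does not need the negativity lemma. By definition $f^*A=\alpha_*\beta^*A$. The inclusion $H^0(X,mf^*A)\subseteq H^0(Z,mA)$ then follows from $f_*f^*A=A$ and the fact that $f_*$ preserves effectivity, because $f^{-1}$ contracts no divisors.
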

This enables our desired birational geometry results.
\begin{proposition}
    \label{prop:dmmp_preserves_cy}
    If $f : X \dashrightarrow Y$ is a birational contraction for $X$ Calabi--Yau type, with $\Delta$ from \cref{def:CY_fano_type}, and $Y$ is $\mathbb{Q}$-factorial, then $Y$ is Calabi--Yau type. In particular, $D$-minimal models of $X$ are Calabi--Yau type.
\end{proposition}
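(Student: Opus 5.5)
The plan is to push the boundary $\Delta$ forward along $f$ and show that $(Y, f_*\Delta)$ is a klt pair with $K_Y + f_*\Delta \equiv 0$. By \cref{prop:CY_type_bir_contract_factor}, $f = h\circ g$ with $g : X \dashrightarrow X'$ an SQM and $h : X' \to Y$ a birational morphism. I will treat the two factors separately, in both cases using that the class $K+\Delta$ is numerically trivial.

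For the SQM $g$, set $\Delta' = g_*\Delta$. Since $g$ is an isomorphism in codimension one, $K_{X'}+\Delta' = g_*(K_X+\Delta)$. Take a common resolution $(\alpha,\beta): W \to X\times X'$. The divisor $\alpha^*(K_X+\Delta) - \beta^*(K_{X'}+\Delta')$ is exceptional for both $\alpha$ and $\beta$, because $g$ contracts no divisors in either direction. Since $K_X+\Delta \equiv 0$, the term $\alpha^*(K_X+\Delta)$ is numerically trivial. Hence this difference is $\beta$-exceptional and numerically equivalent over $X'$ to $-\beta^*(K_{X'}+\Delta')$, which is numerically trivial over $X'$. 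By \cref{lem:negativity} it vanishes. This gives two things: $\beta^*(K_{X'}+\Delta') \equiv 0$, so $K_{X'}+\Delta'\equiv 0$ (this uses $\mathbb{Q}$-factoriality of $X'$ for $\mathbb{Q}$-Cartierness); and the crepant equality $\alpha^*(K_X+\Delta) = \beta^*(K_{X'}+\Delta')$. The crepant equality means discrepancies of every divisor over $X$ and $X'$ coincide, so $(X',\Delta')$ is klt. The coefficients of $\Delta'$ are those of $\Delta$, hence lie in $[0,1)$.

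For the morphism $h$, set $\Delta_Y = h_*\Delta'$. Then $K_Y+\Delta_Y$ is $\mathbb{Q}$-Cartier since $Y$ is $\mathbb{Q}$-factorial. Consider $B := K_{X'}+\Delta' - h^*(K_Y+\Delta_Y)$. It is $h$-exceptional, and it is $h$-numerically trivial because $K_{X'}+\Delta'\equiv 0$. By \cref{lem:negativity}, $B=0$, so $K_{X'}+\Delta' = h^*(K_Y+\Delta_Y)$. This is the main step where care is needed: numerical triviality over $Y$ uses both hypotheses (the numerical triviality of $K_{X'}+\Delta'$ and the $\mathbb{Q}$-Cartierness from $\mathbb{Q}$-factoriality). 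From $h^*(K_Y+\Delta_Y)\equiv 0$ we deduce $K_Y+\Delta_Y \equiv 0$: for any curve $C\subset Y$, lift it to a curve $C'$ mapping onto it, and the projection formula gives $(K_Y+\Delta_Y)\cdot C$ as a positive multiple of $h^*(K_Y+\Delta_Y)\cdot C' = 0$.

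Since the pullback is crepant, every divisor over $Y$ is a divisor over $X'$ with the same discrepancy. The $h$-exceptional divisors appear as divisors over $Y$ with discrepancy equal to minus their coefficient in $\Delta'$, and these coefficients are less than $1$. So $(Y,\Delta_Y)$ is klt, with $\Delta_Y\ge 0$ having coefficients in $[0,1)$. Thus $Y$ is of klt Calabi--Yau type.

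The standing hypotheses also need to be checked for $Y$. Finite generation of $\Cl(Y)$ holds because $\Cl(Y)$ is a quotient of $\Cl(X)$ via $f_*$, which is surjective for a contraction. The condition $\Pic^0(Y)=0$ follows from the injection $h^* : \Pic^0(Y)\hookrightarrow \Pic^0(X')$ together with $\Pic^0(X')\cong \Pic^0(X)$; alternatively, $q(Y)=q(X)$ by birational invariance of $h^1(\mathcal{O})$ for klt (rational) singularities.

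The "in particular" follows directly: a $D$-minimal model is by definition a birational contraction, and its target is $\mathbb{Q}$-factorial in our setting.
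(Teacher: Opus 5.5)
Your proposal is correct and follows the paper's proof: the same factorization via \cref{prop:CY_type_bir_contract_factor}, crepancy (hence klt and $K+\Delta\equiv 0$) on each factor, and finite generation of $\Cl(Y)$ from surjectivity of $f_*$; the only difference is that you derive both crepancy steps directly from \cref{lem:negativity}, where the paper cites \cite[Prop.~3.51]{KM} and \cite[Lem.~2.14]{bernasconi2022semiampleness}, and your version has the merit of proving $K_{X'}+\Delta'\equiv 0$ rather than taking nefness of the pushforward for granted. The one loose end is the unproved isomorphism $\Pic^0(X')\cong\Pic^0(X)$ (SQMs need not preserve $\Pic$), but your alternative via $q$ works, as does the paper's shortcut: $\Pic^0(Y)\subset\Cl(Y)$ is a connected algebraic group sitting inside a finitely generated group, hence trivial.
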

\begin{proof}
    For any birational contraction, the pushforward preserves numerical equivalence, and by the standard sequence $\sum_i \mathbb{Z} E_i \to \Cl(X) \to \Cl(Y) \to 0$ (a consequence of, e.g., \cite[Th. 4.0.20]{cls}) $\Cl(Y)$ is a quotient of $\Cl(X)$ meaning $\Cl(Y)$ is finitely generated and in particular features trivial $\Pic^0$. It therefore suffices to show that $(Y, f_* \Delta)$ is klt, and also crepant --- i.e., on a resolution $(\alpha, \beta) : W \to X \times Y$, $\alpha^*(K_X + \Delta) = \beta^* (K_Y + f_* \Delta)$. By \cref{prop:CY_type_bir_contract_factor}, $f$ factors into the composition of an SQM and a morphism, so it suffices to handle these separately. The result holds for SQMs by \cite[Prop. 3.51]{KM}, using the nefness of $K_X + \Delta$ and its pushforward, and holds for morphisms by \cite[Lem. 2.14]{bernasconi2022semiampleness}.
\end{proof}
\begin{proposition}
    \label{prop:cy_mmp}
    Let $X$ be a Calabi--Yau type variety with divisor $D \in \Eff(X)$. Then either of the following conditions implies the existence of a $D$-minimal model.
        \begin{enumerate}
            \item $D \in \Bigc(X)$ 
            \item $\dim X \leq 3$
        \end{enumerate}
\end{proposition}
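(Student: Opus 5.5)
The plan is to reduce the existence of a $D$-minimal model to the existence of a log terminal model for a suitable klt pair, and then invoke \cite[Th.~1.2]{BCHM10} in case (1) and the known three-dimensional log abundance/termination results in case (2).

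\textbf{Case (1).} Fix $\Delta$ as in \cref{def:CY_fano_type}, so $(X,\Delta)$ is klt and $K_X+\Delta\equiv 0$. Since $D$ is big, after replacing $D$ by a $\mathbb{Q}$-linearly equivalent effective divisor we may write $D \sim_{\mathbb Q} A + B$ with $A$ ample and $B\ge 0$. For $0<\epsilon\ll 1$ the pair $(X,\Delta+\epsilon D)$ is still klt. This uses that klt is an open condition under small perturbation by an effective $\mathbb{Q}$-Cartier divisor, with $X$ being $\mathbb{Q}$-factorial. Moreover
\[
K_X+\Delta+\epsilon D \equiv \epsilon D,
\]
so the boundary $\Delta+\epsilon D$ is big and $K_X+\Delta+\epsilon D$ is pseudoeffective. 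By \cite[Th.~1.2]{BCHM10} there is a log terminal model $f\colon X\dashrightarrow Y$ of $(X,\Delta+\epsilon D)$, obtained by running the $(K_X+\Delta+\epsilon D)$-MMP with scaling.

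It remains to check that $f$ is a $D$-minimal model. Each step of this MMP is negative for $K_X+\Delta+\epsilon D$, and since this divisor is numerically $\epsilon D$, each step is equally $D$-negative; so this is a $D$-MMP. The pushforward $f_*(K_X+\Delta+\epsilon D)\equiv \epsilon f_*D$ holds because numerical equivalence is preserved on the models, as $K+\Delta$ stays numerically trivial by the crepancy in \cref{prop:dmmp_preserves_cy}. Hence $f_*D$ is nef, and $f$ is a birational contraction with $\mathbb{Q}$-factorial target. The $D$-nonpositivity of \cref{def:D-nonpositive} is exactly the negativity property of log terminal models (\cite[Lem.~3.38]{KM}, via \cref{lem:negativity}) applied to $K_X+\Delta+\epsilon D$. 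Since $K_X+\Delta$ is crepant along $f$, the difference $\alpha^*D-\beta^*f_*D$ equals $\epsilon^{-1}$ times the corresponding difference for $K_X+\Delta+\epsilon D$, hence is effective and $\beta$-exceptional.

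\textbf{Case (2).} Suppose $\dim X\le 3$ and $D$ is effective but possibly not big. Choose $\epsilon$ small so that $(X,\Delta+\epsilon D)$ is klt. We then need to run the $(K_X+\Delta+\epsilon D)$-MMP without the bigness hypothesis. For klt pairs in dimension $\le 3$, existence of flips and termination of klt flips are known (Shokurov, Kawamata; see \cite{KM}). Since $K_X+\Delta+\epsilon D\equiv\epsilon D$ is pseudoeffective, the MMP cannot end in a Mori fibre space: the output divisor would be negative on a covering family of curves, contradicting pseudoeffectivity. So it terminates in a model on which the pushforward is nef. The identification with a $D$-minimal model then proceeds exactly as in case (1).

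The main obstacle I expect is the bookkeeping that the $D$-MMP and the log MMP coincide step by step. This needs $K+\Delta$ to remain numerically trivial and the pushed-forward pair to remain klt on every intermediate model, including after flips. I would handle this inductively using \cref{prop:dmmp_preserves_cy} at each step, together with the observation that a numerically trivial divisor has zero intersection with every extremal ray, so every contraction is $(K+\Delta)$-trivial and thus crepant.
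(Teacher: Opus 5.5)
Your proposal is correct and follows essentially the same route as the paper. Both perturb to the klt pair $(X,\Delta+\epsilon D)$, obtain a log minimal model from BCHM Th.~1.2 in the big case and from the three-dimensional log MMP (Shokurov, Kawamata) when $\dim X\le 3$, and observe that such a model is a $D$-minimal model; your crepancy/negativity-lemma check of that identification and your exclusion of a Mori fibre space outcome via pseudoeffectivity spell out details the paper states in one line.
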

\begin{proof}
    Let $\Delta$ be as in \cref{def:CY_fano_type} and choose $\varepsilon > 0$ such that $(X, \Delta + \varepsilon D)$ is klt. Existence of a log minimal model for this pair is ensured for $D \in \Bigc(X)$ by \cite[Th.~1.2]{BCHM10} and for $\dim X \leq 3$ by the results of \cite{shokurov19963} (see, e.g., the introduction of \cite{birkar2010existence} for further discussion). It then suffices to note that a log minimal model for the klt pair $(X, \Delta + \varepsilon D)$ is a $D$-minimal model.
\end{proof}
We will elect to remain general in dimension and restrict to the big cone, as Kawamata--Viehweg becomes difficult to apply for non-big divisors on Calabi--Yau type varieties (but see \cref{sec:non-big}). The existence of $D$-minimal models for big divisors entails a Mori chamber decomposition of the big cone, as was elucidated in \cite{gachet2024effective}, which can be understood as building on \cite[Th. 2.3]{kawamata1997cone}.
\begin{proposition}[{\negthinspace\negthinspace\negthinspace\cite[Prop. 4.11, Lem. 4.2]{gachet2024effective}}]
    \label{prop:cy_preq}
    Let $X$ be a Calabi--Yau type variety.
    \begin{enumerate}
        \item Let $I$ index marked SQMs $\varphi_i : X \dashrightarrow X_i$ of $X$. Then,
        \begin{equation}
            \Mov(X) \cap \Bigc(X) \subset \bigcup_{i \in I} \varphi_i^* (\Nef(X_i) \cap \Eff(X_i))~.
        \end{equation}
        \item Let $J$ index marked $D$-minimal models $f_j : X \dashrightarrow X_j$ of $X$. Then,
        \begin{equation}
            \Bigc(X) \subset \bigcup_{j \in J} \mathcal{C}_j \qquad\qquad \mathcal{C}_j = f_j^* (\Nef(X_j) \cap \Eff(X_j)) + \mathrm{ex}(f_j)
        \end{equation}
        That is, divisors in $\mathcal{C}_j$ have $D$-minimal model $X_j$.
    \end{enumerate}
    That is, $\Mov(X) \cap \Bigc(X)$ and $\Bigc(X)$ are contained in unions of pairwise disjoint Mori chambers of SQMs and birational contractions, respectively.
\end{proposition}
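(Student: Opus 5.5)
The plan is to reduce everything to the log MMP of \cite{BCHM10} via the Calabi--Yau type boundary, as in the proof of \cref{prop:cy_mmp}. Fix $\Delta$ as in \cref{def:CY_fano_type} and let $D\in\Bigc(X)$ be a $\mathbb{Q}$-divisor; we may assume $D\ge 0$. Choose $0<\varepsilon\ll 1$ so that $(X,\Delta+\varepsilon D)$ is klt. Since $K_X+\Delta+\varepsilon D\equiv \varepsilon D$ is big, \cite[Th.~1.2]{BCHM10} lets us run a $(K_X+\Delta+\varepsilon D)$-MMP with scaling of an ample divisor, and it terminates in a log terminal model $f:X\dashrightarrow Y$. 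The steps of this MMP are exactly $D$-negative extremal contractions and flips, because $K+\Delta+\varepsilon D$ and $\varepsilon D$ have the same sign on every curve. Hence $f_*D$ is nef on $Y$, and $Y$ is $\mathbb{Q}$-factorial and of Calabi--Yau type by \cref{prop:dmmp_preserves_cy}.

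To place $D$ in a chamber, I would take a common resolution $(\alpha,\beta):W\to X\times Y$ and show that $F:=\alpha^*D-\beta^*f_*D$ is effective and $\beta$-exceptional. This is the standard statement that each step of a $D$-MMP is $D$-nonpositive, proved via the negativity lemma (\cref{lem:negativity}) applied one step at a time. For a divisorial contraction, the contracted divisor $E$ appears in $F$ with \emph{strictly positive} coefficient. The reason is that $E\cdot R<0$ and $D\cdot R<0$ on the contracted ray, so $D-g^*g_*D$ is a positive multiple of $E$. For flips, coefficients on divisors of $X$ are unchanged. Pushing forward by $\alpha$ then gives
\begin{equation*}
D=f^*f_*D+\textstyle\sum_i a_iE_i,\qquad a_i>0,
\end{equation*}
where the $E_i$ are the $f$-exceptional primes. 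Also $f_*D$ is effective, being the pushforward of an effective divisor. Therefore $D\in f^*(\Nef(Y)\cap\Eff(Y))+\mathrm{ex}(f)$, so $f$ is one of the marked $D$-minimal models $f_j$, and this proves (2). Scaling handles $\mathbb{R}$-classes via rational approximation inside the open cone $\Bigc(X)$, or one can just state the result for $\mathbb{Q}$-classes.

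For (1), suppose in addition that $D$ is movable. Since the $a_i$ are strictly positive, it suffices to show that no exceptional divisor occurs, i.e.\ that $\sum a_iE_i=0$. For every sufficiently divisible $m$, \cref{prop:mmp_preserves_sections} identifies $|mD|$ with $f^*|mf_*D|+m\sum a_iE_i$. So each $E_i$ with $a_i>0$ lies in $\Bdiv(D)$, which is impossible for $D\in\Mov(X)$. Hence $f$ contracts no divisor. It is therefore a composite of flips, i.e.\ an SQM $\varphi_i$, and $D=\varphi_i^*\varphi_{i*}D$ with $\varphi_{i*}D$ nef and effective.

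For the disjointness statement, which I read as disjointness of relative interiors (chambers of a fan share faces), I would argue as follows. A divisor in the interior of a chamber $\mathcal{C}_j$ has $f_{j*}D$ ample on $X_j$, so $X_j=\Proj R(X,D)$ is the ample model of $D$. The marked map $f_j$ is then determined by $D$, and two chambers whose interiors meet give equal marked contractions. The main obstacle I expect is the strict positivity of the exceptional coefficients. That positivity is what simultaneously forces the exceptional divisors into $\Bdiv(D)$ (needed for (1)) and makes the chamber description of \cref{def:mori_chamber} exact. I would handle it carefully step-by-step through the MMP, tracking how the coefficients of earlier exceptional divisors behave under later flips.
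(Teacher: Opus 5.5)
Your proposal is correct. The paper does not prove this itself. It cites \cite[Prop.~4.11, Lem.~4.2]{gachet2024effective}, and the following remark only adds that the version on $\Mov(X)\cap\Bigc(X)$, rather than on the interior of $\Mov(X)$, holds because the minimal model needed in \cite[Lem.~4.4]{gachet2024effective} exists for every big divisor by \cref{prop:cy_mmp}.

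You instead give a self-contained argument along those lines:
\begin{enumerate}
\item Run the BCHM MMP with scaling for $(X,\Delta+\varepsilon D)$; its steps are $D$-negative because $K_X+\Delta$ stays numerically trivial.
\item Track the exceptional coefficients step by step to get $D=f^*f_*D+\sum a_iE_i$ with all $a_i>0$.
\item Use that strict positivity to put every $f$-exceptional prime into $\Bdiv(D)$, which rules out divisorial steps when $D$ is movable.
\end{enumerate}
Your route shows exactly where bigness enters (existence and termination of the MMP). It treats $\Mov(X)\cap\Bigc(X)$ directly, not just the interior of $\Mov(X)$. It also isolates the same negativity-lemma mechanism the paper later uses in \cref{prop:zariski}. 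The citation route is shorter and takes the chamber structure, including disjointness, from the literature.

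Two small points need tightening.

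First, rational approximation inside the open cone $\Bigc(X)$ does not by itself pass the result to $\mathbb{R}$-classes. A limit of classes lying in possibly infinitely many chambers need not lie in any single one without local finiteness (\cite[Cor.~1.1.5]{BCHM10}). It is simpler to run the MMP directly for an effective $\mathbb{R}$-divisor, which BCHM allows. In (1) you would then argue with $|D|_{\mathbb{R}}$, or with asymptotic orders of vanishing. Your identity $|mD|=f^*|mf_*D|+m\sum a_iE_i$ already gives $\mathrm{ord}_{E_i}\geq a_i>0$, and that version is also what you need if $\Mov(X)$ is taken to be the closed movable cone.

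Second, your disjointness argument identifies $R(X,D)$ with $R(X_j,f_{j*}D)$, which uses that $f_j$ is $D$-nonpositive for every $D\in\mathcal{C}_j$. That is the ``That is'' clause of (2), and you never verify it. It is a short negativity-lemma check. Write $D=f_j^*P+\sum c_iE_i$ with $P$ nef and $c_i\geq 0$. Then $\alpha^*f_j^*P-\beta^*P$ is $\alpha$-exceptional with $\beta^*P$ nef, so it is effective. It is also $\beta$-exceptional because $f_j$ is a contraction. Hence $\alpha^*D-\beta^*f_{j*}D\geq 0$ is $\beta$-exceptional, as required.
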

\begin{remark}
    The first statement in \cref{prop:cy_preq} is stated in \cite{gachet2024effective} for the interior of the moving cone, and classically for Calabi--Yau varieties rather than pairs in \cite[Th. 2.3]{kawamata1997cone}, \cite[Props. 4.6 and 4.7]{wang2022remarks}. It holds on the generally larger cone $\Mov(X) \cap \Bigc(X)$, in every dimension, because for big divisors the minimal model required by \cite[Lem. 4.4]{gachet2024effective} exists unconditionally by \cref{prop:cy_mmp}.
\end{remark}
In spite of the similarities of this result with \cref{prop:MDS_chambers}, it is worth emphasizing that while the Mori chamber decomposition for Mori dream spaces is finite and consists of finitely generated cones, on Calabi--Yau type varieties the Mori chamber decomposition can consist of countably many chambers and the chambers can be generated by countably many divisors. 
\begin{remark}
    \label{rem:KM}
    The Kawamata--Morrison cone conjecture \cite{morrison1993compactifications, kawamata1997cone} (extended from the movable and nef cones to the effective cone in \cite{gachet2024effective}) can roughly be understood as the claim that the Mori chamber decomposition of the effective cone of a Calabi--Yau type variety is acted upon by its birational automorphism group such that any fundamental domain consists of finitely many chambers, each generated by finitely many divisors. In this sense, the difference between Calabi--Yau type varieties and Mori dream spaces is conjecturally a group action on Mori chambers. However, we will not pursue or discuss this further, as our results do not depend on it --- it suffices for us that we can subtract fixed components from a divisor or perform a $D$-MMP on a divisor in finitely many steps, which is true even in the presence of infinitely many chambers, including if some are infinitely generated.
\end{remark}
We conclude this section by recalling a few specific results that hold specifically for Calabi--Yau varieties. First, a criterion for low-dimensional smooth Calabi--Yau varieties to be Mori dream spaces.
\begin{proposition}[{\negthinspace\negthinspace\negthinspace\cite[Cor. 4.5]{McKernan2010}}]
    \label{prop:when_cy_is_mds}
    Smooth Calabi--Yau varieties $X$ of dimension $\leq 3$ are Mori dream spaces if and only if they have rational polyhedral effective cones.
\end{proposition}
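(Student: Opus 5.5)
The forward implication is formal; the converse reduces, via Hu--Keel's criterion, to abundance plus finiteness of models. This is the strategy behind \cite[Cor.~4.5]{McKernan2010}.

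\emph{Mori dream space $\Rightarrow$ rational polyhedral effective cone.} If $X$ is a Mori dream space, then by \cref{prop:MDS_chambers} the cone $\Eff(X)$ carries a finite fan $\Fan(X)$ whose maximal cones are rational polyhedral. So $\Eff(X)$ is a convex cone that is a finite union of rational polyhedral cones. It is therefore generated by the finitely many rays of those cones, and hence is rational polyhedral (in particular closed). Neither smoothness nor the dimension bound is needed here.

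\emph{Rational polyhedral effective cone $\Rightarrow$ Mori dream space.} Let $X$ be a smooth Calabi--Yau variety with $\dim X\le 3$ and $\Eff(X)$ rational polyhedral. By our standing hypothesis $\Pic^0(X)=0$, so $\Pic(X)_{\mathbb Q}\cong N^1(X)_{\mathbb Q}$. I would verify the Hu--Keel conditions in the following steps.
\begin{enumerate}
\item Choose a rational polytope $P\subset \Eff(X)$ that is a cross-section of the cone, spanned by effective $\mathbb Q$-divisors $D_1,\dots,D_k$. Fix $\varepsilon>0$ small enough that $(X,\varepsilon D)$ is klt for every $D\in P$. This is possible since $P$ is compact and spanned by finitely many divisors.
\item Since $K_X\sim 0$, a $D$-minimal model is the same as a log minimal model of $(X,\varepsilon D)$. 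By \cref{prop:cy_mmp}(2) such models exist for every $D\in P$, since $\dim X\le 3$.
\item Invoke log abundance for klt pairs in dimension $\le 3$ (Keel--Matsuki--McKernan). On each minimal model $Y$ it shows that the nef divisor $f_*D\sim_{\mathbb Q}(K_Y+\varepsilon f_*D)/\varepsilon$ is semiample.
\item Apply Shokurov's finiteness of models (geography of log models, \cite{shokurov19963, kaloghiros2016finite}) to the rational polytope of boundaries $\{\varepsilon D: D\in P\}$. This yields finitely many marked minimal models, and it needs only the MMP and abundance in this dimension, which steps 2 and 3 supply.
\item Conclude that $\Eff(X)$ is covered by finitely many Mori chambers. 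In particular, $\Mov(X)$ is covered by the nef cones $\varphi_i^*\Nef(X_i)$ of finitely many SQMs, using the factorization in \cref{prop:CY_type_bir_contract_factor}.
\item Show that each $\Nef(X_i)$ is rational polyhedral. It is a face-union of finitely many chambers of the finite decomposition. By step 3 its elements are semiample, and each $X_i$, being again of Calabi--Yau type by \cref{prop:dmmp_preserves_cy}, satisfies the same hypotheses.
\end{enumerate}
These are exactly the defining conditions of a Mori dream space.

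The main obstacle is step 4: turning pointwise existence of semiample models into a \emph{finite} decomposition. Finiteness is precisely where rational polyhedrality of $\Eff(X)$ is used. Without it the polytope of boundaries is not compact, and infinitely many chambers may accumulate, as in \cref{rem:KM}. I would first try to cite finiteness of log minimal models over a compact rational polytope of klt boundaries in dimension three. If that is unavailable in the needed generality, I would instead prove finite generation of the multi-section ring $R(X;D_1,\dots,D_k)$. The route would be to deduce it from the finitely many semiample models via a Zariski-decomposition argument on each chamber, and then compare it with $\mathrm{Cox}(X)$ using that the $D_i$ generate $\Eff(X)$.
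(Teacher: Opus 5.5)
The paper gives no argument of its own here; it imports the statement from \cite[Cor.~4.5]{McKernan2010}. So your proposal has to stand on its own. Your forward direction is fine. The converse has a genuine gap at step 4, and neither of your two options closes it.

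\emph{Why your first option fails.} Finiteness of minimal models over a compact rational polytope of klt boundaries is a theorem only for big boundaries (\cite[Cor.~1.1.5]{BCHM10}). Without bigness it is false, even in dimension three where the MMP and abundance hold. Take a Calabi--Yau threefold with infinitely many marked SQMs (cf.\ \cref{rem:KM}). The single boundary $0$ is a compact rational polytope of klt boundaries, and every SQM is a minimal model for it. The same mechanism operates at nonzero non-big boundaries, such as the pullback of an ample class under a fibration: every SQM over the base is a minimal model for it. Your polytope $\{\varepsilon D : D\in P\}$ meets $\partial\Eff(X)$ as soon as $\rho(X)\ge 2$. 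Compactness of $P$ buys nothing there, because chambers in $\Bigc(X)$ can accumulate at $\partial\Eff(X)$. In your setting finiteness does eventually hold, but only as a consequence of the statement you are proving. What the geography of models supplies for non-big boundaries is finiteness of \emph{ample} models, and that is equivalent to finite generation of the adjoint ring, not a substitute for it.

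\emph{Why your fallback fails.} It is circular: it deduces finite generation of $R(X;D_1,\dots,D_k)$ from ``the finitely many semiample models'', which is exactly what step 4 was meant to provide. Step 6 has a related weakness: rational polyhedrality of the individual chambers is asserted, not proved.

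\emph{The missing idea.} Make finite generation the primary output rather than finiteness of models.
\begin{enumerate}
\item Your steps 2 and 3 (MMP plus log abundance in dimension $\le 3$) show that every $(X,\varepsilon D)$ with $D\in P$ has a good model.
\item Results of the type in \cite{kaloghiros2016finite} turn good models for all boundaries of a rational polytope into finite generation of $R(X;K_X+\varepsilon D_1,\dots,K_X+\varepsilon D_k)$.
\item Since $K_X\sim 0$, after clearing denominators this ring is a Veronese subring of $R(X;D_1,\dots,D_k)$, so the latter is finitely generated.
\item The $D_i$ span $\Eff(X)$ and $\Cl(X)=\Pic(X)$ is finitely generated, so $\mathrm{Cox}(X)$ is finitely generated.
\item The Hu--Keel criterion \cite{HuKeel}, which applies because $\Pic(X)_{\mathbb Q}\cong N^1(X)_{\mathbb Q}$, then makes $X$ a Mori dream space.
\end{enumerate}
Your steps 5 and 6, including rational polyhedrality of the chambers, then follow from \cref{prop:MDS_chambers} instead of being verified by hand. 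Note also where rational polyhedrality of $\Eff(X)$ really enters. It lets finitely many $D_i$ span the entire effective cone, so that a single finitely generated adjoint ring controls the whole Cox ring. It is not a compactness device.
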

Second, the following result is convenient for our purposes as smoothness ensures the applicability of Hirzebruch--Riemann--Roch for convenient evaluation of the Euler characteristic.
\begin{theorem}[{\negthinspace\negthinspace\negthinspace\cite[Th. 2.4]{kollar1989flops}}]
    \label{th:flop_of_smooth_cy3}
    If $X$ is a smooth Calabi--Yau threefold, so are its SQMs.
\end{theorem}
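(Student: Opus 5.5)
The statement is Kollár's theorem: SQMs of a smooth Calabi--Yau threefold are again smooth Calabi--Yau threefolds. The plan is to reduce to single flops and then prove that a flop of a smooth threefold is smooth.

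\emph{Step 1: reduce to flops.} Let $\varphi : X \dashrightarrow X'$ be an SQM with $X$ a smooth Calabi--Yau threefold. Since $K_X \sim 0$ and $X$ is terminal and $\mathbb{Q}$-factorial, $X$ is Calabi--Yau type with $\Delta = 0$. Choose an ample divisor $H'$ on $X'$ and set $H = \varphi^{-1}_* H'$. Then $H$ is movable and big. Run the $(K_X + \varepsilon H)$-MMP, i.e.\ the $H$-MMP, which terminates by \cite{BCHM10}, or by \cite{shokurov19963} in dimension three (\cref{prop:cy_mmp}).

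Because $H$ is movable, no divisor is contracted, so every step is a flip that is $K$-trivial, i.e.\ a flop. The output is the $H$-minimal model, on which $H$ is nef and big. Since $H'$ is ample, this model is $X'$ by uniqueness of the ample model; compare the disjointness in \cref{prop:cy_preq}. So $\varphi$ factors as a finite sequence of $K$-trivial flops $X = X_0 \dashrightarrow X_1 \dashrightarrow \dots \dashrightarrow X_n = X'$, each associated with an extremal small contraction $X_i \to Z_i$.

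\emph{Step 2: invariants preserved along the sequence.} Being isomorphic in codimension one, each step preserves $K \sim 0$ as a Weil divisor and $\mathbb{Q}$-factoriality (for SQMs this is part of the definition). It also preserves terminal singularities, since discrepancies of divisors over $X$ agree for crepant SQMs \cite[Prop.~3.51]{KM}. Finally, $h^i(\mathcal O)=0$ for $i=1,2$ is preserved: terminal singularities are rational, so these are birational invariants.

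\emph{Step 3: smoothness is preserved by a flop.} This is the main obstacle, and it requires using that $X_i$ is smooth, not merely terminal. The first approach I would try is Kollár's deformation argument. Locally analytically, a flopping contraction $X \to Z$ of a smooth threefold contracts a tree of rational curves to a compound Du Val (cDV) point. By Reid's description, $Z$ near that point is a hypersurface $g(x,y,z) + t\,f(x,y,z,t)=0$, and the flop $X^+$ is obtained from $X$ by the involution of the double-cover structure $Z \to \mathbb{C}^3$. Equivalently, $X^+ \cong X$ locally analytically over $Z$ after twisting by this involution. Hence $X^+$ is smooth wherever $X$ is.

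If that route fails, the fallback is a more intrinsic argument. Show that $X^+$ is Gorenstein terminal and $\mathbb{Q}$-factorial with the same (local) Betti numbers and analytic local class groups as $X$. Then use that a Gorenstein terminal threefold singularity is an isolated cDV hypersurface point, and that its local class group is nontrivial unless it is smooth, appealing to factoriality properties carried over from $X$.

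Inducting along the sequence of flops then shows that $X'$ is a smooth Calabi--Yau threefold.
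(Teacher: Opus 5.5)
Your proposal is correct and is essentially the argument the paper delegates to Koll\'ar. You factor the SQM into $K$-trivial flops by running the $H$-MMP, then identify each flop of a smooth threefold, locally analytically over the cDV point $x^2=h(y,z,t)$, with the original threefold composed with the covering involution $\tau$. The fact you should state explicitly is that $B+\tau_*B=\pi^*\pi_*B$ is Cartier for the double cover $\pi$, so the strict transform of an $f$-anti-ample $D$ under $f^{-1}\circ\tau\circ f$ is $\sim -D$ near the flopping curves, hence $f$-ample. Discard the fallback, though: analytically factorial Gorenstein terminal points need not be smooth (e.g.\ $x^2+y^2+z^2+t^3=0$, i.e.\ $uv=-(z^2+t^3)$ with irreducible cusp), so trivial local class groups on $X^+$ would not force smoothness.
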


\subsection{Zariski Decomposition} \label{sec:zariski}

Zariski decomposition is a general concept which takes different forms in different contexts but broadly refers to a splitting of a divisor (or its class) into two pieces: a ``positive'' or ``movable'' component and a ``negative'' or ``fixed'' component, in direct analogy to the decomposition of a linear system into its fixed and movable components \cite{Nakayama04}. Zariski originally developed the notion for surfaces \cite{zariski} but the decomposition is more challenging to implement in higher dimensions. However, for any collection of divisors which admit a Mori chamber decomposition, there is a natural notion of Zariski decomposition. This is well understood for Mori dream spaces \cite{HuKeel, Okawa}, and naturally extends to Calabi--Yau type varieties, thanks to the results of \cite{gachet2024effective} regarding Mori chamber decompositions, as stated in \cref{prop:cy_preq}.

Consider an effective $\mathbb{Q}$-divisor $D$ on a variety $X$ belonging to a Mori chamber $\mathcal{C}$ associated to some birational contraction $f : X \dashrightarrow Y$ with $r$ prime exceptional divisors $E^{\mathcal C}_1, \dots, E^{\mathcal C}_{r}$. 
\begin{equation}
    \mathcal{C} = f^* (\Nef(Y) \cap \Eff(Y)) + \mathrm{ex}(f)~.
\end{equation}
Under our standing hypotheses $\Cl(X)_{\mathbb{R}} \cong N^1(X)_{\mathbb{R}}$, and there is a canonical decomposition of $[D]$ induced by the above expression of $\mathcal{C}$. 
\begin{equation}
    [D] = [P_\mathcal{C}(D)] + [N_\mathcal{C}(D)]~, \qquad [P_\mathcal{C}(D)] \in f^* (\Nef(Y) \cap \Eff(Y)), \;[N_\mathcal{C}(D)] \in \mathrm{ex}(f)~.
\end{equation}
Since $\mathrm{ex}(f)$ is the simplicial cone generated by the classes of the exceptional divisors $E_i^{\mathcal C}$, every class $[N_\mathcal{C}(D)] \in \mathrm{ex}(f_{\mathcal C})$ admits a unique expression
\begin{equation}
    [N_\mathcal{C}(D)] = \sum_{i=1}^{r} \lambda^{\mathcal C}_i(D) [E^{\mathcal C}_i]~.  
\end{equation}
Moreover, since each $[E^{\mathcal C}_i]$ has a unique effective representative $E^{\mathcal C}_i$, the decomposition of classes lifts uniquely to a decomposition of divisors, enabling the following definition.
\begin{definition}
    \label{def:zariski_decomp}
    In the setup described above, let $D \in \mathcal C$. The following is called the \emph{Zariski decomposition} of $D$: 
    \begin{equation*}
        D = P_\mathcal{C}(D) + N_\mathcal{C}(D) \qquad \left( N_\mathcal{C}(D) = \sum_{i=1}^{r} \lambda^{\mathcal C}_i(D) E^{\mathcal C}_i, \,\, P_\mathcal{C}(D) = D - N_\mathcal{C}(D) \right) ~.   
    \end{equation*}
    The divisors $P_\mathcal{C}(D)$ and $N_\mathcal{C}(D)$ are called the \emph{positive} (or movable) and \emph{negative} (or fixed) parts, respectively.
\end{definition}
Note that the Zariski decomposition is determined by the functions $\lambda^{\mathcal C}_i$ mapping divisors $D \in \mathcal{C}$ to $\mathbb{R}$. This function is linear because exceptional divisors are not torsion and have linearly independent classes.
\begin{corollary}
    \label{cor:zariski_decomp_lin_and_mov}
    $P_\mathcal{C}$, $N_\mathcal{C}$, and $\lambda^{\mathcal C}_i$ extend to linear functions on $\Cl(X)$.
\end{corollary}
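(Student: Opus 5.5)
The plan is to define the extensions explicitly through a choice of linear coordinates, and then check that they restrict to the given decomposition on $\mathcal C$. By \cref{def:mori_chamber} and the accompanying rank count, we have $\rank N^1(X) = \rank N^1(Y) + r$. The pullback $f^*\colon N^1(Y)_{\mathbb R}\to N^1(X)_{\mathbb R}$ is injective, since $f_* f^* = \mathrm{id}$ for a birational contraction. The classes $[E^{\mathcal C}_1],\dots,[E^{\mathcal C}_r]$ are linearly independent: this is implicit in $\mathrm{ex}(f)$ being simplicial, and it also follows from the negativity lemma (\cref{lem:negativity}) applied on a common resolution, because a nonzero combination of exceptional divisors cannot be numerically trivial. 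Moreover, $f^*N^1(Y)_{\mathbb R}\cap \mathrm{span}\{[E_i^{\mathcal C}]\}=0$. Indeed, if $f^*A=\sum a_i E_i$, then pushing forward gives $A=f_*f^*A=\sum a_i f_*E_i=0$, so $A=0$, and independence then forces all $a_i=0$. Counting dimensions, we obtain a direct sum decomposition
\[
N^1(X)_{\mathbb R}= f^*N^1(Y)_{\mathbb R}\oplus \textstyle\bigoplus_{i=1}^r \mathbb R[E_i^{\mathcal C}].
\]

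Next we define the extensions. Let $\lambda^{\mathcal C}_i$ be the $i$-th coordinate projection with respect to this splitting, set $N_{\mathcal C}(D)=\sum_i\lambda_i^{\mathcal C}(D)E_i^{\mathcal C}$, and set $P_{\mathcal C}(D)=D-N_{\mathcal C}(D)$. All three are precomposed with the natural homomorphism $\Cl(X)\to \Cl(X)_{\mathbb R}\cong N^1(X)_{\mathbb R}$, which is available under the standing hypotheses. The maps $\lambda_i^{\mathcal C}$ are then linear, being a group homomorphism composed with a linear projection. The map $N_{\mathcal C}$ is linear because it is a linear combination of the $\lambda_i^{\mathcal C}$ with fixed divisors as coefficients, valued in $\mathbb Q$- or $\mathbb R$-divisors. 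Finally, $P_{\mathcal C}=\mathrm{id}-N_{\mathcal C}$ is a difference of linear maps on divisors.

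It remains to check that these extensions agree with \cref{def:zariski_decomp} on $\mathcal C$. For $D\in\mathcal C$, the decomposition $[D]=[P]+[N]$ with $[P]\in f^*(\Nef(Y)\cap\Eff(Y))$ and $[N]\in\mathrm{ex}(f)$ is in particular a decomposition with respect to the direct sum above. By uniqueness of such decompositions, its coefficients coincide with the projections $\lambda_i^{\mathcal C}(D)$, and so the lifts to divisors coincide as well.

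The only potential obstacle is the direct-sum claim, specifically the independence of the exceptional classes from each other and from $f^*N^1(Y)$. This is settled by the pushforward argument together with the simpliciality of $\mathrm{ex}(f)$, which the paper already uses.
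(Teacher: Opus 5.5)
Your proposal is correct and follows essentially the paper's reasoning: the paper justifies the corollary only by remarking that the $\lambda^{\mathcal C}_i$ are linear because the exceptional classes are non-torsion and linearly independent, and your splitting $N^1(X)_{\mathbb R}= f^*N^1(Y)_{\mathbb R}\oplus\bigoplus_i\mathbb R[E_i^{\mathcal C}]$, with the $\lambda^{\mathcal C}_i$ taken as coordinate projections, makes that remark precise. Your verification, via $f_*f^*=\mathrm{id}$, that $f^*N^1(Y)_{\mathbb R}$ meets the span of the exceptional classes only in $0$ supplies a step the paper leaves implicit. That step is exactly what makes the chamberwise decomposition unique, so that it agrees with your global projections on $\mathcal C$.
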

Additionally, the support of $N_\mathcal{C}(D)$ encodes information about both the $D$-MMP as well as the stable base locus of $D$. In particular, the prime components of that support are both the prime exceptional divisors of $f$ and the prime divisors in the stable base locus of $D$. The former is true essentially by construction, but the latter is more non-trivial: in the Mori Dream Space case, it is similar to \cite[Prop 1.11 (5)]{HuKeel} and \cite[Prop. 2.15]{Okawa}.
\begin{proposition}
    \label{prop:zariski}
    Let $X$ be a Calabi--Yau type variety (Mori dream space) with a big (effective) divisor $D$ belonging to the Mori chamber $\mathcal{C} = f^* (\Nef(Y) \cap \Eff(Y)) + \mathrm{ex}(f)$ associated to a birational contraction $f : X \dashrightarrow Y$ with prime exceptional divisors $E_i$. Then,
    \begin{enumerate}
        \item $P_\mathcal{C}(D)$ is big (effective).
        \item $\Bdiv(D) = \mathrm{supp} \, N_\mathcal{C}(D)$.
        \item We have the following inequality.
        \begin{equation}
            \label{eq:bound_on_asymptotic_vanishing}
            \sigma_{E_i}(D) \geq \lceil \lambda^{\mathcal C}_i(D) \rceil
        \end{equation}
        In particular $\lceil \lambda^{\mathcal C}_i(D) \rceil > 0$ if and only if $E_i$ belongs to the stable base locus of $D$.
    \end{enumerate}
\end{proposition}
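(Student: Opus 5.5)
We prove the three items in order, working with the marked contraction $f\colon X\dashrightarrow Y$ and its exceptional divisors $E_1,\dots,E_r$. Write $D=P+N$ with $P=P_{\mathcal C}(D)$ and $N=N_{\mathcal C}(D)=\sum_i\lambda_i E_i$, $\lambda_i\ge 0$.

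\emph{Item (1).} The starting observation is that $P=f^*f_*D$ as a $\mathbb Q$-divisor class. Indeed, $f_*E_i=0$, so $f_*D=f_*P$. Moreover $[P]\in f^*(\Nef(Y)\cap\Eff(Y))$, so $[P]=f^*[D_Y]$ for a unique class $[D_Y]$. Pushing forward gives $[D_Y]=[f_*P]=[f_*D]$, using $f_*f^*=\mathrm{id}$ on classes.

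Next, $f^*$ preserves bigness: it is the pullback of a class along the morphism part $h$ of a factorization $f=h\circ g$ (\cref{prop:CY_type_bir_contract_factor}, resp.\ \cref{prop:MDS_bir_contract_factor}), followed by the SQM $g^{-1}$. Pullback along the birational morphism $h$ preserves bigness, and SQMs identify big cones. So it suffices that $f_*D$ is big on $Y$, and this holds because pushforward along a contraction is surjective on sections and preserves bigness ($h^0(Y,mf_*D)\ge h^0(X,mD)$).

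In the Mori dream space case we only need effectivity. Here $f_*D$ is nef on the Mori dream space $Y$, hence semiample, hence effective. Then $P\sim_{\mathbb Q} f^*f_*D$ is effective, since the pullback of an effective divisor along $h$ is effective and SQMs preserve effectivity.

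\emph{Item (3).} Choose a common resolution $(\alpha,\beta)\colon W\to X\times Y$. By $D$-nonpositivity (the chamber gives a $D$-minimal model), $\alpha^*D-\beta^*f_*D$ is effective and $\beta$-exceptional. Pushing this forward by $\alpha$ gives exactly $D-f^*f_*D=N$, with coefficient $\lambda_i$ along $E_i$.

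Take any $D'\in|D|$. Since $f$ is a contraction, the divisor $f_*D'$ is effective and lies in $|f_*D|$. The difference $\alpha^*D'-\beta^*f_*D'$ is $\beta$-exceptional, $\mathbb Q$-linearly equal to $\alpha^*D-\beta^*f_*D$, and $\beta$-nef up to sign because the pullback terms are trivial over $Y$. By the negativity lemma (\cref{lem:negativity}), applied to the difference of these two $\beta$-exceptional divisors, which is numerically trivial over $Y$, the two coincide. Since $\beta^*f_*D'\ge0$, we get
\[\alpha^*D'\ge \alpha^*D-\beta^*f_*D.\]
Pushing forward by $\alpha$ gives $D'\ge N$, so $\nu_{E_i}(D')\ge\lambda_i$. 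Integrality of $\nu_{E_i}(D')$ then yields $\sigma_{E_i}(D)\ge\lceil\lambda_i\rceil$.

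\emph{Item (2).} This is the main obstacle. Applying (3) to $mD$ gives $\sigma_{E_i}(mD)\ge m\lambda_i$, so $E_i\subset\Bdiv(D)$ whenever $\lambda_i>0$. This proves $\mathrm{supp}\,N\subset\Bdiv(D)$ and also the "if" direction of the final sentence of (3).

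For the reverse inclusion we must show that no other prime divisor lies in the stable base locus. Since $mD=mP+mN$ with $mN$ effective and supported on the $E_i$, it suffices to show that some multiple of $P$ has no divisorial base components. We have $P\sim_{\mathbb Q}f^*f_*D$ with $f_*D$ nef and big on the Calabi--Yau type variety $Y$ (\cref{prop:dmmp_preserves_cy}). By the basepoint-free theorem for the klt pair $(Y,\Delta_Y+\varepsilon f_*D)$, whose log canonical divisor is numerically a positive multiple of $f_*D$, the divisor $f_*D$ is semiample; in the Mori dream space case this is part of the definition. Pulling back along $h$ preserves semiampleness, and $g^{-1}$ is an isomorphism in codimension one, so $|mP|$ has no fixed divisors. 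Thus $\Bdiv(D)\subset\mathrm{supp}\,N$.

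For the "only if" direction of the last sentence of (3), note that $\lambda_i=0$ means $E_i\not\subset\mathrm{supp}\,N=\Bdiv(D)$. The delicate point is this semiampleness of nef-and-big divisors on $Y$ together with the transfer through the SQM $g$, which is where the Calabi--Yau type hypothesis is used.
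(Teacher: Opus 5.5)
Your proof is correct. It follows the same overall structure as the paper: item (3) by comparing an arbitrary $D'\in|D|$ with $N$, then item (2) from (3) together with semiampleness of the positive part via the basepoint-free theorem. The individual steps, however, take different routes.

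For (1), the paper argues that $mP_{\mathcal C}(D)$ is a subtraction of $mD$, which tacitly uses the bound in (3). Your push--pull argument is independent of (3) and so removes that ordering issue: $f_*D$ is big because $H^0(X,\mathcal O_X(mD))\hookrightarrow H^0(Y,\mathcal O_Y(mf_*D))$, and then $f^*=g^*h^*$ preserves bigness. Note that this map on sections is injective, which is the direction you use; surjectivity would require $D$-nonpositivity, so ``surjective'' is the wrong word there.

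For (3), the paper first applies an SQM to make $f$ a morphism. It then invokes the negativity lemma for $D'-N$, which is numerically trivial over $Y$ and has effective pushforward. You instead work on a common resolution, and there the negativity lemma is superfluous. Since $f$ is a contraction, $f_*\mathrm{div}_X(\phi)=\mathrm{div}_Y(\phi)$, so $\alpha^*D'-\beta^*f_*D'=\alpha^*D-\beta^*f_*D$ holds on the nose. Hence $D'=N+f^*f_*D'\geq N$ directly. Your remark that ``the pullback terms are trivial over $Y$'' is not right for $\alpha^*D'$ alone, but the difference you actually apply the lemma to is indeed trivial. Also, the identification of $\alpha_*(\alpha^*D-\beta^*f_*D)=D-f^*f_*D$ with $N$ coefficient by coefficient uses the linear independence of the classes $[E_i]$; you should state this explicitly.

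For (2), you apply the basepoint-free theorem on $Y$ to $f_*D$ and pull back, whereas the paper applies it on the SQM to the nef and big divisor $P$. These are equivalent. The perturbation by $\varepsilon f_*D$ is unnecessary, since the pair $(Y,\Delta_Y)$ already satisfies the hypotheses.
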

\begin{proof}
    First, due to the factorization of $f$ into a composition of an SQM and morphism (\cref{prop:CY_type_bir_contract_factor} for Calabi--Yau type varieties and \cref{prop:MDS_bir_contract_factor} for Mori dream spaces), we can freely perform an SQM to render $f$ a morphism and $P_\mathcal{C}(D)$ nef, while preserving the linear system and global sections of $D$. This also preserves either the Calabi--Yau type or Mori dream space property, by \cref{prop:dmmp_preserves_cy} and \cref{prop:MDS_chambers}, respectively. 
    
    For the first item, in the Calabi--Yau type (Mori dream space) case, $P_\mathcal{C}(D)$ is big (effective) due to the bigness (effectivity) of $D$, as $mP_\mathcal{C}(D)$ is a subtraction of $mD$ for all positive integers $m$ rendering both divisors integral, so the global sections of these families agree.
    
    From here, we show the third item of the proposition. Let $D' \in |D|$ and decompose 
    \begin{equation}
        D' = P + \sum_i \nu_{E_i}(D') E_i
    \end{equation}
    Then if we define the divisor
    \begin{equation}
        \Theta := N_\mathcal{C}(D) - \sum_i \nu_{E_i}(D') E_i = \sum_i (\lambda^{\mathcal C}_i(D) - \nu_{E_i}(D')) E_i
    \end{equation}
    then $P - \Theta$ is numerically equivalent to $P_\mathcal{C}(D)$, which in turn is numerically trivial over $Y$ (i.e., considering only curves contracted by $f$). Thus $- (P - \Theta)$ is numerically nef over $Y$ and $f_*(P - \Theta) = f_* P$ is effective. Hence, by the negativity lemma (\cref{lem:negativity}), $P - \Theta$ is effective. Moreover, $\Theta$ is $f$-exceptional while $P \geq 0$ has no exceptional component by construction, so $-\Theta$ by itself is effective. Finally, $D'$ was arbitrary, so we can replace $\nu_{E^{\mathcal C}_i}(D)$ by $\sigma_{E^{\mathcal C}_i}(D)$ in $\Theta$ and deduce 
    \begin{equation}
        \sigma_{E^{\mathcal C}_i}(D) \geq \lambda^{\mathcal C}_i(D)~.
    \end{equation}
    Applying the ceiling to both sides yields the desired result.
    
    The third item implies $\Bdiv(D) \supset \mathrm{supp} \, N_\mathcal{C}(D)$: an $E^{\mathcal C}_i$ belongs to the latter precisely when $\lambda^{\mathcal C}_i(D) > 0$ meaning $\sigma_{E^{\mathcal C}_i}(D) > 0$, and one can repeat this argument for all multiples of $D$. Let us now show the opposite inclusion. It suffices to show that the restriction of $D$ to $U := X \setminus \mathrm{supp} \, N_\mathcal{C}(D)$ has no stable fixed divisorial component. This is just $P_\mathcal{C}(D)|_U$, so it certainly suffices for $P_\mathcal{C}(D)$ to be semiample. If $X$ is a Mori dream space, then because $P_\mathcal{C}(D)$ is nef, it is immediately semiample. For $X$ Calabi--Yau type, semiamplitude isn't known to be implied by nefness (see, e.g., \cite{lazic2016morrison}). However, $P_\mathcal{C}(D)$ is big by the first item. Consequently, we can apply the classical basepoint free theorem from the theory of minimal models (see, e.g., \cite[Th. 3.3]{KM}). Namely, letting $(X, \Delta)$ denote the klt pair assured by \cref{def:CY_fano_type} and letting $mP_\mathcal{C}(D)$ be Cartier for $m > 0$, then $mP_\mathcal{C}(D) - (K_X + \Delta) \equiv mP_\mathcal{C}(D)$ is nef and big, hence $P_\mathcal{C}(D)$ is semiample.
\end{proof}
\begin{corollary}
    \label{cor:chamber_movable_part}
    Let $X$ be Calabi--Yau type (a Mori dream space). Then for every Mori chamber $\mathcal{C}$, $P_\mathcal{C}(D) \in \Mov(X)$ for every big (effective) divisor $D$ and
    \begin{equation}
        f_{\mathcal C}^* \bigl( \Nef(X_{\mathcal C}) \cap \Bigc(X_{\mathcal C}) \bigr) = \mathcal C \cap \Mov(X) \cap \Bigc(X),
    \end{equation}
    with $\Bigc$ replaced by $\Eff$ throughout in the Mori dream space case. 
\end{corollary}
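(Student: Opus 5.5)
We prove the two assertions in turn. First, that $P_{\mathcal C}(D)$ is movable for every big (effective) $D\in\mathcal C$. Then, the displayed equality of cones by double inclusion.

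For the first assertion, fix $D \in \mathcal C$ big (effective). By \cref{prop:zariski}(1), $P_{\mathcal C}(D)$ is big (effective). The proof of \cref{prop:zariski} shows that after composing with an SQM $g: X\dashrightarrow X'$ (\cref{prop:CY_type_bir_contract_factor}, resp.\ \cref{prop:MDS_bir_contract_factor}), the pushforward $g_*P_{\mathcal C}(D) = h^*P_Y$ is the pullback of a nef divisor under a morphism $h$. That proof also shows $g_*P_{\mathcal C}(D)$ is semiample: via the basepoint free theorem in the Calabi--Yau type case, using bigness, and directly in the Mori dream space case. A semiample divisor has no stable base locus, so $g_*P_{\mathcal C}(D)$ is movable on $X'$. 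Movability is defined by the absence of divisorial stable base components, and SQMs are isomorphisms in codimension one, so they preserve it. Hence $P_{\mathcal C}(D)\in \Mov(X)$. Equivalently, applying \cref{prop:zariski}(2) to the divisor $P_{\mathcal C}(D)$, which lies in $f^*(\Nef\cap\Eff)\subset\mathcal C$ with zero negative part, gives $\Bdiv(P_{\mathcal C}(D))=\emptyset$ directly. We will use this formulation for rational classes and pass to real classes by taking cones.

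For the inclusion $\subseteq$, take $D=f_{\mathcal C}^*B$ with $B$ nef and big on $X_{\mathcal C}$. Then $D\in\mathcal C$ with $N_{\mathcal C}(D)=0$, so $D=P_{\mathcal C}(D)$, which is movable by the above. Bigness of $D$ comes from $h^0(X,mD)=h^0(X_{\mathcal C},mB)$ by \cref{prop:mmp_preserves_sections}, since $X_{\mathcal C}$ is a $D$-minimal model. In the Mori dream space case we use effectivity and the equality $\Nef=\Nef\cap\Eff$ noted in the remark after \cref{prop:MDS_chambers}.

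For the inclusion $\supseteq$, take $D\in\mathcal C\cap\Mov(X)\cap\Bigc(X)$. By \cref{prop:zariski}(2), $\mathrm{supp}\,N_{\mathcal C}(D)=\Bdiv(D)$. Movability means no prime divisor lies in the stable base locus, so all $\lambda_i^{\mathcal C}(D)=0$. Here we use the linear independence of the $[E_i]$ from \cref{cor:zariski_decomp_lin_and_mov} together with $\lambda_i\ge 0$ on $\mathcal C$. Hence $D=f_{\mathcal C}^*B$ with $B=f_{\mathcal C*}D$ nef, and $B$ is big (effective) again by \cref{prop:mmp_preserves_sections}.

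The main obstacle is the boundary subtlety. $\Mov(X)$ is neither open nor closed, and real classes need not have rational representatives on which stable base loci make sense. We will handle this by working with $\mathbb Q$-classes and noting that all the cones involved are convex cones determined by their rational points in the relevant region, since $\mathcal C$ is a cone over $f^*(\Nef\cap\Eff)$ and the simplicial $\mathrm{ex}(f)$. This argument must be stated carefully, and we may need to interpret $\Mov$ via divisorial stable base loci of $\mathbb Q$-divisors as the paper does implicitly.
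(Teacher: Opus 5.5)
Your proposal is correct and is essentially the paper's proof. \cref{prop:zariski}(1) makes $P_{\mathcal C}(D)$ big (effective); \cref{prop:zariski}(2), applied to $P_{\mathcal C}(D)$ with its vanishing negative part, gives $\Bdiv(P_{\mathcal C}(D))=\emptyset$; and the same item yields both inclusions of the displayed equality, with your semiampleness detour merely unpacking the proof of item (2). The real-class worry in your last paragraph is not addressed in the paper either, since it works with divisor classes throughout, but your proposed fix does not work as stated: convex cones are not in general determined by their rational points (they can differ along irrational boundary rays), so it is cleaner to state the result for $\mathbb{Q}$-classes, as the paper implicitly does.
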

\begin{proof}
    In the Calabi--Yau type (Mori dream space) case, applying \cref{prop:zariski} yields that $P_\mathcal{C}(D)$ is big (effective). Because $N_\mathcal{C}(P_\mathcal{C}(D)) = 0$, a further application gives $\Bdiv(P_\mathcal{C}(D)) = \emptyset$, so $P_\mathcal{C}(D)$ is movable. For the displayed equality, if $A$ is nef and big on $X_{\mathcal C}$ then $f_{\mathcal C}^* A$ is big and lies in $\mathcal{C}$ with $N_\mathcal{C}(f_{\mathcal C}^* A) = 0$, hence is movable by the same item. Conversely, a big movable class $D \in \mathcal{C}$ has $\Bdiv(D) = \emptyset$, so $N_\mathcal{C}(D) = 0$ and $D = f_{\mathcal C}^*(f_{\mathcal C *} D)$ with $f_{\mathcal C *} D$ nef and big.
\end{proof}

The important result here is that $\lambda^{\mathcal C}_i(D)$ characterizes important aspects of the stable base locus of $D$. Hence, for a Calabi--Yau type variety or Mori dream space, the Zariski decomposition --- enabled by the geography of models --- thus exhibits that the support of the stable base locus (and a lower bound on the degree of vanishing along components of that locus) is given piecewise on the effective cone by the Mori chambers. This will prove essential for our subtraction method, as otherwise, computing aspects of the fixed component of a linear system requires knowledge of its global sections, which are exactly what we are aspiring to compute.

It is worth stressing that for integral divisor classes, the negative part of the Zariski decomposition is generally rational, and the construction is sensitive only to the $\mathbb{Q}$-linear equivalence class; in particular, it is blind to torsion in the class group. It will eventually be preferable to modify this decomposition to preserve integrality and maintain awareness of torsion. For this, we will have to relax linearity to quasilinearity and sacrifice the movability of the positive part.
\begin{remark}
    Intuitively, the Zariski decomposition is rational for the following reason. Consider the Mori dream space case for simplicity: then, if $D \in \mathcal{C}$, $P_{\mathcal{C}}(D)$ is $f^* f_* D$, and while $f_* D$ is always $\mathbb{Q}$-Cartier in our setup --- indeed, this is necessary to define the pullback --- if it is not Cartier there is no reason it should pullback to anything nice. Indeed, there is no need for the pullback to be Weil: there is no need for it to be anything other than $\mathbb{Q}$-Weil.
\end{remark}

\subsection{Cohomology Theorems}

Our methods hinge on the reduction of global sections to Euler characteristics on suitable birational models with controlled singularities. This is furnished by Kawamata--Viehweg vanishing, which we now recall.
\begin{theorem}[{Kawamata--Viehweg vanishing (e.g., \cite[Cor. 2.49]{Fujino09})}]\label{th:KV}
    Let $(X, \Delta)$ be a projective klt pair and let $D$ be a $\mathbb{Q}$-Cartier Weil divisor on $X$. If $D - (K_X + \Delta)$ is big and nef, then
    \begin{equation*}
        H^i\bigl(X,\mathcal O_X(D)\bigr)=0
    \qquad\text{for all } i>0.
    \end{equation*}
\end{theorem}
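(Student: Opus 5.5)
The plan is the standard reduction to the smooth, simple normal crossings form of Kawamata--Viehweg vanishing, followed by a Leray spectral sequence argument. Set $N := D-(K_X+\Delta)$. This is a nef and big $\mathbb Q$-Cartier $\mathbb Q$-divisor. Fix a log resolution $f\colon Y\to X$ of $(X,\Delta+D)$, chosen so that the exceptional locus together with the supports of $f^{-1}_*\Delta$ and $f^{-1}_*D$ is simple normal crossings. Write
\begin{equation*}
K_Y+f^{-1}_*\Delta = f^*(K_X+\Delta)+\sum_E a_E E,\qquad f^*D = f^{-1}_*D+\sum_E c_E E,
\end{equation*}
with $a_E>-1$ by the klt hypothesis and $c_E\in\mathbb Q$. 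The integral divisor we work with on $Y$ is
\begin{equation*}
D_Y := f^{-1}_*D + \sum_E \lceil c_E + a_E\rceil E = \bigl\lceil f^*D - f^{-1}_*\Delta + \textstyle\sum_E a_E E\bigr\rceil .
\end{equation*}
The second equality uses that $D$ is integral and the coefficients of $\Delta$ lie in $[0,1)$, so rounding up does not change the non-exceptional part.

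The proof then has three steps.

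\emph{Step 1: positivity of $D_Y-K_Y$.} We compute
\begin{equation*}
D_Y-K_Y \equiv f^*N + B,
\end{equation*}
where $B = f^{-1}_*\Delta + \bigl(\lceil\cdot\rceil-(\cdot)\bigr)$ is the rounding-up correction. Here $B$ has simple normal crossings support and all coefficients in $[0,1)$, because $\Delta$ has coefficients in $[0,1)$ and each rounding defect lies in $[0,1)$. Since $f^*N$ is nef and big, the smooth version of Kawamata--Viehweg vanishing with fractional SNC boundary (KMM, or \cite{KM} Th.~2.64) gives $H^i(Y,\mathcal O_Y(D_Y))=0$ for $i>0$. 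The relative version of the same statement, applied over $X$ where $f^*N$ is $f$-nef and $f$-big, gives $R^if_*\mathcal O_Y(D_Y)=0$ for $i>0$.

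\emph{Step 2: identifying the pushforward.} We show $f_*\mathcal O_Y(D_Y)=\mathcal O_X(D)$. Away from the exceptional locus, $D_Y$ agrees with $D$, which gives the inclusion $\subseteq$ by reflexivity of $\mathcal O_X(D)$. For the reverse inclusion, take a local section $s$ with $\mathrm{div}(s)+D\ge 0$. Then $f^*\mathrm{div}(s)+f^*D\ge 0$, so the integer $\nu_E(f^*\mathrm{div}(s))$ is at least $-c_E$, hence at least $\lceil -c_E\rceil$. It then suffices to check that
\begin{equation*}
\lceil -c_E\rceil + \lceil c_E+a_E\rceil \ge 0 .
\end{equation*}
This holds because $\lceil c_E+a_E\rceil > c_E-1$, hence $\lceil c_E+a_E\rceil \ge \lceil c_E\rceil-1 \ge -\lceil -c_E\rceil$ after accounting for integrality. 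I will redo this inequality carefully; it is exactly where $a_E>-1$ enters.

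\emph{Step 3: Leray.} By Step 1 the higher direct images vanish, and by Step 2 the direct image is $\mathcal O_X(D)$. The Leray spectral sequence therefore degenerates to $H^i(X,\mathcal O_X(D))\cong H^i(Y,\mathcal O_Y(D_Y))=0$ for all $i>0$.

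I expect the main obstacle to be the bookkeeping in Steps 1 and 2. The point is that a single choice of $D_Y$ must simultaneously have fractional boundary coefficients strictly less than $1$ in $D_Y-K_Y$, and push forward exactly to the reflexive sheaf $\mathcal O_X(D)$. This must hold even though $D$ is only $\mathbb Q$-Cartier, so the $c_E$ are genuinely fractional. If the rounding does not work out directly, I would first try perturbing $\Delta$ by a small multiple of an effective $f$-exceptional anti-ample divisor, which keeps the pair klt and makes all relevant fractional parts generic. The smooth SNC vanishing theorem itself I treat as a black box.
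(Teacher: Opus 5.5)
The paper does not prove this statement; it is quoted from \cite[Cor.~2.49]{Fujino09}, so there is no internal argument to compare against. Your proposal is the standard self-contained reduction to the smooth SNC case (cf.\ \cite[Th.~2.70]{KM}), and it is correct apart from one slip in Step 2.

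The bookkeeping in Step 1 checks out. Set $M := f^*D - f^{-1}_*\Delta + \sum_E a_E E$, so that $D_Y = \lceil M\rceil$. Then $M - K_Y \sim_{\mathbb Q} f^*N$, and hence $D_Y - K_Y \sim_{\mathbb Q} f^*N + (\lceil M\rceil - M)$. Here $\lceil M\rceil - M = f^{-1}_*\Delta + \sum_E(\lceil c_E+a_E\rceil - c_E - a_E)E$ has SNC support and coefficients in $[0,1)$. This gives both $H^i(Y,\mathcal O_Y(D_Y))=0$ and $R^if_*\mathcal O_Y(D_Y)=0$ for $i>0$. Leray then finishes once $f_*\mathcal O_Y(D_Y)=\mathcal O_X(D)$ is established, and your inclusion via reflexivity is fine.

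The slip is in the inequality chain of Step 2. The step $\lceil c_E\rceil - 1 \ge -\lceil -c_E\rceil$ is false when $c_E\in\mathbb Z$: the left side is $c_E-1$ and the right side is $c_E$. Argue directly instead. The number $n_E := \nu_E(f^*\mathrm{div}(s))$ is an integer, and $n_E \ge -c_E$ because $f^*(\mathrm{div}(s)+D)\ge 0$. Hence $c_E + a_E > c_E - 1 \ge -n_E - 1$, and since $-n_E$ is an integer, this gives $\lceil c_E+a_E\rceil \ge -n_E$. So the rounding works as stated, and the perturbation of $\Delta$ you hold in reserve is unnecessary.

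Two housekeeping points:
\begin{itemize}
\item Take $f$ projective with divisorial exceptional locus. Then $Y$ is projective, so the absolute smooth vanishing applies, and the rounding defect is supported on the SNC divisor.
\item If you prefer not to quote relative Kawamata--Viehweg, you can get $R^if_*\mathcal O_Y(D_Y)=0$ from the absolute statement alone. Apply it to $D_Y + f^*(mA)$, with $A$ ample Cartier on $X$ and $m\gg 0$. Then use Serre vanishing and global generation of $R^if_*\mathcal O_Y(D_Y)\otimes\mathcal O_X(mA)$ in the Leray spectral sequence.
\end{itemize}

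What your route buys is a proof resting only on the smooth SNC form of the vanishing theorem. The paper's citation places the result in Fujino's more general framework but supplies no argument of its own.
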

The Calabi--Yau type (Fano type) property ensures the existence of precisely the klt pair needed to apply Kawamata--Viehweg vanishing to big nef divisors (nef divisors).
\begin{corollary}
    \label{cor:MAIN-VANISHING}
    Let a variety $X$ be Calabi--Yau type (Fano type) and let $D$ be a big nef divisor (nef divisor). Then $H^i\bigl(X,\mathcal O_X(D)\bigr) = 0$ for $i > 0$, so in particular $h^0\bigl(X,\mathcal O_X(D)\bigr) = \chi(X, \mathcal O_X(D))$.
\end{corollary}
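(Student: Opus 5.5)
The plan is to apply \cref{th:KV} directly, using the klt pair $(X,\Delta)$ supplied by \cref{def:CY_fano_type}, with the two cases treated in parallel. The only thing to check is that $D-(K_X+\Delta)$ is big and nef in each case. Since $X$ is $\mathbb{Q}$-factorial, the Weil divisor $D$ is automatically $\mathbb{Q}$-Cartier, so that hypothesis of \cref{th:KV} is immediate.

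In the Calabi--Yau type case we have $K_X+\Delta\equiv 0$, so $D-(K_X+\Delta)\equiv D$. Nefness and bigness depend only on the numerical class, so $D-(K_X+\Delta)$ is big and nef exactly when $D$ is. Hence \cref{th:KV} gives $H^i(X,\mathcal O_X(D))=0$ for all $i>0$.

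In the Fano type case, $-(K_X+\Delta)$ is ample and $D$ is nef. The sum of a nef class and an ample class is ample, so $D-(K_X+\Delta)$ is ample, and in particular big and nef. Again \cref{th:KV} applies and gives the vanishing of the higher cohomology. The one point to note is that $K_X+\Delta$ is only $\mathbb{Q}$-Cartier. This is harmless, because \cref{th:KV} is phrased for $\mathbb{Q}$-divisors $D-(K_X+\Delta)$, and ampleness and nefness make sense in $N^1(X)_{\mathbb{R}}$.

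Finally, with $H^i=0$ for all $i>0$, the Euler characteristic reduces to $\chi(X,\mathcal O_X(D))=h^0(X,\mathcal O_X(D))$, as claimed. No step is a real obstacle. The only care needed is the numerical-equivalence (rather than linear-equivalence) condition $K_X+\Delta\equiv0$ in the Calabi--Yau type case, which is handled by the numerical invariance of bigness and nefness.
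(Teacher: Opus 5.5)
Your proposal is correct and follows the paper's proof: apply \cref{th:KV} with the boundary $\Delta$ from \cref{def:CY_fano_type}, observing that $D-(K_X+\Delta)$ is numerically equivalent to $D$ (big and nef) in the Calabi--Yau type case, and is nef plus ample in the Fano type case. Your explicit remarks on $\mathbb{Q}$-Cartierness via $\mathbb{Q}$-factoriality and on the numerical invariance of bigness and nefness are details the paper leaves implicit.
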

\begin{proof}
    For $X$ Calabi--Yau type (Fano type), we apply \cref{th:KV} for $\Delta$ as in \cref{def:CY_fano_type}, and note that $D - (K_X + \Delta)$ is the sum of a big nef divisor and a nef divisor (a nef divisor and an ample divisor), which in particular must be nef and big.
\end{proof}
We additionally state Hirzebruch--Riemann--Roch --- a simple polynomial expression for the Euler characteristic in the smooth case --- as well as Snapper's theorem, which more generally ensures the quasipolynomial structure of Euler characteristics.
\begin{theorem}[Hirzebruch--Riemann--Roch \cite{o1981hirzebruch}]\label{th:HRR}
    Let $X$ be a smooth projective variety and let $L$ be a line bundle on $X$. Then
    \begin{equation*}
    \chi\bigl(X,\mathcal O_X(L)\bigr)=\int_X \mathrm{ch}(L)\,\mathrm{td}(T_X).
    \end{equation*}
    In particular, if $\dim X=n$, then $\chi(X,\mathcal O_X(L))$ is a polynomial of degree $n$ in the numerical class of $L$.
\end{theorem}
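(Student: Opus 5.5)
The statement to be proved is Hirzebruch--Riemann--Roch (\cref{th:HRR}), which the paper cites from \cite{o1981hirzebruch} as a classical result. I do not intend to re-derive it. Instead I sketch the standard route and explain how the polynomiality claim follows.

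The plan runs through Grothendieck--Riemann--Roch, or through a reduction to curves and surfaces via the splitting principle, and I would follow the Grothendieck route. First, establish GRR for the structure morphism $p\colon X\to \mathrm{pt}$. Factor a smooth projective $X$ as a closed embedding $i\colon X\hookrightarrow \mathbb P^N$ followed by the projection $\mathbb P^N\to\mathrm{pt}$. Then verify the two pieces separately:
\begin{enumerate}
\item For the projection, compute directly on $\mathbb P^N$. The classes $\mathcal O(k)$ generate $K_0(\mathbb P^N)$, both sides are additive, and one checks $\chi(\mathbb P^N,\mathcal O(k))=\binom{N+k}{N}=\int \mathrm{ch}(\mathcal O(k))\mathrm{td}(T_{\mathbb P^N})$. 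This last equality is a residue/generating-function identity.
\item For the closed embedding, use deformation to the normal cone to reduce to the zero section of a vector bundle. There, the Koszul resolution computes $\mathrm{ch}(i_*\mathcal F)$, and its Todd-class correction is exactly $\mathrm{td}(N_{X/\mathbb P^N})^{-1}$.
\end{enumerate}
Functoriality of GRR under composition, together with $\mathrm{td}(T_X)\,\mathrm{td}(N)=i^*\mathrm{td}(T_{\mathbb P^N})$, then gives $\chi(X,\mathcal O_X(L))=\int_X \mathrm{ch}(L)\,\mathrm{td}(T_X)$.

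The polynomiality claim follows formally. We have $\mathrm{ch}(L)=\sum_k c_1(L)^k/k!$, and only the terms with $k\le n$ pair nontrivially with $\mathrm{td}(T_X)$ in degree $n$. Hence $\chi$ is a polynomial of degree at most $n$ in $c_1(L)$, with leading term $c_1(L)^n/n!$. Since intersection numbers depend only on the numerical class, the polynomial is a function of $[L]_{\mathrm{num}}$.

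The main obstacle is the embedding step, namely the deformation to the normal cone and the compatibility of $\mathrm{ch}$ with pushforward. This is where the real content of Grothendieck's theorem lies. I would simply cite it (e.g.\ Fulton, Intersection Theory, Ch.~15) rather than reprove it, since the paper uses \cref{th:HRR} only as a black box.
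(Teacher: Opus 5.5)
Your proposal is correct and takes essentially the same approach as the paper, which gives no proof of this classical theorem and simply cites it \cite{o1981hirzebruch}; treating it as a black box, with the standard Grothendieck--Riemann--Roch sketch from Fulton, Ch.~15, is entirely appropriate. One detail to add: the paper claims degree exactly $n$, not merely at most $n$, and this holds because the top-degree part $D\mapsto D^n/n!$ is a nonzero form on $N^1(X)_{\mathbb R}$, since $A^n>0$ for any ample $A$.
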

Hirzebruch--Riemann--Roch shows that Euler characteristics are polynomial on smooth varieties. For the singular varieties that arise as $D$-minimal models, one loses an explicit intersection-theoretic formula, but a polynomiality statement survives through a theorem of Snapper.
\begin{theorem}[Snapper \cite{snapper1959multiples, snapper1960polynomials, kleiman1966toward}]
    \label{th:snapper}
    Let $X$ be a complete algebraic scheme with a coherent sheaf $\mathcal{F}$, and let $\mathcal{L}_1, \dots, \mathcal{L}_r$ be a finite set of Cartier divisors. Then $\chi(X, \mathcal{F} \otimes (\otimes_{i=1}^r \mathcal{L}_i^{\otimes n_i}))$ is a polynomial in the $n_i$ of degree at most the dimension of the support of $\mathcal{F}$.
\end{theorem}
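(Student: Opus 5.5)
We would prove the statement by induction on $d = \dim \operatorname{supp}\mathcal F$, following Kleiman's treatment \cite{kleiman1966toward}. The base case $d \le 0$ is immediate: $\mathcal F$ is supported on finitely many points, where every $\mathcal L_i$ is trivial. Hence $\chi(\mathcal F\otimes\bigotimes_i\mathcal L_i^{n_i}) = h^0(\mathcal F)$ is constant in $(n_1,\dots,n_r)$, a polynomial of degree $0$.

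For the inductive step we first reduce to a combinatorial criterion. A function $P:\mathbb Z^r\to\mathbb Z$ is a polynomial of degree at most $d$ if, for each $i$, the finite difference $\Delta_i P(n) = P(n+e_i)-P(n)$ is a polynomial of degree at most $d-1$. This follows by integrating one variable at a time, using that a function of one integer variable with polynomial first difference of degree $d-1$ is a polynomial of degree at most $d$. It therefore suffices to show that, for a fixed invertible sheaf $\mathcal L=\mathcal L_i$ and any coherent $\mathcal G$ (in particular $\mathcal G = \mathcal F\otimes\bigotimes_j\mathcal L_j^{n_j}$), the difference $\chi(\mathcal G\otimes\mathcal L)-\chi(\mathcal G)$ is a $\mathbb Z$-linear combination of Euler characteristics of sheaves $\mathcal H\otimes\bigotimes_j\mathcal L_j^{n_j}$ with $\dim\operatorname{supp}\mathcal H\le d-1$. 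Here the $\mathcal H$ must be independent of $n$; we arrange this by building them from $\mathcal F$ alone and tensoring by $\bigotimes_j\mathcal L_j^{n_j}$ afterwards. The inductive hypothesis then gives the required degree bound on the difference.

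To produce these sheaves we use dévissage. By additivity of $\chi$ in short exact sequences, and since tensoring with invertible sheaves is exact, it suffices to treat $\mathcal F$ with a filtration whose graded pieces are of the form $\mathcal O_Z$-modules. Concretely, we reduce to $\mathcal F$ a coherent sheaf on an integral closed subscheme $Z$ with $\dim Z\le d$, torsion-free of rank one up to lower-dimensional pieces; the remaining pieces are handled by induction. On such $Z$, $\mathcal L|_Z$ has a nonzero rational section. Clearing denominators gives a nonzero coherent ideal $\mathcal I\subset\mathcal O_Z$ with injections $\mathcal I\otimes\mathcal L\hookrightarrow\mathcal O_Z$ and $\mathcal I\hookrightarrow\mathcal O_Z$, the latter tensored appropriately. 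Both have cokernels supported on proper closed subsets of $Z$, hence of dimension at most $d-1$. Tensoring with $\mathcal F\otimes\bigotimes_j\mathcal L_j^{n_j}$ and comparing the two resulting sequences yields
\[
\chi(\mathcal G\otimes\mathcal L)-\chi(\mathcal G)=\sum\pm\chi(\mathcal H_k\otimes\textstyle\bigotimes_j\mathcal L_j^{n_j}),
\]
with $\dim\operatorname{supp}\mathcal H_k\le d-1$. The kernels from possible non-flatness of the tensor products are also supported in dimension at most $d-1$ and are absorbed into the $\mathcal H_k$.

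We expect the main obstacle to be this last step: controlling the $\mathcal{T}or$ terms and kernels so that every error term genuinely has support of smaller dimension and is independent of $n$. The case of a general complete (not necessarily projective) scheme, where effective Cartier divisors avoiding associated points need not exist, is also delicate. We would work with ideals of denominators of a rational section on each integral piece, as above, rather than with effective Cartier divisors. In the applications in this paper $X$ is projective, and there the simpler route is available: write each $\mathcal L_i\cong\mathcal O(A_i-B_i)$ with $A_i,B_i$ effective Cartier divisors avoiding the associated points of $\mathcal F$. Then $0\to\mathcal F(-A)\to\mathcal F\to\mathcal F|_A\to0$ directly gives the difference relation with $\dim\operatorname{supp}\mathcal F|_A\le d-1$.
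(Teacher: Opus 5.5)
Your proposal is correct. It is the standard proof of Snapper's theorem for complete schemes, along the lines of \cite{kleiman1966toward}, which the paper cites without reproving: induction on $\dim\operatorname{supp}\mathcal F$, d\'evissage to sheaves on an integral $Z$, and comparison of $\mathcal F$ with $\mathcal F\otimes\mathcal L$ via the ideal of denominators of a rational section of $\mathcal L|_Z$.

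The step you flag as the main obstacle is harmless. Both comparison maps are isomorphisms off a proper closed subset $W\subsetneq Z$, so after tensoring with $\mathcal F$ all kernels, cokernels and $\mathcal{T}or$ terms are supported on $W$, which has dimension at most $d-1$. They are also formed before twisting by $\bigotimes_j\mathcal L_j^{n_j}$, so they are independent of $n$. Alternatively, use the d\'evissage whose graded pieces are nonzero ideal sheaves $\mathcal I_j\subset\mathcal O_{Z_j}$; this reduces to $\mathcal F=\mathcal O_Z$ and avoids $\mathcal{T}or$ entirely.
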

\begin{corollary}
    \label{cor:euler_char_is_quasipoly}
    The Euler characteristic $\chi$ on a complete $\mathbb{Q}$-factorial normal variety $X$ with finitely generated $\Cl(X)$ is a quasipolynomial on $\Cl(X)$, with each constituent polynomial of total degree at most $\dim X$.
\end{corollary}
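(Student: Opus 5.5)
The plan is to reduce to Snapper's theorem (\cref{th:snapper}) applied to $\mathcal F=\mathcal O_X(D_i)$ for finitely many coset representatives $D_i$, twisted by Cartier generators of a finite-index sublattice of $\Cl(X)$. First, I will use that $\Cl(X)$ is finitely generated, so $\Cl(X)\cong \mathbb Z^n\oplus H$ with $H$ finite. Because $X$ is $\mathbb Q$-factorial, every Weil divisor has a Cartier multiple, and so the Cartier classes $\Pic(X)\subset \Cl(X)$ form a subgroup of finite index. Concretely, I choose free generators $e_1,\dots,e_n$ of the $\mathbb Z^n$ part together with integers $m_j>0$ such that $m_j e_j$ is Cartier. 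Then $\Lambda:=\bigoplus_j m_j\mathbb Z e_j$ is a finite-index sublattice of $\mathbb Z^n$ consisting of Cartier classes. Here I pick representative Cartier divisors $L_j$ of the classes $m_je_j$.

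Next, I fix coset representatives $t_i$ of $\mathbb Z^n/\Lambda$ and elements $h\in H$, and choose Weil divisors $D_{i,h}$ representing $(t_i,h)$. Any class $x=(t,h)$ with $t\in t_i+\Lambda$ can be written as $t=t_i+\sum_j n_j m_j e_j$. For such a class, the key identity I need is
\[
\mathcal O_X(D_{i,h}+\textstyle\sum_j n_jL_j)\cong \mathcal O_X(D_{i,h})\otimes\bigotimes_j \mathcal O_X(L_j)^{\otimes n_j},
\]
which holds because tensoring a reflexive rank-one sheaf with an invertible sheaf remains reflexive and agrees with $\mathcal O_X(D_{i,h}+\sum n_jL_j)$ on the regular locus, whose complement has codimension $\ge 2$ by normality. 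This identity is the one step requiring care: it is exactly where normality and Cartier-ness of the $L_j$ are used, since for two non-Cartier Weil divisors the analogous statement fails. Linear equivalence invariance of $\chi$ then makes $\chi$ a well-defined function on $\Cl(X)$.

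Finally, Snapper's theorem, applied with $\mathcal F=\mathcal O_X(D_{i,h})$ (coherent, with support $X$) and $\mathcal L_j=\mathcal O_X(L_j)$, gives that $(n_1,\dots,n_n)\mapsto\chi$ is a polynomial of degree at most $\dim X$. Since $n_j=(t-t_i)_j/m_j$ is an affine-linear function of $t$ on the coset, composing yields polynomials $q_{i,h}(t)$ of total degree at most $\dim X$. This is precisely the data required by \cref{def:quasipoly}, so $\chi$ is a quasipolynomial on $\Cl(X)$.
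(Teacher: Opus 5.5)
Your proposal is correct and follows the paper's proof: you apply Snapper's theorem to $\mathcal O_X(D)$ for finitely many coset representatives twisted by Cartier divisors, and you get the degree bound from the fact that a rank-one reflexive sheaf has support all of $X$. Two of your choices only make explicit what the paper leaves implicit. First, you use a free finite-index sublattice $\Lambda=\bigoplus_j m_j\mathbb Z e_j$ of Cartier classes rather than generators of all of $\Pic(X)$, whose generators may carry torsion and relations; this fits \cref{def:quasipoly} directly. Second, you justify the identity $\mathcal O_X(D+L)\cong\mathcal O_X(D)\otimes\mathcal O_X(L)$ for Cartier $L$.
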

\begin{proof}
    Let $D_1, \dots, D_\ell$ denote representatives of $\mathrm{Cl}(X) / \Pic(X)$, which is a finite group. Upon picking a set of Cartier divisors $\mathcal{L}_1, \dots, \mathcal{L}_r$ which generate $\Pic(X)$, we can apply \cref{th:snapper} to conclude that $\chi$ is polynomial on the coset $D_i + \Pic(X)$ in $\mathrm{Cl}(X)$ for $1 \leq i \leq \ell$. Together, these polynomials form a quasipolynomial for $\chi$ on the entire class group. Since ${\mathcal O}_X(D)$ is a rank-one reflexive sheaf, its support is all of $X$, so Snapper's polynomial has degree at most $\dim X$.
\end{proof}

\section{Cohomology Formulae}

\medskip
\subsection{Contraction}\label{sec:contraction}

\cref{prop:mmp_preserves_sections}, the result that the $D$-MMP preserves global sections, essentially immediately implies our main result for contraction.
\begin{proof}[Proof of \cref{MAIN:contract}]
    A $D$-minimal model exists by \cref{prop:cy_mmp} in the Calabi--Yau type case with $D \in \Bigc(X)$ and by \cref{prop:MDS_chambers} in the Fano type case with $D \in \Eff(X)$. \cref{prop:mmp_preserves_sections} then allows us to pass to such a $D$-minimal model (maintaining the Calabi--Yau type and Fano type properties, respectively, by \cref{prop:dmmp_preserves_cy} and \cref{prop:dmmp_of_fano_type_is_fano_type}) while preserving global sections. \cref{cor:MAIN-VANISHING} relates those global sections to an Euler characteristic and \cref{cor:euler_char_is_quasipoly} subsequently ensures quasipolynomiality.
\end{proof}

Even if contraction is frequently impractical, this existence of the approach can conceptually clarify the origin of global section formulae --- the divisors outside of the movable cone are controlled by Euler characteristics on divisorial contractions of the original variety $X$ and its SQMs. 
For example, the non-polynomiality (that is, strict quasipolynomiality) of global sections in a given Mori chamber arises from non-polynomiality of the Euler characteristic on the associated birational model due to the Picard group having finite index in the class group. As this only occurs for singular varieties, non-polynomiality can be understood as encoding the singularities of that model (indeed, if a $D$-minimal model is smooth, global sections in that chamber are polynomial by \cref{th:HRR}, so strict quasipolynomiality indicates singularities). We will also see in the next section how the guarantee that global sections must be quasipolynomial on each chamber --- even if that quasipolynomial is naively difficult to compute --- is quite valuable. 

\subsection{Subtraction} \label{sec:subtraction}

While the contraction method achieves our goal --- closed-form expressions for global sections --- it is frequently difficult to use in practice. First, it requires significant computation --- one must construct $D$-minimal models and compute their Euler characteristics. Additionally, divisorial contractions famously introduce complicated singularities, meaning the Euler characteristic on a generic $D$-minimal model will no longer be computable using \cref{th:HRR} --- for such models, the Euler characteristics are strictly quasipolynomial and computable only through intimate knowledge of their singularities. 

This is motivation for studying the subtraction approach, which reduces the global sections of a general effective (or big, in the Calabi--Yau case) divisor to those of a divisor contained in the movable cone, which can be handled by passing to an SQM rather than a general divisorial contraction. Subtraction thus only requires knowledge of SQMs rather than all $D$-minimal models. As discussed, subtraction will turn out to be naively less powerful than contraction, but the result for contraction actually strengthens subtraction and enables it to furnish a comprehensive piecewise quasipolynomial function for global sections (\cref{MAIN:together}).

Our first set of results formalises the subtraction method.
\begin{definition}
    \label{def:subtraction}
    Let $X$ be a normal projective variety with effective divisor $D$. Let $E_1,\ldots, E_r$ be prime fixed components of $|D|$. If integers $0 \leq a_i \leq \sigma_{E_i}(D)$ are chosen, not all zero, then we call $D - \sum_i a_i E_i$ a \emph{subtraction} of $D$. 
\end{definition}
\begin{corollary}
    \label{cor:subtraction_algo_preserves}
    Let $X$ be a normal projective variety with effective divisor $D$. If $D'$ is a subtraction of $D$, then 
    \(
        h^0(X, \mathcal{O}_X(D)) = h^0(X, \mathcal{O}_X(D')).
    \)
    Moreover, every sequence of successive subtractions starting from $D$ results in a divisor $D^\downarrow \in \Mov(X)$ after finitely many steps --- we call any such $D^\downarrow$ a movable subtraction of $D$.
\end{corollary}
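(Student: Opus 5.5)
The first claim follows by iterating \cref{lem:exceptional_preserve}. Since each $E_i$ is a fixed component of $|D|$, every $D'' \in |D|$ satisfies $\nu_{E_i}(D'') \geq \sigma_{E_i}(D) \geq a_i$. Hence for every $s \in H^0(X,\mathcal O_X(D))$ we get $\mathrm{div}(s) + D - \sum_i a_i E_i \geq 0$, so $s \in H^0(X,\mathcal O_X(D'))$. The reverse inclusion holds because $\sum_i a_i E_i \geq 0$. This is the argument of \cref{lem:exceptional_preserve}, applied once with the divisor $\sum_i a_i E_i$ instead of subtracting one $E_i$ at a time, so no induction is needed. In particular $D'$ is again effective, in the sense that $|D'| \neq \emptyset$, and $|D'| = |D| - \sum_i a_i E_i$.

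For termination I will use a descending invariant. Write the successive subtractions as $D = D_0, D_1, D_2, \dots$. Each $|D_k|$ is nonempty by the first part, and $|D_{k+1}| = |D_k| - (\text{nonzero effective divisor})$. Fix any member $G_0 \in |D_0|$ and set $G_k := G_0 - \sum_{j<k}(\text{subtracted divisors})$. Then $G_k \in |D_k|$, so $G_k \geq 0$, and the total degree $\sum_A \nu_A(G_k)$ over prime divisors $A$ is a nonnegative integer. It decreases by at least one at each step, since at least one $a_i \geq 1$. So every sequence stops after at most $\sum_A \nu_A(G_0)$ steps.

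It remains to check that the terminal divisor $D^\downarrow$ lies in $\Mov(X)$. I read ``successive subtractions'' as continuing as long as a subtraction exists. Then at termination $|D^\downarrow|$ has no fixed prime components, so its base locus has codimension $\geq 2$, and $D^\downarrow$ is movable. This last step is the delicate one. $\Mov(X)$ is defined by requiring that the stable base locus have no divisorial component, and I would need to make sure that ``no fixed component of $|D^\downarrow|$'' suffices. The main obstacle is how $\Mov(X)$ is defined. If it is the cone generated by classes whose complete linear system has no fixed divisorial component, then $D^\downarrow$ is one of its generators and we are done. If it is the cone of classes with no divisorial stable base locus, I would note that $\Mov(X)$ contains every divisor whose $|D|$ has no fixed component: the base locus of $|mD|$ is contained in that of $|D|$, so no multiple of $D$ acquires a divisorial base component. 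Either way $D^\downarrow \in \Mov(X)$.
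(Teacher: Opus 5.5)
Your proposal is correct and follows the paper's proof. The first claim uses the fixed-component argument of \cref{lem:exceptional_preserve}; you handle the multiplicities $a_i$ in one step instead of iterating. Termination uses a strictly decreasing non-negative coefficient sum; the paper applies it to $D$ itself, where you use an arbitrary $G_0\in|D|$. Movability follows because the terminal linear system has no fixed component, and your remark that $\mathrm{Bs}|mD^\downarrow|\subseteq\mathrm{Bs}|D^\downarrow|$ fills in the step the paper states only as ``hence is movable.''
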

\begin{proof}
    The first statement follows from \cref{lem:exceptional_preserve} because subtraction consists of successively removing prime divisors that are fixed components of the complete linear system $|D|$. Let $D = \sum_i d_i D_i$ for prime divisors $D_i$ and let $D^{(i+1)}$ denote the result of applying subtraction to $D^{(i)}$, with $D^{(0)} := D$. The second statement follows because the integer sequence $\sum_j \nu_{D_j}(D^{(i)})$ is strictly monotonically decreasing as a function of $i$ but must remain non-negative, ensuring that the subtraction procedure terminates after finitely many steps. Since the procedure terminates precisely when no further subtraction is possible, the terminal divisor has no fixed component and hence is movable.
\end{proof}
Once a movable subtraction is reached by a sequence of subtractions, vanishing theorems reduce the computation of global sections to an Euler characteristic. We note that movable subtractions in general are not unique.
\begin{proposition}
    \label{prop:subtraction_algo_chi}
    Let a variety $X$ be Calabi--Yau type with divisor $D \in \Bigc(X)$ admitting a big movable subtraction $D^\downarrow$, or let $X$ be Fano type with divisor $D \in \Eff(X)$. Then there exists a small birational modification $\varphi \colon X \dashrightarrow Y$ such that $\varphi_*(D^\downarrow)$ is nef and
    \begin{equation*}
        h^0\bigl(X,\mathcal O_X(D)\bigr) = \chi\bigl(Y,\mathcal O_{Y}(\varphi_* D^\downarrow)\bigr)~.
    \end{equation*}
\end{proposition}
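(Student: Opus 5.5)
The plan is to chain the three earlier ingredients: subtraction preserves sections, movable big divisors become nef on some SQM, and vanishing on that SQM. First, by \cref{cor:subtraction_algo_preserves},
\[
h^0(X,\mathcal O_X(D)) = h^0(X,\mathcal O_X(D^\downarrow)),
\]
where $D^\downarrow \in \Mov(X)$ is the chosen movable subtraction. In the Fano type case the hypothesis only gives $D \in \Eff(X)$. Here I would take any movable subtraction, which exists by \cref{cor:subtraction_algo_preserves}. It is effective, since subtraction keeps $D^\downarrow$ effective because $a_i \le \sigma_{E_i}(D)$ allows an effective member.

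Second, I need an SQM on which $D^\downarrow$ becomes nef. In the Calabi--Yau type case $D^\downarrow \in \Mov(X)\cap\Bigc(X)$. So \cref{prop:cy_preq}(1) gives a marked SQM $\varphi : X \dashrightarrow Y$ with $D^\downarrow \in \varphi^*(\Nef(Y)\cap\Eff(Y))$, and hence $\varphi_* D^\downarrow$ is nef. In the Fano type case $X$ is a Mori dream space by \cref{prop:fano_type_is_mds}, and its definition writes $\Mov(X)$ as a union of the $\varphi_i^*\Nef(X_i)$, giving the same conclusion.

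Third, I transfer sections along the SQM. Since $\varphi$ is an isomorphism in codimension one and $Y$ is normal, pushforward identifies $H^0(X,\mathcal O_X(D^\downarrow))$ with $H^0(Y,\mathcal O_Y(\varphi_*D^\downarrow))$, because reflexive sheaves are determined off codimension two. Alternatively, $\varphi$ is itself a $D^\downarrow$-minimal model, with trivial nonpositivity condition as no divisors are contracted, so \cref{prop:mmp_preserves_sections} applies.

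Fourth, I apply vanishing on $Y$. In the Calabi--Yau type case $Y$ is Calabi--Yau type by \cref{prop:dmmp_preserves_cy}. Also $\varphi_*D^\downarrow$ is big, since bigness is preserved under isomorphism in codimension one: $h^0$ of multiples agree by the previous step. In the Fano type case $Y$ is Fano type by \cref{prop:dmmp_of_fano_type_is_fano_type}, since an SQM to a nef model is a $D^\downarrow$-minimal model. Then \cref{cor:MAIN-VANISHING} gives $h^0 = \chi$ on $Y$, and chaining the equalities proves the statement.

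The main obstacle is bookkeeping rather than depth. One must check that the chosen SQM really is covered by the cited results: bigness of the pushforward for the Calabi--Yau case, and nef implies Fano-type-compatible vanishing without bigness in the Fano case. Both are handled by the cited propositions.
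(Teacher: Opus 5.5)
Your proposal is correct and follows the same chain as the paper. Subtraction preserves $h^0$; the big movable class $D^\downarrow$ becomes nef on an SQM, via \cref{prop:cy_preq} in the Calabi--Yau type case and the Mori dream space structure in the Fano type case; that SQM remains Calabi--Yau type (respectively, Fano type); and \cref{cor:MAIN-VANISHING} then gives $h^0=\chi$. The differences are cosmetic: the paper takes $\varphi$ as a $D^\downarrow$-minimal model from \cref{prop:cy_mmp} or \cref{prop:MDS_chambers} and uses movability to see it is small, whereas you take the SQM directly from the chamber decomposition of the movable cone and spell out explicitly that sections and bigness are preserved under an isomorphism in codimension one.
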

\begin{proof}
    The existence of $Y$ follows from the existence of $D$-minimal models for Calabi--Yau type varieties (\cref{prop:cy_mmp}) and Mori dream spaces (\cref{prop:MDS_chambers}). In particular, $\varphi$ is an SQM because $D^\downarrow$ is movable (by \cref{prop:cy_preq} in the Calabi--Yau type case and \cref{prop:MDS_chambers} in the Fano type case). The equality then follows from \cref{cor:MAIN-VANISHING}, because varieties of Calabi--Yau type and of Fano type are each closed under taking $D$-minimal models, by \cref{prop:dmmp_preserves_cy} and \cref{prop:dmmp_of_fano_type_is_fano_type}, respectively.
\end{proof}
Thus, if the hypotheses of \cref{prop:subtraction_algo_chi} are satisfied, computing global sections of an effective divisor reduces to computing an Euler characteristic provided one understands both the birational geometry of $X$ and how to compute $D^\downarrow$ using subtractions. The Euler characteristics are quasipolynomial in general, by \cref{cor:euler_char_is_quasipoly}, and for the case of smooth SQMs --- e.g., if $X$ is a smooth Calabi--Yau threefold, by \cref{th:flop_of_smooth_cy3} --- they are polynomial and easy to compute by \cref{th:HRR}. 

We now turn our attention to movable subtractions $D^\downarrow$. These are generally difficult to compute directly, however the more general notion of a subtraction of a divisor allows the computation of movable subtractions to be performed recursively, rather than all at once. Algorithmically, then, it therefore suffices to just have access to some systematic and efficient way to perform subtractions. This will be furnished by the Zariski decomposition.

First, we introduce a modification of the Zariski decomposition which is integral.
\begin{definition}
    \label{def:integral-zariski}
    Let $X$ be a Calabi--Yau type variety (Mori dream space) with a big (effective) divisor $D$ contained in a Mori chamber $\mathcal C$. In the notation of \cref{sec:zariski}, its integral Zariski decomposition is defined as $D = P_{\mathbb{Z},\mathcal{C}}(D) + N_{\mathbb{Z},\mathcal{C}}(D)$ with
    \begin{equation}
        N_{\mathbb{Z},\mathcal{C}}(D) = \sum_{i=1}^r \lceil \lambda^{\mathcal C}_i(D) \rceil E^{\mathcal C}_i~,\quad  P_{\mathbb{Z},\mathcal{C}}(D) = D - N_{\mathbb{Z},\mathcal{C}}(D)~.
    \end{equation}
\end{definition}
From here we can straightforwardly show the following.
\begin{corollary}
    \label{cor:alg}
    In the setting of \cref{def:integral-zariski}, $P_{\mathbb{Z},\mathcal{C}}(D)$ is a subtraction of $D$. In particular, if $X$ is Calabi--Yau type (Fano type), for any big (respectively, effective) divisor $D$ there is a canonical movable subtraction achieved by iteratively performing $D \mapsto P_{\mathbb{Z},\mathcal{C}}(D)$ for $\mathcal{C}$ a Mori chamber containing $D$, unless at some iteration a non-big divisor is achieved (respectively, unconditionally).
\end{corollary}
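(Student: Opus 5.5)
The first claim is that $P_{\mathbb{Z},\mathcal{C}}(D)$ is a subtraction of $D$. To show this we will check the conditions of \cref{def:subtraction} for the divisors $E^{\mathcal C}_i$ with coefficients $a_i = \lceil \lambda^{\mathcal C}_i(D)\rceil$.

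First, the $a_i$ are nonnegative. We have $\lambda^{\mathcal C}_i(D)\ge 0$ because $N_{\mathcal C}(D)\in \mathrm{ex}(f)$, so its ceiling is also nonnegative. Second, \cref{eq:bound_on_asymptotic_vanishing} in \cref{prop:zariski} gives $\lceil \lambda^{\mathcal C}_i(D)\rceil \le \sigma_{E^{\mathcal C}_i}(D)$, which is the required upper bound $a_i\le\sigma_{E_i}(D)$. Third, every $E^{\mathcal C}_i$ with $a_i>0$ satisfies $\sigma_{E^{\mathcal C}_i}(D)\ge a_i>0$. So every divisor in $|D|$ contains $E^{\mathcal C}_i$, and $E^{\mathcal C}_i$ is a fixed component of $|D|$. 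The $E^{\mathcal C}_i$ with $a_i=0$ can be dropped from the list.

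This leaves one degenerate case, where all $a_i$ vanish, i.e.\ $N_{\mathbb Z,\mathcal C}(D)=0$. \cref{def:subtraction} requires the $a_i$ to be not all zero, so strictly this is not a subtraction. We interpret the statement as the trivial subtraction in this case, and it is exactly the case in which the iteration stops. In this case every $\lambda^{\mathcal C}_i(D)$ is zero, so $N_{\mathcal C}(D)=0$. By \cref{prop:zariski}(2), $\Bdiv(D)=\emptyset$, hence $D$ has no fixed divisorial component and $D\in\Mov(X)$.

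For the second claim, we iterate $D^{(k+1)} = P_{\mathbb Z,\mathcal C_k}(D^{(k)})$, where $\mathcal C_k$ is a Mori chamber containing $D^{(k)}$.
\begin{enumerate}
\item The chamber $\mathcal C_k$ exists in the Fano type case because \cref{prop:MDS_chambers} gives it for every effective divisor, and every subtraction of $D^{(k)}$ stays effective.
\item In the Calabi--Yau type case, \cref{prop:cy_preq} gives the chamber only when $D^{(k)}$ is big. This is precisely the hypothesis in the statement that no non-big divisor appears along the way.
\end{enumerate}
Each nontrivial step is a subtraction, so by \cref{cor:subtraction_algo_preserves} the process terminates after finitely many steps. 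It can only stop at a divisor $D^{(k)}$ with $N_{\mathbb Z,\mathcal C_k}(D^{(k)})=0$, and by the argument above such a divisor is movable. The resulting $D^{(k)}$ is therefore a movable subtraction of $D$.

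The main subtlety is the word ``canonical'', since a divisor lying on a common wall may belong to several chambers. We plan to argue that the choice of chamber does not matter. If $N_{\mathcal C}(D)$ is positive on $E$ for some chamber $\mathcal C$, then $E\subset\Bdiv(D)$. By \cref{prop:zariski}(2), $\mathrm{supp}\,N_{\mathcal C}(D)=\Bdiv(D)$ for every chamber $\mathcal C$ containing $D$, so the support is the same for every chamber. For the coefficients, we would use linearity of the decompositions: the Mori chamber decomposition is a fan, so on the intersection of two chambers the two decompositions agree. Failing that, we fix a choice rule, such as the first chamber in a fixed enumeration.
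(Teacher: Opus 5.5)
Your proposal is correct and follows the paper's own one-line argument. There, \cref{prop:zariski}(3) gives $0\le\lceil\lambda^{\mathcal C}_i(D)\rceil\le\sigma_{E^{\mathcal C}_i}(D)$, so $P_{\mathbb Z,\mathcal C}(D)$ is a subtraction, and termination of the iteration comes from \cref{cor:subtraction_algo_preserves}; your handling of the degenerate case $N_{\mathbb Z,\mathcal C}(D)=0$ and of the existence of a chamber at each step just makes explicit what the paper leaves implicit.

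The one soft spot is your chamber-independence remark, which the paper does not address at all. It does not follow from the fan structure alone. It is nonetheless true: semiampleness of the positive part on an SQM gives $\sigma_E(mD)=m\,\nu_E(N_{\mathcal C}(D))$ for every prime $E$ and all sufficiently divisible $m$, and this quantity does not depend on $\mathcal C$. In any case, your fallback choice rule already suffices for the statement.
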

\begin{proof}
    It suffices to note that \cref{prop:zariski} ensures that subtracting $N_{\mathbb{Z},\mathcal{C}}(D)$ from $D$ only involves the removal of prime divisors in the stable base locus.
\end{proof}
\begin{definition}
    \label{def:zariski_movable_subtraction}
    We refer to the subtraction of \cref{cor:alg} as the Zariski subtraction; in particular, we refer to the distinguished movable subtraction of an effective divisor $D$ achieved by iteratively performing Zariski subtractions until a movable divisor is first achieved as the Zariski movable subtraction $D^\downarrow_\mathrm{zms}$. 
\end{definition}
Unlike the positive part of the Zariski decomposition, a single Zariski subtraction need not be movable. Rather, it involves removal of some divisors supported on the stable divisorial fixed locus as prescribed by the chamberwise Zariski decomposition, after rounding coefficients up to the nearest integer. In particular, $P_{\mathbb{Z},\mathcal{C}}(D)$ need not lie in $\mathcal C$; indeed, in general it lies on the boundary of $\mathcal C$ or in a neighbouring chamber.

Before more explicitly describing how these subtractions can be performed algorithmically to compute global sections, we explain how the discussion extends from effective divisors to divisor classes in the effective cone. In practice, one often would like to compute the global sections of the isomorphism class of sheaves associated to some divisor class, rather than for one specific representative effective divisor. This is no problem, as by construction, a subtraction of $D$ will be a subtraction of any $D' \in |D|$, so our discussion lifts to classes.

If a divisor class has no effective representative, then its complete linear system is empty and hence $h^0(X,\mathcal O_X(D))=0$. In this case one should not interpret the prime exceptional divisors associated with the relevant Mori chamber as fixed components of $|D|$ in the usual sense. Nevertheless, the chamberwise integral Zariski map remains well defined at the level of divisor classes. Its subtraction step may then carry the class outside the effective cone. When this occurs, it certifies that the original class had no effective representative, and the algorithm terminates with output zero. Such ``holes'' in the effective cone are well known to occur: divisor classes in $\Eff(X)$ need not themselves be effective \cite{Gendler:2026uux}.

The location of the holes arising from such overshoots is also transparent from the convex geometry. For a fixed Mori chamber, integral Zariski subtraction rounds the coefficients of the exceptional divisors upward, so one can detect the holes simply by checking when the resulting class violates one of the supporting inequalities of $\Eff(X)$. Since the rounding errors are bounded and are unchanged by translation by integral exceptional classes, these holes occur in bounded-width strips parallel to the relevant exceptional directions, with infinite arithmetic families obtained by translation along those directions. 

The algorithm requires only the Mori chamber decomposition as well as the partitioning of each chamber into the pullback of the (effective) nef cone and its exceptional part, which we package as follows.
\begin{definition}
    \label{def:birational_data}
    The Mori chamber data of a Calabi--Yau type variety or Mori dream space $X$ consists of the Mori chambers $\mathcal{C} = \varphi^* \Nef(X_\mathcal{C} \cap \Eff(X_\mathcal{C}))$ of all SQMs $\varphi : X \dashrightarrow X_\mathcal{C}$ together with the prime exceptional divisors of the extremal contractions associated to each facet of $\Mov(X)$ not contained in $\partial \Eff(X)$.
\end{definition}
This data suffices to determine the Mori chamber decomposition. Indeed, for each SQM $\varphi : X \dashrightarrow X_\mathcal{C}$ one can study the faces $F$ of $\varphi^* (\Nef(X_\mathcal{C}) \cap \Eff(X_\mathcal{C}))$ which are additionally contained in $\partial \Mov(X) \setminus \partial \Eff(X)$ (recall \cref{cor:chamber_movable_part}). Each such $F$ could satisfy $F = \mathcal{C}' \cap \Mov(X)$ for some Mori chamber $\mathcal{C}'$, and in fact this will be true if $F$ has codimension $r$ and is contained in the intersection of $r$ facets of $\Mov(X)$ associated to $r$ distinct prime exceptional divisors $E_1, \dots, E_r$. In this case, $\mathcal{C}' = F + \sum_{i=1}^r \mathbb{R}_{\geq 0} E_i$, with $F = f_{\mathcal{C}'}^*(\Nef(X_{\mathcal{C}'}) \cap \Eff(X_{\mathcal{C}'}))$.
\begin{remark}
    As noted in \cref{rem:KM}, the Mori chamber decomposition of a Calabi--Yau type variety may not have finitely many chambers, and individual chambers may not have finitely many generators. While this renders the birational data of \cref{def:birational_data} somewhat unwieldy from a practical standpoint, the Kawamata--Morrison cone conjecture does entail that the birational data of \cref{def:birational_data} is determined by finitely many chambers and extremal rays. We do stress that none of our results depend on such conjectures.
\end{remark}

\begin{algorithm}
\caption[Computing $h^0(X,\mathcal{O}_X(D))$ from Mori chamber data]{%
  Given the Mori chamber data of $X$ and Euler characteristics $\chi(X_\mathcal{C}, -)$ of SQMs of $X$, attempt to compute $h^0(X,\mathcal{O}_X(D))$ for either
  \par
  \begin{minipage}[t]{\dimexpr\linewidth-1em}
    \vspace{-0.6\baselineskip}
    \begin{enumerate}
      \item $X$ Fano type and $D \in \Eff(X)$
      \item $X$ Calabi--Yau type with $D \in \Bigc(X)$ 
    \end{enumerate}
  \end{minipage}}
\label{alg}
\begin{algorithmic}
    \State Set $i = 0$
    \State Set $[D^{(i)}] = [D]$
    \While{$[D^{(i)}] \notin \Mov(X)$} \Comment{Perform integral Zariski subtractions until $[D^{(i)}] = [D_{\mathrm{zms}}^\downarrow]$}
        \State Increment $i$ by $1$.
        \State Set $\mathcal{C} =$ Mori chamber such that $[D^{(i-1)}] \in \mathcal{C}$
        \State Solve for the $\lambda^{\mathcal C}_j(D^{(i-1)})$ \Comment{see \cref{def:zariski_decomp}}
        \State Set $[D^{(i)}] = [P_{\mathbb{Z},\mathcal{C}}(D^{(i-1)})]$ \Comment{see \cref{cor:alg} and \cref{def:integral-zariski}}
        \If{$[D^{(i)}] \notin \Eff(X)$}
            \State \Return 0
        \EndIf
        \If{$X$ Calabi--Yau type and $[D^{(i)}] \notin \Bigc(X)$} 
            \State \Return N/A 
        \EndIf
    \EndWhile
    \State Set $[D_{\mathrm{zms}}^\downarrow] = [D^{(i)}]$
    \State Set $\mathcal{C} =$ Mori chamber with $[D^\downarrow_{\mathrm{zms}}] \in f_{\mathcal{C}}^* (\Nef(X_\mathcal{C}) \cap \Eff(X_\mathcal{C}))$
    \State \Return $\chi(X_\mathcal{C}, \mathcal{O}_{X_\mathcal{C}}(f_{\mathcal{C}*} [D^\downarrow_{\mathrm{zms}}]))$
\end{algorithmic}
\end{algorithm}

At this point, we have achieved a proof of the second main result \cref{MAIN:subtract}.

\begin{proof}[Proof of \cref{MAIN:subtract}]
    Combining \cref{cor:subtraction_algo_preserves}, \cref{prop:subtraction_algo_chi}, and \cref{cor:alg} yields \cref{alg} computing $h^0(X,\mathcal O_X(D))$.
\end{proof}

Now we turn our attention to \cref{MAIN:together}.
In particular, we will now illustrate how $P_{\mathbb{Z},\mathcal{C}}$ and $N_{\mathbb{Z},\mathcal{C}}$ are quasilinear and that asymptotically, the Zariski movable subtraction $D^\downarrow_{\mathrm{zms}}$ of \cref{def:zariski_movable_subtraction} varies quasilinearly as a function of the original divisor on the interior of Mori chambers.

\begin{proposition}
    \label{prop:integerZariski_quasilinear}
    Let $X$ be Calabi--Yau type or a Mori dream space, and let $\mathcal C$ be a Mori chamber with birational map $f : X \dashrightarrow X_\mathcal{C}$. Then $P_{\mathbb Z,\mathcal C}$ and $N_{\mathbb Z,\mathcal C}$ extend to quasilinear maps on $\Cl(X)$, with the associated lattice being
    \begin{equation*}
        \Lambda_{\mathcal C} = \sum_i\mathbb Z E_i^{\mathcal C} \oplus (f^* \Pic(X_\mathcal{C}))_\mathrm{sat},
    \end{equation*}
    with saturation taken in $\Cl(X)$.
\end{proposition}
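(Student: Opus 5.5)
The plan is to reduce everything to the linear maps $\lambda^{\mathcal C}_i$ of \cref{cor:zariski_decomp_lin_and_mov}. Since $N_{\mathbb Z,\mathcal C}(D) = \sum_i \lceil \lambda^{\mathcal C}_i(D)\rceil E_i^{\mathcal C}$ and $P_{\mathbb Z,\mathcal C} = \mathrm{id} - N_{\mathbb Z,\mathcal C}$, it suffices to prove that each $\lceil \lambda^{\mathcal C}_i \rceil : \Cl(X) \to \mathbb Z$ is quasilinear with lattice $\Lambda_{\mathcal C}$. Here $\lambda^{\mathcal C}_i$ is the linear functional $\Cl(X) \to \Cl(X)_{\mathbb Q} \to \mathbb Q$ picking out the $E_i^{\mathcal C}$-coordinate in the decomposition $\Cl(X)_{\mathbb Q} = f^*\Pic(X_{\mathcal C})_{\mathbb Q} \oplus \bigoplus_i \mathbb Q E_i^{\mathcal C}$. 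Torsion in $\Cl(X)$ dies under $\lambda^{\mathcal C}_i$, so it is harmless and only enters through the finite factor $H$ of \cref{def:quasipoly}.

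The key step is to show that $\lambda^{\mathcal C}_i$ takes integer values on $\Lambda_{\mathcal C}$, and in fact that it is additive and integral there. On $\sum_j \mathbb Z E_j^{\mathcal C}$ we have $\lambda^{\mathcal C}_i(E_j^{\mathcal C}) = \delta_{ij}$, using linear independence of the exceptional classes. On $f^*\Pic(X_{\mathcal C})$, the pullback of a Cartier divisor is Cartier, so it is an integral Weil class with zero exceptional coordinate. Hence $\lambda^{\mathcal C}_i$ vanishes there, and also on the saturation, because a class some multiple of which lies in $f^*\Pic(X_{\mathcal C})$ has $\lambda^{\mathcal C}_i = 0$ by linearity. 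Thus on $\Lambda_{\mathcal C}$ the functional $\lambda^{\mathcal C}_i$ is the integer-valued homomorphism $\sum_j n_j E_j + P \mapsto n_i$.

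Next I check that $\Lambda_{\mathcal C}$ has finite index in the free part of $\Cl(X)$, so that it fits \cref{def:quasipoly}. Its rank is $r + \rank \Pic(X_{\mathcal C}) = r + \rank N^1(X_{\mathcal C}) = \rank N^1(X)$, by the rank identity following \cref{def:mori_chamber} together with the standing assumptions $\Pic^0 = 0$ and $\Cl_{\mathbb Q} = \Pic_{\mathbb Q}$. I also need the sum to be direct. This follows because $\lambda^{\mathcal C}$ vanishes on the saturated part while $\mathrm{ex}(f)$ is simplicial, so the intersection of the two summands is trivial.

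For quasilinearity, fix coset representatives $t_k$ of $\Cl(X)/\Lambda_{\mathcal C}$. For $D = t_k + \ell$ with $\ell \in \Lambda_{\mathcal C}$,
\[
\lceil \lambda^{\mathcal C}_i(D)\rceil = \lceil \lambda^{\mathcal C}_i(t_k)\rceil + \lambda^{\mathcal C}_i(\ell),
\]
since $\lambda^{\mathcal C}_i(\ell) \in \mathbb Z$. This is a homomorphism on $\Lambda_{\mathcal C}$ plus a constant depending only on the coset, which is exactly \cref{def:quasipoly} with $A = \Cl(X)$. The same formula then gives the constants $c_k = \sum_i \lceil \lambda_i^{\mathcal C}(t_k)\rceil E_i^{\mathcal C}$ for $N_{\mathbb Z,\mathcal C}$, and hence for $P_{\mathbb Z,\mathcal C}$.

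I expect the main obstacle to be the bookkeeping with the quotient $\Cl(X)$ (torsion, and the distinction between $\Pic$ and $\Cl$) rather than any real geometry. The one point that needs care is justifying that $f^*$ of a Cartier divisor on $X_{\mathcal C}$ is an integral class in $\Cl(X)$ when $f$ is only a rational contraction. I would handle this with \cref{prop:CY_type_bir_contract_factor} or \cref{prop:MDS_bir_contract_factor}: write $f = h \circ g$ with $g$ an SQM, which induces an isomorphism on $\Cl$, and $h$ a morphism, whose pullback of Cartier divisors is honest.
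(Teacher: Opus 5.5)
Your argument is correct and is essentially the paper's: the paper also reduces to $N_{\mathbb Z,\mathcal C}$ and uses integrality of the $\lambda_i^{\mathcal C}$ on $\Lambda_{\mathcal C}$ to show that the rounding error $N_{\mathbb Z,\mathcal C}-N_{\mathcal C}$ is constant on cosets of $\Lambda_{\mathcal C}$, which is exactly your identity $\lceil \lambda_i^{\mathcal C}(t_k+\ell)\rceil = \lceil \lambda_i^{\mathcal C}(t_k)\rceil + \lambda_i^{\mathcal C}(\ell)$. Your checks that $\lambda_i^{\mathcal C}$ vanishes on the saturated pullback lattice and that the sum is direct and of full rank fill in details the paper only asserts; one small correction is that $f^*$ of a Cartier divisor on $X_{\mathcal C}$ is in general only an integral Weil class on $X$, not Cartier, since $g$ is merely an SQM, but integrality in $\Cl(X)$ is all you use, as your last paragraph already recognizes.
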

\begin{proof}
    Let $D$ be a divisor. It suffices to study \(N_{\mathbb Z,\mathcal C}\), since $P_{\mathbb Z,\mathcal C}(D) = D-N_{\mathbb Z,\mathcal C}(D)$. On the lattice $\Lambda_{\mathcal C}$ we have $N_{\mathbb Z,\mathcal C}=N_{\mathcal C}$, since all $\lambda_i^{\mathcal C}$ take integral values there. Define the rounding error $ R_{\mathcal C} := N_{\mathbb Z,\mathcal C}-N_{\mathcal C}$.  It suffices to show that $R_{\mathcal C}(D)$ depends only on the coset of $D$ modulo $\Lambda_{\mathcal C}$. Indeed, choose representatives for the cosets of $\Lambda_{\mathcal C}$ in $\Cl(X)$. Write $ D=A+B$, with $A\in\Lambda_{\mathcal C}$ and $B$ one of these representatives. Then, using the linearity of $N_{\mathcal C}$ and the integrality of the $\lambda_i^{\mathcal C}(A)$ on $\Lambda_{\mathcal{C}}$, we have
    \begin{equation*}
        N_{\mathbb Z,\mathcal C}(D) = N_{\mathbb Z,\mathcal C}(A+B)  = N_{\mathcal C}(A)+N_{\mathbb Z,\mathcal C}(B).
    \end{equation*}
    Therefore
    \begin{equation*}
        R_{\mathcal C}(D) = N_{\mathbb Z,\mathcal C}(D)-N_{\mathcal C}(D) = N_{\mathbb Z,\mathcal C}(B)-N_{\mathcal C}(B)  = R_{\mathcal C}(B).
    \end{equation*}
    Thus the rounding error is constant on each coset of $\Lambda_{\mathcal C}$, and consequently $N_{\mathbb Z,\mathcal C}$ is affine linear on each such coset. Hence $N_{\mathbb Z,\mathcal C}$ is quasilinear, and so is $P_{\mathbb Z,\mathcal C}$.
\end{proof}
As noted in \cref{sec:contraction}, quasipolynomiality of global section formulae is related to the index of the Picard group of $D$-minimal models inside the class group, and thus encodes information about singularities. This same quasipolynomiality must be reproduced by the subtraction method, and this is often achieved through the quasipolynomiality of $P_{\mathbb Z,\mathcal C}(D)$. In this way, the manner in which the Mori chamber decomposition intersects the lattice of divisor classes captures important information about $D$-minimal models of a variety.

Having understood the lattice associated to the integral Zariski decomposition, we can demonstrate that away from boundaries of a Mori chamber, the Zariski movable subtraction behaves quite nicely. We will achieve this through a series of lemmas, culminating in \cref{prop:asymptotic_subtraction}. First, we prove a technical lemma showing that certain shifts of faces of Mori chambers can always be ``corrected'' such that they fall entirely in a single (distinct) Mori chamber. This will be used to show that on for coset $H$ of $\Lambda_{\mathcal{C}}$ of a Mori chamber $\mathcal{C}$, the Zariski subtraction maps ``most'' divisors --- in a sense we will shortly make precise --- to a common new Mori chamber $\mathcal{C}'$.
\begin{lemma}
    \label{lem:corrected_shifted_face}
    Let a variety $X$ be Calabi--Yau type or a Mori dream space, let $\mathcal C$ be a Mori chamber, and let $T$ be an effective $\mathbb{Q}$-divisor. We can then select an $S_T \in \relint(\mathcal C\cap\Mov(X))$ and a Mori chamber $\mathcal{C}_T$ such that
    \begin{equation}
        \begin{aligned}
            \mathcal{C} \cap \Mov(X) &\subset \mathcal{C}_T \cap \Mov(X)~,\\
            \ex(f_{\mathcal C_T}) &\subseteq \ex(f_{\mathcal C})~, \\
            -T + S_T + \mathcal{C} \cap \Mov(X) &\subset \mathcal{C}_T \cap \Bigc(X)~. \\
        \end{aligned}
    \end{equation}
\end{lemma}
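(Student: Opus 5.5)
The plan is to realise $\mathcal C_T$ as a Mori chamber that contains a small neighbourhood, inside $\Bigc(X)$, of a well-chosen point deep in the relative interior of the face $F:=\mathcal C\cap\Mov(X)$. Then $S_T$ is simply a large multiple of that point. Throughout, write $f=f_{\mathcal C}\colon X\dashrightarrow X_{\mathcal C}$, factored via \cref{prop:CY_type_bir_contract_factor} (resp.\ \cref{prop:MDS_bir_contract_factor}) as an SQM $\varphi\colon X\dashrightarrow X'$ followed by a morphism $h\colon X'\to X_{\mathcal C}$. By \cref{cor:chamber_movable_part}, $F\cap\Bigc(X)=f^*(\Nef(X_{\mathcal C})\cap\Bigc(X_{\mathcal C}))$.

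\textbf{Construction of $\mathcal C_T$.} Fix $A\in\relint F$, taking it big, which is possible because $F$ contains $f^*$ of an ample class. Consider the cone $\mathcal K:=F+\ex(f)$, which is just $\mathcal C$ itself. Since $A$ is the pullback of an ample class on $X_{\mathcal C}$, the divisor $A-\varepsilon T$ is big for small $\varepsilon>0$. By \cref{prop:cy_preq}(2) it therefore lies in some Mori chamber $\mathcal C_T$. The point is to choose this chamber among the finitely many chambers meeting a small ball around $A$, namely those whose closures contain $A$.

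For each such chamber I need to check two things. First, its exceptional divisors must be among the $E_i^{\mathcal C}$. The reason is that near $A$ the stable base locus can only shrink. By \cref{prop:zariski}(2), $\Bdiv$ of a class near $A$ is contained in $\mathrm{supp}$ of divisors that are fixed near $A$. Since $A$ is movable, these are exactly the divisors contracted by the relative MMP over $X_{\mathcal C}$. Concretely, the chambers around $A$ correspond to $(D$-$)$MMPs over $X_{\mathcal C}$ run from $X'$: for a class $A-\varepsilon T$, the $D$-minimal model is obtained by running the relative MMP for $-T$ over $X_{\mathcal C}$, which only contracts $h$-exceptional divisors. This yields $\ex(f_{\mathcal C_T})\subseteq\ex(f)$. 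Second, I need $F\subset\mathcal C_T$, which holds since $f_{\mathcal C_T}$ factors through $f$: $f=g\circ f_{\mathcal C_T}$ for a morphism $g$ over $X_{\mathcal C}$. Then $f^*\Nef(X_{\mathcal C})=f_{\mathcal C_T}^*g^*\Nef(X_{\mathcal C})\subseteq f_{\mathcal C_T}^*\Nef(X_{\mathcal C_T})$, giving the first inclusion.

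\textbf{Choosing $S_T$.} Let $\mathcal C_T$ be the chamber given by the relative $(-T)$-MMP over $X_{\mathcal C}$. Then $-T$ is nef over $X_{\mathcal C}$ after pushing forward to $X_{\mathcal C_T}$, up to the exceptional part. So $-T\in \mathcal C_T + \text{(pullback of }\mathbb R\text{-classes from }X_{\mathcal C})$, that is,
\[
-T=f_{\mathcal C_T}^*(M)+N-f^*B,
\]
with $M$ relatively nef over $X_{\mathcal C}$, $N\in\ex(f_{\mathcal C_T})$, and $B$ some class on $X_{\mathcal C}$. Choose an ample $H$ on $X_{\mathcal C}$ with $H-B$ ample and $f_{\mathcal C_T}^*M+f^*(H-B)$ nef and big upstairs. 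This is possible by relative ampleness: nef over the base plus a sufficiently ample pullback is nef. Put $S_T=f^*H\in\relint F$. Then for any $P\in F$, the class $-T+S_T+P$ equals $f_{\mathcal C_T}^*(\text{nef and big})+N$, which lies in $\mathcal C_T\cap\Bigc(X)$.

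\textbf{Main obstacle.} The hard step is making the relative MMP rigorous in the Calabi--Yau type setting. Specifically, I must show that the $-T$-MMP over $X_{\mathcal C}$ terminates with a $\mathbb Q$-factorial model whose chamber is a genuine Mori chamber of the form in \cref{prop:cy_preq}. I would run it as a $(K+\Delta+\varepsilon(f^*H'-T))$-MMP with $H'$ ample on $X_{\mathcal C}$ large, so that the divisor is big and BCHM applies (as in \cref{prop:cy_mmp}). I would then argue that scaling by $f^*H'$ does not change the steps over $X_{\mathcal C}$; this gives both the model and the containment of exceptional divisors.
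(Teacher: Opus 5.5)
Your route is sound, but it differs genuinely from the paper's. The paper runs no MMP inside the proof. It takes $P=f_{\mathcal C}^*A'$ with $A'$ ample and covers the segment $\{P-\delta T\}_{\delta\in[0,\alpha]}\subset\Bigc(X)$ by finitely many chambers via \cite[Cor.~1.1.5]{BCHM10} (automatic for Mori dream spaces). It lets $\mathcal C_T$ be a chamber containing an initial piece of that segment. Since $P$ lies in both $\mathcal C$ and $\mathcal C_T$, it then invokes \cite[Lem.~1.7]{HuKeel} to get a morphism $g$ with $f_{\mathcal C}=g\circ f_{\mathcal C_T}$, which yields the first two inclusions. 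With $S_T=\alpha^{-1}P$, the third inclusion is just convexity of $\mathcal C_T$ plus $\Bigc(X)+\Eff(X)\subset\Bigc(X)$.

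You instead define $\mathcal C_T$ as the chamber of a relative $(-T')$-minimal model $g\colon X''\to X_{\mathcal C}$, run from the SQM $X'$ on which $f$ is a morphism. This builds $f=g\circ f_{\mathcal C_T}$ and $\ex(f_{\mathcal C_T})\subseteq\ex(f_{\mathcal C})$ into the construction. You therefore need neither finiteness of models along a segment nor the Hu--Keel comparison lemma, and you get an explicit geometric description of $\mathcal C_T$. The price is that $S_T$ no longer comes free from convexity: you must show $M+g^*H$ is nef and big on $X''$ for some ample $H$ on $X_{\mathcal C}$, where $M=f_{\mathcal C_T*}(-T)$. (Your extra term $-f^*B$ is unnecessary. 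Already $-T=f_{\mathcal C_T}^*M+N$ with $N\in\ex(f_{\mathcal C_T})$, by the negativity lemma, because $X'\dashrightarrow X''$ is $(-T')$-nonpositive and the SQM $\varphi$ identifies classes.)

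That step is where your justification must be replaced. ``Nef over the base plus a sufficiently ample pullback is nef'' does not follow from relative ampleness, and it is false for general relatively nef divisors. For example, on $E\times E\to E$ with fibre class $F$, the class $M=F_2-\Delta_E$ has $M\cdot F=0$ but $(M+mF)^2=-2$ for every $m$, so no $M+mF$ is nef. The claim does hold in your setting, for other reasons:
\begin{itemize}
\item In the Calabi--Yau type case, $X''$ carries a klt $\Delta''$ with $K_{X''}+\Delta''\equiv 0$ by \cref{prop:dmmp_preserves_cy}, and $g$ is birational. So $M$ is $g$-nef and $g$-big, and the relative basepoint-free theorem \cite{KM} makes $M$ $g$-semiample. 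Hence a multiple of $M+g^*(mH)$ is globally generated for $m\gg 0$.
\item In the Mori dream space case, it follows because $\overline{NE}(X'')$ is polyhedral.
\end{itemize}

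The obstacle you single out is not really one. For birational $h$, every divisor is $h$-big and $h$-pseudoeffective. So choosing an effective $B\sim_{\mathbb Q,X_{\mathcal C}}-T'$ and applying the relative form of \cite[Th.~1.2]{BCHM10} to $(X',\Delta'+\delta B)$ runs the relative $(-T')$-MMP directly, with no need to globalize by $f^*H'$.

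What you do omit is the Mori dream space case. There you should say why the relative MMP exists: relative extremal rays are extremal rays of the polyhedral $\overline{NE}(X')$, their contractions factor $h$, flips exist, and there are finitely many chambers.

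Finally, the local-finiteness claim in your first paragraph is unproved in the Calabi--Yau type case; it is essentially what the paper extracts from \cite[Cor.~1.1.5]{BCHM10}. Once $\mathcal C_T$ is defined via the relative MMP, your argument never uses it, so it can be dropped.
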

\begin{proof}
    Fix an ample divisor $A'$ on $X_{\mathcal C}$ and put $P = f_{\mathcal C}^* A'$, which is big and lies in $\relint (\mathcal{C} \cap \Mov(X)) = f_{\mathcal C}^* \operatorname{Amp}(X_{\mathcal C})$, the equality holding by \cref{cor:chamber_movable_part} and injectivity of $f_{\mathcal C}^*$. We begin by constructing $\mathcal C_T$.

    To this end, fix $\alpha > 0$ such that the segment $I_\alpha = \{ P - \delta T \; | \; \delta \in [0, \alpha] \}$ is contained in $\Bigc(X)$. We will choose $\mathcal{C}_T$ to be a Mori chamber containing $I_\alpha$ --- to show such a chamber exists, we first show $I_\alpha$ is covered by finitely many chambers. This is clear for Mori dream spaces, and for varieties of Calabi--Yau type, this will be a technical application of \cite[Cor. 1.1.5]{BCHM10}, which shows that certain regions $\mathcal{E}_{A,\pi}(V)$ of $\mathbb{R}$-divisors are covered by Mori chambers of finitely many log minimal models. To this end, write $P - \alpha T \sim_\mathbb{Q} A + B$ for $A$ and $B$ effective $\mathbb{Q}$-divisors and $A$ ample, and let $\Delta$ be as in \cref{def:CY_fano_type}. Then $J_\alpha = \{ A + B + (\alpha - \delta) T \; | \; \delta \in [0, \alpha] \}$ is the same line segment as $I_\alpha$ up to linear equivalence, just represented by a different family of divisors. We can choose $\epsilon > 0$ sufficiently small such that $(X, \Delta + \epsilon D)$ is a klt pair for all $D \in J_\alpha$ --- it then suffices to show that the segment $\Delta + \epsilon J_\alpha$ is contained in one of the $\mathcal{E}_{A,\pi}(V)$, as $(X, \Delta + \epsilon D)$-log minimal models of divisors are $D$-minimal models. Following \cite[Def. 1.1.4]{BCHM10}, one can then check that because
    \begin{equation}
        \Delta + \epsilon J_{\alpha} 
        = \left\{ \epsilon A + \Big( \Delta + \epsilon \Big[ B + (\alpha - \delta) T \Big] \Big) \; \Big| \; \delta \in [0, \alpha] \right\}
    \end{equation}
    we have that $\Delta + \epsilon J_{\alpha} \subset \mathcal{E}_{\epsilon A,\pi}(V)$ for $V$ the $\mathbb{R}$-span of $\Delta$, $B$, and $T$ as divisors and $\pi$ the morphism from $X$ to a point. Because $I_\alpha$ and $J_\alpha$ correspond to the same divisor classes, the same is true for $I_\alpha$.

    Now that we know $I_\alpha$ is covered by finitely many Mori chambers, we note that some intersecting Mori chamber must contain an open neighborhood of $P$ in $I_\alpha$. Indeed by convexity each Mori chamber intersects $I_\alpha$ on a subsegment closed in $I_\alpha$; if all chambers containing $P$ contained no other points, an open neighborhood of $P$ in $I_\alpha$ would be uncovered, a contradiction. Letting $\mathcal{C}_T$ denote a Mori chamber containing an open neighborhood of $P$ in $I_\alpha$, by convexity we can rescale $\alpha$ such that $I_\alpha \subset \mathcal{C}_T$.
    
    With $\mathcal{C}_T$ in hand, we can argue the first inclusions asserted by the lemma. One can write the Zariski decomposition of $P$ with respect to both $\mathcal{C}$ and $\mathcal{C}_T$ and apply \cite[Lem. 1.7]{HuKeel} to conclude that $g = f_{\mathcal{C}} \circ f_{\mathcal{C}_T}^{-1}$ is a morphism. Thus, as a birational morphism pulls nef classes back to nef classes and effective classes back to effective classes, $g^* \big( \Nef(X_{\mathcal{C}}) \cap \Eff(X_{\mathcal{C}}) \big) \subset \Nef(X_{\mathcal{C}_T}) \cap \Eff(X_{\mathcal{C}_T})$. Additionally using \cref{cor:chamber_movable_part}, we have
    \begin{equation}
        \begin{aligned}
            \mathcal{C} \cap \Mov(X) 
            &= f_{\mathcal{C}}^* \big( \Nef(X_{\mathcal{C}}) \cap \Eff(X_{\mathcal{C}}) \big) 
            = f_{\mathcal{C}_T}^* \Big( g^* \big( \Nef(X_{\mathcal{C}}) \cap \Eff(X_{\mathcal{C}}) \big) \Big) \\
            &\subset f_{\mathcal{C}_T}^* \big( \Nef(X_{\mathcal{C}_T}) \cap \Eff(X_{\mathcal{C}_T}) \big) 
            = \mathcal{C}_T \cap \Mov(X)~.
        \end{aligned}
    \end{equation}
    The second inclusion of this lemma follows from the equality $f_{\mathcal C} = g \circ f_{\mathcal{C}_T}$.

    Now we finish the final inclusion. As $\mathcal{C}_T$ is a cone containing $P - \alpha T$, taking $S_T := \alpha^{-1} P \in \relint (\mathcal{C}_T \cap \Mov(X))$ gives $-T + S_T = \alpha^{-1}(P - \alpha T) \in \mathcal{C}_T$, and this class is big because $P - \alpha T \in I_\alpha \subset \Bigc(X)$. The first inclusion and convexity then give
    \begin{equation*}
        -T + S_T + \mathcal{C} \cap \Mov(X) \subset \mathcal{C}_T + \mathcal{C}_T = \mathcal{C}_T~,
    \end{equation*}
    while $\Bigc(X) + \Eff(X) \subset \Bigc(X)$ places the same set in $\Bigc(X)$.
\end{proof}
With this lemma in hand, we can now determine how the image of a single Zariski subtraction $P_{\mathbb{Z}, \mathcal{C}}$ on a Mori chamber interplays with the Mori chamber decomposition.
\begin{lemma}
    \label{lem:zariski_sub_chamber}
    In the setting of \cref{lem:corrected_shifted_face}, let $R_H$ be the restriction of $N_{\mathbb Z,\mathcal C} - N_{\mathcal C}$ to a coset $H$ of $\Lambda_\mathcal{C}$, which is a constant effective $\mathbb{Q}$-divisor by \cref{prop:integerZariski_quasilinear}. Let $\mathcal{C}_H$ and $S_H$ be the Mori chamber and class furnished by \cref{lem:corrected_shifted_face} for $T = R_H$. Then there is a coset $H'$ of $\Lambda_{\mathcal{C}_H}$ such that
    \begin{equation*}
        P_{\mathbb Z,\mathcal C}\Big( \, (S_H + \mathcal{C}) \cap H \, \Big) \subset \mathcal{C}_{H} \cap H' \cap \Bigc(X)~.
    \end{equation*}
\end{lemma}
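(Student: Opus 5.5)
For $D \in (S_H + \mathcal{C}) \cap H$ I would write down $P_{\mathbb Z,\mathcal C}(D)$ explicitly and then check membership in $\mathcal{C}_H$, in $\Bigc(X)$, and in a single coset $H'$.

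Write $D = S_H + C$ with $C \in \mathcal{C}$. Since $D \in H$, the rounding error $N_{\mathbb Z,\mathcal C}(D) - N_{\mathcal C}(D)$ equals $R_H$. Hence
\begin{equation*}
P_{\mathbb Z,\mathcal C}(D) = D - N_{\mathcal C}(D) - R_H = P_{\mathcal C}(D) - R_H .
\end{equation*}
By \cref{cor:zariski_decomp_lin_and_mov}, $P_{\mathcal C}$ is linear, so $P_{\mathcal C}(D) = P_{\mathcal C}(S_H) + P_{\mathcal C}(C)$. Because $S_H \in \relint(\mathcal{C} \cap \Mov(X))$ it has no negative part, so $P_{\mathcal C}(S_H) = S_H$. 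Also $P_{\mathcal C}(C) \in f_{\mathcal C}^*(\Nef(X_{\mathcal C}) \cap \Eff(X_{\mathcal C}))$, which equals $\mathcal{C} \cap \Mov(X)$ by \cref{cor:chamber_movable_part}. The same corollary handles the non-big boundary: in the Mori dream case this is immediate, and in the Calabi--Yau type case one uses the closure of the big version together with the defining expression of the chamber. This gives
\begin{equation*}
P_{\mathbb Z,\mathcal C}(D) \in -R_H + S_H + \mathcal{C} \cap \Mov(X) .
\end{equation*}
The third inclusion of \cref{lem:corrected_shifted_face}, applied with $T = R_H$, places this set inside $\mathcal{C}_H \cap \Bigc(X)$.

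For the coset statement, I would show that the map $D \mapsto P_{\mathbb Z,\mathcal C}(D)$, restricted to $H$, has image in a single coset of $\Lambda_{\mathcal C_H}$. On $H$ the map is an affine translate of the linear map $P_{\mathcal C}$ (the shift is the constant $-R_H$). Any two elements of $H$ differ by an element of $\Lambda_{\mathcal C}$, so it suffices to show $P_{\mathcal C}(\Lambda_{\mathcal C}) \subset \Lambda_{\mathcal C_H}$. Check this on the two summands of $\Lambda_{\mathcal C}$:
\begin{itemize}
\item $P_{\mathcal C}$ kills every $E_i^{\mathcal C}$.
\item On $(f_{\mathcal C}^* \Pic(X_{\mathcal C}))_{\mathrm{sat}}$, $P_{\mathcal C}$ is the identity.
\end{itemize}
So it remains to show $(f_{\mathcal C}^*\Pic(X_{\mathcal C}))_{\mathrm{sat}} \subset \Lambda_{\mathcal C_H}$. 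By the proof of \cref{lem:corrected_shifted_face}, $f_{\mathcal C} = g \circ f_{\mathcal C_H}$ with $g$ a morphism, so $f_{\mathcal C}^* L = f_{\mathcal C_H}^* g^* L$ and $g^* L$ is Cartier. This gives $f_{\mathcal C}^*\Pic(X_{\mathcal C}) \subset f_{\mathcal C_H}^*\Pic(X_{\mathcal C_H})$. Saturation is monotone, so the saturations are nested as well, and the latter lies in $\Lambda_{\mathcal C_H}$. Taking $H'$ to be the coset of $P_{\mathbb Z,\mathcal C}(D_0)$ for any fixed $D_0 \in H$ completes the argument.

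I expect the main obstacle to be the coset step, specifically whether the direct sum defining $\Lambda$ behaves as claimed under saturation. The concern is that elements of $\Lambda_{\mathcal C}$ are sums $A + B$, with $B$ in a saturation that may contain rational combinations mixing exceptional and pulled-back classes. Only the decomposition $P_{\mathcal C}(A+B) = B$ is needed, and linearity of $P_{\mathcal C}$ makes that safe. A secondary concern is the boundary case, where $C$ is not big in the Calabi--Yau type setting. There I would fall back on the description of $\mathcal{C}$ in \cref{def:mori_chamber} directly, since it already writes the positive part inside $f_{\mathcal C}^*(\Nef \cap \Eff)$.
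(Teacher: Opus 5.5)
Your proposal is correct and follows the paper's proof. On $H$ you write $P_{\mathbb Z,\mathcal C} = P_{\mathcal C} - R_H$, land in $-R_H + S_H + \mathcal C\cap\Mov(X)$, invoke the third inclusion of \cref{lem:corrected_shifted_face}, and then show that images of elements of $H$ differ by elements of $\Lambda_{\mathcal C_H}$. The only variation is in the coset step: the paper just notes that $P_{\mathcal C}(D)-P_{\mathcal C}(D')$ is an integral class in the span of $\mathcal C\cap\Mov(X)\subset\mathcal C_H\cap\Mov(X)$, whereas you compute $P_{\mathcal C}(\Lambda_{\mathcal C})=(f_{\mathcal C}^*\Pic(X_{\mathcal C}))_{\mathrm{sat}}$ and nest the saturations via the morphism $g$. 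Both arguments work.
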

\begin{proof}
    Now, because on $H$ we have 
    \begin{equation}
        P_{\mathbb Z,\mathcal C} = P_{\mathcal C} - R_H~,
    \end{equation}
    we conclude from \cref{lem:corrected_shifted_face} that
    \begin{equation}
        P_{\mathbb Z,\mathcal C}\Big( \, (S_H + \mathcal{C}) \cap H \, \Big) = -R_H + P_{\mathcal C}\Big( \, (S_H + \mathcal{C}) \cap H \, \Big) \subset -R_H + S_H + \mathcal{C} \cap \Mov(X) \subset \mathcal{C}_H \cap \Bigc(X) ~.
    \end{equation}
    Finally, to conclude that $P_{\mathbb Z,\mathcal C}$ maps $(S_H + \mathcal{C}) \cap H$ to a common coset, let $D, D' \in H$ and note
    \begin{equation}
        P_{\mathbb Z,\mathcal C}(D) - P_{\mathbb Z,\mathcal C}(D') = P_{\mathcal C}(D) - P_{\mathcal C}(D')
    \end{equation}
    is an integral divisor and lies in the span of $\mathcal C \cap \Mov(X) \subset \mathcal{C}_H \cap \Mov(X)$, meaning it is contained in the lattice $\Lambda_{\mathcal{C}_H}$, so $P_{\mathbb Z,\mathcal C}(D), P_{\mathbb Z,\mathcal C}(D')$ belong to a common coset of $\Lambda_{\mathcal{C}_H}$.
\end{proof}
Iterating this lemma, we can extend the result to the Zariski movable subtraction.
\begin{lemma}
    \label{lem:movable_zariski_sub_SQM}
    In the setting of \cref{lem:zariski_sub_chamber}, there exists $S \in \relint (\mathcal C \cap \Mov(X))$ and, for each coset $H$ of $\Lambda_\mathcal{C}$, an SQM $\varphi_H \colon X \dashrightarrow X_H$ such that for $D \in S + \relint \mathcal{C}$,
    \begin{equation*}
        D^\downarrow_{\mathrm{zms}} \in \varphi_H^*\left( \Nef(X_H) \cap \Eff(X_H) \right) \cap \Bigc(X)
    \end{equation*}
    where $H$ is the coset containing $D$ and $D^\downarrow_{\mathrm{zms}}$ differs from $P_{\mathcal C}(D)$ by a class in $\ex(f_{\mathcal C})$ depending only on $H$.
\end{lemma}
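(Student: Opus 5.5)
The plan is to induct along the iterations of the Zariski subtraction, using \cref{lem:zariski_sub_chamber} at each step. The key structural observation is that after one step, the remaining negative part is \emph{constant on a coset}. Let $D\in H$ lie in $S_H+\mathcal C$ (notation of \cref{lem:zariski_sub_chamber}). Then
\begin{equation*}
P_{\mathbb Z,\mathcal C}(D)=P_{\mathcal C}(D)-R_H\in \mathcal C_H .
\end{equation*}
By \cref{lem:corrected_shifted_face} we have $P_{\mathcal C}(D)\in \mathcal C\cap\Mov(X)\subset \mathcal C_H\cap \Mov(X)$, so $N_{\mathcal C_H}(P_{\mathcal C}(D))=0$. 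The linearity of $N_{\mathcal C_H}$ from \cref{cor:zariski_decomp_lin_and_mov} then gives
\begin{equation*}
N_{\mathcal C_H}\bigl(P_{\mathbb Z,\mathcal C}(D)\bigr)=N_{\mathcal C_H}(-R_H),
\end{equation*}
which is independent of $D$. In particular the next integral Zariski subtraction removes the constant class $N_{\mathbb Z,\mathcal C_H}(-R_H)$. Since $\ex(f_{\mathcal C_H})\subseteq\ex(f_{\mathcal C})$, this class is supported on the exceptional divisors of $f_{\mathcal C}$.

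Iterating, each subsequent divisor has the form
\begin{equation*}
D^{(k)}=P_{\mathcal C}(D)-T^{(k)}_H,
\end{equation*}
where $T^{(k)}_H$ is an effective class in $\ex(f_{\mathcal C})$ depending only on $H$ (and $k$). At every stage I apply \cref{lem:corrected_shifted_face} to the original chamber $\mathcal C$ with $T=T^{(k)}_H$. This yields a chamber $\mathcal C^{(k)}_H$ and a shift $S^{(k)}_H\in\relint(\mathcal C\cap\Mov(X))$ such that
\begin{equation*}
-T^{(k)}_H+S^{(k)}_H+\mathcal C\cap\Mov(X)\subset\mathcal C^{(k)}_H\cap\Bigc(X).
\end{equation*}
Two features make this usable. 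First, the shift lies in $\relint(\mathcal C\cap \Mov(X))$, so translating $D$ by it simply translates $P_{\mathcal C}(D)$ within the face. Second, $D^{(k)}\in \mathcal C^{(k)}_H$ is exactly what is needed for the chamberwise decomposition at step $k+1$ to be computed in $\mathcal C^{(k)}_H$; the same linearity argument then shows the next subtracted class is again constant on $H$.

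For termination, \cref{cor:subtraction_algo_preserves} bounds the number of steps once a single $D$ is fixed. To make the bound uniform over $H$, I will use that the whole process depends only on $H$: since $T^{(k)}_H$ is independent of $D$, the run for one $D$ determines the run for all of them. This produces a finite sequence $T^{(0)}_H,\dots,T^{(m_H)}_H$. There are finitely many cosets because $\Lambda_{\mathcal C}$ has finite index. I will therefore take $S\in\relint(\mathcal C\cap\Mov(X))$ with $S-S^{(k)}_H\in\relint(\mathcal C\cap\Mov(X))$ for all $H$ and $k$, for example a sufficiently large multiple of the sum of all the shifts. Every $D\in S+\relint\mathcal C$ then satisfies all the required memberships. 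The final output is
\begin{equation*}
D^\downarrow_{\mathrm{zms}}=P_{\mathcal C}(D)-T^{(m_H)}_H,
\end{equation*}
which differs from $P_{\mathcal C}(D)$ by a class in $\ex(f_{\mathcal C})$ depending only on $H$, as claimed.

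It remains to place $D^\downarrow_{\mathrm{zms}}$ in a single SQM chamber $\varphi_H^*(\Nef(X_H)\cap\Eff(X_H))$, and I expect this to be the main obstacle. The final chamber $\mathcal C^{(m_H)}_H$ contains the big movable class $D^\downarrow_{\mathrm{zms}}$. By \cref{cor:chamber_movable_part}, its movable part is $f^*(\Nef\cap\Bigc)$ of that model, and I need this model to be small. I would argue as follows. Since the algorithm stops, $D^\downarrow_{\mathrm{zms}}\in\Mov(X)$, so by \cref{prop:zariski}(2) no exceptional divisor of $\mathcal C^{(m_H)}_H$ has positive coefficient on it. Replacing $\mathcal C^{(m_H)}_H$ if necessary by the chamber produced by the proof of \cref{lem:corrected_shifted_face} around the ray through $P-\alpha T$, I expect to be able to choose it with trivial $\ex$. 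The fallback is to use that the convex set $-T^{(m_H)}_H+S+\relint(\mathcal C\cap\Mov(X))$ lies in $\Mov(X)\cap\Bigc(X)$ and is covered by SQM chambers (\cref{prop:cy_preq}(1)). By the same local-finiteness argument via \cite[Cor.~1.1.5]{BCHM10} used in \cref{lem:corrected_shifted_face}, enlarging $S$ then forces this set into one SQM chamber.
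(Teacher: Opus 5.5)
Your architecture matches the paper's: at each step you apply \cref{lem:corrected_shifted_face} to the original $\mathcal C$ with the accumulated remainder, use linearity of $N_{\mathcal C_T}$ and $\mathcal C\cap\Mov(X)\subset\mathcal C_T\cap\Mov(X)$ to see that each subtracted class depends only on $H$, and then sum finitely many shifts. The termination step, however, is circular.

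\textbf{Termination.} You infer that the $D$-independent sequence $T^{(k)}_H$ is finite by applying \cref{cor:subtraction_algo_preserves} to one $D$. A fixed $D$ follows that sequence only while $P_{\mathcal C}(D)\in S^{(k)}_H+\mathcal C\cap\Mov(X)$. The shifts $S^{(k)}_H$ only appear as the sequence is built. Once $D$ fails a membership, the algorithm may select a different chamber, and the subtracted class need no longer be $D$-independent. So finiteness of the actual run of that $D$ says nothing about the abstract sequence.

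Choosing a $D$ that satisfies all memberships presupposes the finiteness you want, since your $S$ is defined from the finite list. Moving $D$ deeper at each step does not help: the step bound of \cref{cor:subtraction_algo_preserves} is a sum of multiplicities of the chosen representative, and it can grow as $D$ moves.

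The missing idea, which the paper uses, is to move $D$ only along $A=f_{\mathcal C}^*A'$ with $A'$ ample and Cartier on $X_{\mathcal C}$. Since $A\in\Lambda_{\mathcal C}$, this preserves the coset. Since $A$ is movable, it has a representative with $\nu_{E^{\mathcal C}_i}(A)=0$ for all $i$. Every subtraction is supported on the $E^{\mathcal C}_i$, because $\ex(f_{\mathcal C_T})\subseteq\ex(f_{\mathcal C})$. Hence the number of steps is bounded by $\sum_i\nu_{E^{\mathcal C}_i}(D)$ for a fixed effective representative of the original $D$, uniformly over all translates $D+mA$. This forces the sequence to terminate.

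\textbf{The SQM step.} The step you flag as the main obstacle is in fact immediate, and neither of your routes is needed. The hope of choosing the final chamber with trivial $\ex$ is also unsupported as stated. Let $f\colon X\dashrightarrow Y$ be the contraction of the final chamber $\mathcal C^{(m_H)}_H$. Since $D^\downarrow_{\mathrm{zms}}$ is big, movable, and lies in that chamber, \cref{cor:chamber_movable_part} places it in $f^*(\Nef(Y)\cap\Bigc(Y))$. Factor $f=h\circ\varphi_H$ with $\varphi_H$ an SQM and $h$ a morphism, by \cref{prop:CY_type_bir_contract_factor} (respectively \cref{prop:MDS_bir_contract_factor}). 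As $h^*$ preserves nefness and effectivity, $D^\downarrow_{\mathrm{zms}}\in\varphi_H^*(\Nef(X_H)\cap\Eff(X_H))$. Moreover, $\varphi_H$ depends only on $H$ because the final chamber does.

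Your local-finiteness fallback could likely be made to work. It would, however, require redoing the Hu--Keel morphism argument from the proof of \cref{lem:corrected_shifted_face} for SQM chambers, to fit all of $-T^{(m_H)}_H+S+\relint(\mathcal C\cap\Mov(X))$ into a single one.
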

\begin{proof}
    Fix $D \in (S_H + \relint \mathcal{C}) \cap H$ for $S_H$ from \cref{lem:corrected_shifted_face} and define $A = f_{\mathcal C}^* A' \in \relint (\mathcal{C} \cap \Mov(X))$ for $A'$ ample on $X_\mathcal{C}$. We would like to construct a sequence $D = D_0, D_1, \dots$ of divisors along with Mori chambers $\mathcal{C} =: \mathcal{C}_0, \mathcal{C}_1, \dots$ and shifts $S_H =: S_1, S_2, \dots$ such that $D_i = P_{\mathbb{Z}, \mathcal{C}_{i-1}}(D_{i-1})$ is the Zariski subtraction of $D_{i-1} \in \mathcal{C}_{i-1}$, and $\mathcal{C}_i$, $S_i$ are the Mori chamber and shift from \cref{lem:corrected_shifted_face} derived from the remainder $T_i = P_{\mathcal{C}}(D) - D_i$ and the Mori chamber $\mathcal{C}$.\footnote{We note that while it may seem natural to apply \cref{lem:corrected_shifted_face} to $\mathcal{C}_{i-1}$ rather than $\mathcal{C}$, because we are constructing a formula on $\mathcal{C}$, not $\mathcal{C}_{i-1}$, it is more relevant to construct a new Mori chamber containing the image of divisors in $\mathcal{C}$ under Zariski subtractions, rather than divisors in $\mathcal{C}_{i-1}$.}
    
    If we can achieve this, then from \cref{lem:corrected_shifted_face} we have the inclusions 
    \begin{align}
        \mathcal{C} \cap \Mov(X) &\subset \mathcal{C}_{i} \cap \Mov(X) \label{eq:moving_containment} \\
        \ex(f_i) &\subseteq \ex(f_{\mathcal{C}}) \label{eq:ex_containment}
    \end{align}
    for $f_i$ the birational map associated to $\mathcal{C}_i$. By \cref{lem:corrected_shifted_face}, it suffices at each step for $P_{\mathcal{C}}(D) \in S_{i+1} + \mathcal{C} \cap \Mov(X)$. We proceed inductively --- we can clearly do this for $i = 0$, so we assume we can do it for $i < j$. Note that
    \begin{equation}
        D_j = P_\mathcal{C}(D) + (P_{\mathbb{Z},\mathcal{C}_{j-1}} \circ \dots \circ P_{\mathbb{Z},\mathcal{C}_1})(-R_H) = P_\mathcal{C}(D) - T_j
    \end{equation}
    as $D_1 = P_\mathcal{C}(D) - R_H$ and by \cref{eq:moving_containment} each subsequent $P_{\mathcal{C}_i}(D_i)$ preserves $P_\mathcal{C}(D)$. In particular, each $T_j$ is actually independent of $D$, and depends only on $H$. Now, if $P_{\mathcal{C}}(D) \notin S_{j+1} + \mathcal{C} \cap \Mov(X)$, we can simply replace the original $D$ by $D + mA$ for any sufficiently large (coset-preserving) $m$ such that $P_{\mathcal{C}}(D) \in S_{j+1} + \mathcal{C} \cap \Mov(X)$ is achieved. This affects each $D_i$ only by changing $P_\mathcal{C}(D)$, not $T_i$, so all of the earlier Zariski subtractions take the same form, and now we can safely construct $D_{j+1}$, finishing the inductive step. 

    Now it suffices to show that this process results in a movable $D_i$ in finitely many steps. In particular, while iterative application of Zariski subtraction certainly terminates in a movable divisor by \cref{cor:subtraction_algo_preserves}, in this procedure we are allowing $D$ to change, so we must show that our process here terminates. 

    To this end, we may assume $h^0(X, \mathcal{O}_X(D)) > 0$ by replacing $D$ by $D + mA$ once more, and we may take the representative of $A$ to satisfy $\nu_{E^{\mathcal C}_i}(A) = 0$ for every $i$, as $A$ is movable. Each subtraction removes a divisor supported on the $E^{\mathcal C}_i$ by \cref{eq:ex_containment}, so the argument of \cref{cor:subtraction_algo_preserves} applies to $\sum_i \nu_{E^{\mathcal C}_i}(D_j)$: this sum strictly decreases along $D_0, D_1, \dots$ while remaining non-negative, and it is unaffected by the addition of $A$, so the process terminates.

    Finally, let $D_N$ be the terminal, movable divisor of the sequence, so that $D^\downarrow_{\mathrm{zms}} = D_N$ differs from $P_\mathcal{C}(D)$ by the class $T_N \in \ex(f_\mathcal{C})$, which depends only on $H$, and set $S' = S_1 + \dots + S_N$, so that the single condition $P_\mathcal{C}(D) \in S' + \mathcal{C} \cap \Mov(X)$ suffices for every step at once. Factoring $f_N = h \circ \varphi_H$ into an SQM $\varphi_H : X \dashrightarrow X_H$ followed by a morphism (\cref{prop:CY_type_bir_contract_factor} for Calabi--Yau type varieties and \cref{prop:MDS_bir_contract_factor} for Mori dream spaces) then gives $D^\downarrow_{\mathrm{zms}} \in \mathcal{C}_N \cap \Mov(X) \subset \varphi_H^*\left( \Nef(X_H) \cap \Eff(X_H) \right)$, while $D^\downarrow_{\mathrm{zms}}$ is big by \cref{lem:corrected_shifted_face}. As $\Lambda_\mathcal{C}$ has finite index in $\Cl(X)$ there are finitely many such $S'$, so we can take their sum $S$ and conclude that the result holds for divisors $D \in S + \relint \mathcal{C}$ (as the image of this cone under $P_\mathcal{C}$ is $S + \relint \mathcal{C} \cap \Mov(X)$).
\end{proof}
\begin{proposition}
    \label{prop:asymptotic_subtraction}
    In the setting of \cref{lem:movable_zariski_sub_SQM}, the Zariski movable subtraction $D \mapsto D^\downarrow_{\mathrm{zms}}$ is quasilinear with lattice $\Lambda_{\mathcal C}$ and big for all $D \in S + \relint \mathcal{C}$ --- in particular, these properties hold asymptotically on $\relint \mathcal{C}$.
\end{proposition}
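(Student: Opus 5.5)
The plan is to read the proposition off \cref{lem:movable_zariski_sub_SQM}, which already does most of the work. Fix the class $S \in \relint(\mathcal C \cap \Mov(X))$ furnished there. For each coset $H$ of $\Lambda_{\mathcal C}$ in $\Cl(X)$, that lemma gives a class $T_H \in \ex(f_{\mathcal C})$, depending only on $H$, such that
\begin{equation*}
    D^\downarrow_{\mathrm{zms}} = P_{\mathcal C}(D) - T_H \qquad \text{for all } D \in (S + \relint \mathcal C) \cap H~.
\end{equation*}
By \cref{cor:zariski_decomp_lin_and_mov}, $P_{\mathcal C}$ is the restriction of a linear map on $\Cl(X)$. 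Hence, on each coset $H = t_H + \Lambda_{\mathcal C}$ and writing $D = t_H + \ell$ with $\ell \in \Lambda_{\mathcal C}$, we get
\begin{equation*}
    D^\downarrow_{\mathrm{zms}} = P_{\mathcal C}(\ell) + \bigl(P_{\mathcal C}(t_H) - T_H\bigr),
\end{equation*}
which is a homomorphism on $\Lambda_{\mathcal C}$ plus a constant. This is exactly the quasilinear form of \cref{def:quasipoly} with lattice $\Lambda_{\mathcal C}$. There is a small point to check: $P_{\mathcal C}(\ell)$ must be an honest integral class for $\ell \in \Lambda_{\mathcal C}$. This holds because $\lambda^{\mathcal C}_i$ is integral on $\Lambda_{\mathcal C}$, which was used in the proof of \cref{prop:integerZariski_quasilinear}. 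The map also lands in $\Cl(X)$ rather than $\Cl(X)_{\mathbb Q}$, since $D^\downarrow_{\mathrm{zms}}$ is a Weil divisor obtained by integral subtractions. Bigness is already part of the conclusion of \cref{lem:movable_zariski_sub_SQM}, since $D^\downarrow_{\mathrm{zms}} \in \varphi_H^*(\Nef(X_H)\cap\Eff(X_H)) \cap \Bigc(X)$.

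For the asymptotic statement, take a Weil divisor $D$ with class in $\relint \mathcal C$. I will show that $mD \in S + \relint \mathcal C$ for all $m \ge m_0$. Since $\relint \mathcal C$ is a relatively open cone in $N^1(X)_{\mathbb R}$, we have
\begin{equation*}
    mD - S = m\bigl(D - \tfrac{1}{m} S\bigr),
\end{equation*}
and $D - \tfrac1m S$ lies in $\relint \mathcal C$ once $m$ is large, because $D$ is an interior point and $\tfrac1m S \to 0$. Both $D$ and $S$ lie in the span of $\mathcal C$, which is all of $N^1(X)_{\mathbb R}$ when $\mathcal C$ is full-dimensional. Here the chambers are full-dimensional by the rank count $\rank N^1(X) = \rank N^1(Y) + r$ after \cref{def:mori_chamber}, so openness holds in the ambient space. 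The required $m_0$ depends on $D$, as \cref{def:asymp} allows, and this gives the asymptotic statement.

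The only step needing care is the interpretation of ``$S + \relint\mathcal C$''. At the end of the proof of \cref{lem:movable_zariski_sub_SQM}, this cone is controlled through its image $S + \relint(\mathcal C\cap\Mov(X))$ under $P_{\mathcal C}$. So I will double-check that $P_{\mathcal C}(D) - S \in \relint(\mathcal C \cap \Mov(X))$ whenever $D - S \in \relint\mathcal C$. This follows because $P_{\mathcal C}$ is the linear projection along $\ex(f_{\mathcal C})$ onto the span of $\mathcal C\cap\Mov(X)$, and it fixes $S$. Linearity of $P_{\mathcal C}$ together with the quasilinearity structure settles everything else. I expect no genuine obstacle beyond this bookkeeping.
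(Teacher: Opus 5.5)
Your proposal is correct and follows the paper's proof. Both read off from \cref{lem:movable_zariski_sub_SQM} that $D^\downarrow_{\mathrm{zms}} = P_{\mathcal C}(D) - T_H$ with $T_H$ depending only on the coset $H$ of $\Lambda_{\mathcal C}$, conclude quasilinearity from the linearity of $P_{\mathcal C}$, and take bigness directly from the lemma; your extra checks (integrality of $P_{\mathcal C}$ on $\Lambda_{\mathcal C}$, and $mD \in S + \relint \mathcal C$ for $m \gg 0$, which justifies the ``asymptotically'' clause) are sound details the paper leaves implicit.
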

\begin{proof}
    By \cref{lem:movable_zariski_sub_SQM}, on $S + \relint \mathcal{C}$ the divisor $D^\downarrow_{\mathrm{zms}}$ is big and differs from $P_\mathcal{C}(D)$ by a class depending only on the coset $H$ of $\Lambda_\mathcal{C}$ containing $D$. As $P_\mathcal{C}$ is linear and that class is constant on $H$, the map $D \mapsto D^\downarrow_{\mathrm{zms}}$ is affine linear on each coset of $\Lambda_\mathcal{C}$, and hence quasilinear with lattice $\Lambda_\mathcal{C}$.
\end{proof}
\begin{definition}
    \label{def:quasilinear_zariski_subtraction}
    Let $X$ be a Calabi--Yau type variety or Mori dream space and let $\mathcal C$ be a Mori chamber. We denote by
    $$D\longmapsto D^\downarrow_{\mathcal C}$$
    the quasilinear map that asymptotically agrees with the Zariski movable subtraction on $\relint \mathcal C$.
\end{definition}
At this point we have introduced many variants of subtractions --- for convenience, we summarize these in \cref{tab:subtractions}.
\begin{table}
\centering
\small
\setlength{\tabcolsep}{4pt}
\renewcommand{\arraystretch}{1.1}
\begin{tabular}{|L{0.3\linewidth}|L{0.38\linewidth}|L{0.085\linewidth}|L{0.14\linewidth}|}
\hline
\textbf{Notion} & \textbf{Description} & \textbf{Source}
  & \textbf{Needs Zariski decomp.?} \\
\hline
Subtraction $D - \sum_i a_i E_i$
  & Requires $0 \leq a_i \leq \sigma_{E_i}(D)$ for integral $a_i$; leaves global sections unchanged.
  & \cref{def:subtraction}
  & No \\
\hline
Movable subtraction $D^\downarrow$
  & Any iteration of subtractions yielding a movable divisor, always achieved with finite iterations.
  & \cref{cor:subtraction_algo_preserves}
  & No \\
\hline
Zariski subtraction $P_{\mathbb{Z},\mathcal{C}}(D)$
  & A canonical quasilinear subtraction induced by the Zariski decomposition on a Mori chamber $\mathcal{C}$.
  & \cref{def:zariski_movable_subtraction}
  & Yes \\
\hline
Zariski movable subtraction $D^\downarrow_{\mathrm{zms}}$
  & A canonical movable subtraction induced by iterating Zariski subtractions.
  & \cref{def:zariski_movable_subtraction}
  & Yes\\
\hline
Quasilinear asymptotic Zariski movable subtraction $D^\downarrow_{\mathcal{C}}$
  & A quasilinear map that agrees asymptotically with the Zariski movable subtraction on a fixed Mori chamber $\mathcal{C}$.
  & \cref{def:quasilinear_zariski_subtraction}
  & Yes \\
\hline
\end{tabular}
\caption{The notions of subtraction introduced in \cref{sec:subtraction} for $D$ an effective (integral) divisor on $X$ with fixed components~$E_i$.}
\label{tab:subtractions}
\end{table}

We are now prepared to present the proof of our final main result, \cref{MAIN:together}. The asymptotic quasilinearity of \cref{prop:asymptotic_subtraction} gives rise to an asymptotic piecewise quasipolynomial of global sections, which must hold everywhere because \cref{MAIN:contract} fixes global sections to be quasipolynomial on each Mori chamber (intersected with $\Bigc(X)$ in the Calabi--Yau type case).
\begin{proof}[Proof of \cref{MAIN:together}]
    Fix a Mori chamber $\mathcal{C}$. We begin by arguing for the asymptotic quasipolynomiality of \cref{alg} on $\relint \mathcal{C}$. \cref{alg} computes global sections as
    \begin{equation}
        h^0(X, \mathcal{O}_X(D)) = \chi\Big(Y, \mathcal{O}_{Y}( \varphi_* D^\downarrow_{\mathrm{zms}} ) \Big)
    \end{equation}
    for $\varphi : X \dashrightarrow Y$ an SQM of $X$ satisfying $D^\downarrow_{\mathrm{zms}} \in \varphi^* (\Nef(Y) \cap \Eff(Y))$, as long as $D^\downarrow_{\mathrm{zms}}$ is big (or $X$ is Fano type). By \cref{prop:asymptotic_subtraction}, asymptotically $D \mapsto D^\downarrow_{\mathrm{zms}}$ is quasilinear on $\relint \mathcal{C}$ and has image in $\Bigc(X)$. Hence, asymptotically, for any coset $H$ of $\Lambda_\mathcal{C}$, global sections on $\relint \mathcal{C}$ are the composition of a linear map and a quasipolynomial Euler characteristic. Collectively this yields a quasipolynomial $Q_\text{sub}$ for divisor classes $D \in \mathcal{C}$. In particular, letting $\varphi_H : X \dashrightarrow X_H$ denote the SQM from \cref{lem:movable_zariski_sub_SQM} associated to $H$, the lattice of $Q_\text{sub}$ on $\relint \mathcal{C}$ is
    \begin{equation}
        \Lambda_\text{sub} = \Lambda_\mathcal{C} \bigcap \big( \bigcap_H P_\mathcal{C}^{-1} \varphi_H^* \Pic(X_H) \big)~.
    \end{equation}
    
    Now we argue why this asymptotic formula holds everywhere in $\mathcal{C}$ (understood throughout this proof as $\mathcal{C} \cap \Bigc(X)$ when $X$ is Calabi--Yau type). By \cref{MAIN:contract}, we know that $h^0(X, -)$ is described everywhere on $\mathcal{C}$ by some quasipolynomial $Q_\text{con}$ with lattice $\Lambda_\text{con}$. On any coset $J$ of $\Lambda_\text{sub} \cap \Lambda_\text{con}$, $Q_\text{sub}$ and $Q_\text{con}$ are both polynomials. Moreover, for $S$ as in \cref{prop:asymptotic_subtraction}, $Q_\text{sub} = h^0(X, -) = Q_\text{con}$ on all divisor classes in $(S + \relint \mathcal{C}) \cap J$. Because $\mathcal{C}$ is full dimensional, these polynomials must consequently coincide. Because $J$ was arbitrary, we achieve $Q_\text{sub} = Q_\text{con}$, so in particular $Q_\text{sub} = h^0(X, -)$ everywhere on $\mathcal{C}$.
\end{proof}

Having now formally demonstrated the existence of piecewise quasipolynomial formulae, we conclude this section with a brief discussion on how such formulae can be computed in practice. Given the Mori chamber data of \cref{def:birational_data}, for every Mori chamber $\mathcal{C}$ and every coset $H$ of $\Lambda_{\mathcal{C}}$, it suffices to understand the quasilinear map that is asymptotically $D \mapsto D^\downarrow_{\mathrm{zms}}$. To do this, following the proof of \cref{lem:movable_zariski_sub_SQM}, we set $T_1 = R_H$ and from here define $-T_{i+1} = P_{\mathbb{Z}, \mathcal{C}_{i}}(-T_{i})$ for $\mathcal{C}_i$ a Mori chamber of \cref{lem:corrected_shifted_face} which asymptotically contains $-T_i + \mathcal{C} \cap \Mov(X)$. Such a chamber can be computed for example by letting $P \in \relint(\mathcal{C} \cap \Mov(X))$ and identifying chambers containing $P - \epsilon T_i$ for increasingly small $\epsilon > 0$ until one also contains $P$. Repeating this process gives a sequence $T_1, T_2, \dots$ which terminates with some $T_N$ obeying $T_N = T_{N+1}$, at which point we know that on $\mathcal{C} \cap H$, $D^{\downarrow}_{\mathcal{C}} = P_{\mathcal{C}}(D) - T_N$. Applying the logic of \cref{MAIN:together}, we conclude
\begin{equation}
    \label{eq:systematic_recipe}
    h^0(X, \mathcal{O}_X(D)) = \chi\Big(Y, \mathcal{O}_{Y}(\varphi_*(P_{\mathcal{C}}(D) - T_N))\Big)
\end{equation}
for $\varphi : X \dashrightarrow Y$ any SQM such that $P_{\mathcal{C}}(D) - T_N \in \varphi^* (\Nef(Y) \cap \Eff(Y))$. Combining the results for the different cosets of $\Lambda_{\mathcal{C}}$ gives a quasipolynomial formula for $h^0(X, \mathcal{O}_X(D))$ on $\mathcal{C}$, and combining the results for the different Mori chambers gives the global piecewise quasipolynomial formula.

Our examples of \cref{sec:examples} will be basic enough that we can just comprehensively perform \cref{alg} for all relevant divisors and witness the formulae emerge as the asymptotic behavior of the algorithm. In particular, we will not employ the approach just described for directly computing the formulae. However, we will make some brief comments on how the examples agree with results just discussed, such as \cref{prop:asymptotic_subtraction}.

\begin{remark}
    From the perspective of the contraction method, the lattice associated to the global sections quasipolynomial on a $D$-minimal model $X_\mathcal{C}$ --- i.e., the lattice associated to the quasipolynomial Euler characteristic for nef divisors on $X_\mathcal{C}$ --- should be no smaller than $\Pic(X_\mathcal{C})$, by \cref{cor:euler_char_is_quasipoly}. Pulling this back to $X$ and noting the irrelevance of exceptional divisors, the lattice for the associated Mori chamber should be no smaller than $L_\mathcal{C} := f_\mathcal{C}^*\Pic(X_\mathcal{C}) + \sum_i \mathbb{Z} E_i^\mathcal{C}$. 
    
    Now consider the perspective of the subtraction method --- for simplicity, let the SQMs of $X$ be smooth. Because global sections are achieved as a composition of the quasilinear Zariski movable subtraction, with lattice $\Lambda_\mathcal{C}$ by \cref{prop:integerZariski_quasilinear}, and the polynomial Euler characteristic on some SQM, the associated lattice of the global sections quasipolynomial is again just $\Lambda_\mathcal{C}$. This lattice contains $L_\mathcal{C}$, meaning the two methods are consistent. In particular, if $f_\mathcal{C}^*\Pic(X_\mathcal{C})$ is not saturated in $\Cl(X)$, then $\Lambda_\mathcal{C}$ is strictly larger than $L_\mathcal{C}$. Pushing back down to $X_\mathcal{C}$, we find that the subtraction method actually ensures that the sublattice of $\Cl(X_\mathcal{C})$ on which global sections/the Euler characteristic are polynomial is larger than just $\Pic(X_\mathcal{C})$ --- some non-Cartier, Weil divisors are included as well. It would be interesting to additionally understand the case of singular SQMs, as then the subtraction lattice depends on the Picard groups of the relevant SQMs.
\end{remark}

\begin{remark}
    In the subtraction approach described here, we achieve integrality by rounding up the functions $\lambda^\mathcal{C}_i$. It is worth mentioning the alternative --- rounding them down. When some $\lambda^\mathcal{C}_i(D)$ is non-integral, $D - \lfloor \lambda^\mathcal{C}_i(D) \rfloor E_i$ is then still in $\mathcal{C}$, and in particular cannot be movable, recalling the notation of \cref{sec:zariski}, if one sets
    \begin{equation}
        \Delta' = \sum_i \left( \lambda^\mathcal{C}_i(D) - \lfloor \lambda^\mathcal{C}_i(D) \rfloor \right) E^\mathcal{C}_i,
    \end{equation}
    and $(X, \Delta')$ is a klt pair, then in spite of the fact that $P_{\mathcal{C}} + \Delta'$ wouldn't be movable, the Kawamata--Viehweg theorem would still apply to it, as $P_{\mathcal{C}} + \Delta - (K_X + \Delta')$ would be a movable $\mathbb{Q}$-divisor. It would be interesting to understand the most general setting in which such $\mathbb{Q}$-divisors $\Delta'$ --- linear combinations of exceptional divisors of a fixed birational morphism with coefficients in $[0, 1)$ --- are guaranteed to give rise to klt pairs. For example, it would suffice for them to have normal crossing singularities (see, e.g., \cite[Cor. 3.12]{kollar1997singularities}).
\end{remark}

\begin{remark}
    In many cases, one would like to not only compute the number of global sections of a given divisor but also the ring structure of these sections --- i.e., the structure of the Cox ring more generally. While our results are largely unhelpful for this endeavor, we can make the following elementary observation. If an effective divisor $D$ satisfies $D^\downarrow = D - \sum_i a_i E_i$ for $E_i$ exceptional divisors of various birational maps, and we let $s_i$ denote the unique global section of $E_i$, then all global sections of $D$ are of the form $s \prod_i s_i^{a_i}$ for $s$ a global section of $D^\downarrow$. More generally, it would be worthwhile to understand the hypotheses under which the Cox ring can be reconstructed (or guessed) from just the knowledge of its dimension in various degrees.
\end{remark}

\medskip
\subsection{The Nef and Non-big Case}
\label{sec:non-big}

To apply Kawamata--Viehweg vanishing, we require that the sum $D - (K_X + \Delta)$ be big, where $D$ is either the output of the subtraction method on a suitable SQM or the output of a pushforward to a $D$-minimal model in the contraction method, and $(X,\Delta)$ is a klt pair (e.g., for Calabi--Yau or Fano type varieties, $\Delta$ can be as in \cref{def:CY_fano_type}). For some Mori dream spaces and Calabi--Yau type varieties it is the case that this sum is not big --- that is, it may lie on the boundary of the closure of the effective cone. This is a reason why our results for Calabi--Yau type varieties have been posed only for the big cone. We do not have a systematic treatment for this case, but in this subsection we will briefly comment on one strategy which can apply. 

If the relevant non-big divisor $D$ is semiample, then it defines a non-trivial fibration $\psi : X \to B$ for some base $B$ of strictly smaller dimension than $X$, which we denote the Iitaka fibration. It is a standard result that the global sections of any $\psi^* D$ for any integral $D \in \Nef(B)$ are merely given by the global sections of $D$. This can be understood as an explanation for why $\psi^* D$ isn't big --- the global sections of its integral multiples scale asymptotically as a polynomial of degree at most $\dim B < \dim X$.
\begin{proposition}[{\negthinspace\negthinspace\negthinspace\cite[Lem. 2.1.13]{Lazarsfeld}}]
    \label{prop:nonbig_reduction}
    Let $X$ be a normal, reduced, and irreducible variety with a semiample, non-big Cartier divisor $D$. Let
    \begin{equation*}
        \psi \colon X \longrightarrow B := \Proj\!\Big(\bigoplus_{m\ge 0} H^0(X,\mathcal O_{X}(mD))\Big)
    \end{equation*}
    denote the map defined by $D$, which is a morphism by the semiamplitude of $D$. If $D = \psi^* A$ for a line bundle $A$ on $B$, then
    \begin{equation*}
        h^0\bigl(X,\mathcal O_X(D)\bigr) \;=\; h^0\bigl(B,\mathcal O_B(A)\bigr).
    \end{equation*}
\end{proposition}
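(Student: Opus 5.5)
The plan is to use the projection formula together with the fact that the morphism defined by a semiample divisor has connected fibres and normal image. Concretely, we aim to show $\psi_*\mathcal O_X = \mathcal O_B$. Granting this, the projection formula for the line bundle $A$ gives
\begin{equation*}
\psi_*\mathcal O_X(D) = \psi_*\bigl(\psi^*\mathcal O_B(A)\bigr) = \psi_*\mathcal O_X \otimes \mathcal O_B(A) = \mathcal O_B(A).
\end{equation*}
Taking global sections and using $H^0(X,\mathcal F) = H^0(B,\psi_*\mathcal F)$ then yields $h^0(X,\mathcal O_X(D)) = h^0(B,\mathcal O_B(A))$.

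The substance is therefore the identity $\psi_*\mathcal O_X = \mathcal O_B$. We would deduce it from the structure of the semiample fibration, following \cite[Th.~2.1.27]{Lazarsfeld}. Write $R = \bigoplus_{m\ge 0} H^0(X,\mathcal O_X(mD))$; by semiampleness it is finitely generated (\cite[Th.~2.3.15]{Lazarsfeld}), so $B=\Proj R$ is projective. For $m$ sufficiently divisible, $mD$ is basepoint free, and the morphisms $\phi_m$ it defines stabilise, after Stein factorisation, to $\psi$.

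\textbf{Key step.} $\psi$ is an algebraic fibre space: it is surjective with $\psi_*\mathcal O_X=\mathcal O_B$. To show this, write the Stein factorisation $X\to B'\to B$ with finite part $\nu\colon B'\to B$. Every section of $mD$ comes from $B$ by construction, so the finite map $\nu$ induces an isomorphism on the section rings $\bigoplus_m H^0$ (for divisible $m$) of the pullback of $\mathcal O_B(1)$. A finite birational morphism onto $\Proj$ of its own section ring is an isomorphism. Here normality of $X$ enters, since it makes $R$ integrally closed and hence $B$ normal.

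The main obstacle is making this identification of $B$ with the Stein factorisation precise, in particular handling the need to pass to divisible $m$. If $A$ is only known to be a line bundle, with $D=\psi^*A$ rather than some multiple, we also need the pullback on sections to be injective. Injectivity is immediate from surjectivity of $\psi$, since $X$ is reduced and irreducible. The non-big hypothesis plays no role beyond ensuring $\dim B<\dim X$; the argument works equally when $D$ is big.
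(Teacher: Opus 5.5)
Your proposal is correct and follows the standard argument behind the cited \cite[Lem.~2.1.13]{Lazarsfeld}: show that the semiample fibration $\psi$ is an algebraic fibre space, $\psi_*\mathcal O_X=\mathcal O_B$, and then apply the projection formula to get $H^0(X,\psi^*A)=H^0(B,A)$. Your key step is also sound, and it can be streamlined: $\nu^*\mathcal O_B(d)$ is ample on the Stein factor $B'$ and has section ring $R^{(d)}$, so $B'\cong\Proj R^{(d)}=B$ compatibly with $\nu$, with no need to invoke birationality of $\nu$ or normality of $B$.
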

One can imagine computing global sections on the base by again applying the methods of this paper --- for example, many Calabi--Yau varieties admit fibrations over Fano type bases. We emphasize that semiamplitude is a subtle property: for example, if $X$ is a Calabi--Yau threefold, then if $D$ is nef and effective, it is semiample, by \cite[Th. 2.1]{oguiso1993algebraic}, but it is unknown if semiamplitude follows from nefness in general. For a review of semiamplitude for Calabi--Yau varieties, see \cite{lazic2016morrison}. On the other hand, if $X$ is a Mori dream space, then if $D$ is nef, it is semiample. Additionally, the image of the nef integral classes of $B$ under $\psi^*$ need not be saturated in the Picard group of $X$ (or in the class group of $X$, if $\Cl(X)/\Pic(X)$ is non-trivial). In such cases, only a finite index sublattice of nef, non-big classes will be computable through this method.

\subsection{Calabi--Yau Hypersurfaces}\label{sec:cy_hyp}

Let us now consider a special case of Calabi--Yau varieties for which more is possible. In particular, an important class of Calabi--Yau varieties arise as hypersurfaces in toric varieties. The structure of the Koszul sequence used for computing hypersurface line bundle cohomology is conveniently compatible with Kawamata--Viehweg vanishing. 
\begin{proposition}
    \label{prop:toric_hyp_cy}
    Let $X$ be a quasismooth Calabi--Yau hypersurface in a simplicial projective toric variety $V$ such that $\pi : \mathrm{Cl}(V) \to \mathrm{Cl}(X)$ is an isomorphism. Then, identifying $\Cl(V) \cong \Cl(X)$, for $D \in \Bigc(V) \cap \Nef(V)$ we have
    \begin{equation}
        h^0(X, \mathcal{O}_{X}(D)) 
        = h^0(V, \mathcal{O}_{V}(D)) - h^0(V, \mathcal{O}_{V}(D + K_{V})) 
        = \chi(V, \mathcal{O}_{V}(D)) - \chi(V, \mathcal{O}_{V}(D + K_{V}))
    \end{equation}
    In particular, $h^0\bigl(X,\mathcal O_X(D)\bigr)$ is quasipolynomial for such $D$.
\end{proposition}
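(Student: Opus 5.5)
The plan is to use the Koszul (ideal sheaf) sequence of $X \subset V$, twisted by $\mathcal O_V(D)$. Since $X$ is Calabi--Yau and the hypersurface is anticanonical, $X \in |-K_V|$, so $\mathcal O_V(-X) \cong \mathcal O_V(K_V)$. Twisting $0 \to \mathcal O_V(-X) \to \mathcal O_V \to \mathcal O_X \to 0$ by the reflexive sheaf $\mathcal O_V(D)$ gives
\begin{equation*}
0 \to \mathcal O_V(D+K_V) \to \mathcal O_V(D) \to \mathcal O_X(D|_X) \to 0~.
\end{equation*}
On a simplicial toric variety $V$ with $X$ quasismooth, I would justify exactness on the right and the identification of the restriction with $\mathcal O_X(\pi(D))$ as follows. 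Pass to the Cox quotient presentation, or use that $X$ meets the orbifold locus transversally, so that $X$ avoids the singular strata in codimension two. The hypothesis that $\pi$ is an isomorphism then lets us regard $D$ simultaneously as a class on $V$ and on $X$. I expect this sheaf-theoretic step to be the main obstacle: twisting by a non-Cartier Weil divisor on an orbifold need not preserve exactness a priori. I would handle it by working with $\mathbb{Q}$-Cartier divisors on the toric (quotient) stack, where everything is a line bundle, and then taking invariants, which is exact in characteristic zero.

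Next I would take the long exact cohomology sequence and kill the higher terms. Since $V$ is a simplicial projective toric variety, it is Fano type by \cref{prop:toric_is_fano_type}. Hence \cref{cor:MAIN-VANISHING} gives $H^i(V,\mathcal O_V(D))=0$ for $i>0$, because $D$ is nef.

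For $D+K_V$ I would use \cref{th:KV} directly with $\Delta=0$. The pair $(V,0)$ is klt because $V$ is simplicial toric and therefore has quotient singularities. Moreover $(D+K_V)-K_V = D$ is nef and big by assumption. This yields $H^i(V,\mathcal O_V(D+K_V))=0$ for all $i>0$. In particular $H^1(V,\mathcal O_V(D+K_V))=0$, so the map $H^0(V,\mathcal O_V(D)) \to H^0(X,\mathcal O_X(D))$ is surjective and
\begin{equation*}
h^0(X,\mathcal O_X(D)) = h^0(V,\mathcal O_V(D)) - h^0(V,\mathcal O_V(D+K_V))~.
\end{equation*}
Combining the two vanishings gives $h^0=\chi$ for both sheaves on $V$, which is the second equality.

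Finally, quasipolynomiality follows from \cref{cor:euler_char_is_quasipoly} applied to $V$, which is complete, normal and $\mathbb{Q}$-factorial with finitely generated class group. Both $D \mapsto \chi(V,\mathcal O_V(D))$ and $D \mapsto \chi(V,\mathcal O_V(D+K_V))$ are quasipolynomial on $\Cl(V)$, the second being a translate of the first. Their difference is therefore quasipolynomial, and transporting it through $\Cl(V)\cong\Cl(X)$ gives the claim.
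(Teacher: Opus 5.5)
Your proposal is correct and follows the paper's proof essentially step for step. Both arguments take the twisted Koszul sequence with $\mathcal O_V(-X)\cong\mathcal O_V(K_V)$, use Kawamata--Viehweg with $\Delta=0$ to kill $H^{1}(V,\mathcal O_V(D+K_V))$, use \cref{prop:toric_is_fano_type} with \cref{cor:MAIN-VANISHING} for $\mathcal O_V(D)$, and use \cref{cor:euler_char_is_quasipoly} for quasipolynomiality. Your extra care about exactness of the twisted sequence for non-Cartier $D$ (via the quotient stack and taking invariants) goes beyond the paper, which simply invokes ``the standard Koszul sequence''.
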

\begin{proof}
    Let $D \in \Bigc(V) \cap \Nef(V)$. By the standard Koszul sequence,
    \begin{equation}
        \begin{aligned}
            0 \to H^0(V, \mathcal{O}_{V}(D + K_{V})) &\stackrel{A}{\to} H^0(V, \mathcal{O}_{V}(D)) \to H^0(X, \mathcal{O}_{X}(\pi(D))) \\
            \to H^1(V, \mathcal{O}_{V}(D + K_{V})) &\stackrel{B}{\to} H^1(V, \mathcal{O}_{V}(D)) \to \dots
        \end{aligned}
    \end{equation}
    we can conclude that $H^0(X, \mathcal{O}_{X}(\pi(D))) = \mathrm{coker} \, A \oplus \mathrm{ker} \, B$. By Kawamata--Viehweg (with $\Delta = 0$), $H^1(V, \mathcal{O}_{V}(D + K_{V})) = 0$. Thus, only the $\mathrm{coker} \, A$ contribution survives, directly yielding the first equality. The second equality follows from the vanishing of higher cohomology for $D + K$ on $V$ (as already shown) as well as $D$ on $V$ (e.g., by applying \cref{prop:toric_is_fano_type} and \cref{cor:MAIN-VANISHING}). Recalling \cref{cor:euler_char_is_quasipoly}, it suffices to note that a sum of quasipolynomials is quasipolynomial.
\end{proof}
\begin{corollary}
    \label{cor:toric_hyp_cy_hilbert_function}
    Let $X$ be a quasismooth Calabi--Yau hypersurface in a simplicial projective toric variety $V$ given by the zero locus of $g \in H^0(V, -K_V)$. If $\pi : \mathrm{Cl}(V) \to \mathrm{Cl}(X)$ is an isomorphism, then, identifying $\Cl(V) \cong \Cl(X)$, for $D \in \Bigc(V) \cap \Nef(V)$ we have that $h^0(X, \mathcal{O}_{X}(-))$ agrees with the Hilbert function of $\mathrm{Cox}(V) / \langle g \rangle$.
\end{corollary}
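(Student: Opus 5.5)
The idea is to identify both sides degree by degree. The Cox ring of the simplicial projective toric variety $V$ is the polynomial ring $S = \mathrm{Cox}(V) = \mathbb{C}[x_\rho]$, graded by $\Cl(V)$, with $S_D = H^0(V,\mathcal{O}_V(D))$ for every class $D$. The equation $g$ is homogeneous of degree $[-K_V]$. Since $S$ is a domain, multiplication by $g$ is injective, which gives an exact sequence of graded $S$-modules
\begin{equation*}
0 \to S(K_V) \xrightarrow{\cdot g} S \to S/\langle g\rangle \to 0~.
\end{equation*}
Its degree-$D$ piece reads
\begin{equation*}
0 \to H^0(V,\mathcal{O}_V(D+K_V)) \to H^0(V,\mathcal{O}_V(D)) \to (S/\langle g\rangle)_D \to 0~.
\end{equation*}
Taking dimensions, the Hilbert function of $\mathrm{Cox}(V)/\langle g\rangle$ in degree $D$ is $h^0(V,\mathcal{O}_V(D)) - h^0(V,\mathcal{O}_V(D+K_V))$.

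I would then compare this with \cref{prop:toric_hyp_cy}. Under exactly the stated hypotheses (quasismooth Calabi--Yau hypersurface, $\pi$ an isomorphism, $D \in \Bigc(V)\cap\Nef(V)$), that proposition gives $h^0(X,\mathcal{O}_X(D)) = h^0(V,\mathcal{O}_V(D)) - h^0(V,\mathcal{O}_V(D+K_V))$. The two expressions agree, which proves the corollary.

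There is one consistency check to make. In the Koszul sequence used in \cref{prop:toric_hyp_cy}, the map $A$ should be multiplication by $g$, so that its cokernel is literally $(S/\langle g\rangle)_D$ and the identification holds at the level of vector spaces, not just dimensions. This is standard: $X$ is the zero locus of the section $g$ of $\mathcal{O}_V(-K_V)$, and we twist by $\mathcal{O}_V(D)$, which is valid in the reflexive-sheaf sense on a simplicial toric variety. Quasismoothness keeps the ideal sheaf identification $\mathcal{I}_X \cong \mathcal{O}_V(K_V)$ well behaved.

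I expect no real obstacle, since the only content is matching the graded pieces. The one subtlety is the identification $S_D = H^0(V,\mathcal{O}_V(D))$ for torus-invariant Weil divisor classes, which is the standard Cox construction \cite{cls}.
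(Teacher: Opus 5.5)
Your argument is correct and is exactly how the corollary follows in the paper, which gives it no separate proof: the first equality of \cref{prop:toric_hyp_cy} gives $h^0(X,\mathcal{O}_X(D))=h^0(V,\mathcal{O}_V(D))-h^0(V,\mathcal{O}_V(D+K_V))$, and injectivity of multiplication by $g$ on the polynomial Cox ring identifies this difference with $\dim\bigl(\mathrm{Cox}(V)/\langle g\rangle\bigr)_D$. Your closing consistency check, which identifies the Koszul map with multiplication by $g$, upgrades the result to an identification of vector spaces, but the dimension statement does not need it.
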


\subsection{Birational tomography from finite cohomological data}
\label{sec:birational_tomography}

The preceding results admit a stronger interpretation than the statement that birational geometry determines global sections. As captured in \cref{def:birational_data}, the data required by the subtraction approach are comparatively limited: one needs the Mori chamber decomposition of the movable cone, the prime exceptional divisors governing the chambers outside the moving cone and the Euler characteristics on the SQMs of $X$. In particular, one need not construct all $D$-minimal models, nor compute their Euler characteristics independently. Nevertheless, by \cref{MAIN:together}, the resulting formula for global sections agrees on every Mori chamber with the Euler-characteristic quasipolynomial of the corresponding $D$-minimal model. Thus the global-section formulae encode birational information which was not supplied explicitly in their construction. They retain information about the entire geography of $D$-minimal models, including numerical and discrete information attached to birational contractions. 

This observation suggests reading our results in the opposite direction. Rather than using the birational geometry of $X$ to compute global sections, one may ask how much of the birational geometry can be inferred from finitely many values of $[D]\mapsto h^0(X,\mathcal O_X(D))$. We refer informally to this inverse problem as \emph{birational tomography}. Our results constrain the form that the cohomology data can take. In dimension~$d$,  the values in a fixed birational regime are governed by a quasipolynomial of degree at most~$d$. One may therefore search the data for regions on which a single polynomial or quasipolynomial fit holds, and regard changes in the resulting formula as evidence for walls separating different Mori chambers. This experimental, data-driven approach to uncovering birational structure from cohomology computations has already been explored in a number of examples in~\cite{Brodie:2019dfx, Brodie:2020fiq, Brodie:2021nit}. 

Once a candidate region has been identified, its formula is determined by finitely many values. Indeed, if $\rho=\rank\Cl(X)$, a polynomial of total degree at most $d$ in $\rho$ variables has $\binom{\rho+d}{d}$ coefficients. Thus $\binom{\rho+d}{d}$ suitably chosen evaluations determine a polynomial constituent. If the data are given by a quasipolynomial expression with some non-trivial lattice, the same interpolation can be performed separately on the cosets of the lattice. This does not provide an a priori bound on the total number of computations needed to discover the complete piecewise quasipolynomial structure, since neither the walls nor the lattice are assumed known in advance, but it shows that each candidate constituent can be tested and reconstructed from finitely many values.

The resulting formulae can contain considerably more information than the location of candidate chambers. For example, if the fitted formula in a region is invariant in the direction of a divisor class $[E]$, this is evidence that $[E]$ is exceptional. More information can sometimes be extracted by comparing the formulae in neighbouring candidate regions. Suppose quasipolynomials $P$ and $Q$ govern two such regions, with $P$ associated to a region in the movable cone. If the data reveal a quasilinear map $\Phi$ such that $Q(D)=P(\Phi(D))$, then $\Phi$ is a candidate for the asymptotic integral subtraction map carrying divisors toward the movable cone. In such cases the cohomological data can suggest not only which divisor directions are exceptional, but also the piecewise integral transformation by which their fixed components are removed.

Fitted formulae also carry numerical information. Whenever the relevant Euler characteristic is described by Hirzebruch--Riemann--Roch, its terms recover intersection-theoretic data on the corresponding birational model. For example, on a smooth Calabi--Yau threefold,
$
\chi(X,\mathcal O_X(D)) = \frac{1}{6}D^3+\frac{1}{12}c_2(X)\cdot D,
$
so the cubic and linear terms, respectively, elucidate the cubic intersection form and the pairing with~$c_2(X)$. 

Finally, quasipolynomial behaviour detects information which is invisible in the real vector space $N^1(X)_{\mathbb R}$. From the subtraction perspective, this arises from quasipolynomiality of Euler characteristics of SQMs but also crucially because the integral Zariski decomposition is itself quasilinear --- i.e., affine linear only on cosets of a finite-index lattice $\Lambda_{\mathcal C}\subset\Cl(X)$, with the rounding corrections depending on the corresponding coset. From the contraction perspective, periodicity can reflect the quasipolynomial Euler characteristic of a singular $D$-minimal model, including finite-index phenomena associated with non-Cartier Weil divisor classes and torsion in the class group. The cohomological data can therefore reveal finite integral information that is lost upon passage to $N^1(X)_{\mathbb R}$, even before its precise geometric origin is identified.

There are, however, limits to what can be recovered from finite cohomological data alone. A change in the quasipolynomial governing $h^0$ provides evidence for a birational wall, but finitely many evaluations cannot by themselves certify the complete chamber structure: without an a priori bound on the number and location of chambers, an additional wall may lie outside the sampled region. Moreover, distinct Mori chambers may in principle carry the same quasipolynomial, in which case the wall between them would be invisible from section counts alone. Likewise, periodic behaviour can reveal finite-index structure without uniquely determining its geometric origin. Birational tomography should therefore be understood as a method for extracting and testing birational information from cohomological data, rather than as an unconditional reconstruction theorem.

We note that there is some precedent for this kind of tomography from finite integer data. For example, in the special setting of Calabi--Yau threefolds, the authors of~\cite{Gendler:2022ztv} used finite computations of Gopakumar--Vafa invariants to reconstruct the extended K\"ahler cone.

Against this broader data-driven perspective, some of the examples below serve as a proof-of-principle for the cohomological approach described here in concrete terms. We illustrate how finite collections of global-section computations will be used to infer candidate chamber walls, exceptional directions, contraction maps, intersection-theoretic data, and finite-index phenomena; the resulting predictions will then be verified independently from the birational geometry and the $D$-MMP.

\section{Examples}
\label{sec:examples}

We now exhibit the methods described in the previous sections in some examples. We will consider three smooth Calabi--Yau threefolds arising as hypersurfaces in toric varieties as examples of Calabi--Yau type varieties, as well as a toric variety as an example of a Fano type variety. In particular, we will demonstrate the contraction method (\cref{MAIN:contract}) with some help from \cref{prop:toric_hyp_cy}, and the subtraction method (\cref{MAIN:subtract}) via \cref{alg}. By \cref{MAIN:together}, it suffices to determine the asymptotic quasipolynomial form of \cref{alg}, as this piecewise function characterizes global sections everywhere. However, for the purpose of illustration, we will show in these examples how \cref{alg} can be more complicated for divisors not sufficiently far out in the Mori chamber and then we will verify \cref{MAIN:together} by showing that the asymptotic formulae still apply to such divisors. In general, for Calabi--Yau type varieties a formula is guaranteed to exist only for big divisors, but in the examples studied below we are able to successfully employ the methods of \cref{sec:non-big} in conjunction with our knowledge that $h^0(X, \mathcal{O}_X) = 1$ to achieve a formula for the entire effective cone.

For the forward application of our methods in the examples below, we determine the relevant Mori chamber decompositions independently. For toric varieties this is simply the secondary fan decomposition of the effective cone (see, e.g., \cite[Ch. 14-15]{cls}), which is well-understood and can be performed combinatorially, for example using the \texttt{regfans} Python package \cite{regfans}, as discussed in \cite{MacFadden:2025ssx}. The situation is more subtle for Calabi--Yau hypersurfaces $X$, but our examples will actually themselves be Mori dream spaces as well. This can be verified, for example, by using the methods of \cite{herrera2024cox} to directly compute the Cox ring of the anticanonical hypersurface. In particular, all three of our Calabi--Yau examples actually have hypersurface Cox rings: if $f$ is a generic anticanonical section of the associated toric fourfold $V$, then the Cox ring of the hypersurface is exactly $\mathrm{Cox}(V)/f$ (this is of course not true in general for toric hypersurfaces). In particular, this means that the embedding of the hypersurface is the one described in \cite[Prop. 2.11]{HuKeel}, which entails that the Mori chamber decomposition of the Calabi--Yau hypersurface is refined by that of the toric variety, so the former is readily computed from the latter. In the first two examples, where we also illustrate birational tomography, this independent determination of the birational geometry allows us to verify directly the predictions inferred from the cohomological data.

We also recall the alternative, but more conjectural approach for constructing the extended K\"ahler cone of a Calabi--Yau threefold --- i.e., the Mori chamber decomposition of the moving cone --- presented in \cite{Gendler:2022ztv}, which we noted in noted in \cref{sec:birational_tomography}. This method uses Gopakumar--Vafa invariants computable using the Python packages \texttt{CYTools} \cite{Demirtas:2022hqf} and \texttt{cygv} \cite{cygv}, which implement the methods of \cite{Demirtas:2023als}.

We emphasize that while the examples studied here are simple enough that birational tomography is unnecessary to compute birational geometric data, our examples nevertheless allow us to usefully explain precisely how birational geometric data can be reconstructed from finite global section data. This is particularly true for the first two examples, where we explicitly illustrate how birational information can be extracted from cohomology data. The first example also illustrates how divisors with base loci that are not stable (i.e., ``transient'' base loci) result in \cref{alg} requiring multiple iterations. Our second example in \cref{sec:ex2} features a divisorial contraction which induces torsion in the class group of the target; this torsion leaves a distinctive parity dependence in the global-section formulae and can itself be detected from the cohomological data. Our third example in \cref{sec:ex3} has Picard number three and represents an application of our methods to a variety with a more complex collection of birational contractions. Finally, for our fourth example in \cref{sec:ex_4} we do not present a comprehensive formula, but consider a particular subset of the Mori chambers on which the Zariski movable subtraction is more complicated than in the first three examples. 

\subsection{Example 1 --- Transient Base Locus} \label{sec:ex1}

Consider the reflexive polytope $\Delta^\circ_{25}$ in the Kreuzer--Skarke classification
of four-dimensional reflexive polytopes \cite{KreuzerSkarke} with points 
\begin{equation*}
    \begin{aligned}
        v_1 &= (-7,\,-2,\,-2,\,-2),\quad v_2 = (0,\,0,\,0,\,1),\quad v_3 = (0,\,0,\,1,\,0),\\
        v_4 & = (0,\,1,\,0,\,0), \quad \quad \quad \quad \, v_5 = (1,\,0,\,0,\,0),\quad v_6 = (-3,\,-1,\,-1,\,-1).
    \end{aligned}
\end{equation*}  
Let $V$ be the unique simplicial toric fourfold whose fan has rays generated by the above points. Because $\Delta^\circ_{25}$ is reflexive, a generic anticanonical
hypersurface $X \;\subset\; V$ is a smooth Calabi--Yau threefold \cite{Batyrev:1993oya, MacFadden:2025ssx}. 
$X$ has non-trivial Hodge numbers $(h^{1,1}, h^{2,1}) = (2, 122)$, with its Picard group being inherited from the class group of $V$. We fix a basis for $\Cl(X) \cong \Pic(X)$ by specifying the following class group grading for the prime torus-invariant divisors, ordered in the same way as the above points:
\begin{equation}
    \label{eq:ex1_charge_matrix}
    \begin{pmatrix}
        1 & 2 & 2 & 2 & 7 & 0 \\
        0 & 1 & 1 & 1 & 3 & 1 \\
    \end{pmatrix}~.
\end{equation}
We let $x_i$ denote the homogeneous coordinate associated to the $i$th ray, $\hat{D}_i$ the prime torus-invariant divisor given by $x_i = 0$, and $D_i = \hat{D}_i \cap X$. We see that $\Cl(X)\cong\Pic(X)=\mathbb Z[D_1]\oplus\mathbb Z[D_6]$.

\subsubsection{Birational tomography.}

Before explicitly determining the birational geometry of~$X$, let us ask how much of it can be inferred if one only had access to a finite collection of cohomology computations. \cref{fig:25} displays values of $h^0\bigl(X,\mathcal O_X(aD_1+bD_6)\bigr)$ at lattice points in the effective cone. Since $X$ is a threefold, our general results lead us to search for piecewise quasipolynomial formulae of degree three. 

\begin{figure}
    \centering
    \includegraphics[width=.58\linewidth]{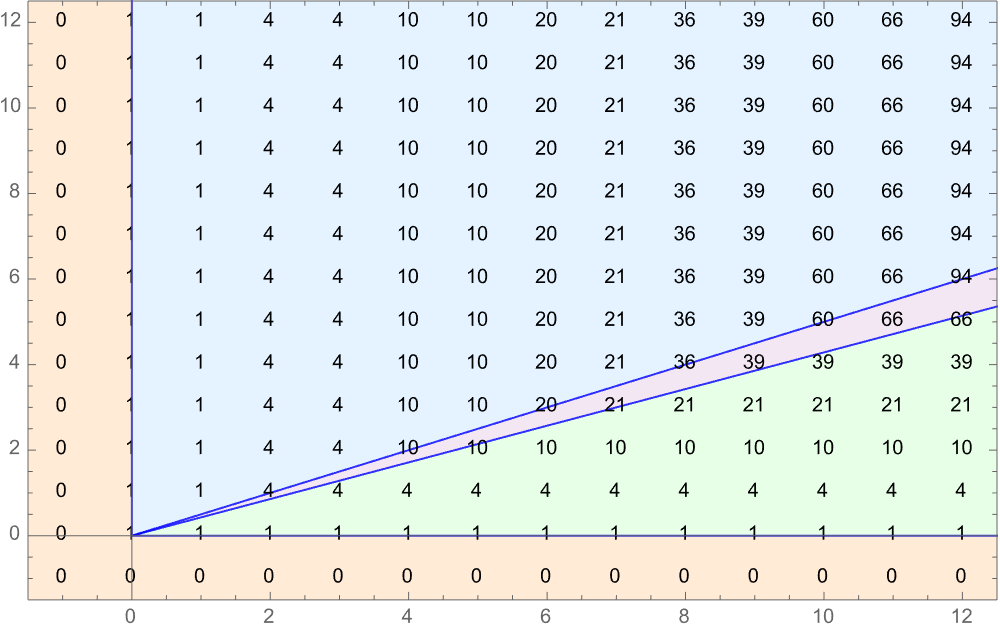}
    \caption{Cohomology data and chamber decomposition of ${\rm Eff}(X)$ for the smooth Calabi--Yau threefold associated with the reflexive polytope $\Delta_{25}^\circ$ in the Kreuzer--Skarke classification of four-dimensional reflexive polytopes. The numbers count global sections of line bundles on $X$.}
    \label{fig:25}
\end{figure}

The data separate naturally into three regimes. In the central region they are fit by the cubic polynomial
\begin{equation*}
P(a,b) = \frac{3}{2}a^{3} -\frac{21}{2}a^{2}b +\frac{49}{2}ab^{2} -\frac{1}{2}a -\frac{56}{3}b^{3} +\frac{14}{3}b .
\end{equation*}
In one exterior region, the data are instead fit by
\begin{equation*}
Q_{\ell}(a) = \frac{1}{24}a^3 +\frac{7}{16}(-1)^a a +\frac{67}{48}a,
\end{equation*}
while in the other they are fit by
\begin{equation*}
Q_r(b) = \frac{7}{18}b^3 +\frac{7}{2}b +\frac{1}{9}\bigl((b+1)\bmod 3-1\bigr).
\end{equation*}
While first formula admits no invariant directions and is polynomial, the two exterior formulae each are independent of a particular direction in the class group and are quasipolynomial with periods two and three, respectively. The change of formula indicates two candidate walls in the effective cone along the rays generated by $[D_2]=2[D_1]+[D_6]$, and $[D_5]=7[D_1]+3[D_6]$, that is $b=a/2$ and, respectively, $b=3a/7$. 

The invariant directions in the two exterior regions have a direct birational interpretation. In the left-hand region, $Q_\ell(a)$ is independent of $b$, indicating that $[D_6]$ is a fixed exceptional direction; its contribution can be subtracted without changing $h^0$, equivalently suggesting a contraction of $D_6$ on the associated $D$-minimal model. The data suggest that asymptotically a single subtraction removes the $D_6$-component. Similarly, in the right-hand region $Q_r(b)$ is independent of $a$, indicating that $[D_1]$ is the corresponding fixed exceptional direction and that a single asymptotic subtraction is needed. 

The periodic coefficients (i.e., quasipolynomiality) in the two exterior formulae indicate index-two and index-three lattice phenomena in the two exterior regions. From the subtraction perspective, such periodicity can arise from the relevant integral Zariski decomposition being linear only on cosets of a finite-index sublattice of $\Cl(X)$. From the contraction perspective, the same phenomenon can reflect non-Cartier divisor classes of finite index on the corresponding singular $D$-minimal model, and hence periodic corrections to its Euler characteristic.

The polynomial parts also carry intersection-theoretic information. The cubic and linear parts of $P(a,b)$ encode, respectively, the cubic intersection form and the pairing with $c_2(X)$ in the basis $\{[D_1],[D_6]\}$. By contrast, the cubic parts of the two exterior formulae depend on a single variable, which is consistent with the corresponding divisors being pushed forward to Picard-rank-one $D$-minimal models, where the Euler characteristic depends on a single numerical divisor coordinate.
We now determine the birational geometry directly and verify the predictions made from this finite cohomological data.

\subsubsection{Birational geometry and contraction.}
The class group $\Cl(X)$ is generated by $\{[D_1], [D_6]\}$; these basis elements turn out to be precisely the exceptional divisors of the divisorial contractions associated to~$X$.
$X$ is a Mori dream space and the effective cone of $X$ admits the Mori chamber decomposition shown in
\cref{fig:25}.
The small birational class of $X$ consists of just $X$, so $\Mov(X) = \Nef(X) = \mathrm{Cone}([D_2], [D_5])$, shown in purple, and there are two additional Mori chambers $Z_1  = \mathrm{Cone}([D_2], [D_6])$ and $Z_2 = \mathrm{Cone}([D_1], [D_5])$, shown in blue and, respectively, green. The ambient variety $V$ enjoys two divisorial contractions given by the blowdowns of the toric divisors $\hat{D}_6$ and $\hat{D}_1$, respectively:
\begin{equation*}
f^\ell : V \dashrightarrow \mathbb{P}_{1\,2\,2\,2\,7}, \qquad 
f^r : V \dashrightarrow \mathbb{P}_{1\,1\,1\,1\,3},
\end{equation*}
which, upon restriction to the hypersurface, induce contractions of $D_6$ and $D_1$, respectively:
\begin{equation*}
f^\ell : X \dashrightarrow Y_1, \qquad
f^r : X \dashrightarrow Y_2~.
\end{equation*}
These induce the two Mori chambers outside of $\Mov(X)$ in ${\rm Eff}(X)$. In particular, $Y_1$ and $Y_2$ are the anticanonical hypersurfaces in $\mathbb{P}_{1\,2\,2\,2\,7}$ and $\mathbb{P}_{1\,1\,1\,1\,3}$, respectively. 

We have identified the Mori chamber decomposition of $\Eff(X)$ as well as the birational models associated to each chamber --- namely, $X$ itself along with $Y_1$ and $Y_2$. Consequently, every effective divisor on~$X$ pushes forward to a nef divisor on one of these models. Thus we can compute $h^0(X, \mathcal{O}_X(D))$ for any effective $D$ by performing the computation on a $D$-minimal model. However, neither $Y_1$ nor $Y_2$ is smooth:
in fact $Y_2 \subset \mathbb{P}_{1\,1\,1\,1\,3}$ carries an isolated $\mathbb{Z}_3$ quotient singularity and $Y_1 \subset \mathbb{P}_{1\,2\,2\,2\,7}$ contains a curve of $\mathbb{Z}_2$ singularities, coming from a toric surface of $\mathbb{Z}_2$ singularities in $\mathbb{P}_{1\,2\,2\,2\,7}$ that meets $Y_1$ generically. This obstructs us from directly applying the smooth Hirzebruch--Riemann--Roch formula on either variety. Nevertheless, we have realized $Y_1$ and $Y_2$ as hypersurfaces in toric varieties, meaning we can compute the global sections of their big and nef divisors from the global sections of divisors on the ambient toric fourfold, by \cref{prop:toric_hyp_cy}. The ambient toric fourfolds in this case are also singular, meaning Hirzebruch--Riemann--Roch remains unavailable, but a convenient approach is furnished by \cref{cor:toric_hyp_cy_hilbert_function}. In particular, the hypersurface divisor classes descending from big nef classes are simply all non-trivial effective classes, and we can conclude from the corollary that their Hilbert series $HS(Y_i;t)$ are
\begin{equation}
    \begin{aligned}
        HS(Y_1;t) &= \frac{1 - t^{14}}{(1-t)(1-t^2)^3(1-t^7)}~, \\
        HS(Y_2;t) &= \frac{1 - t^7}{(1-t)^4(1-t^3)}~. 
    \end{aligned}
\end{equation}
A formula for the global sections of such effective divisors on these varieties can then be straightforwardly derived. In particular, this also computes the Euler characteristic for non-trivial effective classes $nH$, where $H$ is inherited from the weighted projective space hyperplane class and $n > 0$, as such classes are big and nef:
\begin{equation}
    \begin{aligned}
        \chi(Y_1, nH) = h^0(Y_1, \mathcal{O}_{Y_1}(nH)) &= \frac{1}{n!}\frac{d^n}{dt^n} HS(Y_1)\Big|_{t = 0} = \frac{1}{24}n^3 + \frac{7}{16} (-1)^{n} n + \frac{67}{48}n \\
        \chi(Y_2, nH) = h^0(Y_2, \mathcal{O}_{Y_2}(nH)) &= \frac{1}{n!}\frac{d^n}{dt^n} HS(Y_2)\Big|_{t = 0} = \frac{7}{18}n^3 + \frac{7}{2}n + \frac{1}{9}\Big((n + 1) \bmod 3 - 1\Big)
    \end{aligned}
\end{equation}
The divisors that push forward to the trivial class on $Y_1$ or $Y_2$ are not captured by this formula but evidently have one global section. Here, and for the remainder of this section, $\partial^\ell$ and $\partial^r$ refer to the left and right boundaries of a closed two-dimensional cone, respectively.
\begin{equation}
    \begin{aligned}
        h^0(X, \mathcal{O}_X(D)) 
        \; &= \;
        \begin{cases}
            h^0(X, \mathcal{O}_X)  & [D] \in \partial^\ell Z_1 \\
            \chi(Y_1, \mathcal{O}_{Y_1}(f^{\ell}_* D)) & [D] \in Z_1 \setminus \partial^\ell Z_1 \\
            \chi(X, \mathcal{O}_X(D)) & [D] \in \Nef(X) \setminus \{0\} \\
            \chi(Y_2, \mathcal{O}_{Y_2}(f^{r}_*D)) & [D] \in Z_2 \setminus \partial^r Z_2 \\
            h^0(X, \mathcal{O}_X)  & [D] \in \partial^r Z_2
        \end{cases}
    \end{aligned}
\end{equation}
It then suffices to note that $f^\ell_* (a[D_1] + b[D_6]) = a [H]$ and $f^r_* (a[D_1] + b[D_6]) = b [H]$ --- substitution then reproduces the formulae above.

From the contraction method, we see that the quasipolynomiality of the formulae arises from the structure of the Hilbert series of $Y_1$ and $Y_2$. As noted above, $Y_1$ and $Y_2$ admit $\mathbb{Z}_2$ and $\mathbb{Z}_3$ quotient singularities, and quotient singularities can give rise to periodic contributions to the Euler characteristic. Thus the periods two and three observed in the cohomological data admit a direct interpretation in terms of the singularities of the corresponding $D$-minimal models.

The subtraction method below will provide a complementary interpretation of the same periodicity. There, the relevant lattices $\Lambda_{Z_1}$ and $\Lambda_{Z_2}$ from \cref{prop:integerZariski_quasilinear} have indices two and three in $\Cl(X)$, respectively, so the integral Zariski decomposition depends on the corresponding residue classes. Thus the arithmetic structure first detected from finite cohomological data is reflected simultaneously in the lattice geometry of subtraction and in the singular birational models produced by contraction.

\subsubsection{Subtraction and \cref{alg}.} We now recover the same global-section formulae using subtraction, which additionally reveals the transient behaviour of \cref{alg}.  No subtraction is required for any divisor in $\Nef(X)$ and \cref{cor:MAIN-VANISHING} is immediately applicable. For divisors in $Z_1$, the relevant birational map is $f^\ell$, for which there is a single exceptional divisor, $E^{Z_1}_1 = D_6$. We recall that we can compute the Zariski decomposition by decomposing divisors in this chamber as a non-negative linear combination of exceptional classes and generators of the saturation of $f^{\ell *}\Pic(Y_1)$ in $\Cl(X)$. Here the exceptional class is $[D_6]$, while $f^{\ell *}\Pic(Y_1)=\mathbb Z[D_2]$ is already saturated in $\Cl(X)$. This determines the integral Zariski decomposition following \cref{def:integral-zariski}. In particular, $\lambda^{Z_1}_1(a[D_1] + b[D_6]) = b - a/2$, so in $Z_1$, 
\begin{equation}
    P_{\mathbb{Z},Z_1}([D]= a[D_1] + b[D_6]) = [D] - \lceil b - a/2 \rceil [D_6] = a [D_1] + \lfloor a/2 \rfloor [D_6]~.
\end{equation}
For $a = 0, 2, 4$ or $a \geq 6$, this is movable and the algorithm terminates. However, for $a = 1, 3, 5$, we have that $P_{\mathbb{Z},Z_1}([D]) \in Z_2$ and another iteration is required. For such divisors, the divisorial base locus is larger than the stable base locus --- there is an ``unstable'' or ``transient'' base locus --- such that removing the stable base locus does not result in a divisor in the movable cone: $\Mov(X)$ is ``passed through'' by subtraction. 

For divisors in $Z_2$, the relevant birational map is $f^r$, for which there is $E^{Z_2}_1 = D_1$. We can read off that 
$\lambda^{Z_2}_1(a[D_1] + b[D_6]) = a - 7b/3$ so in $Z_2$, 
\begin{equation}
    P_{\mathbb{Z},Z_2}([D]= a[D_1] + b[D_6]) = [D] - \lceil a - 7b/3 \rceil [D_1] = \lfloor 7b/3 \rfloor [D_1] + b [D_6]
\end{equation}
For all $[D] \in Z_2$, then, $P_{\mathbb{Z},Z_2}([D]) \in \Mov(X)$ and the algorithm terminates in one step in this chamber. 

Putting everything together, we see that the algorithm requires $0$, $1$, or $2$ steps, and requires $2$ steps only for the divisors in the set $B = \{ [D] = a[D_1] + b[D_6] \in \Eff(X) \; | \; a \in \{1, 3, 5\}, 2b > a\}$.    
This behaviour is illustrated in \cref{fig:ex1_subtraction}. Divisors in the movable cone require no subtraction, while a single Zariski subtraction suffices for divisors in $Z_2$ and for all but finitely many slices of $Z_1$. The exceptional set $B\subset Z_1$ consists precisely of those divisor classes for which the first subtraction crosses the movable cone and lands in $Z_2$, so that a second subtraction is required. Thus the transient behaviour is confined to finitely many lattice slices near the boundary of $Z_1$.  

\begin{figure}
    \centering
    \includegraphics[width=.58\linewidth]{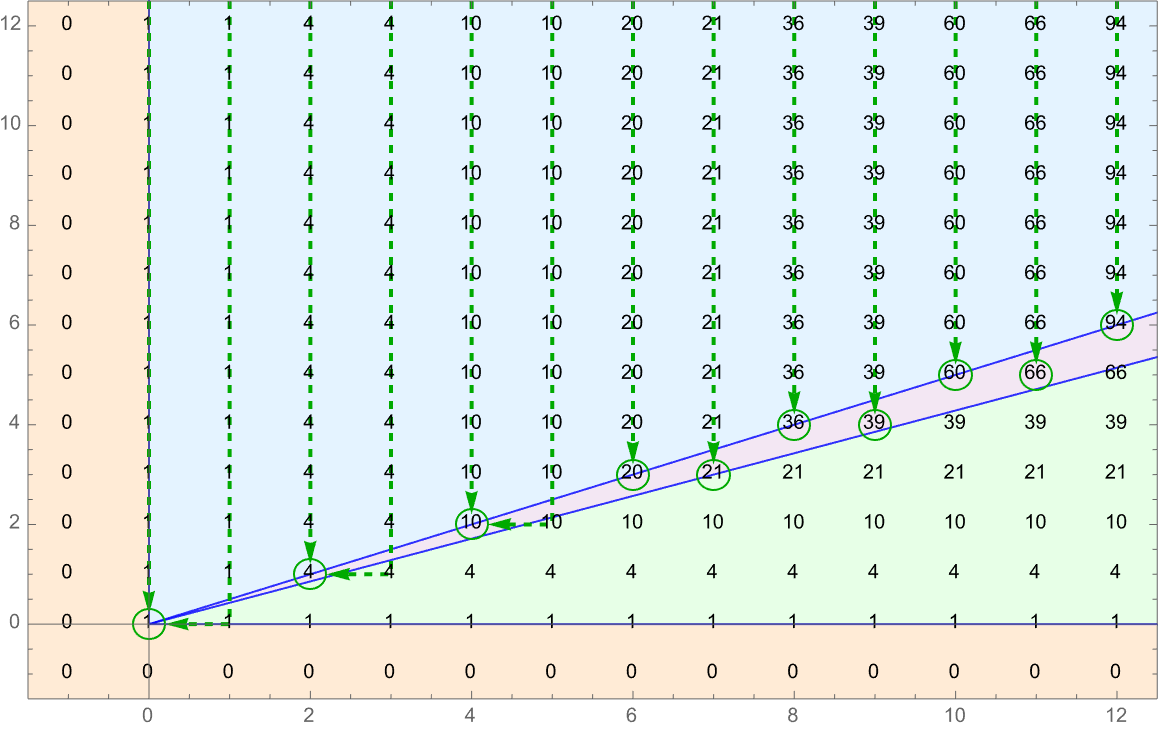}
    \caption{Behaviour of the Zariski subtraction algorithm on $\Eff(X)$. Divisors in $\Mov(X)=\Nef(X)$ (the purple region) require no subtraction. Divisors in $Z_2$ (the green region) are carried into $\Mov(X)$ after one subtraction, and the same holds for divisors in $Z_1$ (the blue region) outside the set
$B=\{a[D_1]+b[D_6]\in\Eff(X)\mid a\in\{1,3,5\},\;2b>a\}$. For classes in $B$, the first subtraction removes the stable fixed component but carries the divisor across $\Mov(X)$ into $Z_2$; a second subtraction is therefore required to reach the movable cone. This transient region accounts for the non-asymptotic behaviour of \cref{alg} in this example.}
    \label{fig:ex1_subtraction}
\end{figure}

As expected for Calabi--Yau varieties, for the non-big divisor classes --- the effective classes $a[D_1] + b[D_6]$ satisfying $a = 0$ or $b = 0$ --- the subtraction procedure culminates in a nef, non-big divisor outside of the scope of Kawamata--Viehweg. Additionally, the big divisors of the form $[D_1] + b[D_6]$ also subtract to a non-big class --- the existence of such examples requires the ``big Zariski movable subtraction'' hypothesis in \cref{MAIN:subtract}. In this example we are fortunate that the subtraction results in the trivial class, whose sections we know --- in general, big classes can subtract to non-trivial non-big classes, where the vanishing theorem does not apply and we would be forced to apply the results of \cref{sec:non-big} or attempt some other ad hoc strategy.

We have completely characterized how the subtraction algorithm proceeds for every class in the effective cone. We now note that for $[D] = a[D_1] + b[D_6] \in \Eff(X)$, 
\begin{equation}
    \chi(X, \mathcal{O}_X(D)) = \frac{3}{2}a^{3} - \frac{21}{2}a^{2} b + \frac{49}{2}a b^{2} - \frac{1}{2}a - \frac{56}{3}b^{3} + \frac{14}{3}b,
\end{equation}
which allows us to summarize the application of the algorithm as follows.
\begin{equation}
    \label{eq:ex_1}
    \begin{aligned}
        h^0(X, \mathcal{O}_X(D))
        \; &= \;
        \begin{cases}
            h^0(X, \mathcal{O}_X) & [D] \in \partial^\ell Z_1 \\
            \chi(X, \mathcal{O}_X(a D_1 + \lfloor a/2 \rfloor D_6)) & [D] \in Z_1 \setminus (B \cup \partial^\ell Z_1) \\
            h^0(X, \mathcal{O}_X) & [D] \in B, \; a = 1 \\
            \chi(X, \mathcal{O}_X(\lfloor 7 \lfloor a/2 \rfloor/3 \rfloor D_1 + \lfloor a/2 \rfloor D_6)) & [D] \in B, \; a \in \{3, 5\} \\
            \chi(X, \mathcal{O}_X(D)) & [D] \in \Nef(X) \setminus \{0\} \\
            \chi(X, \mathcal{O}_X(\lfloor 7b/3 \rfloor D_1 + b D_6)) & [D] \in Z_2 \setminus \partial^r Z_2 \\
            h^0(X, \mathcal{O}_X) & [D] \in \partial^r Z_2 \\
        \end{cases} \\
        &= \;
        \begin{cases}
            1 & (a,b) \in \partial^\ell Z_1 \\
            \frac{1}{24}a^3 + \frac{7}{16}(-1)^a\, a + \frac{67}{48}a & (a, b) \in Z_1 \setminus (B \cup \partial^\ell Z_1) \\
            1 & (a,b) \in B, \; a = 1 \\
            \frac{7}{18}\lfloor a/2\rfloor^{3} + \frac{7}{2}\lfloor a/2\rfloor + \frac{1}{9}\bigl((\lfloor a/2\rfloor + 1) \bmod 3 - 1\bigr) & (a,b) \in B, \; a \in \{3, 5\} \\
            \frac{3}{2}a^{3} - \frac{21}{2}a^{2} b + \frac{49}{2}a b^{2} - \frac{1}{2}a - \frac{56}{3}b^{3} + \frac{14}{3}b & (a, b) \in \Nef(X) \setminus \{0\} \\
            \frac{7}{18}b^{3} + \frac{7}{2}b + \frac{1}{9}\bigl((b+1) \bmod 3 - 1\bigr) & (a, b) \in Z_2 \setminus \partial^r Z_2 \\
            1 & (a,b) \in \partial^r Z_2 \\
        \end{cases}
    \end{aligned}
\end{equation}
The $a = 1$ case for $[D] \in B$ must be handled separately because its Zariski movable subtraction isn't big, as previously mentioned.

There is a useful connection between this calculation and the birational tomography discussed above. Recall the polynomial $P(a,b)$ describing the central chamber and the exterior quasipolynomials $Q_\ell(a)$ and $Q_r(b)$. The subtraction maps derived above are already encoded in these formulae: direct substitution gives
\begin{equation}
Q_\ell(a) = P\left(a,\left\lfloor\frac{a}{2}\right\rfloor\right),
\qquad
Q_r(b) = P\left(\left\lfloor\frac{7b}{3}\right\rfloor,b\right).
\end{equation}
Thus the exterior quasipolynomials are obtained from the central polynomial by the integral maps
\begin{equation}
(a,b)\longmapsto \left(a,\left\lfloor\frac{a}{2}\right\rfloor\right),
\qquad
(a,b)\longmapsto \left(\left\lfloor\frac{7b}{3}\right\rfloor,b\right),
\end{equation}
which are precisely the subtraction maps derived geometrically above. On $Z_2$, and on $Z_1\setminus B$, these maps carry a divisor into the movable cone after a single subtraction. In this sense, the finite cohomological data detect not only the exceptional directions, but also the integral subtraction maps governing the asymptotic behaviour of the algorithm.

\subsubsection{Subtraction and Asymptotic/Global Formulae.} We know from \cref{MAIN:together} that \cref{alg} results in asymptotic formulae on the Mori chambers. This can be directly read off from the previous section. The quasilinear maps $D \mapsto D^\downarrow_\mathcal{C}$ from \cref{def:quasilinear_zariski_subtraction} that asymptotically give the Zariski movable subtraction, for $[D] = a [D_1] + b [D_6]$, are given as follows:
\begin{equation}
    \begin{aligned}
        [D^\downarrow_{Z_1}] &= a [D_1] + \lfloor a/2 \rfloor [D_6]~, \qquad
        [D^\downarrow_{Z_2}] = \lfloor 7b/3 \rfloor [D_1] + b [D_6]~.
    \end{aligned}
\end{equation}
We note that $[D^\downarrow_{Z_1}]$ agrees with $[D^\downarrow_{\mathrm{zms}}]$ in the asymptotic region $Z_1 \setminus B$ but $[D^\downarrow_{Z_2}]$ happens to agree with $[D^\downarrow_{\mathrm{zms}}]$ everywhere on $Z_2$. The asymptotic formulae are then given by composing the quasilinear $D \mapsto D^\downarrow_\mathcal{C}$ with the quasipolynomial Euler characteristic of the relevant SQM. In this example, there is a single SQM, and it is smooth, so the Euler characteristic is simply a polynomial given by Hirzebruch--Riemann--Roch. 

Let us comment further on the lattice structure in this example. Applying \cref{prop:integerZariski_quasilinear}, we know that the lattice $\Lambda_{Z_1}$ associated to the asymptotic quasilinear map $D \mapsto D^\downarrow_{Z_1}$ is
\begin{equation}
    \Lambda_{Z_1} = \mathbb{Z}[D_2] + \mathbb{Z}[D_6],
\end{equation}
and we can read off from \cref{eq:ex1_charge_matrix} that this generates an index-$2$ sublattice of $\Cl(X)$ such that the projection $\Cl(X) \to \Cl(X) / \Lambda_{Z_1} \cong \mathbb{Z}_2$ is $a[D_1] + b[D_6] \mapsto a \bmod 2$. On $Z_1$, we therefore expect the asymptotic quasipolynomial formula to consist of two polynomials, with the parity of $a$ determining which one applies. This perfectly matches the formula presented for $Z_1 \setminus B$, which implements the quasipolynomiality through dependence on $\lfloor a/2 \rfloor$ or $(-1)^a$, which depend only on $a \bmod 2$. A similar analysis for $\Lambda_{Z_2} = \mathbb{Z}[D_1] + \mathbb{Z}[D_5]$ finds that $\Cl(X) \to \Cl(X) / \Lambda_{Z_2} \cong \mathbb{Z}_3$ is $a[D_1] + b[D_6] \mapsto b \bmod 3$ and indeed, the formula given for $Z_2$ is quasipolynomial precisely because it depends on $\lfloor 7b/3 \rfloor$ or $b \bmod 3$. 
We comment that the asymptotic formula for a fixed Mori chamber $\mathcal{C}$ can involve divisors in different cosets of $\Lambda_{\mathcal{C}}$ being subtracted to distinct Mori chambers in the moving cone --- see \cref{sec:ex_4} --- however, we only have one chamber in $\Mov(X)$, and moreover this cannot happen for Picard number $2$.

In order to make contact with the results at the end of \cref{sec:subtraction} regarding the asymptotic structure of the subtraction method and \cref{alg}, in particular with the asymptotic quasilinearity discussed in \cref{prop:asymptotic_subtraction}, we can observe that in $Z_1$ we can choose the class $S$ defined in \cref{prop:asymptotic_subtraction} to be $\tfrac{5}{2}[D_2]$ --- this is the smallest allowed choice. On the other hand, for $Z_2$ \cref{alg} terminates after a single step so any $S$ will do. 

As argued in \cref{MAIN:together} it must be true that the asymptotic formula for each Mori chamber holds for all (big) divisors in that chamber. Hence, in particular, the formula presented for $Z_1 \setminus B$ must also hold on~$B$, which it does. In particular,  for $[D] = a [D_1] + b [D_6]$, we can conclude
\begin{equation}
    \label{eq:ex_1_global}
    \begin{aligned}
        h^0(X, \mathcal{O}_X(D)) 
        \; &= \;
        \begin{cases}
            h^0(X, \mathcal{O}_X) = 1 & [D] \in \partial^\ell Z_1 \\
            \chi(X, \mathcal{O}_X(a D_1 + \lfloor a/2 \rfloor D_6))=Q_{\ell}(a) & [D] \in Z_1 \setminus \partial^\ell Z_1 \\
            \chi(X, \mathcal{O}_X(D)) = P(a,b) & [D] \in \Nef(X) \setminus \{0\} \\
            \chi(X, \mathcal{O}_X(\lfloor 7b/3 \rfloor D_1 + b D_6)) =Q_r(b) & [D] \in Z_2 \setminus \partial^r Z_2 \\
            h^0(X, \mathcal{O}_X)=1  & [D] \in \partial^r Z_2 \\
        \end{cases} 
    \end{aligned}
\end{equation}

\subsection{Example 2 --- Torsionful Contraction} \label{sec:ex2}

Consider the reflexive polytope $\Delta^\circ_{20}$ in the Kreuzer--Skarke classification
of four-dimensional reflexive polytopes \cite{KreuzerSkarke}. The rays of the fan relevant to our chosen simplicial toric variety $V$ are generated by the lattice points
\begin{equation*}
    \begin{aligned}
        v_1 &= (-5,\,-1,\,-1,\,-2),\quad v_2 = (0,\,0,\,1,\,0),\quad
        v_3 = (0,\,1,\,0,\,0),\\ v_4 & = (1,\,0,\,0,\,0),\quad\quad\quad\quad\,
        v_5 = (1,\,0,\,0,\,2),\quad v_6 = (1,\,0,\,0,\,1).
    \end{aligned}
\end{equation*}
Analogously to the previous example, a generic anticanonical
hypersurface $X \;\subset\; V$ is a smooth Calabi--Yau threefold. 
$X$ has non-trivial Hodge numbers $(h^{1,1}, h^{2,1}) = (2, 106)$, with its Picard group being inherited from the class group of $V$. We fix a basis for $\Cl(X) \cong \Pic(X)$ by specifying the following class group grading for the prime torus-invariant divisors, ordered in the same way as the above points:
\begin{equation}
    \begin{pmatrix}
        1 & 1 & 1 & 4 & 1 & 0 \\
        1 & 1 & 1 & 3 & 0 & 2
    \end{pmatrix}~.
\end{equation}
As before, let $x_i$ denote the homogeneous coordinate associated to the $i^{\rm th}$ ray, $\hat{D}_i$ the prime torus-invariant divisor given by $x_i = 0$, and $D_i = \hat{D}_i \cap X$. We see that $\Cl(X)$ is generated by $\{[D_5], [D_6/2]\}$ (with both of these being integral classes, in spite of the factor of $1/2$). These basis elements will turn out to be (at least proportional to) the exceptional divisors of the divisorial contractions associated to $X$.

\begin{figure}
    \centering
    \includegraphics[width=.58\linewidth]{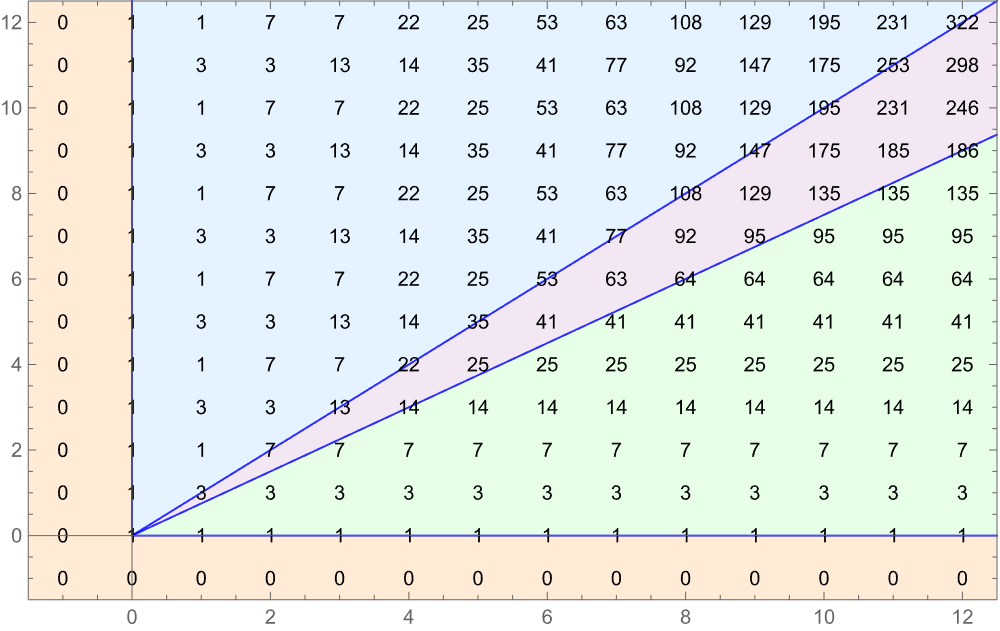}
    \caption{Cohomology data and chamber decomposition of ${\rm Eff}(X)$ for the Calabi--Yau threefold associated with the reflexive polytope $\Delta_{20}^\circ$ in the Kreuzer--Skarke classification of four-dimensional reflexive polytopes. The numbers count global sections of line bundles on $X$.}
    \label{fig:plot20}
\end{figure}

\subsubsection{Birational tomography.}

Before determining the birational geometry of $X$, let us again ask what can be inferred from a finite collection of global-section computations alone. \cref{fig:plot20} displays values of $h^0\bigl(X,\mathcal O_X(aD_5+b(D_6/2))\bigr)$ at lattice points in the effective cone. As in the previous example, the data separate naturally into three regions, on which they are fit by distinct cubic quasipolynomial expressions.

In the central region, the data are described by the cubic polynomial
\begin{equation}
P(a,b) = \frac{3}{2}a^3 -6a^2b +8ab^2 -\frac{10}{3}b^3 -\frac12a +\frac{10}{3}b.
\end{equation}
In one exterior region, they are instead described by
\begin{equation}
Q_\ell(a,b) = \frac16a^3 +(-1)^{a-b}a +\frac{11}{6}a,
\end{equation}
while in the other they are described by
\begin{equation}
Q_r(b) = \frac29b^3 +\frac83b +\frac19\bigl((b+1)\bmod 3-1\bigr).
\end{equation}
The changes of formula occur along the rays $[D_1]=[D_5]+[D_6/2]$ and $[D_4]=4[D_5]+3[D_6/2]$, corresponding in the $(a,b)$-coordinates to
$b=a$ and, respectively,  $b=3a/4$. Thus the cohomological data suggest two walls separating three birational regimes.

In the right-hand region, the formula depends only on $b$, suggesting that the $D_5$-direction is fixed or contracted on the birational model governing the region. The formula has a term depending on $b\bmod 3$; as we will see, this dependence is explained in the contraction method by the fact that the Picard group of the relevant $D$-minimal model has index $3$ in the class group --- yielding a strictly quasipolynomial Euler characteristic --- due to a quotient singularity. In the subtraction method, this dependence is captured by how the Mori chambers lie with respect to the lattice of divisor classes.

The left-hand region contains a still more informative phenomenon. The formula for $Q_\ell(a,b)$ has no ordinary polynomial dependence on $b$, but it does retain the parity of $a-b$. As such, motion in the $D_6/2$-direction is invisible to global sections up to a residual $\mathbb Z_2$-valued datum.
This is stronger than merely detecting a possible contracted divisor. The data suggest that the corresponding birational model has only one non-torsion divisor direction but retains an additional torsion class of order two. One possible explanation is therefore that the relevant contraction sends $a[D_5]+b[D_6/2]$ to a class of the form $aH+\epsilon(a,b)T$, with $\epsilon(a,b)=(a-b) \bmod 2$, $H$ a generator of the free part of the class group of the contracted model and $T$ a two-torsion class.

The polynomial parts provide further evidence for this picture. The central expression features no invariant directions, whereas the two exterior regions feature formulae which depend on only one direction in the class group, indicating that they are governed by birational models with Picard number one. 

Thus the finite cohomological data suggest the following birational picture: the effective cone should decompose into three regions separated by the rays $[D_1]$ and $[D_4]$; the two exterior regions should be governed by contractions of the $D_6$- and $D_5$-directions, respectively; the corresponding models should each have one remaining numerical divisor direction; and the left-hand model should carry a genuine order-two discrete structure, while the right-hand model should exhibit an order-three periodic phenomenon. We now construct the relevant birational models and test these predictions directly.

\subsubsection{Birational geometry and contraction.} The effective cone of $X$ admits the Mori chamber decomposition shown in \cref{fig:plot20}.
The small birational class of $X$ again consists of just $X$, so $\Mov(X) = \Nef(X) = \mathrm{Cone}([D_1], [D_4])$, shown in purple, and there are additionally two Mori chambers $Z_1  = \mathrm{Cone}([D_1], [D_6])$ and $Z_2 = \mathrm{Cone}([D_4], [D_5])$, shown in blue and, respectively, green. The ambient variety $V$ enjoys two divisorial contractions given by the blowdowns of the toric divisors $\hat{D}_6$ and $\hat{D}_5$, respectively:
\begin{equation*}
f^\ell : V \dashrightarrow \mathbb{P}_{1\,1\,1\,1\,4} / \mathbb{Z}_2, \qquad 
f^r : V \dashrightarrow \mathbb{P}_{1\,1\,1\,2\,3}~.
\end{equation*}
We stress that the five rays remaining after removing $v_6$ have the linear relation associated with $\mathbb{P}_{1\,1\,1\,1\,4}$ but these rays do not generate $\mathbb{Z}^4$ --- instead they generate an index-$2$ sublattice, meaning that the class group is $\mathbb{Z} \oplus \mathbb{Z}_2$. This torsion follows from the presence of a non-primitive exceptional divisor $\hat{D}_6$, as noted in \cite[Prop. 20]{Gendler:2026uux}. Upon restriction to the hypersurface, these toric contractions induce maps contracting $D_6$ and $D_5$,
$f^\ell : X \dashrightarrow Y_1$, and, respectively, 
$f^r : X \dashrightarrow Y_2$,
determining two Mori chambers in ${\rm Eff}(X)$. In particular, $Y_1$ and $Y_2$ are the anticanonical hypersurfaces in $\mathbb{P}_{1\,1\,1\,1\,4} / \mathbb{Z}_2$ and $\mathbb{P}_{1\,1\,1\,2\,3}$, respectively.

As in the previous example, we can compute $h^0(X, \mathcal{O}_X(D))$ for any effective $D$ on $X$ by performing the computation on a $D$-minimal model: one of $X, Y_1, Y_2$. While, again, neither $Y_1$ nor $Y_2$ is smooth, the hypersurface realizations continue to allow us to circumvent this. In particular, applying \cref{cor:toric_hyp_cy_hilbert_function} as in the previous section allows us to compute the following Hilbert series $HS(Y_i;s,t)$ which holds for all non-trivial effective classes.
\begin{equation}
    \begin{aligned}
        HS(Y_1;s,t) &= \frac{1 - t^8}{(1-t)^3(1-st)(1-st^4)}~, \\
        HS(Y_2;t) &= \frac{1 - t^8}{(1-t)^3(1-t^2)(1-t^3)}~.
    \end{aligned}
\end{equation}
Here, $s$ is understood to square to $1$, as it corresponds to the torsionful generator of $\mathrm{Cl}(\mathbb{P}_{1\,1\,1\,1\,4} / \mathbb{Z}_2)$.
It follows straightforwardly that, for $H$ the restriction of the relevant ambient weighted projective space hyperplane class to $Y_1$ and $Y_2$, respectively, and $T$ a torsion generator on $Y_1$,
\begin{equation}
    \begin{aligned}
        \chi(Y_1, \mathcal{O}_{Y_1}(nH + mT)) = h^0(Y_1, \mathcal{O}_{Y_1}(nH + mT)) &= 
        \frac{1}{n!}\frac{d^n}{dt^n}\left[\frac{HS(Y_1;s,t)|_{s=1} + (-1)^m HS(Y_1;s,t)|_{s=-1}}{2}\right]_{t=0}\\
        & = \frac{1}{6}n^3 + (-1)^m n + \frac{11}{6} n \\
        \chi(Y_2, \mathcal{O}_{Y_2}(nH)) = h^0(Y_2, \mathcal{O}_{Y_2}(nH)) &= \frac{1}{n!}\frac{d^n}{dt^n} HS(Y_2;t)\Big|_{t = 0} = \frac{2}{9}n^3 + \frac{8}{3}n + \frac{1}{9} ((n+1) \bmod 3-1)
    \end{aligned}
\end{equation}
where, as before, the formula holds for big divisors ($n > 0$). We then already know that the trivial class has one global section, and we can additionally note that the pure torsion class $T$ on $Y_1$ has no global sections. The bracketed expression implements a sum over coefficients in $HS(Y_1;s,t)$ with powers of~$s$ congruent to $m$ modulo $2$. We then arrive at the following formula.
\begin{equation}
    \begin{aligned}
        h^0(X, \mathcal{O}_X(D)) 
        \; &= \;
        \begin{cases}
            h^0(X, \mathcal{O}_X(-(b \bmod 2)[D_6])) & D \in \partial^\ell Z_1 \\
            \chi(Y_1, \mathcal{O}_{Y_1}(f^{\ell}_* D)) & D \in Z_1 \setminus \partial^\ell Z_1 \\
            \chi(X, \mathcal{O}_X(D)) & D \in \Nef(X) \setminus \{0\} \\
            \chi(Y_2, \mathcal{O}_{Y_2}(f^{r}_*D)) & D \in Z_2 \setminus \partial^r Z_2 \\
            h^0(X, \mathcal{O}_X) & D \in \partial^r Z_2
        \end{cases}
    \end{aligned}
\end{equation}
It then suffices to note that $f^\ell_* (a[D_5] + b[D_6/2]) = a [H] + ((a - b) \bmod 2) [T]$ and $f^r_* (a[D_5] + b[D_6/2]) = b [H]$ --- substitution then reproduces the formulae above.

\subsubsection{Subtraction and \cref{alg}.} Once more, no subtraction is required for any divisor in $\Nef(X)$. For divisors in $Z_1$, the relevant birational map is $f^\ell$, for which there is a single exceptional divisor, $E^{Z_1}_1 = D_6$. We can read off that 
\begin{equation}
    \lambda^{Z_1}_1(a[D_5] + b[D_6/2]) = b/2 - a/2,
\end{equation}
so for $Z_1$, 
\begin{equation}
    P_{\mathbb{Z},Z_1}([D]= a[D_5] + b[D_6/2]) = [D] - \lceil b/2 - a/2 \rceil [D_6] = a [D_5] + (b - 2 \lceil b/2 - a/2 \rceil)[D_6/2]
\end{equation}
For $a - b \equiv 0$ modulo $2$ or $a \geq 4$, this is movable and the algorithm terminates. Otherwise, if $a > 0$ then $P_{\mathbb{Z},Z_1}([D]) \in Z_2$ and another iteration is required, while if $a = 0$ the output is a non-effective class and the algorithm returns $0$, as discussed in \cref{sec:subtraction}.

It is worth emphasizing that because the exceptional divisor $E^{Z_1}_1 = D_6$ is not primitive in the Picard group, but rather generates an index two sublattice of its saturation, there is different behavior depending on the remainder of $a - b$ modulo $2$, whereas in the previous example there was no dependence on the coefficient of the exceptional divisor. But this is natural given that we expect different behavior for the different cosets of $\Lambda_{Z_1}$, one of whose generators is $E^{Z_1}_1 = D_6$, whose class is not primitive in the class group. This phenomenon is explored in \cite{Gendler:2026uux}, where it was connected to the presence of torsion in the birational model achieved by contracting the non-primitive class. We note that this torsion appears to not be preserved upon performing a generic deformation from this singular birational model.

Now, for divisors in $Z_2$, the relevant birational map is $f^r$, for which there is $E^{Z_2}_1 = D_5$. We can read off that
\begin{equation}
    \lambda^{Z_2}_1(a[D_5] + b [D_6/2]) = a - 4b/3
\end{equation}
so in $Z_2$, 
\begin{equation}
    P_{\mathbb{Z},Z_2}([D]= a[D_5] + b [D_6/2]) = [D] - \lceil a - 4b/3 \rceil [D_5] = \lfloor 4b/3 \rfloor [D_5] + b [D_6/2]
\end{equation}
For all $[D] \in Z_2$, then, $P_{\mathbb{Z},Z_2}([D]) \in \Mov(X)$ and the algorithm terminates. In conjunction, then, the algorithm requires $0$, $1$, or $2$ steps, and requires $2$ steps only for the divisors in the set 
\begin{equation}
    B = \{ D = a[D_5] + b[D_6/2] \in Z_1 \; | \; a - b \equiv 1 \bmod 2, \; 0 < a < 4 \}.
\end{equation}
We have now again completely characterized how the subtraction algorithm proceeds for every class in the effective cone. We note that for $[D] = a[D_5] + b[D_6/2] \in \Eff(X)$, $\chi(X,D)=P(a,b)$.
Before summarizing the result, let us also briefly comment on non-big divisors on the boundary of the effective cone. In the previous example, the subtraction method reduced all of these to the trivial class. Now, though, for $b [D_6/2]$, if $b \equiv 1$ modulo $2$, then subtraction actually reduces to the non-effective divisor $-[D_6/2]$. This is entirely consistent, as discussed in \cref{sec:subtraction}: this merely encodes that such divisors have no global sections. In particular, as $D_6$ still belongs to the stable base locus of these divisors, we are free to subtract it.  We then have the following formula.
\begin{equation}
    \begin{aligned}
        h^0(X, \mathcal{O}_X(D))
        \; &= \;
        \begin{cases}
            h^0(X, \mathcal{O}_X(-(b \bmod 2)[D_6])) & [D] \in \partial^\ell Z_1 \\
            \chi(X, \mathcal{O}_X(a D_5 + (b - 2 \lceil b/2 - a/2 \rceil)(D_6/2))) & [D] \in Z_1 \setminus (B \cup \partial^\ell Z_1) \\
            h^0(X, \mathcal{O}_X) & [D] \in B, \; a = 1 \\
            \chi(X, \mathcal{O}_X(\lfloor 4(a-1)/3 \rfloor D_5 + (a-1)(D_6/2))) & [D] \in B, \; a \in \{2, 3\} \\
            \chi(X, \mathcal{O}_X(D)) & [D] \in \Nef(X) \setminus \{0\} \\
            \chi(X, \mathcal{O}_X(\lfloor 4b/3 \rfloor D_5 + b (D_6/2))) & [D] \in Z_2 \setminus \partial^r Z_2  \\
            h^0(X, \mathcal{O}_X) & [D] \in \partial^r Z_2 \\
        \end{cases} \\[4pt]
        &= \;
        \begin{cases}
            (b+1) \bmod 2 & (a,b) \in \partial^\ell Z_1 \\
            \frac{1}{6}a^3 + (-1)^{a-b}\, a + \frac{11}{6}a & (a, b) \in Z_1 \setminus (B \cup \partial^\ell Z_1) \\
            1 & (a, b) \in B, \; a = 1 \\
            \frac{2}{9}(a-1)^3 + \frac{8}{3}(a-1) + \frac{1}{9}\bigl((a \bmod 3) - 1\bigr) & (a, b) \in B, \; a \in \{2, 3\} \\
            \frac{3}{2}a^3 - 6 a^2 b + 8 a b^2 - \frac{10}{3} b^3 - \tfrac{1}{2} a + \frac{10}{3} b & (a, b) \in \Nef(X) \setminus \{0\} \\
            \frac{2}{9}b^3 + \frac{8}{3}b + \frac{1}{9}\bigl((b+1)\bmod 3 - 1\bigr) & (a, b) \in Z_2 \setminus \partial^r Z_2 \\
            1 & (a,b) \in \partial^r Z_2 \\
        \end{cases}
    \end{aligned}
\end{equation}

\subsubsection{Subtraction and Asymptotic/Global Formulae.} The situation for the asymptotic formulae is largely analogous to the previous example, differing primarily because $E^{Z_1}_1 = D_6$ is not primitive. The quasilinear maps $D \mapsto D^\downarrow_\mathcal{C}$ that asymptotically give the Zariski movable subtraction, for $[D] = a [D_5] + b [D_6/2]$, are given as follows.
\begin{equation}
    \begin{aligned}
        [D^\downarrow_{Z_1}] &= a [D_5] + (b - 2 \lceil b/2 - a/2 \rceil)[D_6/2] \\
        [D^\downarrow_{Z_2}] &= \lfloor 4b/3 \rfloor [D_5] + b [D_6/2]
    \end{aligned}
\end{equation}
Once again, $[D^\downarrow_{Z_1}]$ agrees with $[D^\downarrow_{\mathrm{zms}}]$ in the asymptotic region $Z_1 \setminus B$ but $[D^\downarrow_{Z_2}]$ happens to agree with $[D^\downarrow_{\mathrm{zms}}]$ everywhere on $Z_2$. The asymptotic formulae are again given by composing the quasilinear $D \mapsto D^\downarrow_\mathcal{C}$ with the polynomial Euler characteristic of the unique smooth SQM.

We can also again study the lattices relevant for this example. The lattice associated to $Z_1$ is
\begin{equation}
    \Lambda_{Z_1} = \mathbb{Z}[D_1] + \mathbb{Z}[D_6]
\end{equation}
such that the projection $\Cl(X) \to \Cl(X) / \Lambda_{Z_1} \cong \mathbb{Z}_2$ is $a[D_5] + b[D_6/2] \mapsto a - b \bmod 2$, and our asymptotic formula indeed has quasipolynomiality arising exactly from $\lceil b/2 - a/2 \rceil$, whose periodic piece is $(-1)^{a-b}$. 

Making contact once again with \cref{prop:asymptotic_subtraction} as we did in the previous section, the class $S$ from that result can be chosen to be $3[D_1]$ for $Z_1$, which is again the smallest allowed choice. Again, on $Z_2$ any $S$ is appropriate because subtraction terminates after a single step.

Once again, the asymptotic formulae for each Mori chamber must actually hold for all big divisors in that Mori chamber, allowing us to simplify to
\begin{equation}
    \begin{aligned}
        h^0(X, \mathcal{O}_X(D)) 
        \; &= \;
        \begin{cases}
            h^0(X, \mathcal{O}_X(-(b \bmod 2)[D_6])) =(b+1) \bmod 2 & [D] \in \partial^\ell Z_1 \\
            \chi(X, \mathcal{O}_X(a D_5 + (b - 2 \lceil b/2 - a/2 \rceil)(D_6/2)))= Q_\ell(a,b) & [D] \in Z_1 \setminus \partial^\ell Z_1 \\
            \chi(X, \mathcal{O}_X(D)) =P(a,b) & [D] \in \Nef(X) \setminus \{0\} \\
            \chi(X, \mathcal{O}_X(\lfloor 4b/3 \rfloor D_5 + b (D_6/2))) = Q_r(b)& [D] \in Z_2 \setminus \partial^r Z_2  \\
            h^0(X, \mathcal{O}_X) = 1& [D] \in \partial^r Z_2 \\
        \end{cases} 
    \end{aligned}
\end{equation}

\subsection{Example 3 --- Picard number three} \label{sec:ex3}

In this example we will exhibit a geometry with a more complicated Mori chamber decomposition. We have already seen the lattice/convex geometry of subtraction and contraction in detail in the previous two examples, so we will not present such a thorough analysis for this geometry. Instead, we will merely present the chamber decomposition and the formulae that result for global sections from performing our methods (in a manner entirely analogous to the previous two examples). It is worth noting that this example was selected for its simplicity: in fact, there is a piecewise \textit{polynomial} global formula for global sections --- there is no quasipolynomiality. 

Consider the reflexive polytope $\Delta^\circ$ in the Kreuzer--Skarke classification
of four-dimensional reflexive polytopes \cite{KreuzerSkarke} with points 
\begin{equation*}
    \begin{aligned}
        v_1 &= (1,0,0,0),\quad v_2 = (-4,-1,-1,-1),\quad v_3 = (0,0,0,1), \\
        v_4 &= (0,0,1,0),\quad v_5 = (0,1,0,0),\quad\quad\quad\quad\, v_6 = (-2,-1,0,0),\quad
        v_7 = (-1,1,-1,0).
    \end{aligned}
\end{equation*}
Let $V$ denote one of the five weak-Fano simplicial toric fourfolds whose fan has rays generated by the above points --- i.e., the fans are associated to fine, regular, star triangulations of $\Delta^\circ$. Analogously to the previous examples, generic anticanonical
hypersurfaces $X \;\subset\; V$ are smooth Calabi--Yau threefolds. There are two other simplicial toric fourfolds with fans constructed from these rays for which the anticanonical class is not nef: such fans were studied under the name ``vex triangulations'' in \cite{MacFadden:2025ssx}, where it was shown that the associated toric varieties also have smooth anticanonical hypersurfaces. 
$X$ has non-trivial Hodge numbers $(h^{1,1}, h^{2,1}) = (3, 103)$, with its Picard group being inherited from the class group of $V$. We fix a basis by specifying the following class group grading for the prime torus-invariant divisors, ordered in the same way as the above points:
\begin{equation}
    \begin{pmatrix}
        7 & 1 & 1 & 2 & 1 & 1 & 1 \\
        4 & 1 & 1 & 1 & 1 & 0 & 0 \\
        2 & 0 & 0 & 0 & 1 & 1 & 0
    \end{pmatrix}~.
\end{equation}
As before, we let $x_i$ denote the homogeneous coordinate associated to the $i^{\rm th}$ ray, $\hat{D}_i$ the prime torus-invariant divisor given by $x_i = 0$, and $D_i = \hat{D}_i \cap X$. 

The small birational class of the anticanonical hypersurfaces in the seven simplicial toric fourfolds consists of two elements, which we label $X = X_1$ and $X_2$. The Mori chamber decomposition for $X$ is illustrated in \cref{fig:eff_cone_diagram}, as well as the Mori chamber decomposition for the ambient toric variety (which is just its secondary fan). In particular, the latter refines the former: we use solid lines to denote faces of the CY Mori chamber decomposition and dotted lines to denote faces of the ambient toric Mori chamber decomposition which do not descend to $X$. For example, the Mori chamber $\Nef(X)$ is the union of five toric Mori chambers (i.e., secondary cones). Intuitively, this refinement arises because the ambient toric varieties possess birational contractions which do not descend to the hypersurface because the exceptional locus doesn't intersect it.

\begin{figure}
\begin{center}
\scalebox{0.8}{
\begin{tikzpicture}[xscale=.5, yscale=.5]

\tkzDefPoint(-8, 8){D6}     
\tkzDefPoint( 8, 8){D5}     
\tkzDefPoint( 8,-8){BR}     
\tkzDefPoint(-8,-8){D7}     

\tkzDefPoint( 0, 8){TM}     
\tkzDefPoint(-8, 0){LM}     
\tkzDefPoint( 8, 0){RM}     
\tkzDefPoint( 0,-8){BM}     

\tkzDefPoint( 0           , 0           ){T}    
\tkzDefPoint( 2.6666666667,-2.6666666667){R}    
\tkzDefPoint(-2.6666666667,-2.6666666667){L}    
\tkzDefPoint( 0,-3.3333333333){C}                

\tkzDrawSegments(D6,D5  D5,BR  BR,BM  BM,D7  D7,D6)

\tkzDrawSegments(D6,T  D5,T  D6,L  D7,L  D5,R  BR,R)

\tkzDrawSegments(T,R  R,BM  BM,L  L,T)

\tikzset{dotline/.style={gray, line width=0.7pt,
        dash pattern=on 0.5pt off 2.5pt, line cap=round}}
\draw[dotline] (C) -- (D6);
\draw[dotline] (C) -- (D5);
\draw[dotline] (C) -- (D7);
\draw[dotline] (C) -- (BR);
\draw[dotline] (BM) -- (C);     

\tkzDrawPoints[size=2, color=black](D6, D5, BR, D7, T, R, L, BM, C)

\tkzLabelPoint[above left ](D6){$D_6$}
\tkzLabelPoint[above right](D5){$D_5$}
\tkzLabelPoint[below right](BR){$D_2, D_3$}        
\tkzLabelPoint[below left ](D7){$D_7$}

\tkzLabelPoint[above      ](T){$D_5 + D_7$}         
\tkzLabelPoint[right      ](R){$D_4 + D_5$}         
\tkzLabelPoint[left       ](L){$D_4 + D_6$}         
\tkzLabelPoint[below      ](BM){$D_4$}        

\tkzLabelPoint[below     ](C){$D_1$}

\node at ( 0  , 5.0) {$\mathcal{C}_3$};
\node at (-5.5, 0  ) {$\mathcal{C}_5$};
\node at (-3.5, 2.0) {$\mathcal{C}_4$};
\node at ( 3.5, 2.0) {$\mathcal{C}_2$};
\node at ( 5.5, 0  ) {$\mathcal{C}_1$};
\node at ( 0, -2.0) {$\Nef(X)$};
\node at (-3.5,-6  ) {$\mathcal{C}_6$};
\node at ( 3.5,-6  ) {$\varphi_2^*\Nef(X_2)$};

\node[above=4pt] at (TM)      {$B(\bullet_1)$};
\node[left =4pt] at (LM)      {$B(\bullet_2)$};
\node[right=4pt] at (RM)      {$B(\mathbb{P}^1)$};
\node[below=4pt] at (-4,-8)   {$B(\mathbb{P}^2)$};
\node[below=4pt] at ( 4,-8)   {$B(F_1)$};

\end{tikzpicture}
}
\end{center}
\caption{Mori chamber decomposition of $\Eff(X)$ for the Picard number three Calabi--Yau threefold of \cref{sec:ex3}, drawn as a codimension-one cross-section. Solid black lines denote walls of the Mori chamber decomposition of $X$, while gray dotted lines denote additional walls of the secondary fan of the ambient toric variety which do not descend to walls on $X$. The moving cone consists of the two chambers $\Nef(X)$ and $\varphi_2^*\Nef(X_2)$, while $\mathcal{C}_1,\ldots,\mathcal{C}_6$ correspond to divisorial contractions. The labels $B(\mathbb{P}^1)$, $B(\mathbb{P}^2)$, $B(F_1)$, and $B(\bullet_i)$ indicate boundary cones whose divisors induce Iitaka fibrations with the indicated bases, with $\bullet_i$ denoting a point.}
\label{fig:eff_cone_diagram}
\end{figure}

Let us now describe the Mori chamber decomposition of $X$ in some detail. We denote the SQMs of $X$ by $X_i$ and their Mori chambers by $\varphi_i^* \Nef(X_i)$, while the divisorial contractions of (SQMs of) $X$ are denoted $Y_i$ with Mori chambers $\mathcal{C}_i$ and birational maps $f_i : X \dashrightarrow Y_i$. The moving cone decomposes into two CY K\"ahler cones, $\Nef(X)$ and $\varphi_2^* \Nef(X_2)$ corresponding to two distinct Picard number three Calabi--Yau threefolds $X$ and $X_2$. Explicitly, the two K\"ahler cones are
\begin{equation}
    \begin{aligned}
        \Nef(X) &= \mathrm{Cone}(D_4,\ D_5+D_7,\ D_4+D_6,\ D_4+D_5)~, \\
        \varphi_2^* \Nef(X_2) &= \mathrm{Cone}(D_2,\ D_4,\ D_4+D_5)~.
    \end{aligned}
\end{equation}
The three facets of $\Mov(X)$ that don't lie in a facet of $\partial \Eff(X)$ correspond to divisorial contractions: in particular, the top right, top left, and bottom left facets of $\Mov(X)$ correspond to the contraction of $D_5$, $D_6$, and $D_7$, respectively. This results in four new Mori chambers intersecting $\Mov(X)$ at codimension one: $\mathcal{C}_1, \mathcal{C}_2, \mathcal{C}_4, \mathcal{C}_6$. In particular, $D_6$ and $D_7$ can be contracted only on $X$ (yielding $Y_4$ and $Y_6$, respectively), while $D_5$ can be contracted on either $X$ or $X_2$ (yielding $Y_2$ and $Y_1$, respectively).\footnote{We comment that removing the rays associated to $\hat{D}_5, \hat{D}_6, \hat{D}_7$ from $\Delta^\circ$ results in three new, distinct reflexive polytopes, and indeed contracting these divisors on $X$ yields Picard number two anticanonical hypersurfaces in the toric varieties associated to these new polytopes (albeit not generic hypersurfaces, but rather ones with sections inherited from the original toric variety).} Of course, as must be the case for Mori dream spaces, the Mori chambers are generated by their intersection with $\Mov(X)$ and the relevant exceptional divisors (see \cref{cor:chamber_movable_part} and the discussion around \cref{def:birational_data}).

Additionally, $Y_2, Y_4, Y_6$ admit further divisorial contractions. Contracting $f_{2*} D_6$ on $Y_2$ or $f_{4*} D_5$ on $Y_4$ yields a Picard number one variety $Y_3$, and contracting $f_{4*} D_7$ on $Y_4$ or $f_{6*} D_6$ on $Y_6$ yields a Picard number one variety $Y_5$. These have Mori chambers intersecting $\Mov(X)$ at codimension two. 

Explicitly, the six Mori chambers outside of $\Mov(X)$ are:
\begin{equation*}
    \begin{aligned}
        \mathcal{C}_1 &= \mathrm{Cone}(D_2,\ D_5,\ D_4{+}D_5), \quad~
        \mathcal{C}_2 = \mathrm{Cone}(D_5,\ D_5{+}D_7,\ D_4{+}D_5), \quad~
        \mathcal{C}_3 = \mathrm{Cone}(D_6,\ D_5,\ D_5{+}D_7), \\
        \mathcal{C}_4 &= \mathrm{Cone}(D_6,\ D_5+D_7,\ D_4{+}D_6), \quad~
        \mathcal{C}_5 = \mathrm{Cone}(D_7,\ D_6,\ D_4{+}D_6), \quad~
        \mathcal{C}_6 = \mathrm{Cone}(D_7,\ D_4,\ D_4{+}D_6).
    \end{aligned}
\end{equation*}
  
A crucial fact is that the lattices $\Lambda_{\mathcal{C}_i}$ for $1 \leq i \leq 6$ all coincide with $\Cl(X)$. This means that the Zariski decomposition itself --- not just our modified integral Zariski decomposition --- maps integral divisors to integral divisors on all Mori chambers, and the subtraction method applied to big divisors results in asymptotic polynomial formulae, not quasipolynomial formulae. We can then exploit our result for Calabi--Yau varieties to conclude that these are global polynomial formulae. Evidently, in spite of the birational contractions $Y_1, \dots, Y_6$ being singular, their Euler characteristics are all polynomial on the entirety of $f_{i*}\Cl(X)$, rather than just $f_{i*}\Cl(X) \cap \Pic(Y_i)$.

Finally, we consider the non-big divisors belonging to the faces of the effective cone. A priori, because $X$ is Calabi--Yau, our methods do not apply to $\partial \Eff(X)$. However, for this particular example, each face can be handled using the methods of \cref{sec:non-big}. We stress that in order to do so, one should formally prove that the pullback of the Picard group of the base of the relevant Iitaka fibration is saturated in the Picard group of the original variety. However, since in this work we place emphasis on the big cone, with this being the only example where we apply the methods of \cref{sec:non-big}, for brevity and simplicity we will not provide such a proof here. Explicit line bundle cohomology computations verify the formulae implied by \cref{sec:non-big} for non-big divisors assuming saturation and we content ourselves with this strong empirical evidence that the saturation holds. 

Having provided this disclaimer, we now proceed to an analysis of the Iitaka fibrations associated to each face. The effective cone of $X$ is rational polyhedral, so $X$ is a Mori dream space by \cref{prop:when_cy_is_mds} (in addition to being Calabi--Yau), and hence every nef divisor on $X$ is semiample. It will suffice to consider $\partial \Eff(X) \cap \Mov(X)$, because all other faces of $\Eff(X)$ can be reduced to these faces by subtractions. 
In this example, all fibrations are inherited from the ambient toric variety, whose fibrations are straightforward to compute \cite{Kreuzer:2000qv}. We find that $X_2$ is genus-one fibered over the Hirzebruch surface $F_1$, which is itself fibered, inducing a K3 fibration of $X_2$ as well. $F_1$ also admits a birational contraction to $\mathbb{P}^2$, meaning $Y_6$ is also a genus-one fibration over $\mathbb{P}^2$. The faces of $\varphi_2^* \Nef(X_2)$ whose divisors induce Iitaka fibrations with base $\mathbb{P}^2$, $F_1$, and $\mathbb{P}^1$ are $\Cone(D_4)$, $\Cone(D_2, D_4)$, and $\Cone(D_2)$, respectively. We can also consider the trivial class, a zero-dimensional face of $\Mov(X) \cap \partial \Eff(X)$, to correspond to the trivial Iitaka fibration of $X$ to a point. Extending these faces to include all of the divisors in $\partial \Eff(X)$ for which the subtraction method reduces to these faces yields the following cones.\footnote{Equivalently, any Mori chamber outside of $\Mov(X)$ containing one of the faces of $\Mov(X) \cap \partial \Eff(X)$ mentioned above will enjoy the same fibration, so instead of subtraction, we can imagine performing contraction and studying the Iitaka fibration on the $D$-minimal model of a non-big $D$.}
\begin{equation}
    \begin{aligned}
        B(F_1) &= \mathrm{Cone}(D_2,\ D_4), \quad
        B(\mathbb{P}^2) = \mathrm{Cone}(D_7,\ D_4), \quad
        B(\mathbb{P}^1) = \mathrm{Cone}(D_2,\ D_5), \\
        B(\bullet_1) &= \mathrm{Cone}(D_6,\ D_5), \quad
        B(\bullet_2) = \mathrm{Cone}(D_7,\ D_6).
    \end{aligned}
\end{equation}
Here $\bullet$ denotes a point: $B(\bullet_1)$ and $B(\bullet_2)$ are the cones of divisors whose Iitaka fibration contracts $X$ to a point, and the index serves only to distinguish them. We comment in passing that $X_1$ is of course also genus-one fibered over $\mathbb{P}^2$ because $\Nef(X)$ also contains $D_4$.

By applying \cref{prop:nonbig_reduction}, we can compute formulae for each of these faces of the effective cone because we understand the global sections of the bases of the relevant Iitaka fibrations --- when they are non-trivial, they are all toric varieties, and hence can be treated using our methods for Fano-type varieties. Of course, this is overkill for this simple example, as the global sections of simple varieties like $\mathbb{P}^1$, $\mathbb{P}^2$, and $F_1$ can be understood using much more elementary methods. 

Let $[D] = a [D_7] + b ([D_5] - [D_6]) + c ([D_6] - [D_7])$ with representative $D$, and let $L=\mathcal O_X(D)$. Then, either by performing subtraction to reduce to the Euler characteristic of $X_1$, $X_2$, or one of the bases of the fibrations of these, or by performing contraction to reduce to the Euler characteristic of a birational variety (or of the base of an Iitaka fibration of a birational variety), we achieve the following formula. In all cases, $\pi$ denotes the relevant birational contraction or Iitaka fibration,
\begin{equation}
    \label{eq:ex_3_global}
    \begin{aligned}
        h^0(X, L) &=
        \begin{cases}
            \begin{aligned}
                \chi(X, L) & = \tfrac{1}{6}a^{3} - a^{2} b - \tfrac{1}{2}a^{2} c + 2 a b^{2} + 2 a b c + \tfrac{1}{2}a c^{2} \\ 
                &~~+ \tfrac{5}{6}a - \tfrac{4}{3}b^{3} - b^{2} c - 2 b c^{2} + \tfrac{4}{3}b + \tfrac{1}{6}c^{3} - \tfrac{1}{6}c
            \end{aligned}  & D \in \Nef(X) \setminus B(\mathbb{P}^2) \\[4pt]
            \chi(X_2, L) = - a^{2} c + 4 a b c + a - 3 b^{2} c - b c^{2} + b & D \in \varphi_2^*\Nef(X_2) \setminus B(F_1) \\[4pt]
            \begin{aligned}
               \chi(Y_4, \pi(L))& = \tfrac{1}{6}a^{3} - a^{2} b - \tfrac{1}{2}a^{2} c + 2 a b^{2} + 2 a b c + \tfrac{1}{2}a c^{2} \\
                & ~~+ \tfrac{5}{6}a - b^{3} - 2 b^{2} c - b c^{2} + 2b - \tfrac{1}{6}c^{3} - \tfrac{5}{6}c
            \end{aligned} & D \in \mathcal{C}_4 \setminus B(\bullet_2) \\[4pt]
            \begin{aligned}
                \chi(Y_2, \pi(L)) & = \tfrac{1}{6}a^{3} - \tfrac{1}{2}a^{2} c + \tfrac{1}{2}a c^{2} + \tfrac{17}{6}a - \tfrac{1}{3}b^{3} \\
                &~~+ b^{2} c - b c^{2} - \tfrac{2}{3}b + \tfrac{1}{6}c^{3} - \tfrac{13}{6}c
            \end{aligned}
            & D \in \mathcal{C}_2 \setminus B(\mathbb{P}^1) \\[4pt]
            \begin{aligned}
            \chi(Y_1, \pi(L)) & = a^{2} b - a^{2} c - 2 a b^{2} + 2 a b c\\
            &~~ + 3 a + b^{3} - b^{2} c - b - 2 c
            \end{aligned}& D \in \mathcal{C}_1 \setminus B(\mathbb{P}^1) \\[4pt]
            \chi(Y_6, \pi(L)) = b^{2} c - b c^{2} + 3 b + \tfrac{1}{3}c^{3} + \tfrac{2}{3}c & D \in \mathcal{C}_6 \setminus B(\mathbb{P}^2) \\[4pt]
            \chi(Y_3, \pi(L)) = \tfrac{1}{6}a^{3} - \tfrac{1}{2}a^{2} c + \tfrac{1}{2}a c^{2} + \tfrac{17}{6}a - \tfrac{1}{6}c^{3} - \tfrac{17}{6}c & D \in \mathcal{C}_3 \setminus B(\bullet_1) \\[4pt]
            \chi(Y_5, \pi(L)) = \tfrac{1}{3}b^{3} + \tfrac{11}{3}b & D \in \mathcal{C}_5 \setminus B(\bullet_2) \\[4pt]
            \chi(F_1, \pi(L))=  - \tfrac{1}{2}a^{2} + 2 a b + \tfrac{1}{2}a - \tfrac{3}{2}b^{2} + \tfrac{1}{2}b + 1  & D \in B(F_1) \setminus (B(\mathbb{P}^2) \cup B(\mathbb{P}^1)) \\[4pt]
            \chi(\mathbb{P}^2, \pi(L))=\tfrac{1}{2}b^{2} + \tfrac{3}{2}b + 1 & D \in B(\mathbb{P}^2) \\[4pt]
            \chi(\mathbb{P}^1, \pi(L))= a - c + 1 & D \in B(\mathbb{P}^1) \\[4pt]
            1 & D \in B(\bullet_1) \\[4pt]
            1 & D \in B(\bullet_2)
        \end{cases}
    \end{aligned}
\end{equation}

\subsection{Example 4 --- Non-trivial subtraction} \label{sec:ex_4}

There are two features of the subtraction methods and the asymptotic formulae they induce which we spent some time discussing and accommodating in \cref{sec:subtraction} but which have not featured in the three examples discussed so far. First, we noted that even for sufficiently large divisors, subtraction need not terminate in a single step. Second, we noted that for sufficiently large divisors in a fixed Mori chamber $\mathcal{C}$, subtraction in general will place different cosets of the lattice $\Lambda_\mathcal{C}$ into different Mori chambers in the moving cone. We will now illustrate these phenomena in an example. 

We have already presented how one can both comprehensively apply \cref{alg} and in the process construct asymptotic formulae for the entire effective cone --- for this reason, we will not perform a thorough analysis for a new geometry, but rather we will merely discuss the minimal amount of birational information needed for a particular example to see the two aforementioned, yet-unseen behavior in practice.

Consider the reflexive polytope $\Delta^\circ$ in the Kreuzer--Skarke classification
of four-dimensional reflexive polytopes \cite{KreuzerSkarke} with points 
\begin{equation*}
    \begin{aligned}
        v_1 &= (0,\,0,\,1,\,0),\quad v_2 = (-3,\,-2,\,-1,\,-1),\quad
        v_3 = (0,\,0,\,0,\,1),\quad v_4 = (0,\,1,\,0,\,0),\\
        v_5 &= (-2,\,0,\,1,\,0),\quad v_6 = (-2,\,-1,\,0,\,0),\quad
        v_7 = (-1,\,0,\,1,\,0),\quad v_8 = (1,\,0,\,0,\,0).
    \end{aligned}
\end{equation*}
In particular, consider the four simplicial toric fourfolds whose fans have rays generated by the above points. These fourfolds have Picard number four. We will consider the toric varieties themselves in this example, which are of course Fano type due to their projectivity by \cref{prop:toric_is_fano_type}. We fix a basis for their class group by specifying the following class group grading for the prime torus-invariant divisors, ordered the same as the above points:
\begin{equation}
    \begin{pmatrix}
        1 & 1 & 1 & 2 & 0 & 0 & 0 & 3 \\
        0 & 1 & 1 & 2 & 0 & 0 & 1 & 4 \\
        0 & 1 & 1 & 2 & 1 & 0 & 0 & 5 \\
        0 & 0 & 0 & 1 & 0 & 1 & 0 & 2
    \end{pmatrix}~.
\end{equation}
As before, we let $x_i$ denote the homogeneous coordinate associated to the $i$th ray and $\hat{D}_i$ the prime torus-invariant divisor given by $x_i = 0$.
Now, let $X$ (which we will also denote $X_1$) and $X_2$ denote the two toric fourfolds with nef cones satisfying
\begin{equation}
    \begin{aligned}
        \Nef(X) &= \Cone(\hat{D}_8, \hat{D}_1 + \hat{D}_8, 2\hat{D}_1 + \hat{D}_7 + 2 \hat{D}_8, \hat{D}_2 + \hat{D}_8) \\
        \varphi_2^* \Nef(X_2) &= \Cone(\hat{D}_4, \hat{D}_8, \hat{D}_1 + \hat{D}_8, 2\hat{D}_1 + \hat{D}_7 + 2 \hat{D}_8)~.
    \end{aligned}
\end{equation}
The moving cone is
\begin{equation}
    \Mov(X) = \Cone(\hat{D}_2, \hat{D}_4, \hat{D}_8, \hat{D}_1 + \hat{D}_8) ~.
\end{equation}
Contracting $\hat{D}_5, \hat{D}_6, \hat{D}_7$ yields the weighted projective space $Y_1 = \mathbb{P}_{1\,1\,1\,2\,3}$. We will also be interested in another contraction $Y_2$ which results from contracting just $\hat{D}_5$. Their Mori chambers are
\begin{equation}
    \begin{aligned}
        \mathcal{C}_1 &= \Cone(\hat{D}_8, \hat{D}_5, \hat{D}_6, \hat{D}_7) \\
        \mathcal{C}_2 &= \Cone(\hat{D}_8, \hat{D}_1 + \hat{D}_8, \hat{D}_2 + \hat{D}_8, \hat{D}_5)
    \end{aligned}
\end{equation}
From the $\Cl(X)$-grading we can read off that $\Cl(X)/\Lambda_{\mathcal{C}_1} = \mathbb{Z}_3$ with $\Cl(X) \to \Cl(X)/\Lambda_{\mathcal{C}_1}$ being given by $a\hat{D}_1 + b\hat{D}_6 + c\hat{D}_5 + d\hat{D}_7 \mapsto a \bmod 3$. In terms of the effective class $[\hat{D}_0] := [\hat{D}_4] - [\hat{D}_1] \in \mathcal{C}_1$, representatives of the cosets of $\Lambda_{\mathcal{C}_1}$ in $\Cl(X) \cap \mathcal{C}_1$ are furnished by $(3m + 1)[\hat{D}_0]$, $(3m + 2)[\hat{D}_0]$, and $(3m + 3)[\hat{D}_0]$ for $m \geq 0$. In particular, these project to $1, 2, 0 \in \Cl(X)/\Lambda_{\mathcal{C}_1}$, respectively --- and we will use these numbers to label the cosets of $\Lambda_{\mathcal{C}_1}$ in $\Cl(X)$. The following two properties hold for these cosets.
\begin{itemize}
    \item[(1)] Asymptotically, divisors in the coset $2$ require two subtractions to reach their Zariski movable subtraction: in particular, the first subtraction step asymptotically results in a divisor in the non-trivial coset of $\Lambda_{\mathcal{C}_2}$ in $\mathcal{C}_2$ (noting $\Cl(X)/\Lambda_{\mathcal{C}_2} \cong \mathbb{Z}_2$).
    \item[(2)] Asymptotically, divisors in the cosets $0, 1, 2$ subtract to elements of $\Nef(X) \cap \varphi_2^* \Nef(X_2)$, $\Nef(X) \setminus \varphi_2^* \Nef(X_2)$, and $\varphi_2^* \Nef(X_2) \setminus \Nef(X)$, respectively.
\end{itemize}
To verify this, let us compute some integral Zariski decompositions. On the coset $0$, the Zariski composition of course coincides with the integral version and has image in $\mathcal{C}_1 \cap \Mov(X)$. Now consider the coset $1$: the (regular) Zariski decomposition of $(3m + 1)[\hat{D}_0]$ is 
\begin{equation}
    (3m + 1)[\hat{D}_0] = \left(m + \frac{1}{3}\right)[\hat{D}_8] + \left[ \left(m + \frac{1}{3} \right) [\hat{D}_5] + \left(m + \frac{1}{3} \right) [\hat{D}_6] + \left(2m + \frac{2}{3} \right) [\hat{D}_7] \right]
\end{equation}
from which one can compute that $P_{\mathbb{Z},\mathcal{C}_1}((3m + 1)[\hat{D}_0]) = [\hat{D}_2] + m[\hat{D}_8]$. One can verify for $m \geq 1$ that this belongs to $\Nef(X)$ and not $\varphi_2^* \Nef(X_2)$. The Zariski decomposition of $(3m + 2)[\hat{D}_0]$ is 
\begin{equation}
    (3m + 2)[\hat{D}_0] = \left(m + \frac{2}{3}\right)[\hat{D}_8] + \left[ \left(m + \frac{2}{3} \right) [\hat{D}_5] + \left(m + \frac{2}{3} \right) [\hat{D}_6] + \left(2m + \frac{4}{3} \right) [\hat{D}_7] \right]
\end{equation}
from which one can compute that $P_{\mathbb{Z},\mathcal{C}_1}((3m + 2)[\hat{D}_0]) = [\hat{D}_4] + [\hat{D}_5] + m[\hat{D}_8]$, which belongs to $\mathcal{C}_2$ and not $\Mov(X)$. Performing one more subtraction results in a class in $\varphi_2^* \Nef(X_2)$ and not in $\Nef(X)$.
Thus, for a general $[D] \in \mathcal{C}_1 \cap \Cl(X)$,
\begin{equation}
    \begin{aligned}
        h^0(X, \mathcal{O}_X(D))
        &= h^0(Y_1, \mathcal{O}_{Y_1}((f_1)_* D))
        = \chi(Y_1, \mathcal{O}_{Y_1}((f_1)_* D)) \\[2pt]
        &= \begin{cases}
            \chi(X, \mathcal{O}_X(D^\downarrow_{\mathcal{C}_1})) = \chi(X_2, \mathcal{O}_{X_2}((\varphi_2)_* D^\downarrow_{\mathcal{C}_1})) & [D] \in \Lambda_{\mathcal{C}_1} \\
            \chi(X, \mathcal{O}_X(D^\downarrow_{\mathcal{C}_1})) & [D] \in \Lambda_{\mathcal{C}_1} + [\hat{D}_0] \\
            \chi(X_2, \mathcal{O}_{X_2}((\varphi_2)_* D^\downarrow_{\mathcal{C}_1})) & [D] \in \Lambda_{\mathcal{C}_1} + 2[\hat{D}_0]
        \end{cases}
    \end{aligned}
\end{equation}
The first and second equalities are \cref{MAIN:contract}, while the third is \cref{MAIN:together}. Regrettably, in this particular example, $X$ and $X_2$ are both singular, so we cannot easily write down a closed-form expression for $h^0(X, \mathcal{O}_X(D))$ on $\mathcal{C}_1$ using Hirzebruch--Riemann--Roch; however, the purpose of this example was to exemplify non-trivial subtraction behavior.

\begin{remark}
    The failure of the Zariski subtraction to asymptotically agree with the Zariski movable subtraction, as in item (1) above, can be understood as the failure of the inequality in \cref{eq:bound_on_asymptotic_vanishing} to be saturated.
\end{remark}

\bibliographystyle{amsalpha-arxiv}
\bibliography{ref}

\end{document}